\documentclass[10pt]{amsart}

\usepackage[utf8]{inputenc}
\usepackage[T2A]{fontenc}
\usepackage[english]{babel}
\usepackage{amssymb,amsmath}
\usepackage{mathtools}
\usepackage{amsthm}
\usepackage{hyperref}
\usepackage{url}
\usepackage{tikz-cd}
\usepackage[shortlabels]{enumitem}
\usepackage{geometry}
\newtheorem{theorem}{Theorem}[section]
\newtheorem{proposition}[theorem]{Proposition}
\newtheorem{lemma}[theorem]{Lemma}
\newtheorem{corollary}[theorem]{Corollary}

\theoremstyle{definition}
\newtheorem{definition}[theorem]{Definition}
\newtheorem{remark}[theorem]{Remark}

\numberwithin{equation}{section}

\newcommand{\Z}{\mathbb{Z}}
\newcommand{\Q}{\mathbb{Q}} 
\newcommand{\V}{\mathbb{V}}
\newcommand{\Qp}{\mathbb{Q}_p}

\newcommand{\Zhat}{\widehat{\mathbb{Z}}}

\newcommand{\Oc}{\mathcal{O}}

\DeclareMathOperator{\GL}{GL}
\DeclareMathOperator{\Gal}{Gal}
\DeclareMathOperator{\Spec}{Spec}
\DeclareMathOperator{\Hom}{Hom}
\DeclareMathOperator{\Isom}{Isom}
\DeclareMathOperator{\Out}{Out}
\DeclareMathOperator{\rank}{rank}
\DeclareMathOperator{\tr}{tr}

\DeclareMathOperator{\Path}{Path}

\DeclareMathOperator{\Fet}{F\acute{e}t}

\DeclareMathOperator{\Sec}{Sec}
\DeclareMathOperator{\Cusp}{Cusp}
\DeclareMathOperator{\Sel}{Sel}

\title{Families of curves separating points}
\author{Wojciech Porowski}
\address{Research Institute for Mathematical Sciences, Kyoto University, Kyoto 606-8502, Japan}
\email{porowski@kurims.kyoto-u.ac.jp}  
\subjclass[2010]{14H30, 14F35, 14D99}

\begin{document}

\begin{abstract}
We show that for a hyperbolic curve $X$ over an algebraically closed field $k$ of characteristic zero and a finite set of points $S\subset X(k)$, after replacing $X$ by a finite \'etale cover, there exists a family of curves over $X$ whose fibres over points in $S$ have pairwise nonisogenous Jacobians.
We also give an application of our results to a problem concerning Galois sections.
\end{abstract}

\maketitle

\section{Introduction}\label{s:introduction}

Let $k$ be an algebraically closed field and $X$ be a hyperbolic curve over $k$.
Consider a family of smooth proper connected curves $Y\to X$, in algebraic geometry we are often interested in the behaviour of a particular geometric property of a fibre $Y_s$ depending on the point $s\in X$.
For example, we could ask how often a particular isogeny class of abelian varieties appears among the Jacobians of $Y_s$ when we vary $s\in X(k)$ along closed points (see \cite{orr_2015families}).
However in the present work we slightly change the perspective by fixing points on the base and varying the family of curves instead.

More precisely, let $S\subset X(k)$ be a finite set of $k$-rational points of $X$, the following definition is the starting point of this paper.

\begin{definition}\label{s1:def:separation_by_isogeny}
We say that the family $Y\to X$ \emph{separates points} in $S$ by \emph{isogeny classes} if 
the Jacobians of the fibres $Y_s$ over points in $s\in S$ are pairwise nonisogenous.
\end{definition}

Then the question we ask is the following: fixing a finite set $S\subset X(k)$ of $k$-rational points, does there exist a family of curves $Y\to X$ which separates points in $S$ by isogeny classes?
Our first theorem states that if we are allowed to replace $X$ be a finite \'etale cover then in characteristic zero the answer to this question is affirmative, in the strong sense.

\begin{theorem}\label{s1:thm:separation_in_char_zero}
Assume that $k$ has characteristic zero.
Then for any finite subset $S\subset X(k)$ there exists a connected finite \'etale cover $X'\to X$ such that   for any lift $S'\subset X'(k)$ of $S$ there exists a family of smooth proper connected curves $Y\to X'$ which separates points in $S'$ by isogeny classes.
\end{theorem}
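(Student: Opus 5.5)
We will construct the family by a Kodaira--Parshin type construction whose fibres are branched covers of the fibres of a fixed family, and arrange that the branching loci, and hence the Jacobians of the fibres, differ between the points of $S$.

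\textbf{Reduction.} By the Lefschetz principle we may assume $k=\mathbb{C}$, or at least that $k$ has finite transcendence degree over $\Q$ and embeds into $\mathbb{C}$. Isogeny of Jacobians is insensitive to extension of algebraically closed fields, since $\Hom$-schemes of abelian varieties are unramified. So we may work with the complex topology and with monodromy.

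\textbf{Construction of $X'$ and $Y$.} Write $S=\{s_1,\dots,s_n\}$. Consider the constant family $X\times X\to X$ with the diagonal section $\Delta$, and the punctured family $(X\times X)\setminus\Delta\to X$. Its fibre over $s$ is $X\setminus\{s\}$. As in Parshin's construction, choose a finite characteristic quotient $G$ of $\pi_1$ of a punctured fibre, for instance mod $m$ homology $H_1(X\setminus\{s\},\Z/m)$ or a suitable nonabelian characteristic quotient. The monodromy of the base acts on the set of such $G$-covers through a finite quotient, so after a finite étale base change $X'\to X$ we obtain a family $Y\to X'$. Its fibre $Y_{s'}$ is a $G$-cover of the curve $\bar X$ (the compactification of $X$) branched exactly at $s$ and the cusps, with the ramification at $s$ nontrivial. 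Only finitely many choices are involved, and all lifts $s'$ of a given $s$ give isomorphic fibre types, so the cover $X'$ can be fixed independently of the lift $S'$. Alternatively, we may take a fibre product of several such families and pass to a component, which keeps the fibres connected.

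\textbf{Separating isogeny classes.} We must show that for $s\neq t$ in $S$, $\mathrm{Jac}(Y_s)$ and $\mathrm{Jac}(Y_t)$ are not isogenous for a suitable choice of $G$. The idea is to compare the $G$-isotypic decompositions of $H^1(Y_s)$. The new part (the part not coming from $\bar X$ and the cusps alone) depends on the position of $s$. An isogeny $\mathrm{Jac}(Y_s)\sim \mathrm{Jac}(Y_t)$ would give an isomorphism of rational Hodge structures. Varying $G$ over an infinite family, for example cyclic covers of increasing degree $m$, these Hodge structures would have to match for infinitely many $m$. By a rigidity argument this would force an isomorphism of Hodge structures, or of $\ell$-adic Galois representations over a field of definition, that implies $(\bar X, s)\simeq(\bar X,t)$ compatibly with the cusps.

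To conclude we use that $X$ is hyperbolic. Its automorphism group fixing the cusps is finite, so we can first choose $X'$ (for instance by a level structure) so that this automorphism group acts freely enough. Then we choose the finite étale $X'$ so that the lifts of distinct points of $S$ are not related by any automorphism. We also use that the isogeny class of a Jacobian contains only finitely many curves up to isomorphism (Faltings/Zarhin finiteness over finitely generated fields, or Torelli plus finiteness of polarizations). Hence for large $m$ the fibres $Y_s$ and $Y_t$ cannot both lie in the same isogeny class.

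\textbf{Main obstacle.} The hard step is this last one: ruling out an isogeny between $\mathrm{Jac}(Y_s)$ and $\mathrm{Jac}(Y_t)$, rather than mere isomorphism. I would first try to exploit the finiteness of the set of curves in a given isogeny class together with the growth in $m$, with an argument over a finitely generated field of definition via Faltings' isogeny theorem.
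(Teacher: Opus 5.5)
There is a genuine gap, and it is exactly the step you flag as the main obstacle: nothing in the proposal rules out an isogeny $\mathrm{Jac}(Y_s)\sim\mathrm{Jac}(Y_t)$. The ``rigidity argument'' is asserted without proof and is not an available theorem. An isogeny only gives an isomorphism of \emph{rational} Hodge structures (or of $\ell$-adic representations), to which Torelli does not apply. Nothing ties together the putative isogenies for different $m$, so there is no mechanism forcing $(\bar X,s)\simeq(\bar X,t)$.

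The finiteness statement you invoke does not fill the gap either. Over $\mathbb{C}$ an isogeny class contains infinitely many isomorphism classes (already $E/C$ for cyclic $C\subset E$ of all orders). Over a finitely generated field, Faltings' finiteness concerns one fixed isogeny class, so it cannot stop two specific curves $Y_s,Y_t$ from lying in the same class. It gives even less ``for large $m$'', since the genus changes with $m$. Whether two given fibres of a fixed non-isotrivial family are isogenous is genuinely delicate: for the Legendre family the isogenous pairs are dense.

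The construction itself also has concrete defects. Your explicit choices of $G$ (mod $m$ homology, cyclic covers) are degenerate when $X$ is proper, which is the case one naturally reduces to. There the loop around $s$ in $\pi_1(X\setminus\{s\})$ is a product of commutators. So every abelian cover of $X\setminus\{s\}$ is unramified at $s$, extends to an \'etale cover of $X$, and the family is isotrivial. Moreover, as written $Y_{s'}$ is a cover of $\bar X$ branched at the image $s$ of $s'$. With a characteristic $G$ it therefore depends only on $s$, so points of $S$ exchanged by an automorphism of $X$ can never be separated. The passage to $X'$ has to enter the construction of the fibres, not just the trivialization of monodromy.

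What is missing is an isogeny invariant that you can actually compute and control. The paper uses the toric rank of the reduction at a $p$-adic place. The argument runs as follows:
\begin{enumerate}[(i)]
\item By Cassels' embedding theorem, embed a field of definition into a $p$-adic field so that $X$ (made proper) has good reduction and the points of $S$ have pairwise distinct reductions.
\item Lemma~\ref{s6:lem:stable_to_weak_graphicity}, which rests on Tamagawa's resolution of nonsingularities, produces a finite \'etale $X'$ such that for lifts $s',t'$ of distinct points the reduction graphs of $X'\setminus\{s'\}$ and $X'\setminus\{t'\}$ are not isomorphic. Hence no comparison isomorphism between these punctured fibres is graphic.
\item Theorem~\ref{s4:thm:graphicity_criterion} and the compactness argument of Corollary~\ref{s4:cor:compactness} then give a uniform bound $N$. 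For every tuple of comparison isomorphisms, some subgroup of index $\le N$ has transported covers of the fibres with pairwise distinct toric ranks.
\item Trivializing the monodromy of all subgroups of index $\le N$ (Proposition~\ref{s5:prop:separating_finite_set}) turns these covers into families over a further finite \'etale cover. Their compactified fibres over lifts of distinct points of $S$ have distinct toric ranks, hence nonisogenous Jacobians.
\item The families descend from $\bar K$ to $k$ via $\Fet(X_\iota)\cong\Fet(X)$.
\end{enumerate}
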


Before we comment on the proof of Theorem~\ref{s1:thm:separation_in_char_zero} we introduce a more refined version of Definition~\ref{s1:def:separation_by_isogeny}.
Assume that $k$ is an algebraic closure of the fraction field of a strictly henselian discrete valuation ring thus every abelian variety $A$ over $k$ extends uniquely to a semiabelian scheme $\mathcal{A}$ over the ring of integers of $k$.
The special fibre of $\mathcal{A}$ is a semiabelian variety and its toric rank will be referred to simply as the toric rank of $A$.
Then our second definition is as follows.

\begin{definition}\label{s1:def:separation_by_toric_rank}
We say that the family $Y\to X$ \emph{separates points} in $S$ by the \emph{toric rank} if the Jacobians of the fibres $Y_s$ over points in $s\in S$ have pairwise distinct toric rank.
\end{definition}

With this definition we may state our second theorem which is a more precise version of Theorem~\ref{s1:thm:separation_in_char_zero}; note that it applies in particular to the case of an algebraic closure of the field of $p$-adic numbers $\Qp$.

\begin{theorem}\label{s1:thm:separation_by_toric_rank}
Assume that $k$ has characteristic zero and the residue field of $k$ is an algebraic closure of the finite field $\mathbb{F}_p$, for some prime number $p$. Then for any finite subset $S\subset X(k)$ there exists a connected finite \'etale cover $X'\to X$ such that   for any lift $S'\subset X'(k)$ of $S$ there exists a family of smooth proper connected curves $Y\to X'$ which separates points in $S'$ by the toric rank.
\end{theorem}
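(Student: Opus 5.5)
We plan to build the family $Y\to X'$ as a composite of cyclic (Kummer-type) covers branched along divisors that we choose freely, and to control the toric rank of each fibre Jacobian through the reduction of the branch points in the fibre. The underlying mechanism is a standard fact. For a smooth proper curve $C$ over $k$, the toric rank of $\mathrm{Jac}(C)$ is the first Betti number of the dual graph of the stable reduction of $C$. For a cyclic cover $C\to\mathbb{P}^1$ of degree prime to $p$, this graph is determined combinatorially by the tree of the stable marked reduction of $(\mathbb{P}^1,\text{branch points})$ and by the monodromy data. So the task becomes: produce, over an étale cover, a family of branch divisors whose fibres over the points of $S'$ have pairwise different "reduction trees", in a way that forces different Betti numbers.

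\textbf{Step 1 (reduction to the projective line).} Let $\bar X$ be the smooth compactification of $X$, with boundary $D$. For each $s\in S$, the position of $s$ relative to the reduction of $\bar X$ is well defined after base change to $\Oc_k$. We choose a rational function $f$ on $\bar X$, or more generally a finite morphism $\bar X\to\mathbb{P}^1$, such that the images $f(s)$ for $s\in S$ have pairwise distinct $p$-adic valuations. Such an $f$ exists because the points of $S$ are finitely many distinct $k$-points and we may rescale.

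\textbf{Step 2 (étale cover).} Passing to a connected finite étale cover $X'\to X$, for instance one obtained by taking $N$-th roots of suitable units together with an abelian cover of large degree, we arrange the following. The pullback of a family of "$\mu_N$-translates" of $f$ gives divisors $B\subset X'\times\mathbb{P}^1$ that are étale over $X'$, and whose fibre over a lift $s'$ of $s$ consists of points clustering $p$-adically at $f(s)$ with prescribed distances. The point of taking the cover is to make $B\to X'$ finite étale, since avoiding collisions is only possible after extracting roots. Because the construction uses only $f$ and the Galois-invariant data, it applies to every lift $S'$ at once.

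\textbf{Step 3 (the cyclic cover and its toric rank).} Let $Y\to X'\times\mathbb{P}^1$ be a $\mu_\ell$-cover branched along $B$, with $\ell\neq p$. This exists after a further étale cover trivializing the relevant Picard obstruction, and each fibre $Y_{s'}$ is smooth, proper and connected. We then compute the toric rank of $\mathrm{Jac}(Y_{s'})$. The tree of the marked $\mathbb{P}^1$ has a number of clusters depending on the valuation of $f(s)$. Each cluster of branch points with total monodromy nontrivial contributes loops, so the Betti number is a strictly monotone function of this valuation, and the toric ranks are pairwise distinct.

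We expect Step 3, together with making Step 2 compatible with it, to be the main obstacle. The branch divisor must be étale over the base, which means it cannot genuinely degenerate, yet its fibres must realise different cluster configurations at different points of $S'$. We would attack this by making the number of branch points in a small disc depend on $v(f(s))$ through the chosen $N$-th root structure. If that fails, we would instead iterate covers, composing one cyclic cover for each point of $S$ so that at each stage exactly one additional fibre acquires an extra loop.
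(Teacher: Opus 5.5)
\textbf{The proposal has a genuine gap: over a proper base, Step 3 produces only isotrivial families, so it cannot separate any points.}

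The theorem has to cover proper $X$ (the paper in fact reduces to that case), and then $X'$ is proper. Suppose $B\subset X'\times\mathbb{P}^1$ is finite \'etale of degree $m\ge 2$ over $X'$.
\begin{itemize}
\item After a finite \'etale base change $X''\to X'$, $B$ splits into morphisms $g_1,\dots,g_m\colon X''\to\mathbb{P}^1$ with pairwise disjoint graphs.
\item Numerically $\Gamma_{g_i}\equiv[X''\times\mathrm{pt}]+\deg(g_i)\,[\mathrm{pt}\times\mathbb{P}^1]$, so $\Gamma_{g_i}\cdot\Gamma_{g_j}=\deg g_i+\deg g_j$. Disjointness forces every $g_i$ to be constant. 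For $m\ge 3$ this is just the fact that a proper curve has no nonconstant map to the affine scheme $\mathcal{M}_{0,m}$, so twisting the $\mathbb{P}^1$-bundle does not help either.
\item Hence $B=X'\times B_0$ for a fixed finite $B_0\subset\mathbb{P}^1$. Since $\mathrm{Pic}(\mathbb{P}^1)$ is torsion free, every fibre of a $\mu_\ell$-cover branched along $B$ is isomorphic to one fixed cyclic cover of $\mathbb{P}^1$ branched at $B_0$. All fibres therefore have the same toric rank.
\end{itemize}

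The same problem is visible in Step 2. The translates $\zeta f$ collide at the zeros and poles of $f$, which lie on $X$ when $X$ is proper. A finite \'etale cover is surjective, so it never removes collisions. Away from them, $z\mapsto z/f(x)$ carries $\{0,\infty\}\cup f(x)\mu_N$ to $\{0,\infty\}\cup\mu_N$, so the fibres are isomorphic anyway.

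Independently, the monotonicity claim in Step 3 is false. The stable reduction graph of a marked $\mathbb{P}^1$, and of a tame cyclic cover of it, depends only on the combinatorial cluster pattern, not on the depths of the clusters; the depths only affect the thicknesses of the nodes. For instance, with $B_x=\{0,1,\infty,f(x)\}$, all points with $v(f(s))>0$ give the same graph.

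Your fallback of iterating cyclic covers does not escape this. The first stage is isotrivial. Later stages become non-isotrivial only if you branch along graphs of nonconstant maps from a cover of $X'$ to the fixed fibre curve. That is a Kodaira--Parshin type construction, and your proposal gives no way to compute the reduction graphs of its fibres.

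\textbf{What is missing, and how the paper supplies it.} You need a source of non-isotrivial families over a proper base, together with a mechanism that controls the toric ranks of their fibres.
\begin{itemize}
\item The paper starts from the configuration family $X\times X\setminus D\to X$, whose fibre over $s$ is $X\setminus\{s\}$. After a finite \'etale base change $X'\to X$ trivializing the monodromy of finitely many open subgroups, covers of one fibre extend to families over $X'$. Their compactifications are families of smooth proper curves (Section~\ref{s:families}).
\item Fibres over different points are related by cuspidal comparison isomorphisms, and toric ranks are controlled anabelianly. By the numerical criterion (Proposition~\ref{s3:prop:graphicity_numeric}, Theorem~\ref{s4:thm:graphicity_criterion}), non-graphic isomorphisms are detected simultaneously by the toric ranks of some open subgroup. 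Compactness (Corollary~\ref{s4:cor:compactness}) bounds the index of that subgroup uniformly, which is why a single $X'$ works for every lift $S'$.
\item To make distinct points of $S$ give non-isomorphic reduction graphs of $X'\setminus\{s'\}$, the paper chooses a semistable model separating $S$. It then uses resolution of nonsingularities (Theorems~\ref{s6:thm:rns} and~\ref{s6:thm:tamagawa_rns}, Lemmas~\ref{s6:lem:special_cover_X'} and~\ref{s6:lem:stable_to_weak_graphicity}) to arrange that the vertical components through the reductions of different points have distinct genera.
\end{itemize}
None of this can be replaced by a choice of rational function $f$ on $\bar X$.
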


Our proof of Theorems~\ref{s1:thm:separation_in_char_zero} and~\ref{s1:thm:separation_by_toric_rank} is essentially $p$-adic and uses two ingredients.
The first one is combinatorial anabelian geometry, which is the analysis of the \'etale fundamental group of a hyperbolic curve $X$ using combinatorial data associated to the dual graph of the special fibre of the stable model of $X$; the key concept we use here is the notion of a graphic isomorphism, introduced in \cite{mochizuki_2007}.
The second one is a resolution of nonsingularities (\cite{tamagawa_2004_resolution}, \cite{mochizuki_tsujimura_2023}), which allows one to distinguish points of $X(k)$ by using geometry of the dual graphs of finite \'etale covers of $X$.
We remark here that our proof uses a compactness argument thus does not give an effective bound on the degree of a finite \'etale cover $X'\to X$ appearing in the statements of Theorems~\ref{s1:thm:separation_in_char_zero} and~\ref{s1:thm:separation_by_toric_rank}. 

We also give an application of our results to a problem concerning Galois sections; in fact this application was the main motivation for the author to consider families of curves separating points.
Let $K$ be a number field and $X$ be a hyperbolic curve over $K$; for simplicity we assume that $X$ is proper and has stable reduction over $K$, write $\mathcal{X}$ for the stable model of $X$.
Write $\V(K)$ for the set of nonarchimedean valuations of $K$, for a valuation $v\in \V(K)$ we denote by $K_v$ the completion of $K$ at $v$ and by $\kappa(v)$ the residue field of $v$.
We have a homotopy short exact sequence of \'etale fundamental groups (see \cite[Exp. IX, Thm. 6.1]{sga1})
\[
1\to \pi_1(X\times_K \bar{K})\to \pi_1(X) \to G_K\to 1,
\]
where $\bar{K}$ is an algebraic closure of $K$ and $G_K = \Gal(\bar{K}/K)$ is the absolute Galois group of $K$.
Write $\Sec(X)$ for the set of conjugacy classes of sections of the above sequence, there is a natural injective map $X(K)\hookrightarrow \Sec(X)$.
Given a section $s\in \Sec(X)$ and a valuation $v\in \V(K)$ we denote by $s_v$ the restriction of $s$ to a decomposition group $G_v\subset G_K$.
We say that $s$ is a \emph{Selmer} section (see \cite{porowski2025}) if for every $v\in \V(K)$ the local section $s_v$ is geometric, i.e., it arises from a unique point in $X(K_v)$.
Write $\Sel(X)$ for the set of conjugacy classes of Selmer sections thus we have a sequence of inclusions
\[
X(K)\subset \Sel(X)\subset \Sec(X).
\]
The Section Conjecture proposed by Grothendieck (see~\cite{brief_an_faltings}) predicts that both of these inclusions are actually equalities.
We refer the reader to~\cite{stix2012rational} for more information on the Section Conjecture.

We recall from~\cite{porowski2025} the notion of a reduction of a Selmer section.
For every $v\in \V(K)$, by considering the reductions of points on the stable model $\mathcal{X}$, we obtain a map
\[
red_v\colon \Sel(X)\to X(K_v)\to \mathcal{X}(\kappa(v)),
\]
and for a subset $\Omega\subset \V(K)$ we define the reduction map on $\Omega$
\[
red_{\Omega}\colon \Sel(X) \to \prod_{v\in \Omega} \mathcal{X}(\kappa(v))
\]
as the product of the corresponding maps $red_v$, for all $v\in \Omega$.
It is an important open problem to show that the map $red_{\Omega}$ is injective whenever the set $\Omega$ is infinite; note that this injectivity follows immediately from the Section Conjecture.
It was proved by the author in~\cite{porowski2025} that $red_{\Omega}$ is injective when $\Omega$ has upper density one.
Here we can slightly improve this result; first recall from~\cite{porowski2025} that a subset $\Omega\subset \V(K)$ is said to have \emph{strongly positive density} if for every finite field extension $L/K$ the preimage of $\Omega$ under the restriction map 
\[
\V(L)\to \V(K)
\]
has positive upper density. Then our result is the following theorem.
\begin{theorem}\label{s1:thm:reduction_on_strongly_pos_density}
Assume that the set $\Omega$ has strongly positive density. Then the map $red_{\Omega}$ is injective.
\end{theorem}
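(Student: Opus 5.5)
Suppose $s\neq t$ are Selmer sections with $red_v(s)=red_v(t)$ for all $v\in\Omega$; we derive a contradiction. The strategy reduces to the situation of \cite{porowski2025}, where the key input is that two distinct Selmer sections have distinct local points $s_v\neq t_v$ in $X(K_v)$ for a set of $v$ of density one (or at least for many $v$), and that these local points can be separated after passing to the reduction. The new ingredient from Theorem~\ref{s1:thm:separation_by_toric_rank} will be used to convert ``the local points are distinct'' into ``the reductions are distinct'' for a positive density of places in $\Omega$.

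\textbf{Step 1: a single separating family over a number field.} Over $\bar K$, the sections $s$ and $t$ give, at each place, points of $X(\bar K_v)$. We apply Theorem~\ref{s1:thm:separation_by_toric_rank} over $\bar K_v$ for one auxiliary place $v_0$ with $s_{v_0}\neq t_{v_0}$ and $S=\{s_{v_0},t_{v_0}\}$. This gives a finite étale cover $X'\to X$ and a family $Y\to X'$ separating the lifts by toric rank, hence by isogeny class. Both $X'$ and $Y$ are defined over $\bar K$, so they descend to some finite extension $L/K$. Replacing $K$ by $L$, the sections $s,t$ lift to Selmer sections $s',t'$ of $X'_L$; this holds after possibly enlarging $L$ to choose lifts compatibly. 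Here Selmer-ness is preserved by finite étale covers, since geometric local sections lift to geometric ones. By strong positive density, the preimage $\Omega_L$ of $\Omega$ still has positive upper density.

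\textbf{Step 2: pushing sections to abelian varieties.} Pulling $Y\to X'$ back along $s'$ and $t'$ produces, via the section formalism, Galois representations on $H^1$ of the ``fibres'' $Y_{s'}$ and $Y_{t'}$. These are $\ell$-adic representations $\rho_{s'},\rho_{t'}$ of $G_L$ that are locally at each $v$ isomorphic to the Tate modules of $\mathrm{Jac}(Y_{s'_v})$ and $\mathrm{Jac}(Y_{t'_v})$. They are pure, and almost all are unramified. If $red_w(s')=red_w(t')$ at a place $w$ of good reduction for $\mathcal{Y}$, then the fibres have the same reduction. Consequently the Frobenius traces of $\rho_{s'}$ and $\rho_{t'}$ agree at $w$. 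This holds for all $w\in\Omega_L$ outside a finite set.

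\textbf{Step 3: Chebotarev argument (main obstacle).} We need the Frobenius traces agreeing on a set of positive upper density to force $\rho_{s'}^{ss}\cong\rho_{t'}^{ss}$. Positive density alone does not suffice in general; this is exactly why the statement requires \emph{strongly} positive density. First, pass to the finite extension $L'$ cut out by the finite image of $\rho_{s'}\oplus\rho_{t'}$ modulo $\ell^n$; equivalently, replace $L$ by $L'$ so that the images become pro-$\ell$ and connected. Then apply a Serre-type argument to the Zariski closure of the image, which is connected after the extension. On a connected $\ell$-adic Lie group, the set where two traces agree is either everything or a proper analytic subset of Haar measure zero. Positive upper density of $\Omega_{L'}$ then rules out the measure-zero case, so the traces agree everywhere. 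We conclude that $\rho_{s'}^{ss}\cong\rho_{t'}^{ss}$.

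Restricting to the decomposition group at $v_0$ then gives an isogeny between $\mathrm{Jac}(Y_{s'_{v_0}})$ and $\mathrm{Jac}(Y_{t'_{v_0}})$ via Faltings/Tate over $p$-adic fields; alternatively, compare the monodromy filtrations, which detect toric rank. This contradicts the separation by toric rank, so $s=t$. The delicate point is making the measure-zero argument work uniformly with only upper density, and ensuring the representations attached to non-geometric (Selmer) sections are well defined and pure. I would try to handle purity locally, using that each $s'_w$ is geometric.
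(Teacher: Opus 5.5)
\textbf{Main gap: Step 2 does not follow from the hypothesis.} Your route differs from the paper's. The paper proves the uniform Theorem~\ref{s7:thm:main_stable}: for every $v$ the set $\Omega(v)$ has potentially density one. This uses Proposition~\ref{s5:prop:separating_compact_set} on the relatively compact image of $\Sel(X)$ in $X(K_v)$, Faltings' finiteness of $R(d,B,\ell)$, and Rajan. The paper then gets $\bar s_v=\bar t_v$ at \emph{every} $v$ and concludes with \cite[Theorem 1.1]{porowski2025}. You instead argue pairwise at one place $v_0$ with $s_{v_0}\neq t_{v_0}$; the existence of $v_0$ should be justified by injectivity of $res_{\V(K)}$ \cite{porowski2024}.

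The hypothesis only gives $red_w(s)=red_w(t)$ on the model of $X$. For \emph{fixed} lifts $s',t'$, at a good place $w$ the Hensel bijection between the fibres of $X'\to X$ over $s_w$ and $t_w$ pairs $s'$ with the lift $t''_w$ of $t$ that has the same reduction. This $t''_w$ genuinely varies with $w$: for a Galois cover, $\bar t'_w=g_w\bar s'_w$ with $g_w$ depending on $w$. So ``this holds for all $w\in\Omega_L$ outside a finite set'' is unjustified, and you have no control over where the traces of $\rho_{s'}$ and $\rho_{t'}$ agree.

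The repair goes as follows. Theorem~\ref{s1:thm:separation_by_toric_rank} gives $X'$ independently of the lift, so take a family $Y_{t''}$ separating $\{s'_{v_0},t''_{v_0}\}$ for each of the finitely many lifts $t''$ of $t$. Pass to one $L'$ over which all the resulting algebraic monodromy groups are connected. Only then split the preimage of $\Omega$ in $\V(L')$ according to which $t''$ satisfies $red_w(t'')=red_w(s')$, and keep a part of positive upper density. The order matters, because a positive-upper-density subset of $\Omega$ need not itself have strongly positive density. In the paper this is handled by requiring families to separate classes for \emph{every} set of representatives, together with the containment \eqref{s7:loc:containment}.

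\textbf{The final step also needs correcting.} There is no Faltings/Tate theorem over $p$-adic fields: non-isogenous abelian varieties over $K_{v_0}$ can have isomorphic $\ell$-adic Tate modules. In any case you only know the \emph{semisimplifications} become isomorphic over $L'$. Comparing monodromy operators fails too, since the rank of $N$ is not invariant under global semisimplification. For example, the Kummer extension of $\Q_\ell$ by $\Q_\ell(1)$ attached to an element with nonzero valuation at $v_0$ has $N\neq 0$ locally but split semisimplification.

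What works, and what the paper uses, is Frobenius weights. On an open subgroup of the decomposition group at $v_0$, the potential toric rank equals the multiplicity of weight $0$ in the local semisimplification of $H^1$ of the fibre. That multiplicity is read off from traces, hence from $\rho^{ss}$.

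Two smaller points. Descending $Y$ to a number field follows from the paper's construction (a finite \'etale cover of the pulled-back configuration space, which descends in characteristic $0$), not from the bare statement of Theorem~\ref{s1:thm:separation_by_toric_rank}. And with these repairs your Step~3 is exactly Rajan's theorem \cite{rajan_1998} in the connected case. Your argument then proves Theorem~\ref{s1:thm:reduction_on_strongly_pos_density} without Faltings' finiteness. The paper's route is longer but yields the uniform statements about $\Omega(v)$ used in the later corollaries.
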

In fact our results are somewhat stronger than what is stated in Theorem~\ref{s1:thm:reduction_on_strongly_pos_density} as we can also give examples of infinite sets $\Omega$ of density zero for which the map $red_{\Omega}$ is still injective; we refer the interested reader to Section~\ref{s:galois_sections}.

We now sketch the structure of the present paper.
In Section~\ref{s:families} we discuss homotopy exact sequences coming from families of hyperbolic or, more generally, stable curves; this gives a method of constructing new families of curves from old ones.
In Section~\ref{s:graphic} we discuss the notion of a graphic isomorphism between $p$-prime fundamental groups of hyperbolic curves and we prove a numerical criterion for checking whether an isomorphism is graphic; we extend these results to an arbitrary finite number of isomorphisms in Section~\ref{s:criterion}.
Then in Section~\ref{s:separating_classes} we generalize the problem of separating points into separating \emph{classes} of points, we also introduce various partitions of the set $X(k)$ into classes.
Then we apply our discussion from Section~\ref{s:families} together with results from Section~\ref{s:criterion} to construct families of curves separating classes of points.
In Section~\ref{s:separating_points} we use a resolution of nonsingularities to extend our results from Section~\ref{s:separating_classes} from separating classes to separating points; here we prove Theorems~\ref{s1:thm:separation_in_char_zero} and~\ref{s1:thm:separation_by_toric_rank}.
Finally in Section~\ref{s:galois_sections} we apply our construction from Section~\ref{s:separating_classes} to prove a number of results concerning Selmer sections of hyperbolic curves over number fields, in particular we prove Theorem~\ref{s1:thm:reduction_on_strongly_pos_density}.

\section*{Acknowledgements}

I would like to thank Yuichiro Hoshi for many hours of discussions on combinatorial anabelian geometry and Shinichi Mochizuki for pointing out an improvement of Proposition~\ref{s5:prop:graphical_equivalence_criterion}.

\section{Families of curves}\label{s:families}

In this section we will discuss a homotopy short exact sequence arising from a family of smooth or, more generally, stable curves.
This sequence gives a method of constructing new families of curves from a given family by considering an \'etale cover of a fibre and extending it to an \'etale cover of the total space.
We will also introduce comparison isomorphisms between fundamental groups of different fibres in a family.

We start by fixing some notation and terminology used in this paper.
Write $\mathbb{P}$ for the set of prime numbers and let $\Sigma\subset \mathbb{P}$ be a nonempty subset.
For a profinite group $G$ we write 
\[
G\twoheadrightarrow G^{\Sigma}
\]
for the maximal pro-$\Sigma$ quotient of $G$.
When $p$ is a prime number we say that $G$ is a $p$\emph{-prime} group if every open subgroup of $G$ has index prime to $p$.
For an open subgroup $H\subset G$ we say that $H$ is a \emph{$\Sigma$-subgroup} if the open normal subgroup 
\[
N = \cap_{g\in G}gHg^{-1}\subset G
\]
has index divisible only by primes belonging to $\Sigma$.

For a scheme $X$ and a point $x\in X$ we write $\bar{x}$ for a geometric point of $X$ lying over $x$.
When $X$ is connected, we write $\pi_1(X,\bar{x})$ for its \'etale fundamental group with the basepoint at the geometric point $\bar{x}$.
We will omit a basepoint from the notation whenever it does not lead to confusion.
For a nonempty set of prime numbers $\Sigma$ the quotient
\[
\pi_1(X)\twoheadrightarrow \pi^{\Sigma}_1(X)
\]
will be called the pro-$\Sigma$ fundamental group of $X$,
in the special case when $\Sigma = \mathbb{P}\setminus\{p\}$ for some prime number $p$ we will refer to $\pi^{\Sigma}_1(X)$ as the $p$-\emph{prime} fundamental group of $X$.
When $X' \to X$ is a connected finite \'etale cover we say that $X'$ is a \emph{$\Sigma$-cover} if the degree of the Galois closure of $X'$ over $X$ is divisible only by primes contained in $\Sigma$.
Thus $\Sigma$-covers of $X$ correspond precisely to $\Sigma$-subgroups of $\pi_1(X)$ and the group $\pi^{\Sigma}_1(X)$ classifies $\Sigma$-covers of $X$.

Let $S$ be a connected and nonempty scheme.
Consider a morphism of schemes 
\begin{equation}\label{s2:eq:compactification}
\overline{X}\to S
\end{equation}
which is proper, flat, of finite presentation, with geometric fibres purely of dimension one and a closed immersion 
\begin{equation}\label{s2:eq:divisor}
D\hookrightarrow \overline{X}
\end{equation}
such that the composition $D\to S$ is finite \'etale of constant degree $r\ge 0$.
We refer to this type of data as a \emph{family of curves over} $S$ and denote it by $\mathcal{F}$,
also we write $X\subset \overline{X}$ for an open subscheme obtained as the complement of $D$.
Note that the arithmetic genus of a geometric fibre of the morphism~\eqref{s2:eq:compactification} is constant on $S$, we denote it by $g\ge 0$.

\begin{definition}\label{s2:def:family_of_curves}
We say that $\mathcal{F}$ is a family of \emph{smooth} (resp. \emph{proper}, \emph{connected}) curves if the morphism~\eqref{s2:eq:compactification} is smooth (resp. $r=0$, the geometric fibres of~\eqref{s2:eq:compactification} are connected).
\end{definition}

For a family of smooth curves $\mathcal{F}$ the scheme $X$ is connected if and only if $\overline{X}$ is connected, additionally connectivity of the geometric fibres of~\eqref{s2:eq:compactification} implies that $\overline{X}$ is connected.
For a family of smooth connected curves $\mathcal{F}$ we say that $\mathcal{F}$ is a family of \emph{hyperbolic} curves if $2g-2+r> 0$.
Suppose that $S$ is a normal scheme, then for a family of hyperbolic curves $\mathcal{F}$ the $S$-scheme $\overline{X}$ and the relative divisor $D\hookrightarrow\overline{X}$ are uniquely determined by the $S$-scheme $X$.
In such a situation we will say simply that $X\to S$ is a family of hyperbolic curves and we will refer to $\overline{X}\to S$ as the \emph{compactification} of the family $X\to S$ and to $D\hookrightarrow\overline{X}$ as the \emph{divisor of cusps}.

From now on we assume that the structure morphism $S\to \Spec \Z$ is not surjective and we fix a nonempty set of prime numbers $\Sigma\subset\mathbb{P}$ which are invertible on $S$.
Consider a family $\mathcal{F}$ of smooth connected curves, we will recall a homotopy exact sequence associated to this family.
From our discussion we see that the natural homomorphism 
\[
\pi_1(X)\twoheadrightarrow \pi_1(S)
\]
is surjective, write $K$ for its kernel so that we have a short exact sequence
\[
1\to K \to \pi_1(X)\to \pi_1(S)\to 1.
\]
Note that the kernel of the quotient $K\twoheadrightarrow K^{\Sigma}$ is a characteristic subgroup of $K$ hence a normal subgroup of $\pi_1(X)$; denote by 
\[
\pi_1(X)\twoheadrightarrow \pi'_1(X)
\]
the corresponding quotient of $\pi_1(X)$.
Thus we have a short exact sequence
\[
1\to K^{\Sigma}\to \pi'_1(X) \to \pi_1(S)\to 1.
\]
Let $\bar{s}$ be a geometric point of $S$, then the natural map $\pi_1(X_{\bar{s}})\to \pi_1(X)$ factorizes through $K\subset \pi_1(X)$, moreover by~\cite[XIII, Proposition 4.1]{sga1} the induced homomorphism of pro-$\Sigma$ quotients
\[
\pi^{\Sigma}_1(X_{\bar{s}})\twoheadrightarrow K^{\Sigma}
\]
is surjective.
Thus we have an exact sequence of profinite groups.
\begin{equation}\label{s2:eq:homotopy_sequence}
\pi^{\Sigma}_1(X_{\bar{s}}) \to \pi'_1(X)\to \pi_1(S)\to 1.
\end{equation}
Finally, when the family $\mathcal{F}$ is a family of hyperbolic curves then by \cite[Proposition 2.7]{stix_2005} the above sequence is actually short exact
\begin{equation}\label{s2:ses:homotopy_ses}
1\to \pi^{\Sigma}_1(X_{\bar{s}}) \to \pi'_1(X)\to \pi_1(S)\to 1.
\end{equation}
In particular, we have a natural outer action
\begin{equation}\label{s2:eq:outer_monodromy_smooth}
\pi_1(S,\bar{s})\to \Out(\pi^{\Sigma}_1(X_{\bar{s}})).
\end{equation}

Before we continue we briefly explain how to use the sequence~\eqref{s2:ses:homotopy_ses} to construct families of hyperbolic curves by taking \'etale covers of a given family and passing to compactifications.
Suppose that $S$ is regular and let $X\to S$ be a family of hyperbolic curves, $\bar{s}$ be a geometric point of $S$ and
\[
U_{\bar{s}}\subset \pi^{\Sigma}_1(X_{\bar{s}})
\]
be an open subgroup.
From the left exactness of the sequence~\eqref{s2:ses:homotopy_ses} it follows that there exists an open subgroup $U\subset \pi'_1(X)$ such that
\[
U\cap \pi^{\Sigma}_1(X_{\bar{s}}) = U_{\bar{s}}.
\]
The open subgroup $U\subset \pi'_1(X)$ corresponds to a connected finite \'etale cover $X''\to X$.
Write 
\[
V\subset \pi_1(S)
\]
for the image of $U$ in $\pi_1(S)$, it is an open subgroup corresponding to a connected finite \'etale cover $S'\to S$.
Thus we obtain a commutative diagram with a cartesian square
\begin{equation}\label{s2:diag:construction_of_a_family}
\begin{tikzcd}
X''\arrow[r]\arrow[rd]& X'\arrow[r] \arrow[d]& X \arrow[d] \\
 & S'\arrow[r] & S,
\end{tikzcd}
\end{equation}
whose horizontal arrows are finite \'etale morphisms.
Clearly $S'$ is regular and $X'\to S'$ is a family of hyperbolic curves, with compactification $\overline{X}'$ and the divisor of cusps $D'$.
Since $X''\to X'$ is a $\Sigma$-cover and $D'$ is a regular divisor on a regular scheme $\overline{X}'$ it follows from Abhyankar's lemma that $X''\to X'$ extends uniquely to a tamely ramified cover 
\[
\overline{X}''\to \overline{X}'.
\]
Moreover the preimage $D''\subset \overline{X}''$ of the divisor $D'$, considered with reduced scheme structure, is \'etale over $S'$.
By construction $U$ surjects onto $\pi_1(S')$ thus by considering Stein factorization we see that the geometric fibres of the morphism
\[
\overline{X}''\to S'
\]
are connected.
Therefore we deduce that $X''\to S'$ is a family of hyperbolic curves.
Summarizing the above discussion, starting from a family of hyperbolic curves $X\to S$, a geometric point $\bar{s}$ of $S$ and a connected finite \'etale cover of the fibre $X_{\bar{s}}$ we have constructed a new family $X''\to S'$ over a connected finite \'etale cover of $S$; in Section~\ref{s:separating_classes} we will employ this procedure to construct desired families of curves.

Our next goal is to relate fundamental groups of different fibres in a family of hyperbolic curves.
Recall that for any two geometric points $\bar{s}$ and $\bar{t}$ of a connected scheme $S$ we have a set of paths
\[
\Path(\bar{s},\bar{t})
\]
consisting of isomorphisms of profinite groups
\[
\pi_1(S,\bar{s}) \xrightarrow{\sim} \pi_1(S,\bar{t})
\]
coming from isomorphisms between corresponding fibre functors.
The set $\Path(\bar{s},\bar{t})$ has a natural transitive left action by $\pi_1(S,\bar{t})$ as well as a transitive right action by $\pi_1(S,\bar{s})$; there are also natural composition and inversion operations on paths.
For a connected $S$-scheme $S'$ with two geometric points $\bar{s}'$ and $\bar{t}'$, lying over $\bar{s}$ and $\bar{t}$ respectively, there is a natural map of sets
\[
\Path(\bar{s}',\bar{t}')\to \Path(\bar{s},\bar{t})
\]
which is compatible with the corresponding left and right actions of the fundamental groups of $S'$ and $S$ via the homomorphism $\pi_1(S')\to \pi_1(S)$.

Let $\mathcal{F}$ be a family of hyperbolic curves over $S$ and $\bar{s}$ be a geometric point of $S$.
Choosing a geometric point $\bar{a}$ of $X_{\bar{s}}$ we may write the sequence~\eqref{s2:ses:homotopy_ses} using basepoints as
\begin{equation}\label{s2:ses:homotopy_ses_basepoints}
1\to \pi^{\Sigma}_1(X_{\bar{s}}, \bar{a}) \to \pi'_1(X,\bar{a})\to \pi_1(S, \bar{s})\to 1.
\end{equation}
Let $\bar{t}$ be another geometric point of $S$ and $\bar{b}$ be a geometric point of $X_{\bar{t}}$.
Then for a path isomorphism 
\[
p_X\colon \pi'_1(X,\bar{a})\xrightarrow{\sim} \pi'_1(X,\bar{b}),
\]
there exists a unique path isomorphism
\[
p_S\colon \pi_1(S,\bar{s})\xrightarrow{\sim} \pi_1(S,\bar{t})
\]
making the following diagram commutative
\[
\begin{tikzcd}
1\arrow[r] & \pi_1^{\Sigma}(X_{\bar{s}},\bar{a}) \arrow[r]\arrow[d, "\sim" sloped] & \pi'_1(X, \bar{a})\arrow[r]\arrow[d, "p_X" ] & \pi_1(S, \bar{s})\arrow[r]\arrow[d, "p_S"] & 1 \\
1\arrow[r] & \pi_1^{\Sigma}(X_{\bar{t}},\bar{b}) \arrow[r] & \pi'_1(X, \bar{b})\arrow[r] & \pi_1(S, \bar{t})\arrow[r] & 1.
\end{tikzcd}
\]
Denote by
\[
\alpha \in \Isom^{\Out} (\pi^{\Sigma}_1(X_{\bar{s}}), \pi^{\Sigma}_1(X_{\bar{t}}))
\]
the outer isomorphism determined by the left vertical arrow in the above diagram.
Note that $\alpha$ does not depend on the choice of basepoints $\bar{a}$ and $\bar{b}$ of the fibres $X_{\bar{s}}$ and $X_{\bar{t}}$.

\begin{definition}\label{s2:def:comparison_smooth}
Any outer isomorphism $\alpha$ obtained from a path $p_X$ as above will be called a \emph{comparison isomorphism} between fibres over $\bar{s}$ and $\bar{t}$; we denote by
\[
C(\mathcal{F},\bar{s},\bar{t})\subset \Isom^{\Out} (\pi^{\Sigma}_1(X_{\bar{s}}), \pi^{\Sigma}_1(X_{\bar{t}}))
\]
the set of all comparison isomorphisms.
\end{definition}

From the above definition we see that the set $C(\mathcal{F},\bar{s},\bar{t})$ has two transitive group actions, namely a left action by the group $\pi_1(S,\bar{s})$ and a right action by the group $\pi_1(S,\bar{s})$.
Moreover the operation of path composition induces naturally the operation of composition of comparison isomorphisms between fibres.
In the case $\bar{s} = \bar{t}$ the set of comparison isomorphism is simply the image of the outer monodromy representation~\eqref{s2:eq:outer_monodromy_smooth}.

Even though we are mainly interested in families of smooth curves, it will be necessary to consider more general families of stable curves.

\begin{definition}
Let $\mathcal{F}$ be a family of curves over $S$ we say that $\mathcal{F}$ is a family of \emph{semistable} curves if the geometric fibres of the morphism~\eqref{s2:eq:compactification} have only nodes as singularities and moreover $D$ lies in the smooth locus of~\eqref{s2:eq:compactification}.
Additionally, if every geometric fibre of $\mathcal{F}$ is a stable curve then we say that $\mathcal{F}$ is a family of \emph{stable} curves.
\end{definition}

When working with families of semistable curves it will be convenient to use the language of log geometry and log fundamental groups (see~\cite{illusie_overview}).
For a log scheme $S^{log}$ we write $S$ for the underlying scheme, when $S$ is connected then for a log geometric point $\tilde{s}$ of $S^{log}$ we write $\pi_1(S^{log},\tilde{s})$ for the log fundamental group of $S^{log}$ classifying Kummer \'etale (k\'et) covers of $S^{log}$.
Whenever the choice of a basepoint is irrelevant we will omit it from the notation.
In the following all log schemes we consider are fs log schemes.

Let $S^{log}$ be an fs log scheme and
\begin{equation}\label{s2:eq:log_morphism}
\overline{X}{}^{log}\to S^{log}
\end{equation}
be a morphism of fs log schemes.
The notion of a family of semistable curves can be also defined using log structures, as follows.

\begin{definition}
Morphism~\eqref{s2:eq:log_morphism} is called a \textit{semistable log curve} if it is log smooth, saturated, with connected geometric fibres and purely of relative dimension one.
\end{definition}

See~\cite{tsuji_saturated} for the definition and properties of saturated morphisms of log schemes.
Any semistable log curve~\eqref{s2:eq:log_morphism} determines a closed subscheme $D\subset\overline{X}$ such the underlying morphisms of schemes determine a family of semistable curves over $S$.
Conversely, for any family $\mathcal{F}$ of semistable curves over $S$ there exist log structures on $S$ and $\overline{X}$ which make the morphism $\overline{X}\to S$ into a semistable log curve (see~\cite{kato_fumiharu_2000}).
In particular, the geometric fibres of a semistable log curve are (marked) semistable curves; we say that a semistable log curve~\eqref{s2:eq:log_morphism} is a \emph{stable log curve} if its geometric fibres are (marked) stable curves.

Given a stable log curve~\eqref{s2:eq:log_morphism}, from the above discussion we see that the underlying morphism of schemes determines a family of stable curves thus we obtain a classifying map
\begin{equation}\label{s2:eq:classifying_map}
S\to \overline{\mathcal{M}}_{g,r}
\end{equation}
from $S$ to the stack of pointed stable curves for some $g,r\ge 0$.
The stack $\overline{\mathcal{M}}_{g,r}$ as well as the universal curve $\overline{\mathcal{C}}_{g,r}$ over it have natural fs log structures (see \cite{kato_fumiharu_2000}) for which the morphism
\[
\overline{\mathcal{C}}^{log}_{g,r}\to \overline{\mathcal{M}}^{log}_{g,r}
\]
is a log smooth morphism of log stacks. 
The log structure on $\overline{\mathcal{M}}_{g,r}$ is defined by the normal crossing divisor corresponding to singular curves, in particular the resulting log structure is log regular.
Then the morphism~\eqref{s2:eq:classifying_map} extends to a morphism of log stacks for which the commutative diagram 
\[
\begin{tikzcd}
\overline{X}{}^{log}\arrow[r]\arrow[d] & \overline{\mathcal{C}}_{g,r}^{log}\arrow[d] \\
S^{log}\arrow[r] & \overline{\mathcal{M}}_{g,r}^{log}
\end{tikzcd}
\]
of log stacks is cartesian.

Next we are going to discuss a homotopy exact sequence of log fundamental group associated to a family of semistable curves. Let 
\begin{equation}\label{s2:eq:semistable_log_curve_1}
\overline{X}{}^{log}\to S^{log}
\end{equation}
a semistable log curve, since the morphism~\eqref{s2:eq:semistable_log_curve_1} is saturated it follows that for every morphism
\[
T^{log}\to S^{log}
\]
of fs log schemes the underlying scheme of the fibre product in the category of fs log schemes
\[
Y^{log} = \overline{X}{}^{log}\times_{S^{log}} T^{log}
\]
coincides with the fibre product of $X$ and $T$ over $S$ in the category of schemes.
In particular, $Y$ is connected whenever $T$ is connected thus the natural map between log fundamental groups
\begin{equation}\label{s2:eq:pi_morphism_X_to_S}
\pi_1(\overline{X}{}^{log}) \twoheadrightarrow \pi_1(S^{log})
\end{equation}
is a surjection.

Suppose for a moment that $S$ is the spectrum of a separably closed field $k$ so $X$ is simply a semistable curve over $k$.
We denote by $\pi^{adm}_1(X)$ the kernel of the surjection~\eqref{s2:eq:pi_morphism_X_to_S} thus we have a short exact sequence
\[
1\to \pi^{adm}_1(X)\to \pi_1(\overline{X}{}^{log})\to \pi_1(S^{log})\to 1.
\]
The group $\pi^{adm}_1(X)$ will be referred to as the \emph{admissible} fundamental group of $X$, it does not depend on the log structure on $S$ and can be defined as a profinite group classifying admissible covers of $X$, see~\cite{yang_2018} or \cite{wewers_1999} for the definition of admissible covers and its properties.
In particular, if $p$ is the characteristic of $\bar{s}$ then the maximal pro-$\Sigma$ quotient $\pi^{adm,\Sigma}_1(X)$ can be defined as a fundamental group of a certain graph of groups; moreover its isomorphism class depends only on the arithmetic genus and the number of cusps of $X$ (see \cite{serre1980trees} and \cite[Appendix]{mochizuki_2004}).

We now come back to the case of a semistable log curve~\eqref{s2:eq:semistable_log_curve_1} over a general connected log scheme $S^{log}$.
Observe that for every geometric point $\bar{s}$ of $S$ we have natural maps
\begin{equation}\label{s2:loc:maps}
\pi_1^{adm}(X_{\bar{s}}) \to \pi_1(\overline{X}{}^{log})\twoheadrightarrow \pi_1(S^{log})
\end{equation}
whose composition is trivial.
In the next lemma we show that this gives a homotopy exact sequence associated to a semistable log curve, see \cite{hoshi_exactness_2009} for another variant with different assumptions.

\begin{lemma}\label{s2:lem:right_exact_sequence}
Morphisms~\eqref{s2:loc:maps} induce an exact sequence
\[
\pi^{adm}_1(X_{\bar{s}})\to \pi_1(\overline{X}{}^{log})\to \pi_1(S^{log})\to 1.
\]
\end{lemma}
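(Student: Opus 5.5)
Surjectivity of $\pi_1(\overline{X}{}^{log})\to\pi_1(S^{log})$ was already noted in~\eqref{s2:eq:pi_morphism_X_to_S}, so only exactness in the middle needs proof. I would translate it into the usual Galois-category statement (as in \cite[Exp.~V, Prop.~6.11]{sga1}): it suffices to show that for every connected k\'et cover $Y^{log}\to\overline{X}{}^{log}$ whose pullback to the geometric fibre $\overline{X}_{\tilde{s}}$ (over a log geometric point $\tilde{s}$ above $\bar{s}$) has a section, hence contains a copy of the fibre mapping isomorphically, there is a k\'et cover $T^{log}\to S^{log}$ with $Y^{log}\cong \overline{X}{}^{log}\times_{S^{log}}T^{log}$ on the component through that section. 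Equivalently, the image of $\pi_1^{adm}(X_{\bar{s}})$ is normal and the induced map from the cokernel to $\pi_1(S^{log})$ is injective. The plan is to construct $T^{log}$ as a ``relative Stein factorization'' of $Y^{log}\to S^{log}$ in the log setting.

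The key steps, in order, are as follows. First, the composite $Y^{log}\to\overline{X}{}^{log}\to S^{log}$ is log smooth and proper, and after a k\'et base change $T_0^{log}\to S^{log}$ it can be made saturated with reduced fibres. This is done by the standard saturation procedure, since k\'et covers of a log curve become, after base change along a suitable root of the base log structure, again semistable log curves (the Kummer-type local description of k\'et covers of a semistable log curve, \cite{kato_fumiharu_2000}, \cite{illusie_overview}). Second, for a proper saturated log smooth morphism the Stein factorization $Y^{log}\to T^{log}\to S^{log}$ has $T^{log}\to S^{log}$ k\'et, and $Y^{log}\to T^{log}$ has geometrically connected fibres; here we use log smoothness and saturatedness. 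Third, since the fibre over $\tilde{s}$ splits off a component mapping isomorphically to $\overline{X}_{\tilde{s}}$, the degree of $Y\to \overline{X}\times_S T$ is one on the corresponding component. By connectedness of $\overline{X}{}^{log}\times_{S^{log}}T^{log}$, which holds by saturatedness as explained before the lemma, this degree is one everywhere, so $Y^{log}\cong\overline{X}{}^{log}\times_{S^{log}}T^{log}$. Finally, normality of the image of $\pi^{adm}_1$ follows formally from the Galois-closure version of the same argument.

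I expect the main obstacle to be the second step: showing that the Stein factorization of $Y^{log}\to S^{log}$ is a k\'et cover of $S^{log}$, with the expected compatibility of geometric fibres with $Y_{\tilde{s}}$. To handle it I would reduce, via the cartesian square over $\overline{\mathcal{M}}^{log}_{g,r}$, to the case of a log regular base. There Kummer-type purity (log Abhyankar) lets one check the k\'et property over the open locus of smooth fibres and extend it. On that locus the statement is the classical one, \cite[Exp.~X]{sga1} together with \cite[XIII, Prop.~4.1]{sga1}, already used in~\eqref{s2:eq:homotopy_sequence}. If this reduction proves awkward, I would instead cite the log homotopy exact sequence of \cite{hoshi_exactness_2009} and verify that its hypotheses can be arranged by k\'et base change.
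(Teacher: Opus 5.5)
Your core mechanism is the paper's: make $Y^{log}\to S^{log}$ saturated by a k\'et base change, use a Stein factorization to get connected fibres, and compare degrees. However, the step you flag as the main obstacle is not one. Once $Y^{log}\to S^{log}$ is saturated, \cite[Theorem II.4.2]{tsuji_saturated} makes $Y\to S$ flat with geometrically reduced fibres. The classical Stein factorization of the proper morphism $Y\to S$ is then already finite \'etale, i.e.\ a strict k\'et cover, so no log purity is needed. Your proposed reduction via $\overline{\mathcal{M}}^{log}_{g,r}$ would not apply directly anyway, since the k\'et cover $Y^{log}$ is not pulled back from the universal curve. Your final normality step is also superfluous: the ``section'' criterion already puts the kernel inside the intersection of all open subgroups containing the closed image, which is the image itself.

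The genuine gap is global. Step~1 assumes a single k\'et cover $T_0^{log}\to S^{log}$ over which $Y^{log}$ becomes saturated. Such a cover is only available locally, on a Noetherian neighbourhood with a chart where the needed Kummer roots can be adjoined. There is no reason for it to exist over an arbitrary connected $S^{log}$, so you have no global Stein factorization $T$ to work with. Once you localize, two problems appear: your chosen fibre $\bar{s}$ need not lie in the neighbourhood, and the resulting local covers must be glued.

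The paper handles this as follows.
\begin{enumerate}[(i)]
\item It lets $S'\subset S$ be the set of points over whose fibre the cover has a component mapping isomorphically onto the fibre.
\item It shows that every point of the closure of $S'$ has a neighbourhood $U$ with a unique k\'et cover $V^{log}\to U^{log}$ such that $Y|_U\cong \overline{X}{}^{log}\times_{S^{log}}V^{log}$.
\item It concludes that $S'$ is open and closed, hence equal to $S$, and glues the covers $V^{log}$ using uniqueness.
\end{enumerate}
In the local step, the paper first replaces $S^{log}$ by the k\'et cover corresponding to the image of $H$. It then only has to show that the Stein factorization is trivial. This also avoids the descent from $T_0$ back to $S$ that your Steps~1--3 leave implicit.
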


\begin{proof}
Let $H$ be an open subgroup 
\[
H\subset \pi_1(\overline{X}{}^{log})
\]
corresponding to a k\'et cover 
\begin{equation}\label{s2:loc:Y_log_to_X_log}
Y^{log}\to \overline{X}{}^{log}
\end{equation}
whose restriction over $X_{\bar{s}}$ contains a connected component mapping isomorphically onto $X_{\bar{s}}$.
Then the composition
\begin{equation}\label{s2:loc:Y_log_to_S_log}
Y^{log}\to S^{log}
\end{equation}
is a log smooth exact morphism of fs log schemes.
We need to show that there exists a unique connected k\'et cover of $S^{log}$ whose base change along~\eqref{s2:eq:semistable_log_curve_1} is isomorphic to $Y^{log}$.

Define a subset $S'\subset S$ consisting of all points $s\in S$ such that the restriction of~\eqref{s2:loc:Y_log_to_X_log} to $X_{\bar{s}}$ contains a connected component mapping isomorphically onto $X_{\bar{s}}$.
By assumption $S'$ is nonempty, write $Z$ for the closure of $S'$ in $S$.
We are going to prove the following assertion: for every point $s\in Z$ there exists an open neighbourhood $s\in U\subset S$ and a (unique) k\'et cover 
\begin{equation}\label{s2:loc:V_log_to_U_log}
V^{log}\to U^{log}
\end{equation}
such that the restriction of \eqref{s2:loc:Y_log_to_S_log} to $U$ is isomorphic to the base change of~\eqref{s2:eq:semistable_log_curve_1} along 
\[
V^{log}\to U^{log}\hookrightarrow S^{log}
\]
Observe that this assertion will immediately finish the proof; indeed this will show that $S'\subset S$ is open and closed thus $S' = S$, moreover it follows from the uniqueness that local k\'et covers~\eqref{s2:loc:V_log_to_U_log} glue to a global one whose base change along~\eqref{s2:eq:semistable_log_curve_1} is isomorphic to $Y^{log}$.
Fix $s\in Z$, note that the assertion is a local statement around $s\in S$ thus we may assume that $S$ is a Noetherian scheme.
Further replacing $S^{log}$ by a connected k\'et cover we may assume that $H$ surjects onto $\pi_1(S^{log})$, thus we have to show that~\eqref{s2:loc:Y_log_to_X_log} is an isomorphism.

To prove it we are free to replace $S^{log}$ by a k\'et cover thus by applying~\cite[Theorem 4.9.1]{ogus_lectures} we may assume that~\eqref{s2:loc:Y_log_to_S_log} is saturated.
In particular it follows from \cite[Theorem II.4.2]{tsuji_saturated} that the morphism $Y\to S$ is flat with geometrically reduced fibres.
Thus, considering the Stein factorization of $Y\to S$ we deduce that the geometric fibres of $Y\to S$ are connected.
Finally, this implies that for a point $s'\in S'$ the k\'et cover~\eqref{s2:loc:Y_log_to_X_log} restricts to an isomorphism over $X_{\bar{s}'}$ hence it must be an isomorphism.
\end{proof}

We would like to extend the construction of the short exact sequence~\eqref{s2:ses:homotopy_ses} to the case of a family of stable curves. 
Write $K$ for the kernel of the homomorphism~\eqref{s2:eq:pi_morphism_X_to_S} thus we have a short exact sequence
\[
1\to K\to \pi_1(\overline{X}{}^{log})\to \pi_1(S^{log})\to 1.
\]
By pushing out through the characteristic quotient $K\twoheadrightarrow K^{\Sigma}$ we obtain a short exact sequence
\[
1\to K^{\Sigma}\to \pi'_1(\overline{X}{}^{log}) \to \pi_1(S^{log})\to 1,
\]
thus by Lemma~\ref{s2:lem:right_exact_sequence} we have an exact sequence
\begin{equation}\label{s2:eq:right_exact_sequence_sigma}
\pi^{adm,\Sigma}_1(X_{\bar{s}})\to \pi'_1(\overline{X}{}^{log})\to \pi_1(S^{log})\to 1.
\end{equation}
We are interested in situations when the above sequence becomes exact on the left.
First we recall the case of a strictly henselian local ring.

\begin{lemma}\label{s2:lem:strictly_henselian}
Suppose that $S$ is the spectrum of a strictly henselian local ring, $\bar{s}$ is a geometric point of $S$ and $\bar{t}$ is a geometric point over the closed point of $S$.
\begin{enumerate}[(i)]
\item The sequence~\eqref{s2:eq:right_exact_sequence_sigma} is exact on the left, i.e., we have a short exact sequence
\[
1\to \pi^{adm,\Sigma}_1(X_{\bar{s}})\to \pi'_1(\overline{X}{}^{log})\to \pi_1(S^{log})\to 1.
\]
\item A specialization morphism (see \cite[\S 1.3]{lepage_covers_I_2013})
\[
\pi^{adm,\Sigma}_1(X_{\bar{s}}) \xrightarrow{\sim} \pi^{adm,\Sigma}_1(X_{\bar{t}})
\]
is an isomorphism compatible with outer inclusions into $\pi'_1(\overline{X}{}^{log})$.
\end{enumerate}
\end{lemma}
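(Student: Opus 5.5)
We plan to deduce both parts from the log purity and specialization theory for stable log curves over a strictly henselian base (Vidal, Mochizuki, Hoshi), combined with Lemma~\ref{s2:lem:right_exact_sequence}. Let $S=\Spec R$ with $R$ strictly henselian, and $s_0$ the closed point. First we compute $\pi_1(\overline{X}{}^{log})$ directly. Since $\overline{X}\to S$ is proper and $R$ is henselian, k\'et covers of $\overline{X}{}^{log}$ are equivalent to k\'et covers of the closed fibre $\overline{X}{}^{log}_{s_0}$, with the log structure pulled back from $S^{log}$. This is proper base change for the log fundamental group, via Fujiwara--Kato and the fact that k\'et covers are finite and tame, so Grothendieck's existence and henselian lifting apply. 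Hence the restriction map $\pi_1(\overline{X}{}^{log}_{\bar t})\to\pi_1(\overline{X}{}^{log})$ is an isomorphism, and likewise $\pi_1(\bar t^{log})\xrightarrow{\sim}\pi_1(S^{log})$. Here $\bar t^{log}$ is the log geometric point with the induced log structure, and its log fundamental group is the prime-to-$p$ part of $\Zhat(1)^{n}$, where $n$ is the rank of the log structure.

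Next we treat the closed fibre. Over the log point $\bar t^{log}$, with separably closed residue field, the definition of the admissible fundamental group gives by construction a short exact sequence
\[
1\to \pi^{adm}_1(X_{\bar t})\to \pi_1(\overline{X}{}^{log}_{\bar t})\to \pi_1(\bar t^{log})\to 1 .
\]
By the previous step this identifies $K$ with $\pi^{adm}_1(X_{\bar t})$, so $K^{\Sigma}=\pi^{adm,\Sigma}_1(X_{\bar t})$. This already gives (i) for $\bar s=\bar t$.

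For a general geometric point $\bar s$ we use the comparison map. Lemma~\ref{s2:lem:right_exact_sequence} shows that $\pi^{adm,\Sigma}_1(X_{\bar s})\to K^{\Sigma}$ is surjective, and composing with the identification above gives the specialization morphism of \cite[\S 1.3]{lepage_covers_I_2013}. This morphism is compatible with the outer inclusions by construction, since both maps factor through $\pi_1(\overline{X}{}^{log})$ and are well defined up to the choice of path.

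It remains to show injectivity. Both $\pi^{adm,\Sigma}_1(X_{\bar s})$ and $\pi^{adm,\Sigma}_1(X_{\bar t})$ are isomorphic to the pro-$\Sigma$ completion of the same surface group. As recalled above, their isomorphism class depends only on $(g,r)$, which is constant in the family. They are finitely generated profinite groups, and such groups are Hopfian, so a continuous surjection between two isomorphic ones is an isomorphism. This gives (ii), and (i) then follows for arbitrary $\bar s$, because the left map is the composite of an isomorphism with the injection $K^{\Sigma}\subset \pi'_1(\overline{X}{}^{log})$.

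The main obstacle is the first step, namely justifying the isomorphism $\pi_1(\overline{X}{}^{log}_{\bar t})\cong\pi_1(\overline{X}{}^{log})$ over a non-noetherian or non-complete strictly henselian base. We would first reduce to the noetherian case by a limit argument, since the family and its log structure are of finite presentation, and then pass to the completion, where log Grothendieck existence is available. Alternatively, we can cite the log proper base change theorem directly.
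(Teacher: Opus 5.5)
Your proposal is correct and follows essentially the same route as the paper. Both arguments first obtain (i) at the closed point, then deduce surjectivity of the specialization map from Lemma~\ref{s2:lem:right_exact_sequence}, and conclude via finite generation (the Hopfian property) together with the fact that the isomorphism class of $\pi^{adm,\Sigma}_1$ depends only on $(g,r)$; the only difference is that the paper cites \cite[Proposition 5.1]{olsson_2022} or \cite[Theorem 1.8]{lepage_covers_I_2013} for the closed-point case, whereas you sketch it via log proper base change.
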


\begin{proof}
The statement $(i)$ for $\bar{s} = \bar{t}$ follows from~\cite[Proposition 5.1]{olsson_2022} or  \cite[Theorem 1.8]{lepage_covers_I_2013}.
Thus we deduce from the sequence~\eqref{s2:eq:right_exact_sequence_sigma} that the specialization morphism in $(ii)$ is surjective.
As the admissible fundamental group $\pi^{adm,\Sigma}_1(X_{\bar{s}})$ is topologically finitely generated and its isomorphism class is independent of the geometric point $\bar{s}$ of $S$ it follows that the specialization map in $(ii)$ is an isomorphism.
Thus $(i)$ holds for any geometric point $\bar{s}$ of $S$. 
\end{proof}

Consider now a stable log curve
\begin{equation}\label{s2:eq:stable_log_curve}
\overline{X}{}^{log} \to S^{log},
\end{equation}
similarly as in the smooth case we will show that for families of stable curves we obtain a homotopy short exact sequence over a general base.

\begin{proposition}\label{s2:prop:ses_stable}
For the stable log curve~\eqref{s2:eq:stable_log_curve} the sequence~\eqref{s2:eq:right_exact_sequence_sigma} is exact on the left, i.e., we have a short exact sequence
\[
1\to \pi^{adm,\Sigma}_1(X_{\bar{s}})\to \pi'_1(\overline{X}{}^{log})\to \pi_1(S^{log})\to 1.
\]
In particular, for every log geometric point $\tilde{s}$ of $S^{log}$ we have the induced outer monodromy action
\[
\pi_1(S^{log}, \tilde{s}) \to \Out (\pi^{adm,\Sigma}_1(X_{\bar{s}})).
\]
\end{proposition}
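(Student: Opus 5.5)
Our strategy is to reduce to the universal case over the moduli stack and then use a "good model" of the base where the monodromy action is already known to be injective on the fibre group. Exactness on the left means that the map $\pi^{adm,\Sigma}_1(X_{\bar{s}})\to \pi'_1(\overline{X}{}^{log})$ is injective. Since $\pi^{adm,\Sigma}_1(X_{\bar{s}})$ is a pro-$\Sigma$ group, and open subgroups of it can be described combinatorially, it suffices to show the following. For every open subgroup $U_{\bar{s}}\subset\pi^{adm,\Sigma}_1(X_{\bar{s}})$ (equivalently, every admissible $\Sigma$-cover of $X_{\bar{s}}$) there is a k\'et cover of $\overline{X}{}^{log}$ whose restriction to $X_{\bar{s}}$ dominates the cover corresponding to $U_{\bar{s}}$.

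The first step is to reduce to the strictly henselian situation, where Lemma~\ref{s2:lem:strictly_henselian}(i) gives exactly this. The injectivity statement only concerns the image of $\pi^{adm,\Sigma}_1(X_{\bar{s}})$. By functoriality, the composite
\[
\pi^{adm,\Sigma}_1(X_{\bar{s}})\to \pi'_1(\overline{X}{}^{log})\to \pi'_1(\overline{\mathcal{C}}{}^{log}_{g,r})
\]
factors through the map to $\pi'_1(\overline{X}{}^{log})$. So it is enough to prove injectivity into $\pi'_1$ of the universal stable log curve over $\overline{\mathcal{M}}{}^{log}_{g,r}$, after passing to a finite \'etale (or k\'et) atlas if one prefers schemes. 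Note that the kernel $K^{\Sigma}$ for $S$ maps onto the corresponding kernel for the moduli stack because both receive $\pi^{adm,\Sigma}_1(X_{\bar{s}})$ surjectively. Hence injectivity upstairs gives injectivity of $\pi^{adm,\Sigma}_1(X_{\bar{s}})\to K^{\Sigma}$.

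Over $\overline{\mathcal{M}}{}^{log}_{g,r}$ we argue as follows. Let $\mathcal{M}_{g,r}$ be the open substack of smooth curves; the interior of the log structure is exactly this open part. Fix an open subgroup $U_{\bar{s}}$ and choose a geometric point $\bar{\eta}$ of $\mathcal{M}_{g,r}$ together with a strict henselization of $\overline{\mathcal{M}}_{g,r}$ at the image of $\bar{s}$. Lemma~\ref{s2:lem:strictly_henselian}(ii) identifies $\pi^{adm,\Sigma}_1(X_{\bar{s}})$ with $\pi^{\Sigma}_1(X_{\bar\eta})$, compatibly with the inclusions into $\pi'_1$. Over the smooth locus, the sequence~\eqref{s2:ses:homotopy_ses} is short exact (via \cite[Proposition 2.7]{stix_2005}). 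So there is an open subgroup of $\pi'_1(\mathcal{C}_{g,r})$ meeting $\pi^{\Sigma}_1(X_{\bar\eta})$ in the given subgroup. The corresponding cover is a $\Sigma$-cover. By log purity (Fujiwara--Kato), since $\overline{\mathcal{C}}{}^{log}_{g,r}$ is log regular with interior $\mathcal{C}_{g,r}$, this tamely ramified cover extends uniquely to a k\'et cover of $\overline{\mathcal{C}}{}^{log}_{g,r}$. Its restriction to the fibre over $\bar{s}$ then realizes $U_{\bar{s}}$, via the specialization isomorphism. The main obstacle is this step: checking that the log purity extension is compatible with specialization, i.e.\ that restricting the extended k\'et cover to $X_{\bar{s}}$ gives the admissible cover corresponding to $U_{\bar{s}}$ under Lemma~\ref{s2:lem:strictly_henselian}(ii). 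I would handle it by working over the strict henselization, where Lemma~\ref{s2:lem:strictly_henselian}(i) makes both restrictions factor through the same $\pi'_1$, so they agree.

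Finally, the outer monodromy action follows formally from the short exact sequence. Conjugation in $\pi'_1(\overline{X}{}^{log})$ preserves the normal subgroup $\pi^{adm,\Sigma}_1(X_{\bar{s}})$, and the resulting action descends to a homomorphism $\pi_1(S^{log},\tilde{s})\to\Out(\pi^{adm,\Sigma}_1(X_{\bar{s}}))$, with the log geometric point $\tilde{s}$ fixing basepoints.
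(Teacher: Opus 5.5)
Your reductions (to a single geometric point via Lemma~\ref{s2:lem:strictly_henselian}, and to the universal stable log curve by base change) are fine and match the paper. The gap is in the log regular step, at the sentence ``The corresponding cover is a $\Sigma$-cover.''

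\textbf{Why that step fails.} The condition $U\cap\pi^{\Sigma}_1(X_{\bar\eta})=U_{\bar\eta}$ constrains $U\subset\pi'_1(\mathcal{C}_{g,r})$ only in the fibre direction. Its image in $\pi_1(\mathcal{M}_{g,r})$ is an open subgroup stabilizing the conjugacy class of $U_{\bar\eta}$. The index of that image is in general divisible by primes outside $\Sigma$; already for proper fibres the action on $H_1(X_{\bar\eta},\Z/\ell\Z)$ runs through $\mathrm{Sp}_{2g}(\mathbb{F}_\ell)$. Worse, you may intersect $U$ with the preimage of \emph{any} open subgroup of $\pi_1(\mathcal{M}_{g,r})$ without changing $U\cap\pi^{\Sigma}_1(X_{\bar\eta})$. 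If the moduli stack lives over a base of characteristic $p$ (the natural choice when $S$ does), such a subgroup can cut out an Artin--Schreier cover wildly ramified along a boundary divisor. The resulting cover of $\mathcal{C}_{g,r}$ is then not tame along the preimage of that divisor, so it does not extend to a k\'et cover of $\overline{\mathcal{C}}{}^{log}_{g,r}$. Asserting that \emph{some} $U$ realizing $U_{\bar\eta}$ is tame along the boundary amounts to the injectivity you are trying to prove. By contrast, the specialization compatibility you single out as the main obstacle is routine.

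\textbf{How the paper avoids it.} The paper never extends covers of the open part. In the log regular case it lets $N$ be the kernel of $\pi'_1(X_\eta)\to\pi'_1(\overline{X}{}^{log})$. By Fujiwara--Kato log purity and Zariski--Nagata purity, $N$ is generated by conjugates of the analogous kernels $N_s$ over strict henselizations at height-one points $s$ of the base. Lemma~\ref{s2:lem:strictly_henselian}(i) shows each $N_s$ meets the geometric fundamental group trivially; since both are normal, $N_s$ centralizes it. So $N$ centralizes $\pi^{\Sigma}_1(X_{\bar\eta})$, which has trivial centre, and injectivity follows. This argument is insensitive to wild ramification of covers of the base.

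\textbf{How your route could be repaired.} It needs an input you did not state. Since $\Sigma$ is invertible on $S$, you may take $\overline{\mathcal{M}}_{g,r}$ over $\Z[\Sigma^{-1}]$. There the log divisor of $\overline{\mathcal{C}}_{g,r}$ (the marked sections together with the preimage of the boundary) and the boundary of $\overline{\mathcal{M}}_{g,r}$ are flat over the base. Hence their generic points have residue characteristic zero, and tameness there is automatic. Log purity then gives $\pi_1(\mathcal{C}_{g,r})\cong\pi_1(\overline{\mathcal{C}}{}^{log}_{g,r})$ and $\pi_1(\mathcal{M}_{g,r})\cong\pi_1(\overline{\mathcal{M}}{}^{log}_{g,r})$. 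So the two $\pi'_1$'s coincide, and the exactness of~\eqref{s2:ses:homotopy_ses} over $\mathcal{M}_{g,r}$ transfers directly. What makes the extension work is this characteristic-zero fact, not any $\Sigma$-property of the cover.
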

\begin{proof}
To prove the statement we need to show that the homomorphism 
\begin{equation}\label{s2:loc:pi_morphism_fibre_to_X}
\pi^{adm,\Sigma}_1(X_{\bar{s}})\to \pi'_1(\overline{X}{}^{log})
\end{equation}
is injective, using specialization isomorphisms it is enough to show the injectivity for only one geometric point $\bar{s}$ of $S$.

Suppose first that $S^{log}$ is a log regular log scheme, thus $\overline{X}{}^{log}$ is log regular as well.
Let $\eta$ be the generic point of $S$ and $\bar{\eta}$ be a geometric point over $\eta$.
We will prove the injectivity of~\eqref{s2:loc:pi_morphism_fibre_to_X} for $\bar{s} = \bar{\eta}$.
Write 
\[
U_S\subset S, \; U_X\subset \overline{X}
\]
for the corresponding open subschemes where the log structure is trivial.
By assumption $U_S$ and $U_X$ are regular nonempty schemes and the map $U_X\to U_S$ is a family of hyperbolic curves.
Thus $\eta\in U_S$ and the geometric generic fibre $X_{\bar{\eta}}$ is a hyperbolic curve.
Consider a commutative diagram
\[
\begin{tikzcd}
& & \pi_1^{\Sigma}(X_{\bar{\eta}})\arrow[d, hook]\arrow[rd, "\varphi"] & & \\
1\arrow[r] & N\arrow[r] & \pi'_1(X_{\eta})\arrow[r] & \pi'_1(\overline{X}^{log})\arrow[r] & 1,
\end{tikzcd}
\]
with the injective vertical arrow and the group $N$ defined to make the row exact.
Our goal is to show that the map $\varphi$ is injective.
Define a normal subgroup
\[
Z \subset \pi'_1(X_{\eta})
\]
as the centralizer of $\pi_1^{\Sigma}(X_{\bar{\eta}})$ in $\pi'_1(X_{\eta})$.
Since $X_{\bar{\eta}}$ is a hyperbolic curve the fundamental group $\pi_1^{\Sigma}(X_{\bar{\eta}})$ has trivial centre thus we see that the intersection 
\[
\pi_1^{\Sigma}(X_{\bar{\eta}})\cap Z
\]
is trivial.
Therefore the injectivity of $\varphi$ is equivalent to the containment $N\subset Z$.
We prove it by introducing a collection of subgroups $N_s$ of $N$ whose conjugates generate $N$ and showing that each of the groups $N_s$ is contained in $Z$.

Let $s\in S$ be a point of height one and $T$ be the spectrum of a strict henselization of the local ring $\Oc_{S,s}$.
Let $\overline{X}_T\to T$ be the base change of $\overline{X}\to S$ via $T\to S$.
Write $\eta_T$ for the generic point of $T$ and $\bar{\eta}_T$ for a geometric generic point over $\eta$.
Then we have a commutative diagram
\[
\begin{tikzcd}
&  & \pi_1^{\Sigma}(X_{\bar{\eta}_T})\arrow[rd, hook, "\varphi_s"]\arrow[d, hook] & & \\
1\arrow[r] & N_s\arrow[r] & \pi'_1(X_{\eta_T})\arrow[r] & \pi'_1(\overline{X}{}^{log}_T) \arrow[r] & 1
\end{tikzcd}
\]
with the map $\varphi_s$ being injective by Lemma~\ref{s2:lem:strictly_henselian} and the group $N_s$ defined to make the row exact.
Choosing an appropriate path isomorphism we also have a commutative square of outer homomorphisms
\[
\begin{tikzcd}
\pi_1^{\Sigma}(X_{\bar{\eta}_T}) \arrow[r, "\sim"]\arrow[d, hook] &  \pi_1^{\Sigma}(X_{\bar{\eta}})\arrow[d, hook] \\
\pi'_1(X_{\eta_T})\arrow[r, hook] & \pi'_1(X_{\eta}).
\end{tikzcd}
\]
Hence, considering $N_s$ as a subgroup of $\pi'_1(X_{\eta})$ defined up to conjugation, we deduce that $N_s$ is contained in both $N$ and $Z$.

Finally, we claim that $N$ is generated by the conjugates of subgroups $N_s$ for all points $s\in S$ of height one.
Indeed from the log purity theorem of Fujiwara-Kato we have an isomorphism
\[
\pi^t_1(U_X) \xrightarrow{\sim} \pi_1(\overline{X}{}^{log})
\]
where $\pi^t_1(U_X)$ is the tame fundamental group of $\overline{X}$ classifying finite \'etale covers of $U_X$ which are tamely ramified at the generic points of the complementary divisor $\overline{X}\setminus U_X$.
On the other hand, by the Zariski-Nagata purity theorem the kernel of the quotient
\[
\pi'_1(X_{\eta})\twoheadrightarrow \pi'_1(U_X)
\]
is generated by the conjugates of inertia subgroups of points of $U_X$ of height one.
Therefore the subgroup $N$, i.e., the kernel of the quotient
\[
\pi'_1(X_{\eta})\twoheadrightarrow \pi'_1(\overline{X}{}^{log})
\]
is also generated by the inertia groups of points of $\overline{X}$ of height one. 
By construction these inertia groups are contained in conjugates of subgroups $N_s$, proving our claim.
This establishes the injectivity of~\eqref{s2:loc:pi_morphism_fibre_to_X} in the log regular case.

Finally we treat the case of a general stable log curve~\eqref{s2:eq:stable_log_curve}.
Consider a cartesian diagram of fs log schemes
\[
\begin{tikzcd}
\overline{X}{}'{}^{log}\arrow[r]\arrow[d] & \overline{X}{}^{log}\arrow[d] \\
S'^{log}\arrow[r] & S^{log},
\end{tikzcd}
\]
thus the morphism
\begin{equation}\label{s2:loc:stable_log_curve'}
\overline{X}{}'{}^{log} \to S'^{log}
\end{equation}
is also a stable log curve.
For a geometric point $\bar{s}'$ of $S'$ lying over $\bar{s}$ we obtain an induced commutative diagram of outer homomorphisms between fundamental groups
\[
\begin{tikzcd}
\pi^{adm,\Sigma}_1(X_{\bar{s}'}) \arrow[r, "\sim"]\arrow[d, "a"] & \pi^{adm,\Sigma}_1(X_{\bar{s}})\arrow[d, "b"] \\
\pi'_1(\overline{X}{}'{}^{log}) \arrow[r, "c"] & \pi'_1(\overline{X}{}^{log}).
\end{tikzcd}
\]
Suppose that the statement holds for the stable log curve~\eqref{s2:eq:stable_log_curve}, clearly the injectivity of $b$ implies that $a$ is injective therefore it holds for~\eqref{s2:loc:stable_log_curve'} as well.
Suppose now that $S'\to S$ is a k\'et cover and the statement holds for the stable log curve~\eqref{s2:loc:stable_log_curve'}, then $a$ and $c$ are injective thus $b$ is injective hence it holds for~\eqref{s2:eq:stable_log_curve} as well.
Therefore, using the fact that log stacks of pointed stable curves are log regular we immediately reduce the proof to the log regular case which we have already considered.
\end{proof}

Next, we extend our definition of comparison isomorphisms from the smooth case to the stable case.
Let $\tilde{a}$ be a log geometric point of $\overline{X}{}^{log}$ lying over a log geometric point $\tilde{s}$ of $S^{log}$.
Then by Proposition~\ref{s2:prop:ses_stable} we have a short exact sequence 
\[
1\to   \pi^{adm,\Sigma}_1(X_{\bar{s}}, \tilde{a})  \to \pi'_1(\overline{X}{}^{log}, \tilde{a}) \to \pi_1(S^{log},\tilde{s})\to 1.
\]
Let $\tilde{t}$ be another log geometric point of $S^{log}$ and $\tilde{b}$ be a log geometric point of $\overline{X}{}^{log}$ lying over $\tilde{t}$.
Then for a log path
\[
p^{log}_X\colon \pi'_1(\overline{X}{}^{log},\tilde{a})\xrightarrow{\sim} \pi'_1(\overline{X}{}^{log},\tilde{b})
\]
there exists a unique log path
\[
p^{log}_S\colon \pi_1(S^{log},\tilde{s})\xrightarrow{\sim} \pi_1(S^{log},\tilde{t})
\]
making the following diagram commutative
\begin{equation}\label{s2:diag:log_path_diagram}
\begin{tikzcd}
1\arrow[r] & \pi_1^{adm, \Sigma}(X_{\bar{s}},\tilde{a}) \arrow[r]\arrow[d, "\sim" sloped] & \pi'_1(\overline{X}{}^{log}, \tilde{a})\arrow[r]\arrow[d, "p^{log}_X"] & \pi_1(S^{log}, \tilde{s})\arrow[r]\arrow[d, "p^{log}_S"] & 1 \\
1\arrow[r] & \pi_1^{adm, \Sigma}(X_{\bar{t}},\tilde{b}) \arrow[r] & \pi'_1(\overline{X}{}^{log}, \tilde{b})\arrow[r] & \pi_1(S^{log}, \tilde{t})\arrow[r] & 1.
\end{tikzcd}
\end{equation}
Denote by
\[
\alpha \in \Isom^{\Out} (\pi^{adm,\Sigma}_1(X_{\bar{s}}), \pi^{adm,\Sigma}_1(X_{\bar{t}}))
\]
the outer isomorphism determined by the left vertical arrow in the above diagram.

\begin{definition}
Any outer isomorphism $\alpha$ obtained from a log path $p^{log}_X$ as above will be called a \emph{ comparison isomorphism} between fibres over $\bar{s}$ and $\bar{t}$; we denote by
\[
C(\overline{X}{}^{log}/S^{log},\bar{s},\bar{t})\subset \Isom^{\Out} (\pi^{adm,\Sigma}_1(X_{\bar{s}}), \pi^{adm,\Sigma}_1(X_{\bar{t}}))
\]
the set of all comparison isomorphisms.
\end{definition}

Clearly this extends Definition~\ref{s2:def:comparison_smooth} to the stable case.
Similarly as in the smooth case there are two transitive group actions on the set of comparison isomorphisms: a left action by $\pi_1(S^{log}, \tilde{t})$ and a right action of $\pi_1(S^{log},\tilde{s})$.
Also, when $\bar{s} = \bar{t}$ then the set of comparison isomorphisms is equal to the image of the outer monodromy representation appearing in Proposition~\ref{s2:prop:ses_stable}.

Let $S'^{log}\to S^{log}$ be a morphism of fs log schemes and consider a cartesian diagram
\begin{equation}\label{s2:loc:base_change}
\begin{tikzcd}
\overline{X}{}'{}^{log}\arrow[r]\arrow[d] & X^{log}\arrow[d] \\
S'^{log}\arrow[r] & S^{log}.
\end{tikzcd}
\end{equation}
Choose two geometric points $\bar{s}'$ and $\bar{t}'$ of $S'^{log}$ lying over $\bar{s}$ and $\bar{t}$, respectively.
By considering two diagrams~\eqref{s2:diag:log_path_diagram} and a collection of arrows between them induced by the diagram~\eqref{s2:loc:base_change} we deduce that there is a natural injective map
\begin{equation}\label{s2:eq:relation_to_pullback}
C(\overline{X}{}'{}^{log}/S'^{log}, \bar{s}',\bar{t}')\hookrightarrow C(\overline{X}{}^{log}/S^{log}, \bar{s},\bar{t})
\end{equation}
In particular, together with Lemma~\ref{s2:lem:strictly_henselian} this implies that when $s$ specializes to $t$ then any specialization morphism
\[
\pi^{adm,\Sigma}_1(X_{\bar{s}}) \xrightarrow{\sim} \pi^{adm,\Sigma}_1(X_{\bar{t}})
\]
is also a comparison isomorphism.
The map~\eqref{s2:eq:relation_to_pullback} is compatible with both actions of the fundamental groups $\pi_1(S'^{log})$ and $\pi_1(S^{log})$ via the homomorphism 
\[
\pi_1(S'^{log}) \to \pi_1(S^{log}).
\]
In particular, when the above homomorphism of log fundamental groups is surjective then the map~\eqref{s2:eq:relation_to_pullback} is a bijection.

Assume now that $S^{log}$ is log regular and let $U\subset S$ be a nonempty open subscheme on which the log structure is trivial, thus $U$ is a regular scheme.
From the easier part of the log purity theorem we see that the map
\[
\pi_1(U)\twoheadrightarrow \pi_1(S^{log})
\]
is surjective.
The restriction of the stable log curve~\eqref{s2:eq:stable_log_curve} to $U$ determines a family of hyperbolic curves $X_U\to U$ and our discussion implies that for two geometric points $\bar{s}$, $\bar{t}$ of $U$ we have a bijection
\[
C(X_U/U, \bar{s},\bar{t})\xrightarrow{\sim} C(\overline{X}{}^{log}/S^{log}, \bar{s},\bar{t}).
\]
In other words, extending a family of hyperbolic curves over a regular scheme to a family of stable curves over a log regular log scheme does not change the set of comparison isomorphisms.

We now come back to the smooth case and we consider a family $\mathcal{F}$ of hyperbolic curves over $S$ as in Definition~\ref{s2:def:family_of_curves}.
Let $\bar{s}$ be a geometric point of $S$ and $U_{\bar{s}}\subset \pi^{\Sigma}_1(X_{\bar{s}})$ be an open subgroup.
We say that $U_{\bar{s}}$ is \emph{monodromy trivial} if there exists an open subgroup $U\subset \pi'_1(X)$ which surjects onto $\pi_1(S)$ such that 
\begin{equation}\label{s2:loc:U_s}
U_{\bar{s}} = U\cap \pi^{\Sigma}_1(X_{\bar{s}}).
\end{equation}
This implies that the action of $\pi_1(S)$ on the set of conjugacy classes of open subgroups of $\pi^{\Sigma}_1(X_{\bar{s}})$ fixes the conjugacy class of $U_{\bar{s}}$.
Assume now that $U_{\bar{s}}$ is monodromy trivial and let $\bar{t}$ be another geometric point of $S$.
Then for any comparison isomorphism
\[
\alpha \in C(\mathcal{F},\bar{s},\bar{t})
\]
the subgroup
\[
U_{\bar{t}} = \alpha(U_{\bar{s}})\subset \pi^{\Sigma}_1(X_{\bar{t}})
\]
is also monodromy trivial, moreover its conjugacy class is independent of the chosen comparison isomorphism $\alpha$.
Let $U\subset \pi'_1(X)$ be an open subgroup surjecting onto $\pi_1(S)$ satisfying~\eqref{s2:loc:U_s}, write $X'\to X$ for a finite \'etale cover corresponding to $U$.
Then the geometric fibres of the composition
\[
X'\to X\to S
\]
are connected and for every geometric point $\bar{t}$ of $S$ the finite \'etale cover $X'_{\bar{t}}\to X_{\bar{t}}$ corresponds to the subgroup $U_{\bar{t}}\subset \pi^{\Sigma}_1(X_{\bar{t}})$.
Finally, note that every open subgroup of $\pi^{\Sigma}_1(X_{\bar{s}})$ becomes monodromy trivial after replacing $S$ by a sufficiently large finite \'etale cover.

Next, we are going to describe how fibre comparison isomorphisms behave with respect to the base change along finite \'etale morphisms.  
Let $S'\to S$ be a connected finite \'etale cover and $\mathcal{F}'$ be the pullback of the family $\mathcal{F}$ to $S'$.
Let $\bar{s}'$ and $\bar{t}'$ be two geometric points of $S'$ lying over $\bar{s}$ and $\bar{t}$, respectively, then we have an injection
\begin{equation}\label{s2:loc:F'_to_F}
C(\mathcal{F}', \bar{s}', \bar{t}') \hookrightarrow C(\mathcal{F}, \bar{s}, \bar{t}).
\end{equation}
Fibres of $S'\to S$ over $\bar{s}$ and $\bar{t}$ have natural actions by fundamental groups $\pi_1(S,\bar{s})$ and $\pi_1(S,\bar{t})$ which are compatible with their action on the set of comparison isomorphisms, i.e., for every $\pi_s\in \pi_1(S,\bar{s})$ and $\pi_t\in \pi_1(S,\bar{t})$ we have an equality of sets 
\[
C(\mathcal{F}', \pi_s\bar{s}',\pi_t\bar{t}')  =  \pi_tC(\mathcal{F}', \bar{s}',\bar{t}')\pi_s,
\]
here we regard the injection~\eqref{s2:loc:F'_to_F} as an inclusion.
In particular, as these actions are transitive we obtain
\[
C(\mathcal{F},\bar{s},\bar{t}) = \bigcup_{\pi\in\pi_1(S,\bar{s})} C(\mathcal{F}', \pi\bar{s}',\bar{t}') = \bigcup_{\pi\in\pi_1(S,\bar{t})} C(\mathcal{F}', \bar{s}',\pi\bar{t}').
\]
Note that the two cases we have considered, namely a monodromy trivial subgroup and a pullback family, together describe comparison isomorphisms between fibres for a family constructed in diagram~\eqref{s2:diag:construction_of_a_family}.

In the final part of this section we introduce the notion of a cuspidal isomorphism and show that comparison isomorphisms for a family of hyperbolic curves are cuspidal.
Let $k$ be an algebraically closed field, for a hyperbolic curve $X$ over $k$ we write $g(X)$ for the genus of $X$ and $r(X)$ for the number of cusps.
Each cusp $c$ of $X$ determines a conjugacy class of cuspidal subgroups 
\[
I_c\subset \pi_1^{\Sigma}(X),
\]
with $I_c$ isomorphic noncanonically to $\Zhat^{\Sigma}$.
Every cuspidal subgroup $I_c$ is commensurably terminal in $\pi_1^{\Sigma}(X)$ (see \cite[Lem 1.3.7] {mochizuki_2004}), in particular each inertia subgroup $I_c$ is uniquely determined by any of its open subgroups.
Recall that as an abstract profinite group $\pi^{\Sigma}_1(X)$ is isomorphic to the pro-$\Sigma$ completion of the standard surface group $S_{g,r}$ given by the following generators and a unique relation 
\[
S_{g,r} = \langle a_1,\ldots, a_g, b_1, \ldots, b_g, c_1,\ldots , c_r | [a_1,b_1]\ldots [a_g,b_g]c_1\ldots c_r =1 \rangle,
\]
here $g=g(X)$ and $r=r(X)$.
In particular, the (topological) abelianization of $\pi^{\Sigma}_1(X)$ is a free abelian pro-$\Sigma$ group of rank $2g(X) + r(X) - 1$.
Moreover, note that when $r(X) > 0$ then $\pi^{\Sigma}_1(X)$ is a free pro-$\Sigma$ group hence in general the isomorphism class of $\pi^{\Sigma}_1(X)$ does not even determine the genus of $X$.

Let $X$ and $X_1$ be two hyperbolic curves over $k$ and consider an isomorphism
\[
\alpha \colon \pi^{\Sigma}_1(X)\xrightarrow{\sim} \pi^{\Sigma}_1(X_1)
\]
of profinite groups.

\begin{definition}\label{s2:def:cuspidal_isom}
We say that the isomorphism $\alpha$ is \textit{cuspidal} if $\alpha$ induces a bijection between the sets of conjugacy classes of cuspidal subgroups. 
\end{definition}

Note that compositions of cuspidal isomorphisms are cuspidal and trivially any inner automorphism is cuspidal, in particular we may talk about cuspidality of outer isomorphisms between pro-$\Sigma$ fundamental groups of hyperbolic curves.   
Observe that if $\alpha$ is cuspidal then we automatically have $r(X) = r(X_1)$ hence also $g(X) = g(X_1)$.
Cuspidal isomorphisms appear naturally as comparison isomorphisms between fibres in families of hyperbolic curves, as the next lemma shows.

\begin{lemma}
Let $\mathcal{F}$ be a family of hyperbolic curves over $S$ and let $\bar{s}$ and $\bar{t}$ be two geometric points of $S$.
Then any comparison isomorphism
\[
\pi^{\Sigma}_1(X_{\bar{s}})\xrightarrow{\sim} \pi^{\Sigma}_1(X_{\bar{t}})
\]
is cuspidal.
\end{lemma}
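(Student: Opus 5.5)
The plan is to show that cusps of the fibres vary in a family: the divisor of cusps $D\to S$ is finite étale, and the cuspidal subgroups of all fibres arise from inertia subgroups of $\pi'_1(X)$ along the components of $D$. The argument is local near $D$, in the spirit of the construction following diagram~\eqref{s2:diag:construction_of_a_family}.

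First, reduce to the case where $D$ is a disjoint union of sections. Replace $S$ by a connected finite étale cover $S'\to S$ that trivializes $D$. By~\eqref{s2:loc:F'_to_F} and the union formula for $C(\mathcal{F},\bar{s},\bar{t})$ given after it, every comparison isomorphism for $\mathcal{F}$ is a comparison isomorphism for the pullback $\mathcal{F}'$ between suitable lifts, composed with nothing else. So we may assume $D=\bigsqcup_{i=1}^r D_i$ with each $D_i\to S$ an isomorphism. We may also assume $S$ is regular, or at least normal, by working locally; if needed, pass to the classifying map to $\mathcal{M}_{g,r}$, which is regular, and use the injectivity~\eqref{s2:eq:relation_to_pullback}.

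Next, for each $i$ define a subgroup $I_i\subset\pi'_1(X)$, up to conjugacy. Take the henselization of $\overline{X}$ along $D_i$, or more concretely the tame inertia at the generic point of $D_i$. Abhyankar's lemma, already used in the construction above, shows that for any $\Sigma$-cover of a fibre extended to the total space, the ramification along $D_i$ restricts on each fibre $\overline{X}_{\bar{s}}$ to the ramification at the point $D_i(\bar{s})$. Concretely, compare with a tubular neighbourhood: the punctured formal neighbourhood of $D_i$, which is $\Spec$ of $\Oc_S[[z]][z^{-1}]$ locally. Its tame $\Sigma$-fundamental group relative to $S$ is $\Zhat^{\Sigma}(1)$, and this maps to the cuspidal subgroup $I_{D_i(\bar{s})}$ of each geometric fibre. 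Hence the image of $I_{D_i(\bar{s})}$ in $\pi'_1(X)$ lies in the kernel $\pi^\Sigma_1(X_{\bar{s}})$ and is conjugate in $\pi'_1(X)$ to a fixed subgroup $J_i$, independent of $\bar{s}$.

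Finally, let $p_X$ be a path inducing $\alpha$. It carries the conjugacy class of $J_i$ in $\pi'_1(X,\bar{a})$ to that in $\pi'_1(X,\bar{b})$, so $\alpha(I_{D_i(\bar{s})})$ is $\pi'_1(X,\bar{b})$-conjugate to $I_{D_i(\bar{t})}$. This conjugation is by an element of $\pi'_1(X)$, not of the fibre group. I therefore still need to show that it maps to a cuspidal class of the fibre, which is a union of classes permuted by $\pi_1(S)$. Since the $\pi_1(S)$-outer action preserves the set of cuspidal classes (it comes from automorphisms of the family after a base change, again by the local description), $\alpha$ maps cuspidal classes to cuspidal classes. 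Bijectivity follows by applying the same argument to $\alpha^{-1}$. The main obstacle is the middle step: rigorously identifying fibrewise inertia with a global inertia along $D_i$, uniformly in $\bar{s}$. I would handle this by reducing, via the classifying map, to the universal curve over $\mathcal{M}_{g,r}$, where it is standard.
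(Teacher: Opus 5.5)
\textbf{There is a genuine gap, and it sits exactly at the step you flagged and then settled by assertion.} Your reductions are fine: trivializing $D$ by a finite \'etale base change via the union formula after \eqref{s2:loc:F'_to_F}, and passing to the universal curve via \eqref{s2:eq:relation_to_pullback}. The step you call the main obstacle is actually standard. By Abhyankar's lemma, the cuspidal subgroup of $X_{\bar s}$ at $D_i(\bar s)$ and the inertia $J_i$ at the generic point of $D_i$ are both images in $\pi'_1(X)$ of the tame $\Sigma$-inertia $\Zhat^{\Sigma}(1)$ of the punctured strict henselization of $\overline{X}$ at $D_i(\bar s)$. This is essentially the content of \cite[XIII, Lemme 2.11]{sga1}, which the paper invokes for specialization isomorphisms.

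The problem is the last step. You need that $gI_{D_i(\bar t)}g^{-1}$ is a cuspidal subgroup of $\pi^{\Sigma}_1(X_{\bar t})$ for every $g\in\pi'_1(X,\bar b)$. That is exactly the statement that the outer monodromy $\pi_1(S,\bar t)\to\Out(\pi^{\Sigma}_1(X_{\bar t}))$ preserves cuspidal classes, i.e.\ the lemma itself for $\bar s=\bar t$. Every comparison isomorphism from $\bar s$ to $\bar t$ is a fixed one followed by such a monodromy element, so this case carries essentially all the content, and the argument is circular at its key point. Your justification is also incorrect: monodromy is not induced by automorphisms of the family after a base change. For the universal curve over $\mathcal{M}_{g,r}$, to which you reduced, the monodromy is essentially the profinite mapping class group.

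\textbf{The missing idea is a decomposition group.} Let $\xi_i$ be the generic point of $D_i$, and let $\mathcal{D}_i\subset\pi'_1(X)$ be the image of the absolute Galois group of the fraction field of the henselization of $\Oc_{\overline{X},\xi_i}$.
\begin{enumerate}[(a)]
\item $\mathcal{D}_i$ contains $J_i$ as a normal subgroup.
\item The image of $\mathcal{D}_i$ in $\pi_1(S)$ equals the image of $G_{\kappa(\xi_i)}$. This is all of $\pi_1(S)$, because $D_i\cong S$ and $S$ is normal.
\item Replace $\mathcal{D}_i$ by a conjugate whose normal inertia subgroup is $I_{D_i(\bar t)}$. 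Then $\pi'_1(X)=\pi^{\Sigma}_1(X_{\bar t})\cdot N\bigl(I_{D_i(\bar t)}\bigr)$.
\item Hence every $\pi'_1(X)$-conjugate of $I_{D_i(\bar t)}$ is already a conjugate inside the fibre group, and your argument closes.
\end{enumerate}
Equivalently, use the full tame fundamental group of your punctured tubular neighbourhood, which is an extension of $\pi_1(S)$ by $\Zhat^{\Sigma}(1)$, not only its part relative to $S$.

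With this repair your route is a genuinely global alternative to the paper's. The paper, after reducing to $S$ regular, uses the surjection $\pi_1(\eta)\twoheadrightarrow\pi_1(S)$. This lets it factor any comparison isomorphism into specialization isomorphisms, which are cuspidal by SGA1, and Galois monodromy on the geometric generic fibre. There cusp invariance is the classical field case, which is itself the same decomposition-group argument over a field.
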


\begin{proof}
For specialization isomorphisms this is true by \cite[XIII, Lemme 2.11]{sga1}.
In general, by using a similar argument as at the end of the proof of Proposition~\ref{s2:prop:ses_stable} we reduce to the case $S$ is regular, in particular normal.
Then we may further reduce to the case when $S$ is the spectrum of a field and $\bar{s} = \bar{t}$ in which case the result is well known. 
\end{proof}

We close this section by giving an alternative definition of the notion of a cuspidal isomorphism, linking it to the theory which will be developed in the next section. Let 
\[
U\subset \pi^{\Sigma}_1(X)
\]
be an open subgroup corresponding to a finite \'etale cover $X'\to X$.
Using the isomorphism $\alpha$ we define an open subgroup 
\[
U_1 = \alpha (U) \subset \pi^{\Sigma}_1(X_1)
\]
corresponding to a finite \'etale cover $X'_1\to X_1$.
In the situation as above we say that the $\Sigma$-cover $X'\to X$ \textit{corresponds via} $\alpha$ to the $\Sigma$-cover $X'_1\to X_1$.

\begin{definition}
We say that $\alpha$ is \textit{numerically cuspidal} if $r(X') = r(X'_1)$ for every pair of $\Sigma$-covers corresponding via $\alpha$ (hence also $g(X') = g(X'_1)$ for every such a pair).
\end{definition}

Additionally, restricting $\alpha$ to $U$ we obtain another isomorphism denoted as
\[
\alpha_U \colon U = \pi^{\Sigma}_1(X') \xrightarrow{\sim} \pi^{\Sigma}_1(X'_1) = U_1.
\]
Then the basic properties of cuspidal isomorphisms are contained in the following lemma.

\begin{lemma}\label{s2:lem:cuspidal_isoms}
With the above notation:
\begin{enumerate}[(i)]
\item $\alpha$ is cuspidal if and only if $\alpha_U$ is cuspidal,
\item $\alpha$ is cuspidal if and only if it is numerically cuspidal,
\item Let $\Sigma'\subset \Sigma$ be a nonempty subset and 
\[
\alpha'\colon \pi^{\Sigma'}_1(X) \xrightarrow{\sim} \pi^{\Sigma'}_1(X_1)
\]
the isomorphism induced by $\alpha$ via the characteristic quotient $\pi^{\Sigma}_1(X)\twoheadrightarrow \pi^{\Sigma'}_1(X)$.
Then $\alpha$ is cuspidal if and only if $\alpha'$ is cuspidal. 
\end{enumerate}
\end{lemma}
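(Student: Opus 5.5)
The approach is to work with the well-known group-theoretic characterization of cuspidal subgroups and with the Riemann--Hurwitz/Euler characteristic bookkeeping for $\Sigma$-covers.

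\emph{Part (i).} Suppose $\alpha$ is cuspidal. The cuspidal subgroups of $U=\pi^{\Sigma}_1(X')$ are exactly the intersections $U\cap gI_cg^{-1}$ for $g\in\pi^{\Sigma}_1(X)$ and $c$ a cusp of $X$. Indeed, the cusps of $X'$ lying over $c$ correspond to double cosets $U\backslash \pi^{\Sigma}_1(X)/I_c$, and the inertia group of a cusp of $X'$ is the intersection of $U$ with the inertia group of its image.

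Since $\alpha$ maps the family $\{gI_cg^{-1}\}$ bijectively onto the corresponding family for $X_1$, it maps the set of cuspidal subgroups of $U$ onto that of $U_1$. Hence $\alpha_U$ is cuspidal.

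Conversely, suppose $\alpha_U$ is cuspidal. Every cuspidal subgroup $I$ of $\pi^{\Sigma}_1(X)$ has the open subgroup $I\cap U$, which is cuspidal in $U$. Therefore $\alpha(I\cap U)$ is cuspidal in $U_1$, hence contained in a cuspidal subgroup $J$ of $\pi^{\Sigma}_1(X_1)$. Since cuspidal subgroups are commensurably terminal, $\alpha(I)$ lies in the commensurator of $\alpha(I\cap U)$, which is $J$. Applying the same argument to $\alpha^{-1}$ gives equality.

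\emph{Part (ii).} Cuspidal implies numerically cuspidal: by (i), $\alpha_U$ is cuspidal, so $r(X')=r(X_1')$ as a count of conjugacy classes of cuspidal subgroups. Also $2g+r-1$ is the rank of the abelianization, which gives the genera.

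For the converse, I plan to characterize cuspidal elements numerically. Fix a nontrivial $\sigma\in\pi^{\Sigma}_1(X)$. I claim that $\sigma$ lies in a cuspidal subgroup if and only if, for every open normal $N$ with quotient $G=\pi/N$ a $\Sigma$-group, the count $r(X_N)$ interacts with the subgroup generated by $\sigma$ in a prescribed way.

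The cleanest version uses cyclic covers of $X_N$. An element $\sigma\in N$ maps to a class in $N^{ab}\otimes\Z/\ell$, and $\sigma$ is in a cuspidal subgroup iff for all such $N$ and all primes $\ell\in\Sigma$ there is a surjection $\phi\colon N\to\Z/\ell$ with $\phi(\sigma)\neq 0$ for which the cover $X''$ has $r(X'')<\ell\, r(X_N)$. Equivalently, some cusp ramifies, and by Riemann--Hurwitz this amounts to $2g(X'')-2\neq \ell(2g(X_N)-2)$, i.e.\ $g$ being different from the étale value.

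Precisely: $\sigma$ is cuspidal (up to conjugacy and powers) iff for every such $N$ containing a power of $\sigma$, the image of $\sigma$ lies in the subgroup of $N^{ab}$ generated by cusps. That subgroup is the kernel of $N^{ab}\to\pi^{\Sigma}_1(\overline{X_N})^{ab}$. It is detected numerically: a character $\phi$ of $N$ factors through the compactification iff the corresponding cyclic cover has $r=\ell\cdot r(X_N)$.

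Since $\alpha$ preserves these numbers, it preserves the cuspidal subgroup of each $N^{ab}$. Thus it preserves the set of elements whose images lie in the cuspidal part of $N^{ab}$ for all $N$. I would then invoke the standard result (Mochizuki / Nakamura) that this intersection over all $N$ is exactly the union of cuspidal subgroups. This is the main obstacle, and I would cite it rather than reprove it.

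\emph{Part (iii).} If $\alpha$ is cuspidal, the images of cuspidal subgroups in the characteristic quotient $\pi^{\Sigma'}_1$ are the cuspidal subgroups there, so $\alpha'$ is cuspidal.

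Conversely, if $\alpha'$ is cuspidal, it is numerically cuspidal for $\Sigma'$-covers by (ii). To upgrade to $\Sigma$-covers, I would argue via (i): given a $\Sigma$-cover, pass to a further $\Sigma'$-cover of it. Alternatively, apply the detection criterion of (ii) using only primes $\ell\in\Sigma'$ but arbitrary $\Sigma$-subgroups $N$. The pro-$\Sigma'$ cuspidal data of $N$ is determined by $\alpha'_{N}$, and $\alpha'_N$ is cuspidal by combining the $\Sigma'$ version of (i) with compatibility of the characteristic quotients.

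I expect this compatibility bookkeeping in (iii), together with the detection step in (ii), to be the delicate points.
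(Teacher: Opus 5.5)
Your arguments for (i), for ``cuspidal $\Rightarrow$ numerically cuspidal'' in (ii), and for ``$\alpha$ cuspidal $\Rightarrow$ $\alpha'$ cuspidal'' in (iii) are fine and match the paper. The paper uses commensurable terminality, and the fact that the characteristic quotient carries cuspidal subgroups onto cuspidal subgroups. For the converse in (ii) the paper only cites \cite[Thm 1.6 (i)]{mochizuki_2007}. Your sketch is an acceptable unpacking of that citation, with two provisos. First, use cyclic characters of all orders supported on $\Sigma$: a degree-$m$ cyclic cover of $X_N$ is unramified over all cusps iff it has $m\,r(X_N)$ cusps. Second, use only your second, ``precise'' formulation; the first one already fails for $\sigma=c^{\ell}$ with $c$ cuspidal and $N$ the whole group.

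The genuine gap is the converse of (iii). The step ``$\alpha'_N$ is cuspidal by the $\Sigma'$-version of (i)'' is only available when $N$ is a $\Sigma'$-subgroup. Otherwise $N^{\Sigma'}\to\pi^{\Sigma'}_1(X)$ is not an open embedding, so $\alpha'_N$ is induced by $\alpha_N$ rather than by $\alpha'$, and cuspidality of $\alpha'$ tells you nothing about it. Your alternative route needs $r(X'')=r(X''_1)$ for $\Sigma$-covers, which is exactly what is to be proved. Passing to further $\Sigma'$-covers does not recover the cusp count of a $\Sigma$-cover.

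No repair is possible, because this implication is false. Let $X=X_1=\mathbb{P}^1\setminus\{0,1,\infty,\lambda\}$, $\Sigma=\{2,3\}$, $\Sigma'=\{2\}$. Then $\pi^{\Sigma}_1(X)$ is free pro-$\Sigma$ on cuspidal generators $c_1,c_2,c_3$, with $c_4=(c_1c_2c_3)^{-1}$. Put $\varepsilon=(0,1)\in\Zhat^{\Sigma}=\Z_2\times\Z_3$ and define $\alpha(c_1)=c_1c_2^{\varepsilon}$, $\alpha(c_2)=c_2$, $\alpha(c_3)=c_3$.
\begin{itemize}
\item The image of $\alpha$ contains $c_2^{\varepsilon}$, hence $c_1$, so $\alpha$ is surjective and therefore an automorphism.
\item Since $c_2^{\varepsilon}$ dies in the maximal pro-$2$ quotient, $\alpha'$ is the identity, which is cuspidal.
\item However, $\alpha(\overline{\langle c_1\rangle})$ maps onto $\Zhat^{\Sigma}\cdot(1,\varepsilon,0)$ in $\pi^{\Sigma}_1(X)^{ab}\cong(\Zhat^{\Sigma})^{3}$. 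This lies in none of the lines spanned by $(1,0,0)$, $(0,1,0)$, $(0,0,1)$, $(1,1,1)$, which contain the images of all cuspidal subgroups. So $\alpha$ is not cuspidal.
\item Numerically: the $\Z/3\Z$-cover with $\chi(c_1)=1$, $\chi(c_2)=-1$, $\chi(c_3)=\chi(c_4)=0$ has $8$ cusps. The corresponding cover via $\alpha$ is given by $\chi\circ\alpha^{-1}$, with values $2,2,0,2$ on $c_1,\dots,c_4$, and has $6$ cusps.
\end{itemize}

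So in (iii) you should prove only the forward implication. The paper's one-line justification (a bijection between conjugacy classes of cuspidal subgroups under the quotient) likewise yields only that direction. As far as I can see, only that direction is used later, e.g.\ ``$\alpha^{\ell}$ preserves cusps'' in Lemma~\ref{s4:lem:V_U_subgroups}.
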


\begin{proof}
The first statement follows from the fact that an inertia subgroup is uniquely determined by any of its open subgroups.
The second statement is proved in~\cite[Thm 1.6 (i)]{mochizuki_2007} while the third one is clear as the quotient $\pi^{\Sigma}_1(X)\twoheadrightarrow \pi^{\Sigma'}_1(X)$ induces a bijection between the corresponding sets of conjugacy classes of cuspidal subgroups.
\end{proof}

\section{Graphic isomorphisms}\label{s:graphic}
In this section we will discuss the notion of a graphic isomorphism between fundamental groups of hyperbolic curves. 
The main result of this section is Proposition~\ref{s3:prop:graphicity_numeric} which gives a numerical criterion for checking whether an isomorphism between fundamental group is graphic, similar to the one (Lemma~\ref{s2:lem:cuspidal_isoms}) obtained for cuspidal isomorphisms.

Let $p$ be a prime number and let $k$ be an algebraic closure of the fraction field of a strictly henselian discrete valuation ring with residue characteristic $p$.
Write $S$ for the spectrum of the ring of integers of $k$ and $\bar{s}$ for a geometric point determined by the closed point of $S$.
For a hyperbolic curve $X$ over $k$ we write $\Delta_X$ for the $p$-prime fundamental group of $X$.
By the stable reduction theorem $X$ extends uniquely to a stable model $\mathcal{X}$ over $S$, let $\Gamma(X)$ be the corresponding dual graph of the special fibre $\mathcal{X}_{\bar{s}}$ (see \cite[Appendix]{mochizuki_2004}).  
Write $V(X)$ for the set of vertices, $E(X)$ for the set of closed edges and $C(X)$ for the set of open edges of the graph $\Gamma(X)$.
We remind that $V(X)$ is in bijection with the set of irreducible components of the special fibre, $E(X)$ is in bijection with the set of nodes of the special fibre and the set of open edges $C(X)$ is in bijection with the set of cusps of $X$.
It will be convenient to introduce a labelling on the set $V(X)$ as follows: at a vertex $v\in V(X)$ we put a nonnegative integer equal to the genus of the normalization of $v$.  
In the following we will always consider the graph $\Gamma(X)$ as a labelled graph and call it
the \emph{reduction graph} of $X$; we remark that an isomorphism of reduction graphs is simply an isomorphism of graphs respecting labels.
We write
\[
e(X) = \# E(X), \; v(X) = \# V(X),
\]
for the cardinalities of the sets of edges and vertices of $\Gamma(X)$ and 
\[
t(X) = \rank_{\Z} H_1(\Gamma(X),\Z)
\]
for the rank of the first homology group of the geometric realization of $\Gamma(X)$.
We have the relation 
\[
t(X) = v(X) - e(X) + 1
\]
and we call $t(X)$ the \textit{toric rank} of $X$.
Denoting by $J$ the Jacobian of the smooth compactification of $X$ we know that $J$ extends uniquely to a semiabelian scheme $\mathcal{J}$ over $S$; then $t(X)$ is equal to the toric rank of the semiabelian variety $\mathcal{J}_{\bar{s}}$.
 
Let $\Sigma\subset \mathbb{P}\setminus \{p\} $ be a nonempty subset and $X$ be a hyperbolic curve over $k$.
We briefly recall the relationship between the pro-$\Sigma$ fundamental group $\Delta^{\Sigma}_X$ and the combinatorics of the dual graphs of covers of $X$.
Let $Y\to X$ be a $\Sigma$-cover, it uniquely extends to a finite morphism $\mathcal{Y}\to \mathcal{X}$ of stable models (see \cite[Proposition 2.2]{tamagawa_2004_resolution}) and induces a morphism of graphs $\Gamma(Y)\to \Gamma(X)$.
When $Y\to X$ is also Galois then the group $\Delta^{\Sigma}_X$ act naturally on $\Gamma(Y)$ and this action is compatible with towers of Galois covers.
In particular, the group $\Delta^{\Sigma}_X$ acts on the projective limit
\[
\tilde{\Gamma}(X) = \varprojlim \Gamma(Y),
\]
where $Y$ runs over all Galois $\Sigma$-covers and this action is faithful.
Given a vertex $v\in \Gamma(X)$ we choose a pro-vertex $\tilde{v}\in \tilde{\Gamma}(X)$ mapping to $v$ and we define a closed subgroup $D_{\tilde{v}}\subset \Delta^{\Sigma}_X$ as the stabilizer of $\tilde{v}$.
Different choices of $\tilde{v}$ lead to conjugate subgroups $D_{\tilde{v}}$ thus we obtain a conjugacy class of closed subgroups 
\[
D_v\subset \Delta^{\Sigma}_X,
\]
called \emph{verticial} subgroups.
Applying similar consideration to a closed edge $e\in \Gamma(X)$ we obtain a conjugacy class of closed subgroups
\[
I_{e}\subset \Delta^{\Sigma}_X,
\]
called \emph{nodal} subgroups.
Similarly to cuspidal subgroups, verticial and nodal subgroups are commensurably terminal in $\Delta^{\Sigma}_X$ (see \cite[Prop 1.2]{mochizuki_2007}); in particular they are uniquely determined by any of their open subgroups.
Let $e\in E(X)$ be an edge, for every branch $b$ of $e$ abutting to a vertex $v\in V(X)$ there is an injective homomorphism
\[
\iota_b \colon I_e \hookrightarrow D_v.
\]
Similarly, for a cusp $c$ lying on an irreducible component $v$ we have an injective homomorphism
\[
\iota_c \colon I_c \hookrightarrow D_v.
\]
The above data of groups $D_v, I_e, I_c$ and homomorphisms $\iota_b, \iota_c$ determines a graph of groups $\mathcal{G}^{\Sigma}(X)$ (see~\cite{serre1980trees}) whose fundamental group is naturally isomorphic to $\Delta^{\Sigma}_X$ (see~\cite[Appendix]{mochizuki_2004}).

Before continuing, let us link the above discussion to the content of Section~\ref{s:families}.
Let $k_0\subset k$ be the fraction field of a strictly henselian discrete valuation ring dominated by $k$ and $S_0$ be the spectrum of the ring of integers of $k_0$.
Assume that $\mathcal{X}$ descends to a stable model $\mathcal{X}_0$ over $S_0$.
Thus for some choice of an fs log structure on $S_0$ we obtain a stable log curve over $S^{log}_0$ which determines a specialization outer isomorphism
\[
\Delta^{\Sigma}_X\xrightarrow{\sim}  \pi^{adm,\Sigma}_1(\mathcal{X}_{\bar{s}})
\]
defined uniquely up to the outer action
\begin{equation}
\pi_1(S^{log}_0)\to \Out(\pi^{adm,\Sigma}_1(\mathcal{X}_{\bar{s}})).
\end{equation}
The outer action of $\pi_1(S^{log}_0)$ is given by profinite Dehn twists, thus preserves the conjugacy class of each verticial, nodal and cuspidal subgroup.
In particular, consider the log structure on $S_0$ determined by the closed point and take $\Sigma = \mathbb{P}\setminus \{p\}$, then the log fundamental group $\pi_1(S^{log}_0)$ is isomorphic to the tame inertia group of $k_0$.
Thus, writing $G_{k_0}$ for the absolute Galois group of $k_0$, the outer action
\[
G_{k_0}\to \Out(\Delta_X)
\]
factorizes through the maximal tame quotient which acts on $\Delta_X$ via profinite Dehn twists.

Next, we are going to introduce some \'etale covers of $X$ having particularly simple combinatorial description.
Let $Y\to X$ be a Galois finite \'etale cover of degree $d$ which is prime to $p$ and denote by
\[
\phi \colon V(Y)\to V(X)
\]
the induced map on the sets of vertices.
For a vertex $v\in V(X)$ we say that $v$ is \emph{totally split} in $Y$ if $\phi^{-1}(v)$ has cardinality $d$ and that $v$ is \emph{inert} in $Y$ if $\phi^{-1}(v)$ has cardinality one.
It is easy to check that $v$ is totally split if and only if a decomposition subgroup $D_v$ lies in the kernel of the map
\[
\Delta_X \twoheadrightarrow \Gal(Y/X).
\]
Similarly, $v$ is inert if and only if $D_v$ surjects onto $\Gal(Y/X)$.

Denote by $\Delta^{et}_X$ the maximal $p$-prime quotient of the \'etale fundamental group of the compactification of $\mathcal{X}_{\bar{s}}$, thus the specialization homomorphism determines a quotient map
\begin{equation}\label{s3:loc:eq:Delta_et}
\Delta_X\twoheadrightarrow \Delta^{et}_X.
\end{equation}
The kernel of this quotient is the normal subgroup of $\Delta_X$ generated by cuspidal and nodal subgroups, \'etale covers classified by this quotient will be called \textit{unramifed over cusps and nodes}.
The group $\Delta^{et}_X$ admits a further quotient
\begin{equation}\label{s3:loc:eq:Delta_cmb}
\Delta^{et}_X\twoheadrightarrow \Delta^{cmb}_X
\end{equation}
whose kernel is generated by the image of verticial subgroups; \'etale covers classified by this quotient will be called \emph{combinatorial}.
The group $\Delta^{cmb}_X$ is a free $p$-prime profinite group of rank $t(X)$.

For a hyperbolic curve $X$ we say that its reduction is \emph{sturdy} if the normalization of every irreducible component of $\mathcal{X}_{\bar{s}}$ has genus at least two; also we say that its reduction is \emph{untangled} if every closed edge in $E(X)$ abuts to two distinct vertices. 
Every connected finite \'etale cover of a hyperbolic curve with sturdy (resp. untangled) reduction also has sturdy (resp. untangled) reduction; moreover for every hyperbolic curve there exists a characteristic connected finite \'etale cover which has sturdy and untangled reduction.
Later in this section we will need to construct certain $\Sigma$-covers of $X$ with specified number of vertices and edges, this is achieved by the next lemma.

\begin{lemma}\label{s3:lem:cover_split_on_S}
Let $X$ be a hyperbolic curve over $k$ with sturdy reduction and $N\ge 1$ be a natural number.
Fix a proper subset $S\subsetneq V(X)$ of the set of vertices of the reduction graph of $X$.
Then there exists a connected Galois finite \'etale cover $Y\to X$ with Galois group $\Z/N\Z$, unramified over cusps and nodes, such that for $v\in V(X)$ we have that $v\in S$ (resp. $v\not\in S $) if and only if $v$ is totally split (resp. inert) in $Y$.

In particular, we have $e(Y) = N e(X)$ and $v(Y)  =  v(X) + (N-1) \#S$.
\end{lemma}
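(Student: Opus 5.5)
We construct a surjection $\Delta^{et}_X\twoheadrightarrow \Z/N\Z$, that is, a cyclic étale cover of the compactified special fibre, and then lift it to $X$. Since the cover is unramified over cusps and nodes, it is classified by $\Delta^{et}_X$. The natural decomposition uses the graph of groups $\mathcal{G}^{\Sigma}(X)$ with trivial nodal and cuspidal groups. Concretely, an étale $\Z/N\Z$-torsor on the special fibre amounts to two pieces of data: a $\Z/N\Z$-torsor $T_v$ on each normalized component $\tilde{C}_v$ (corresponding to $\phi_v\colon D_v\to\Z/N\Z$ killing the cuspidal subgroups), and gluing isomorphisms at the nodes. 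Up to choosing a maximal tree, the gluing data is a class in $H^1(\Gamma(X),\Z/N\Z)$. Since $N$ need not be prime to $p$, we first reduce. Here $\Delta$ denotes the $p$-prime group, so a $\Z/N\Z$ quotient only makes sense if $p\nmid N$. I will assume, as the context of $p$-prime covers implicitly requires, that $N$ is prime to $p$. Otherwise one could at least handle the prime-to-$p$ part.

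The first step is the choice of $\phi_v$. For $v\in S$ take $\phi_v=0$, so $D_v$ lies in the kernel and $v$ is totally split. For $v\notin S$ we need $\phi_v$ surjective onto $\Z/N\Z$, so that $v$ is inert. The sturdy hypothesis gives genus $g_v\ge 2\ge 1$ for the normalization $\tilde C_v$. Hence the abelianization of $\pi_1^{p'}(\tilde C_v)$, the compact curve's group, surjects onto $(\Z/N\Z)^{2g_v}$, and we can choose a surjective homomorphism trivial on cusps. This is the only place sturdiness enters; genus $\ge1$ would suffice.

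The second step is gluing. Because every $\phi_v$ kills the cuspidal subgroups and the nodal subgroups, which come from branch points that are cusps of the normalized component, we can glue the $\phi_v$ into a homomorphism $\Phi\colon\Delta^{et}_X\to\Z/N\Z$ using trivial gluing on edges. Using the presentation of $\pi_1$ of a graph of groups, $\Phi$ is determined by the vertex maps together with arbitrary values on the generators corresponding to edges outside a maximal tree; we set those values to $0$. Surjectivity of $\Phi$ then follows from $S\ne V(X)$, which gives some $v\notin S$ with $\phi_v$ surjective. This is precisely where properness of $S$ is needed. Composing $\Delta_X\twoheadrightarrow\Delta^{et}_X\xrightarrow{\Phi}\Z/N\Z$ yields the desired connected Galois cover $Y\to X$. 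It is unramified over cusps and nodes, and $v$ is split or inert exactly according to $\Phi|_{D_v}$, by the criterion recalled before the lemma.

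The last step is counting. Since nodal subgroups map trivially, each edge has $N$ preimages, so $e(Y)=Ne(X)$. The vertex count is $v(Y)=N\#S+(v(X)-\#S)=v(X)+(N-1)\#S$. The main point requiring care is verifying that the stable model of $Y$ is obtained simply as the étale cover of $\mathcal{X}$. This holds because the cover is étale on the special fibre, including at the nodes, so the pulled-back curve is semistable. It is moreover stable, since the components of $Y$ are étale covers of stable pieces, and hence it is the stable model. Its dual graph is then the covering graph described above.
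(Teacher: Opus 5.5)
Your proposal is correct and is the direct construction from the graph-of-groups description of $\Delta_X$ that the paper appeals to (the paper alternatively cites Hoshi--Mochizuki, Lem.~1.4): take vertex homomorphisms vanishing on $D_v$ for $v\in S$ and surjective (genus $\geq 1$ suffices) for $v\notin S$, all killing the nodal and cuspidal subgroups, glue them trivially along the edges, and get surjectivity from $S\neq V(X)$. Your restriction to $p\nmid N$ costs nothing, since the paper only defines ``totally split'' and ``inert'' for covers of degree prime to $p$ and only applies the lemma to $p$-prime covers.
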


\begin{proof}
Such a cover can be constructed directly from the combinatorial description of $\Delta_X$; alternatively we could refer to~\cite[Lem 1.4]{hoshi_mochizuki_2011} which proves a more general statement 
\end{proof}

Assume now that $X$ is a hyperbolic curve with sturdy reduction, following~\cite[\S 1]{mochizuki_1996} we are going to recall a construction of a canonical bijection between the set of vertices $V(X)$ and a certain subset of homomorphisms from the fundamental group $\Delta^{\Sigma}_X$.
The essential point of this construction is that from the profinite group $\Delta^{\Sigma}_X$ equipped with some combinatorial data one can construct a set equipped with a canonical bijective map to the set $V(X)$ of vertices of $X$.

Choose a prime number $\ell\in \Sigma$ and consider a connected Galois finite \'etale cover $Y\to X$ of degree $\ell$, unramified over cusps.
We always have the inequality
\[
v(Y)\le \ell v(X)
\]
and the equality holds if and only if $Y\to X$ is a combinatorial cover.
Indeed, the equality 
\[
v(Y) = \ell v(X)
\]
implies that $Y\to X$ is totally split over every vertex of $X$ thus for each vertex $v\in V(X)$ a decomposition group $D_v$ map trivially to $\Gal(Y/X)$.

Next we assume additionally that $X$ has untangled reduction.
Suppose that the cover $Y\to X$ is not combinatorial, then we have 
\begin{equation}\label{s3:loc:eq:maximal}
v(Y)\le \ell (v(X)-1) + 1.
\end{equation}
We say that the cover $Y\to X$ is \emph{maximal} if it is unramified over cusps and the equality holds in~\eqref{s3:loc:eq:maximal}.
Then $Y\to X$ is maximal if and only if it is unramifed over cusps and nodes and moreover there exists a unique vertex in $V(X)$ which is inert in $Y$ with the rest of vertices being totally split.
Indeed, the equality 
\[
v(Y) = \ell (v(X)-1) + 1
\]
implies that there exist a subset $S\subset V(X)$ of cardinality $v(X) - 1$ such that all vertices in $S$ are totally split in $Y$ and the remaining vertex is inert.
Since the reduction of $X$ is untangled, for every edge $e\in V(X)$ there exists a vertex $v\in S$ and a branch homomorphism $\iota_b\colon I_e \hookrightarrow D_v$.
Therefore as $v$ is totally split in $Y$ we see that the cover $Y\to X$ is unramifed over cusps and nodes.

Consider the set 
\[
M_{\ell}(X)\subset \Hom(\Delta^{et}_X, \Z/\ell\Z)
\]
of all surjective homomorphism whose kernels correspond to maximal covers.
There is a natural map
\begin{equation}\label{s3:eq:M_to_V}
M\to V(X),
\end{equation}
which sends a maximal cover to the unique vertex which is inert.
For a vertex $v\in V(X)$ we apply Lemma~\ref{s3:lem:cover_split_on_S} to $S = V(X)\setminus \{v\}$ to see that the map~\eqref{s3:eq:M_to_V} is surjective.
On $M_{\ell}(X)$ we define an equivalence relation $\sim$ as follows: for $\phi,\phi'\in M_{\ell}(X)$ we write $\phi\sim \phi'$ if and only if for all $\lambda,\lambda'\in (\Z/\ell\Z)^{\times}$, whenever 
\[
\psi = \lambda \phi + \lambda' \phi' \ne 0
\]
and the cover determined by the kernel $\psi$ is not combinatorial then it is also maximal.
One checks~\cite[Prop 1.3]{mochizuki_1996} that $\sim$ is indeed an equivalence relation, moreover writing
\[
C_{\ell}(X) = M_{\ell}(X)/\sim
\]
the surjective map~\eqref{s3:eq:M_to_V} descends to a bijection 
\begin{equation}\label{s3:eq:C_to_V}
C_{\ell}(X) \xrightarrow{\sim} V(X).
\end{equation}

Next, we discuss the functoriality of the above construction.
Given a $\Sigma$-cover $Y\to X$ corresponding to an open subgroup $\Delta_Y\subset \Delta_X$ we may construct a map $C_{\ell}(Y)\to C_{\ell}(X)$ as follows.
Let $\bar{\phi}\in C_{\ell}(X)$, choose a representative $\phi \in M_{\ell}(X)$ of $\bar{\phi}$ such that the restriction of $\phi$ to $\Delta_Y$ 
\[
\phi_Y \in \Hom(\Delta_Y,\Z/\ell\Z)
\]
is surjective.
Then there exist pairwise nonequivalent homomorphisms $\phi_i\in M_{\ell}(Y)$ for $1\le i\le s$ such that the kernel of the homomorphism
\[
\psi = \phi_Y +\phi_1 + \ldots + \phi_s
\]
corresponds to a combinatorial cover.
We define the map $C_{\ell}(Y)\to C_{\ell}(X)$ by sending the class of $\phi_i$ to the class of $\phi$, for every $1\le i\le s$.
One can check that this rule is well defined and determines a map $C_{\ell}(Y)\to C_{\ell}(X)$ such that the diagram
\[
\begin{tikzcd}
C_{\ell}(Y)\arrow[r, "\sim"]\arrow[d] & V(Y) \arrow[d]\\
C_{\ell}(X)\arrow[r, "\sim"]  & V(X)
\end{tikzcd}
\]
commutes.

After these preparations we come to the definition of a graphic isomorphism.
Let $X$ and $X_1$ be two hyperbolic curves over $k$ and $\Sigma\subset \mathbb{P}\setminus \{p\}$ be a nonempty set of prime numbers.
Write $\Delta$ for the $p$-prime fundamental group of $X$ and $\mathcal{G}^{\Sigma}$ for the graph of groups $\mathcal{G}^{\Sigma}(X)$; we also denote by $\Delta_1$ and $\mathcal{G}^{\Sigma}_1$ the corresponding data for $X_1$.
Let
\begin{equation}\label{s3:eq:alpha}
\alpha\colon \Delta^{\Sigma} \xrightarrow{\sim} \Delta^{\Sigma}_1
\end{equation}
be an isomorphism of profinite groups.

\begin{definition}\label{s3:def:veg_isom}
We say that $\alpha$ is
\begin{enumerate}[(i)]
\item \emph{verticial} if it induces a bijection between the sets of conjugacy classes of verticial subgroups,
\item \emph{nodal} if it induces a bijection between the set of conjugacy classes of nodal subgroups,
\item \emph{graphic} if it arises from an isomorphism of graphs of groups $\mathcal{G}^{\Sigma}\xrightarrow{\sim} \mathcal{G}^{\Sigma}_1$.
\end{enumerate}
\end{definition}

Clearly a graphic isomorphism is also cuspidal, nodal and verticial, the next proposition says that the converse statement holds as well.
Also note that any inner automorphism is graphic thus we may also use Definition~\ref{s3:def:veg_isom} for outer isomorphisms.

\begin{proposition}\label{s3:prop:graphicity=cuspidal_and_verticial}
With the above definitions we have:
\begin{enumerate}[(i)]
\item $\alpha$ is graphic if and only if it cuspidal, verticial and nodal,
\end{enumerate}
Moreover, if $\alpha$ is verticial then it is also nodal hence $(i)$ may be strengthened to
\begin{enumerate}[(i)]
\setcounter{enumi}{1}
\item $\alpha$ is graphic if and only if it cuspidal and verticial.
\end{enumerate}
\end{proposition}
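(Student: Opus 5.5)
The plan is to first prove that a verticial isomorphism is nodal, and then deduce both (i) and (ii) from a reconstruction of the graph of groups out of its verticial, nodal and cuspidal subgroups. The implication ``graphic $\Rightarrow$ cuspidal, verticial and nodal'' is immediate from the definitions: an isomorphism of graphs of groups $\mathcal{G}^{\Sigma}\xrightarrow{\sim}\mathcal{G}^{\Sigma}_1$ carries the vertex, edge and cusp groups to the corresponding groups, up to conjugation.

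For the converse in (i), suppose $\alpha$ is cuspidal, verticial and nodal. I would recover the combinatorial data of $\Gamma(X)$ purely group-theoretically, using commensurable terminality of verticial, nodal and cuspidal subgroups (\cite[Prop 1.2]{mochizuki_2007}, \cite[Lem 1.3.7]{mochizuki_2004}).
\begin{enumerate}[(a)]
\item A nodal subgroup $I_e$ is contained in a conjugate of $D_v$ if and only if $e$ abuts to $v$.
\item A cuspidal subgroup $I_c$ is contained in a conjugate of $D_v$ if and only if $c$ lies on $v$.
\item Distinct vertices give noncommensurable verticial subgroups, and the intersection of two distinct verticial subgroups is either trivial or a nodal subgroup.
\end{enumerate}
From these, $\alpha$ induces bijections $V(X)\cong V(X_1)$, $E(X)\cong E(X_1)$ and $C(X)\cong C(X_1)$ that respect the incidence relations, which gives an isomorphism $\Gamma(X)\cong\Gamma(X_1)$. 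The labels are preserved because the genus of the normalization of $v$ can be read off from $D_v$ together with the number of nodal and cuspidal branches on it: the abelianization of $D_v$ has rank $2g_v+r_v-1$, where $r_v$ counts those branches. Passing to the limit over corresponding $\Sigma$-covers, which is compatible with $\alpha$ by Lemma~\ref{s2:lem:cuspidal_isoms}(i) and its verticial and nodal analogues, gives a $\Delta^{\Sigma}$-equivariant isomorphism $\tilde{\Gamma}(X)\cong\tilde{\Gamma}(X_1)$. By Bass--Serre theory this yields an isomorphism of graphs of groups inducing $\alpha$. This is essentially \cite[Prop 1.5]{mochizuki_2007}, and I would cite it rather than redo it.

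For the key point, that verticial implies nodal, the idea is to characterize nodal subgroups intrinsically in terms of verticial ones. For a node $e$ joining branches at $v$ and $w$, with distinct pro-vertices $\tilde v\neq\tilde w$ adjacent in the tree, the intersection $D_{\tilde v}\cap D_{\tilde w}$ is exactly the edge stabilizer $I_{\tilde e}$. For nonadjacent pro-vertices the intersection is trivial: in the pro-$\Sigma$ tree, stabilizers of pro-edges are trivially intersected with one another, since cyclic subgroups of distinct nodes are non-commensurable. The nodal subgroups are therefore precisely the nontrivial intersections of two distinct verticial subgroups. This condition is preserved by any $\alpha$ that bijects conjugacy classes of verticial subgroups, because $\alpha$ then bijects the set of all verticial subgroups, not only the classes.

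The main obstacle is justifying that $D_{\tilde v}\cap D_{\tilde w}$ is trivial or a single edge group, in the profinite setting where the tree is pro-$\Sigma$. I would derive this from the description of $\Delta^{\Sigma}$ as the profinite fundamental group of $\mathcal{G}^{\Sigma}$. Working in finite Galois $\Sigma$-covers $Y$, an element fixing both $\tilde v$ and $\tilde w$ fixes their images in $\Gamma(Y)$ together with the paths between them. Taking $Y$ large, these paths become unique: first pass to a cover with untangled reduction, then use combinatorial covers to separate cycles. This forces the element into the stabilizer of every edge along the path. Finally, inertia of distinct edges at a common vertex intersect trivially, since they are noncommensurable procyclic subgroups of $D_v$, which is the fundamental group of a hyperbolic curve. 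Hence the path has length at most one.
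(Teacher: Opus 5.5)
Your reduction is sound. For (i) you do what the paper does and defer to \cite[Prop.~1.5]{mochizuki_2007}. For `verticial $\Rightarrow$ nodal' the paper simply cites \cite[Prop.~1.13]{hoshi_mochizuki_2011}. You instead characterize the nodal subgroups as the nontrivial intersections of two distinct verticial subgroups, and observe that a verticial $\alpha$ permutes the set of \emph{all} verticial subgroups, not just their conjugacy classes. That correctly yields the implication, provided one has the intersection lemma: for pro-vertices $\tilde v\neq\tilde w$, the group $D_{\tilde v}\cap D_{\tilde w}$ is trivial unless $\tilde v,\tilde w$ are joined by a unique pro-edge $\tilde e$, in which case it equals $I_{\tilde e}$. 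Everything rests on that lemma, and this is where your proposal has a gap.

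The argument you sketch for the lemma does not work.
\begin{itemize}
\item In a finite graph $\Gamma(Y)$, an element fixing two vertices need not fix the paths between them. There is also no reason for those paths to become unique as $Y$ grows: passing to larger covers, combinatorial ones in particular, only increases $t(Y)$.
\item More seriously, if $t(X)>0$, two pro-vertices of $\tilde\Gamma(X)$ need not be joined by any finite chain of pro-edges. A finite chain starting at $\tilde v$ only reaches pro-vertices $\gamma\tilde v$ such that the image of $\gamma$ in the combinatorial quotient (a free pro-$\Sigma$ group of rank $t(X)$, hence uncountable) lies in the countable image of the topological fundamental group of the finite graph $\Gamma(X)$. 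For every other pair the lemma asserts $D_{\tilde v}\cap D_{\tilde w}=1$, and your conclusion that ``the path has length at most one'' has no path to act on.
\end{itemize}
You need a genuinely profinite argument here, for instance profinite Bass--Serre theory for the pro-$\Sigma$ tree $\tilde\Gamma(X)$. Alternatively, cite the result, as the paper in effect does through \cite[Prop.~1.13]{hoshi_mochizuki_2011}.

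A smaller point: ``noncommensurable procyclic subgroups intersect trivially'' is automatic only when $\Sigma=\{\ell\}$. There commensurable terminality forces equality as soon as the intersection is nontrivial. For $\#\Sigma\ge 2$, two copies of $\Zhat^{\Sigma}$ can share a $\mathbb{Z}_\ell$-factor without being commensurable. So the trivial intersection of distinct edge groups at a common vertex also needs its own argument, e.g.\ isolating an $\ell$-Sylow part and working in a suitable finite cover, or a reference.
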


\begin{proof}
The property $(i)$ is shown in~\cite[Prop 1.5 (ii)]{mochizuki_2007} whereas the fact that verticial isomorphisms are automatically nodal is shown in~\cite[Prop 1.13]{hoshi_mochizuki_2011}.
\end{proof}

For an open subgroup $U\subset \Delta^{\Sigma}$ determining a finite \'etale cover $Y\to X$ we write $U_1 = \alpha(U)$ and denote by $Y_1\to X_1$ a finite \'etale cover of $X_1$ corresponding to $Y$ via $\alpha$.
We extend our notation from curves to open subgroups and write
\[
v(U) = v(Y),\; e(U) = e(Y), \; t(U) = t(Y), \; V(U) = V(Y),
\]
similarly for $U_1$ and $Y_1$.
Denote by 
\begin{equation}\label{s3:eq:alpha_U}
\alpha_U \colon U\xrightarrow{\sim} U_1
\end{equation}
the restriction of $\alpha$ to $U$.
Let $\Sigma'\subset \Sigma$ is a nonempty subset, after passing to the maximal pro-$\Sigma'$ quotient~\eqref{s3:eq:alpha} induces the isomorphism  
\[
\alpha'\colon \Delta^{\Sigma'} \xrightarrow{\sim} \Delta^{\Sigma'}_1,
\]
whereas~\eqref{s3:eq:alpha_U} induces the isomorphism
\[
\alpha'_U \colon U^{\Sigma'}\xrightarrow{\sim} U^{\Sigma'}_1.
\]
The next lemma relates the graphicity of $\alpha'$ and $\alpha_U'$. 
\begin{lemma}\label{s3:lem:graphicity_restriction}
With the above notation we have:
\begin{enumerate}[(i)]
\item If $\alpha_U'$ is graphic then $\alpha'$ is graphic as well.
\item Suppose that $U\subset \Delta^{\Sigma}$ is a $\Sigma'$-subgroup. Then $\alpha_U'$ is graphic if and only if $\alpha'$ is graphic.
\end{enumerate}
In particular, $\alpha$ is graphic if and only if $\alpha_U$ is graphic.
\end{lemma}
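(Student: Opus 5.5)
We will use Proposition~\ref{s3:prop:graphicity=cuspidal_and_verticial}(ii), which reduces graphicity to cuspidality plus verticiality, and Lemma~\ref{s2:lem:cuspidal_isoms}, which handles cuspidality under restriction and under passing to $\Sigma'$-quotients. Throughout, $U^{\Sigma'}$ is the pro-$\Sigma'$ fundamental group of the cover $Y$ determined by $U$, and the graph of groups $\mathcal{G}^{\Sigma'}(Y)$ is attached to it.

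For (i), assume $\alpha'_U$ is graphic. First, cuspidality of $\alpha'_U$ lifts to $\alpha'$. If $U$ is a $\Sigma'$-subgroup, then $U^{\Sigma'}$ is an open subgroup of $\Delta^{\Sigma'}$ and we apply Lemma~\ref{s2:lem:cuspidal_isoms}(i) directly. In general, we first pass to the $\Sigma$-level by Lemma~\ref{s2:lem:cuspidal_isoms}(iii): $\alpha_U$ is cuspidal, hence $\alpha$ is cuspidal by (i), hence $\alpha'$ is cuspidal by (iii).

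For verticiality, take a verticial subgroup $D_v\subset\Delta^{\Sigma'}$. Its preimage in $\Delta^{\Sigma}$ meets $U$ in a subgroup whose image in $U^{\Sigma'}$ lies in, and is open in, a verticial subgroup $D_w$ of $U^{\Sigma'}$, where $w$ lies over $v$. Graphicity of $\alpha'_U$ sends $D_w$ to a verticial subgroup $D_{w_1}$ of $U_1^{\Sigma'}$. The image of $D_{w_1}$ in $\Delta_1^{\Sigma'}$ is then open in the verticial subgroup $D_{v_1}$, where $v_1$ is the image of $w_1$.

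The delicate point, which I expect to be the main obstacle, is that when $U$ is not a $\Sigma'$-subgroup the map $U^{\Sigma'}\to\Delta^{\Sigma'}$ is neither injective nor has open image. One therefore has to check the following:
\begin{enumerate}[(a)]
\item the images of verticial subgroups of $U^{\Sigma'}$ are open in verticial subgroups of $\Delta^{\Sigma'}$;
\item $\alpha'$ carries the image of $D_w$ into $D_{v_1}$;
\item by commensurable terminality, $\alpha'(D_v)$ is then exactly $D_{v_1}$.
\end{enumerate}
To handle this I would first try to reduce to the case where $U$ is normal of prime-power index. For $\ell\notin\Sigma'$ and $U$ of index $\ell$, I would use that the pro-$\Sigma'$ graph of groups of $Y$ maps to that of $X$ compatibly with $Y\to X$ (\cite[Appendix]{mochizuki_2004}) and compare the vertex sets directly. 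Alternatively, one can replace $U$ by its normal core $N$: graphicity of $\alpha'_U$ implies graphicity of $\alpha'_N$ by the easy direction proved in (ii) below, which reduces us to the normal case.

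For (ii), assume $U$ is a $\Sigma'$-subgroup. Then the natural map $U^{\Sigma'}\to\Delta^{\Sigma'}$ is injective with open image, so we may take $\Sigma=\Sigma'$. The direction ``$\alpha'$ graphic $\Rightarrow\alpha'_U$ graphic'' is done as follows. Verticial subgroups of $U$ are exactly the intersections $U\cap gD_vg^{-1}$, since the pro-vertices of $\tilde\Gamma(Y)$ and $\tilde\Gamma(X)$ coincide. Hence $\alpha'_U$ is verticial, and it is cuspidal by Lemma~\ref{s2:lem:cuspidal_isoms}(i). We conclude with Proposition~\ref{s3:prop:graphicity=cuspidal_and_verticial}(ii). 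The converse direction is (i).

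Finally, the ``in particular'' statement is (ii) with $\Sigma'=\Sigma$, because every open subgroup of $\Delta^{\Sigma}$ is a $\Sigma$-subgroup.
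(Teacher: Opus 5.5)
Your strategy is the paper's. You reduce graphicity to cuspidality plus verticiality via Proposition~\ref{s3:prop:graphicity=cuspidal_and_verticial}, use commensurable terminality for (i), and use the open inclusion $U^{\Sigma'}\hookrightarrow\Delta^{\Sigma'}$ for (ii). Your cuspidality step in (i), running through parts (iii), (i), (iii) of Lemma~\ref{s2:lem:cuspidal_isoms}, is correct. So are your proof of (ii) and the deduction of the final assertion.

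The verticial step of (i), however, is not established, and the concrete claim you make there is false when $U$ is not a $\Sigma'$-subgroup. The preimage of $D_v$ in $\Delta^{\Sigma}$ contains $K=\ker(\Delta^{\Sigma}\twoheadrightarrow\Delta^{\Sigma'})$, and $U\cap K$ maps onto the kernel of $U^{\Sigma'}\to\Delta^{\Sigma'}$. This kernel is a normal subgroup that is in general nontrivial. For instance, take $\Sigma=\{2,3\}$, $\Sigma'=\{2\}$ and $U$ normal of index $3$: then $U^{\Sigma'}\to\Delta^{\Sigma'}$ is surjective, but $U^{\Sigma'}$ has larger abelianization. Such a kernel is in general not contained in any verticial subgroup, so the image of your intersection is not contained in a verticial $D_w$.

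Your suggested repairs do not close this. The easy direction of (ii) passes graphicity from $U$ to its normal core $N$ only if $[U:N]$ is a $\Sigma'$-number, which need not hold. Normality also does not remove the non-injectivity of $U^{\Sigma'}\to\Delta^{\Sigma'}$, which is the real issue. The prime-index idea is not carried out. Incidentally, that map always has open image, since its image is the image of the finite-index subgroup $U$.

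The missing observation is to pass through the $\Sigma$-level. The quotients $\Delta^{\Sigma}\twoheadrightarrow\Delta^{\Sigma'}$ and $U\twoheadrightarrow U^{\Sigma'}$ carry verticial subgroups onto verticial subgroups, and every verticial subgroup arises this way. This is the verticial analogue of the fact used for Lemma~\ref{s2:lem:cuspidal_isoms}(iii). The argument then runs as follows:
\begin{enumerate}[(1)]
\item Lift $D_v$ to a verticial subgroup $\tilde{D}\subset\Delta^{\Sigma}$.
\item $U\cap\tilde{D}$ is verticial in $U$, as you yourself note in (ii).
\item Its image $D_w\subset U^{\Sigma'}$ is verticial.
\item The image of $D_w$ in $\Delta^{\Sigma'}$ is the image of the open subgroup $U\cap\tilde{D}\subset\tilde{D}$ under a surjection $\tilde{D}\twoheadrightarrow D_v$, hence is open in $D_v$.
\end{enumerate}
The same reasoning proves your point (a) for every verticial subgroup of $U^{\Sigma'}$. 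Your steps (b) and (c), applied to $\alpha'$ and to $(\alpha')^{-1}$ (using that $(\alpha'_U)^{-1}$ is also graphic), then give verticiality of $\alpha'$. This is what the paper's one-line appeal to commensurable terminality amounts to.
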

\begin{proof}
The property $(i)$ follows from Proposition~\ref{s3:prop:graphicity=cuspidal_and_verticial} and the fact that verticial (and nodal) subgroups are uniquely determined by any of their open subgroups.
To prove $(ii)$ note that under the assumption that $U\subset \Delta^{\Sigma}$ is a $\Sigma'$-subgroup the natural map
\[
U^{\Sigma'} \to \Delta^{\Sigma'}
\]
is an open inclusion.
Thus the statement follows again from Proposition~\ref{s3:prop:graphicity=cuspidal_and_verticial}.
\end{proof}
Similarly as in the case of cuspidal isomorphisms the property of being graphic can be given an equivalent formulation as an equality of certain combinatorial data for all open subgroups.
\begin{definition}
We say that $\alpha$ is
\begin{enumerate}[(i)]
\item \emph{numerically verticial} if for every open subgroup $U\subset \Delta^{\Sigma}$ we have $v(U) = v(U_1)$,
\item \textit{numerically toric} if for every open subgroup $U\subset \Delta^{\Sigma}$ we have $t(U) = t(U_1)$.
\end{enumerate}
\end{definition}
Then we have the following proposition linking the graphicity of an isomorphism to the numerical properties of corresponding open subgroups.
\begin{proposition}\label{s3:prop:graphicity_numeric}
Assume that $\alpha$ is cuspidal.
Then the following conditions are equivalent:
\begin{enumerate}[(i)]
\item $\alpha$ is graphic,
\item $\alpha$ is numerically verticial,
\item $\alpha$ is numerically toric.
\end{enumerate}
\end{proposition}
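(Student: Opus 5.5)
We prove the cycle of implications (i)$\Rightarrow$(iii)$\Rightarrow$(ii)$\Rightarrow$(i).

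\emph{(i)$\Rightarrow$(ii),(iii).} If $\alpha$ is graphic, then by Lemma~\ref{s3:lem:graphicity_restriction} its restriction $\alpha_U$ to every open subgroup $U$ is graphic. A graphic isomorphism $\alpha_U$ comes from an isomorphism of graphs of groups $\mathcal{G}^{\Sigma}(Y)\xrightarrow{\sim}\mathcal{G}^{\Sigma}(Y_1)$. In particular the underlying reduction graphs are isomorphic, so $v(U)=v(U_1)$, $e(U)=e(U_1)$ and $t(U)=t(U_1)$.

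\emph{(iii)$\Rightarrow$(ii).} We first express $v$ in terms of $t$ and cover-invariant data. Since $\alpha$ is cuspidal, Lemma~\ref{s2:lem:cuspidal_isoms} gives $g(Y)=g(Y_1)$ and $r(Y)=r(Y_1)$ for corresponding covers. For a cover $Y\to X$ of degree $d$ that is unramified over cusps and nodes we have $e(Y)=d\,e(X)$. Hence $t(Y)-t(X)=v(Y)-v(X)-(d-1)e(X)$, so $v(Y)$ is determined by $t(Y)$, $t(X)$, $v(X)$ and $e(X)$.

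The plan is therefore to pass to a cover where $e$ and $v$ can be pinned down from toric ranks, and then work upward. Because $\alpha$ is graphic iff $\alpha_U$ is, we may replace $X$ by a characteristic cover with sturdy and untangled reduction.

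To recover $e$, use the formula $2g(Y)-2=\sum_{w}(2g_w-2)+2e(Y)$ over vertices $w$ together with the genus and $t$ data. A cleaner route is to compare the abelianization ranks of $\Delta^{et}$ and $\Delta^{cmb}$. I would first try the following: by choosing covers totally split everywhere except over one vertex (Lemma~\ref{s3:lem:cover_split_on_S}), $t$ changes in a controlled way, and a cover is combinatorial iff $t(Y)$ attains its maximum $d(t(X)-1)+1$. This characterises combinatorial covers, and hence $t$ alone should give equal $e$ and $v$ for corresponding subgroups. This is the main obstacle. The danger is that numerical toricity only sees $t=v-e+1$, so ramification over nodes must be detected separately, for example through the behaviour of $t$ in degree-$\ell$ cyclic covers: $t$ is maximal iff the cover is combinatorial.

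\emph{(ii)$\Rightarrow$(i).} By Proposition~\ref{s3:prop:graphicity=cuspidal_and_verticial}(ii) it suffices to show that $\alpha$ is verticial. Numerical verticiality for degree-$\ell$ covers shows that $\alpha$ preserves combinatorial covers (equality $v(Y)=\ell v(X)$) and maximal covers (equality in \eqref{s3:loc:eq:maximal}). Hence $\alpha$ maps $M_\ell(X)$ bijectively to $M_\ell(X_1)$. Since the relation $\sim$ is defined purely in these terms, it descends to a bijection $C_\ell(X)\xrightarrow{\sim}C_\ell(X_1)$, i.e.\ a bijection $V(X)\to V(X_1)$ via \eqref{s3:eq:C_to_V}. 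Applying this to all open subgroups $U$, which remain numerically verticial, the bijections are compatible with the functoriality maps $C_\ell(Y)\to C_\ell(X)$. Passing to the limit gives an $\alpha$-equivariant bijection $\tilde\Gamma(X)\to\tilde\Gamma(X_1)$ on pro-vertices, so stabilizers correspond: $\alpha(D_{\tilde v})=D_{\tilde v_1}$. Thus $\alpha$ is verticial, hence graphic.
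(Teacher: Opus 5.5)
Your implications (i)$\Rightarrow$(ii),(iii) and (ii)$\Rightarrow$(i) are fine. The latter is essentially the paper's argument: maximal covers, the relation $\sim$, $C_\ell(X)\cong V(X)$, functoriality, and stabilizers of pro-vertices. Note that it is cuspidality of $\alpha$ that transports the condition ``unramified over cusps'' in the definition of a maximal cover.

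The genuine gap is (iii)$\Rightarrow$(ii). You flag it as the main obstacle but give no argument, and the criterion you suggest does not work. The toric rank is the first Betti number $t=e-v+1$ of the reduction graph. The relation displayed in Section~\ref{s:graphic} has the sign reversed, and your formula for $t(Y)-t(X)$ inherits this. Among degree-$d$ covers unramified over nodes one has $e(Y)=d\,e(X)$. So combinatorial covers, which maximize $v(Y)$, \emph{minimize} $t(Y)$, with value $d(t(X)-1)+1$. Covers ramified over nodes can go strictly below this value. For example, take two vertices of genus $\ge 2$ joined by three edges and $\ell\ge 3$: there is a degree-$\ell$ cover inert at both vertices and totally ramified at all three nodes, with $t(Y)=2<\ell+1$. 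On the $X_1$ side you cannot know whether the corresponding cover is unramified over nodes. So toric ranks of cyclic covers do not single out combinatorial covers in the way you propose. Even granting such a detection, you give no mechanism turning it into $e(U)=e(U_1)$ or $v(U)=v(U_1)$.

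The missing idea is the one-sided asymptotic comparison of Lemma~\ref{s3:lem:toric_to_ev}, which uses your observation about node-unramified covers only on the $X$ side. After passing to sturdy reduction, take for each $n$ the cyclic cover $Y\to X$ of degree $\ell^n$ from Lemma~\ref{s3:lem:cover_split_on_S} with $S=\emptyset$. It is unramified over nodes and inert at every vertex, so $e(Y)=\ell^n e(X)$ and $v(Y)=v(X)$. For the corresponding cover $Y_1\to X_1$ you know nothing about nodes or vertices. Still, the map $\Gamma(Y_1)\to\Gamma(X_1)$ always gives $e(Y_1)\le \ell^n e(X_1)$ and $v(Y_1)\ge v(X_1)$. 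Hence $t(Y)=t(Y_1)$ yields $\ell^n\bigl(e(X)-e(X_1)\bigr)\le v(X)-v(X_1)$, and letting $n\to\infty$ gives $e(X)\le e(X_1)$. By symmetry (using $\alpha^{-1}$) $e(X)=e(X_1)$, and then $t(X)=t(X_1)$ gives $v(X)=v(X_1)$. Applying this with every open subgroup $U$ in place of $\Delta^{\Sigma}$ gives numerical verticiality. So the danger you identified is not detected but circumvented: possible node ramification or vertex splitting on the $X_1$ side enters only through inequalities pointing in the right direction, and the factor $\ell^n$ then forces $e(X)\le e(X_1)$.
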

Before giving the proof we need to prove two lemmas. 
The first one relates the properties of being numerically vertical and numerically toric.   

\begin{lemma}\label{s3:lem:toric_to_ev}
Assume that $X$ and $X_1$ have sturdy reduction and $t(X) = t(X_1)$.
Suppose that there exists a prime number $\ell$ such that for all normal subgroups $U\subset \Delta$ of $\ell$-power index we have the equality
\[
t(U) = t(U_1).
\]
Then we also have $v(X) =  v(X_1)$.

\end{lemma}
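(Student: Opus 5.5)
The plan is to read off $v(X)$ from the maximum of $t(U)$ over normal subgroups $U\subset\Delta$ of index exactly $\ell$, using only the identity $t = e - v + 1$. Such subgroups are the kernels of surjections $\Delta\twoheadrightarrow\Z/\ell\Z$, and $\alpha$ maps them bijectively onto the normal subgroups of index $\ell$ in $\Delta_1$. The hypothesis therefore gives
\[
\max_{U} t(U) = \max_{U_1} t(U_1),
\]
where $U$ runs over normal subgroups of index $\ell$ in $\Delta$ and $U_1$ over those in $\Delta_1$. It is enough to show that this maximum equals $\ell\, e(X) - v(X) + 1$, and likewise for $X_1$. Granting this, we rewrite it as $\ell(t(X)-1) + (\ell-1)v(X) + 1$. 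Since $t(X) = t(X_1)$ and $\ell - 1 \neq 0$, this forces $v(X) = v(X_1)$.

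\emph{Upper bound.} Let $Y\to X$ be the cyclic cover of degree $\ell$ corresponding to $U$, and let $\Gamma(Y)\to\Gamma(X)$ be the induced map of graphs. Each closed edge of $\Gamma(X)$ has at most $\ell$ preimages, so $e(Y)\le \ell\, e(X)$. Each vertex has at least one preimage, so $v(Y)\ge v(X)$. Hence
\[
t(Y) = e(Y) - v(Y) + 1 \le \ell\, e(X) - v(X) + 1 .
\]
Equality holds exactly when every node is totally split (the cover is unramified over nodes) and every vertex is inert. This argument does not require untangled reduction, since self-loops also have at most $\ell$ preimages.

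\emph{The bound is attained.} We apply Lemma~\ref{s3:lem:cover_split_on_S} with $N = \ell$ and $S = \emptyset$, which is a proper subset of $V(X)$; this is where the sturdiness of $X$ is used. It gives a cyclic cover of degree $\ell$, unramified over cusps and nodes, in which every vertex is inert. For this cover $e(Y) = \ell\, e(X)$ and $v(Y) = v(X)$, so equality holds. The same argument applies to $X_1$, which is also sturdy.

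The step needing the most care is the attainment, i.e.\ producing a cover of degree $\ell$ in which all vertices are simultaneously inert while all nodes stay unramified. I rely on Lemma~\ref{s3:lem:cover_split_on_S} for this. If one wanted to avoid that lemma, one would construct the cover directly from the graph-of-groups description $\mathcal{G}^{\Sigma}(X)$. For each vertex, choose a homomorphism $D_v\to\Z/\ell\Z$ that is surjective but trivial on the images of the nodal and cuspidal subgroups; this is possible since each component has genus at least $2$, so $D_v$ has free $\ell$-abelianization coming from the compact genus. Then glue these maps, taking the maps on the graph's free part to be trivial.
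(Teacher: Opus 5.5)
Your argument is correct and is essentially the paper's proof: the paper also takes the cover from Lemma~\ref{s3:lem:cover_split_on_S} with $S=\emptyset$ (of degree $\ell^n$), compares it via $\alpha$ with the graph-morphism bounds $e(Y_1)\le \ell^n e(X_1)$ and $v(Y_1)\ge v(X_1)$, and concludes $e(X)=e(X_1)$ by symmetry. The only difference is cosmetic: you use $t(X)=t(X_1)$ to solve the resulting linear relation directly in degree $\ell$, whereas the paper lets $n\to\infty$, which your computation shows is not actually needed.
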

\begin{proof}
It is enough to show the equality $e(X) = e(X_1)$.
Let $n\ge 1$ be a natural number and $Y\to X$ be a connected finite \'etale cover of degree $\ell^n$ obtained from Lemma~\ref{s3:lem:cover_split_on_S} for the set $S$ being empty.
Thus we have
\begin{equation}\label{s3:loc:eq:special_cover}
e(Y) = \ell^n e(X),\; v(Y) = v(X).
\end{equation}
Consider the connected finite \'etale cover $Y_1\to X_1$ corresponding via $\alpha$ to $Y$.
From our assumptions we have $t(Y) = t(Y_1)$ hence
\begin{equation}\label{s3:loc:eq:toric_ranks}
e(Y) - v(Y) = e(Y_1) - v(Y_1).
\end{equation}
On the other hand the map of graphs $\Gamma(Y_1) \to \Gamma(X_1)$ gives the inequalities
\begin{equation}\label{s3:loc:eq:bound}
e(Y_1)\le \ell^n e(X_1),\; v(Y_1)\ge v(X_1).
\end{equation}
Equations~\eqref{s3:loc:eq:special_cover} and~\eqref{s3:loc:eq:toric_ranks} together with inequalities~\eqref{s3:loc:eq:bound} imply that
\[
\ell^n e(X) - v(X)\le \ell^n e(X_1) - v(X_1),
\]
therefore we obtain
\[
\ell^n(e(X) - e(X_1)) \le v(X) - v(X_1).
\]
By increasing $n$ we conclude that $e(X)\le e(X_1)$ thus $e(X) = e(X_1)$ by symmetry.
\end{proof}

Recall from our discussion following Lemma~\ref{s3:lem:cover_split_on_S} that, 
assuming hyperbolic curves $X$ and $X_1$ have sturdy and untangled reduction, for every prime number $\ell\in \Sigma$ we have defined the subset of maximal covers
\[
M_{\ell}(X)\subset \Hom(\Delta^{\Sigma}, \Z/\ell\Z),
\]
equipped with an equivalence relation $\sim$, and a natural bijection 
\[
C_{\ell}(X) \xrightarrow{\sim} V(X).
\]
Write $\sim_1$ for the corresponding equivalence relation on $M_{\ell}(X_1)$. 
Since the above data was defined using the geometry of reductions of curves $X$ and $X_1$ it is not necessarily preserved by the isomorphism $\alpha$.
However we can force the compatibility under some combinatorial assumptions, as the next lemma shows.

\begin{lemma}\label{s3:lem:functorial_M}
Assume that $X$ and $X_1$ have sturdy and untangled reduction and that $v(X) = v(X_1)$.
Let $\ell \in \Sigma$ be a prime number such that for every normal open subgroup $U\subset \Delta^{\Sigma}$ of index $\ell$ we have the equality $v(U) = v(U_1)$. 
Then the isomorphism $\alpha$ naturally induces a bijection between $M_{\ell}(X)$ and $M_{\ell}(X_1)$ which is compatible with equivalence relations $\sim$ and $\sim_1$. 
\end{lemma}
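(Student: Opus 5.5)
The plan is to let $\alpha$ act on homomorphisms by precomposition with $\alpha^{-1}$, i.e.\ $\phi\mapsto \phi\circ\alpha^{-1}$, from $\Hom(\Delta^{\Sigma},\Z/\ell\Z)$ to $\Hom(\Delta^{\Sigma}_1,\Z/\ell\Z)$. Since $\Z/\ell\Z$ is an $\ell$-group with $\ell\in\Sigma$, every such homomorphism factors through $\Delta^{\Sigma}$, and this gives a bijection. I would then show two things: this bijection sends $M_{\ell}(X)$ onto $M_{\ell}(X_1)$, and it preserves the relations $\sim$ and $\sim_1$.

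The starting point is that, by the discussion preceding the lemma, both the notion of a maximal cover and the notion of a combinatorial cover can be phrased purely in terms of vertex counts of degree-$\ell$ Galois covers. Suppose $\phi$ is a surjection with kernel $U$, a normal subgroup of index $\ell$. Then:
\begin{itemize}
\item $\ker\phi$ is combinatorial if and only if $v(U)=\ell v(X)$;
\item $\ker\phi$ is maximal if and only if it is unramified over cusps and $v(U)=\ell(v(X)-1)+1$.
\end{itemize}
One point to check is the identification $M_{\ell}(X)\subset\Hom(\Delta^{et}_X,\Z/\ell\Z)$. A maximal cover is automatically unramified over cusps and nodes, so it factors through $\Delta^{et}_X$. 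Conversely, given a homomorphism $\phi$ on $\Delta^{\Sigma}$, I would use the characterization via vertex count together with unramifiedness over cusps.

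The main obstacle is the cusp condition, since no cuspidality hypothesis on $\alpha$ appears in the statement. I expect to handle it as follows. If $\phi$ is not unramified over cusps, then some cusp is ramified, so the inertia subgroup $I_c$ surjects onto $\Z/\ell\Z$. The component $v$ containing $c$ has $D_v\supset I_c$, hence $D_v$ surjects and $v$ is inert. I would then try to show that such a cover cannot achieve $v(U)=\ell(v(X)-1)+1$ together with the vertex-count behaviour of the combinations used in $\sim$. If that turns out to be false, the fallback is that the paper presumably applies this lemma only when $\alpha$ is cuspidal (as in Proposition~\ref{s3:prop:graphicity_numeric}), and I would use cuspidality directly. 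In that case cuspidal subgroups correspond under $\alpha$, so $\phi$ kills all $I_c$ if and only if $\phi\circ\alpha^{-1}$ kills all $I_{c_1}$.

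Granting the cusp point, the rest follows. Using $v(X)=v(X_1)$ and $v(U)=v(U_1)$ for every index-$\ell$ normal subgroup, the two vertex-count equations transport verbatim. Hence $\phi\in M_{\ell}(X)$ if and only if $\phi\circ\alpha^{-1}\in M_{\ell}(X_1)$, and $\ker\psi$ is combinatorial if and only if $\ker(\psi\circ\alpha^{-1})$ is.

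Compatibility with the equivalence relations then follows because the map is $\Z/\ell\Z$-linear: $(\lambda\phi+\lambda'\phi')\circ\alpha^{-1}=\lambda\,\phi\circ\alpha^{-1}+\lambda'\,\phi'\circ\alpha^{-1}$. Every condition in the definition of $\sim$ (nonvanishing of $\psi$, being combinatorial, being maximal) is preserved by precomposition with $\alpha^{-1}$. Therefore $\phi\sim\phi'$ if and only if $\phi\circ\alpha^{-1}\sim_1\phi'\circ\alpha^{-1}$.
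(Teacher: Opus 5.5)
Your proposal is essentially the paper's proof: you transport homomorphisms by $\phi\mapsto\phi\circ\alpha^{-1}$, read off maximality and combinatoriality from the vertex counts of index-$\ell$ kernels (using $v(X)=v(X_1)$ and $v(U)=v(U_1)$), use cuspidality of $\alpha$ for the ``unramified over cusps'' condition, and observe that the conditions defining $\sim$ are preserved. The paper's proof likewise invokes cuspidality of $\alpha$, which holds in every application, and your fallback is the right choice: your first idea cannot work, because a component carrying two cusps admits $\Z/\ell\Z$-covers ramified only at those cusps, inert over that component and totally split elsewhere, and such covers have exactly $\ell(v(X)-1)+1$ vertices, so vertex counts alone cannot detect ramification at cusps.
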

\begin{proof}
Recall that a connected Galois finite \'etale cover $Y\to X$ of degree $\ell$ is maximal if and only if it is unramified over cusps and satisfies the equality
\[
v(Y) = \ell (v(X) - 1) + 1.
\]
Since $\alpha$ is cuspidal and preserves the number of vertices for all such covers we see that the isomorphism $\alpha$ induces a bijection 
\[
M_{\ell}(X)\xrightarrow{\sim} M_{\ell}(X_1).
\]
Finally, the equivalence relations $\sim$ and $\sim_1$ are derived from the notions of combinatorial and maximal covers which in turn are characterized purely in terms of the numbers of vertices $v(Y)$ and $v(X)$.
Thus it follows from our assumptions that they are preserved by $\alpha$.  
\end{proof}

\begin{proof}[Proof of Proposition~\ref{s3:prop:graphicity_numeric}]
Clearly we have the implications $(i)\Rightarrow (ii)$ and $(i)\Rightarrow (iii)$, thus we need to prove that each of the conditions $(ii)$ and $(iii)$ implies the graphicity of $\alpha$.
Thanks to Lemma~\ref{s3:lem:graphicity_restriction} we may pass to an open subgroup and assume that $X$ and $X_1$ have sturdy and untangled reduction.
Assume $(iii)$, then applying Lemma~\ref{s3:lem:toric_to_ev} to all open subgroups of $\Delta^{\Sigma}$ we deduce that $\alpha$ is numerically verticial.
Thus it is enough to show that $(ii)$ implies $(i)$.

Assume $(ii)$ and fix a prime number $\ell\in \Sigma$.
Given an open subgroup $U\subset \Delta^{\Sigma}$ we write $C_{\ell}(U) = C_{\ell}(Y)$ where $Y\to X$ is a $\Sigma$-cover determined to $U$, similarly for covers of $X_1$ corresponding via $\alpha$.
Applying Lemma~\ref{s3:lem:functorial_M} to all open subgroups of $\Delta$ we deduce that $\alpha$ induces an isomorphism in the middle of the following diagram
\[
\varprojlim V(U)\xleftarrow{\sim} \varprojlim C_{\ell}(U) \xrightarrow{\sim} \varprojlim C_{\ell}(U_1)\xrightarrow{\sim} \varprojlim V(U_1),
\]
here the inverse limits are over all open subgroups $U\subset \Delta$.
The above isomorphisms are compatible with the natural actions of $\Delta$ and $\Delta_1$ via $\alpha$.
Therefore by looking at the stabilizers of pro-vertices on both sides we deduce that $\alpha$ is verticial.
Hence $\alpha$ is graphic by Proposition~\ref{s3:prop:graphicity=cuspidal_and_verticial}.   
\end{proof}

From Proposition~\ref{s3:prop:graphicity_numeric} we immediately obtain the following corollary.
\begin{corollary}\label{s3:cor:toric_separation_for_two}
Suppose that $\alpha$ is cuspidal and the reduction graphs $\Gamma(X)$ and $\Gamma(X_1)$ are not isomorphic. Then there exists an open subgroup $U\subset \Delta$ such that $t(U)\ne t(U_1)$.
\end{corollary}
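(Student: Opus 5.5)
The argument goes by contraposition, using Proposition~\ref{s3:prop:graphicity_numeric}. Suppose, for a contradiction, that $t(U) = t(U_1)$ for every open subgroup $U\subset \Delta$, where $U_1=\alpha(U)$. Since $\alpha$ is cuspidal, condition $(iii)$ of Proposition~\ref{s3:prop:graphicity_numeric} holds. Hence $\alpha$ is graphic, i.e.\ it arises from an isomorphism of graphs of groups $\mathcal{G}^{\Sigma}\xrightarrow{\sim}\mathcal{G}^{\Sigma}_1$. The remaining task is to show that such an isomorphism forces an isomorphism of labelled reduction graphs $\Gamma(X)\xrightarrow{\sim}\Gamma(X_1)$, which contradicts the hypothesis.

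First I would extract the graph isomorphism. An isomorphism of graphs of groups comes with an isomorphism of the underlying graphs, matching vertices, closed edges and open edges together with their branch data. Equivalently, by Proposition~\ref{s3:prop:graphicity=cuspidal_and_verticial}, $\alpha$ induces bijections on conjugacy classes of verticial, nodal and cuspidal subgroups. Incidence is then read off group-theoretically: an edge $e$ abuts to $v$ exactly when some conjugate of $I_e$ lies in some conjugate of $D_v$, and similarly for cusps. These relations are preserved by $\alpha$, so the bijections assemble into an isomorphism of the underlying graphs $\Gamma(X)\cong\Gamma(X_1)$.

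Next I would check the labels. The genus of the normalization of the component $v$ is recovered from $D_v$: the group $D_v$ is the pro-$\Sigma$ fundamental group of the corresponding component with its nodes and cusps removed, so it is free. Its rank is $2g_v+n_v-1$, where $n_v$ is the number of branches and cusps at $v$. Since $\alpha$ maps $D_v$ isomorphically onto $D_{\alpha(v)}$ and $n_v$ is determined by the graph isomorphism, the genera $g_v$ agree. Thus the labelled graphs are isomorphic, giving the desired contradiction.

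The main obstacle is the justification that graphic means an isomorphism of labelled graphs. This includes self-loops and branch multiplicities when the reduction is not untangled, and for these I would rely on the graph-of-groups formalism directly. The condition on $\Sigma$ is harmless here, since abelianization ranks of free pro-$\Sigma$ groups still detect the rank. All of this is essentially formal once Proposition~\ref{s3:prop:graphicity_numeric} is available.
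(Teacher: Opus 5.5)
Your proposal is correct and follows the paper's argument: the paper deduces the corollary immediately from the implication (iii) $\Rightarrow$ (i) of Proposition~\ref{s3:prop:graphicity_numeric}, and takes for granted that a graphic isomorphism yields an isomorphism of labelled reduction graphs, which you spell out. One small slip: $D_v$ is free only when $v$ carries at least one branch or cusp. In the remaining case (a single vertex with no edges or cusps) its abelianization has rank $2g_v$, so the genus is still recovered and your conclusion stands.
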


\section{A criterion for graphicity}\label{s:criterion}

In this section we will state and prove Theorem~\ref{s4:thm:graphicity_criterion} which is a generalization of Proposition~\ref{s3:prop:graphicity_numeric} to an arbitrary finite number of hyperbolic curves.
Then using a compactness argument we derive Corollary~\ref{s4:cor:compactness} which is the crucial result used in Section~\ref{s:separating_classes}.
Thus we have to generalize the situation considered in Section~\ref{s:graphic} to allow for a finite number of hyperbolic curves.
We keep the same assumptions as in the previous section.

Let $n\ge 1$ be a natural number and let 
\begin{equation}\label{s4:eq:collection_of_hyp_curves}
X_0, X_1,\ldots ,X_n
\end{equation}
be a collection of hyperbolic curves over $k$.
To simplify the notation we will use the convention of omitting the lower subscript `$0$' from our notation for objects corresponding to the hyperbolic curve $X = X_0$.
For $0\le i\le n$ write $\Delta_i =\Delta_{X_i} $ for the $p$-prime fundamental group of $X_i$.
Suppose that for $1\le i \le n$ we are given cuspidal isomorphisms
\[
\alpha_i \colon \Delta \xrightarrow{\sim} \Delta_i,
\]
by convention we write $\alpha_0$ for the identity isomorphism on $\Delta = \Delta_0$.
For every pair of indices $0\le i,j\le n$ we write 
\[
\alpha_{ij} \colon \Delta_i \xrightarrow{\sim} \Delta_j
\]  
for the isomorphism obtained as the composition $\alpha_j\circ\alpha_i^{-1}$.
Given an open subgroup $U\subset \Delta$ we write $U_i\subset \Delta_i$ for the corresponding open subgroup defined as 
\[
U_i = \alpha_{i}(U),
\]
for every $0\le i\le n$.
Hence $U_i$ determines a connected finite \'etale cover $Y_i\to X_i$; as before we will say that covers $Y_i$ and $Y_j$ correspond via $\alpha_{ij}$, for all $0\le i,j\le n$.
We will always regard the group $U_i$ as an open subgroup of $\Delta_i$ and for convenience we write
\[
e(U_i) = e(Y_i), \; v(U_i) = v(Y_i),\; t(U_i) = t(Y_i),\; r(U_i) = r(Y_i).
\]
Finally, for a natural number $n\ge 1$ denote by $P_n$ the set of all unordered pairs $(i,j)$ of distinct integers with $0\le i\ne j\le n$ and let $P\subset P_n$ be a subset. 
After these preparations we may state our desired generalization of Proposition~\ref{s3:prop:graphicity_numeric} which is the main result of this section.

\begin{theorem}\label{s4:thm:graphicity_criterion}
Assume that for every open subgroup $U\subset \Delta$ there exists $(i,j)\in P$ such that 
\[
t(U_i) = t(U_j).
\]
Then there exists $(i,j)\in P$ such that the isomorphism 
\[
\alpha_{ij}\colon \Delta_i\xrightarrow{\sim} \Delta_{j}
\]
is graphic.
\end{theorem}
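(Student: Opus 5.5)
We argue by contradiction combined with induction on $\#P$. Suppose that for no $(i,j)\in P$ is $\alpha_{ij}$ graphic. By Proposition~\ref{s3:prop:graphicity_numeric}, since each $\alpha_{ij}$ is cuspidal (compositions of cuspidal isomorphisms are cuspidal), for each $(i,j)\in P$ there exists an open subgroup $W^{(ij)}\subset\Delta$ with $t(W^{(ij)}_i)\neq t(W^{(ij)}_j)$. The goal is to produce a single open subgroup $U\subset\Delta$ with $t(U_i)\ne t(U_j)$ simultaneously for all $(i,j)\in P$, contradicting the hypothesis. The difficulty is that passing to a smaller subgroup may destroy a previously obtained inequality, so we must control how toric ranks change along covers.

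The key tool is the covers of Lemma~\ref{s3:lem:cover_split_on_S}. Given $U$, the $\Z/N\Z$-cover with $S=\emptyset$, unramified over cusps and nodes, gives $v$ unchanged and $e$ multiplied by $N$. Hence $t$ changes to $N e - v + 1 = N(t-1) + (N-1)v + \dots$; more precisely $t(Y)=N(e(U)-v(U))+v(U)+1$. On the other curves $X_j$ the corresponding cover need not be of this form, but the graph map still gives $e(Y_j)\le N e(U_j)$, $v(Y_j)\ge v(U_j)$, as in the proof of Lemma~\ref{s3:lem:toric_to_ev}. Following that lemma, we first reduce (using Lemma~\ref{s3:lem:toric_to_ev} and its proof) to pairs with equal $t$ on all subgroups of $W$, where graphicity would follow by Proposition~\ref{s3:prop:graphicity_numeric}. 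So the real plan is: define for each pair an invariant, for instance the pair $(e(U_i)-v(U_i),\,v(U_i))$ compared lexicographically, and show that by a suitable tower of covers one can make the dominant term $N(e-v)$ differ for all pairs simultaneously.

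Concretely we proceed by induction on $\#P$. For $\#P=1$ this is Corollary~\ref{s3:cor:toric_separation_for_two}. For the inductive step, choose $(i_0,j_0)\in P$; by induction applied to $P\setminus\{(i_0,j_0)\}$ (and its restrictions to open subgroups, using Lemma~\ref{s3:lem:graphicity_restriction} which guarantees nongraphicity persists on every open subgroup) we get $U$ separating all pairs of $P\setminus\{(i_0,j_0)\}$; if $U$ also separates $(i_0,j_0)$ we are done, otherwise we pass inside $U$ to $W^{(i_0j_0)}\cap U$ and then to a large-degree $\ell$-cover where a toric-rank gap grows linearly in $N$, while the gaps for the other pairs, being of the form $N\cdot(\text{nonzero})+O(1)$ after arranging the leading coefficients to differ, stay nonzero for $N$ large. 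The choice of $N$ (large, coprime to $p$, avoiding finitely many bad values) is what makes all inequalities hold at once, since each difference $t(Y_i)-t(Y_j)$ is, for the chosen family of covers, an eventually nonvanishing expression in $N$.

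The main obstacle is that the covers of Lemma~\ref{s3:lem:cover_split_on_S} are adapted to one curve's reduction only, so on the other $X_j$ the numbers $e,v$ of the corresponding cover are only bounded, not computed. I would handle this by building the cover from the graph of $X$ and exploiting the inequalities $e(Y_j)\le Ne(U_j)$, $v(Y_j)\ge v(U_j)$ to obtain strict inequalities with a definite sign, arranging (by ordering the indices according to $e-v$ and using symmetry, as in Lemma~\ref{s3:lem:toric_to_ev}) that the curve with the maximal value grows fastest. If this sign control fails, the fallback is a compactness argument over the profinite space of tuples of open subgroups.
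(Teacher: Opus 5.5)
Your plan has a genuine gap in the inductive step, and it sits exactly at the difficulty you flag yourself: nothing in the plan preserves inequalities $t(U_i)\neq t(U_j)$ that you have already obtained when you pass to a smaller open subgroup.

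\emph{Intersecting loses both separations.} Replacing $U$ by $W^{(i_0j_0)}\cap U$ throws away both pieces of information. Toric ranks are not controlled under passage to an arbitrary open subgroup, so the separations of $P\setminus\{(i_0,j_0)\}$ (which hold on $U$) need not survive on the intersection. Neither need the separation of $(i_0,j_0)$ (which holds on $W^{(i_0j_0)}$).

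\emph{The cyclic covers give no control on the other curves.} The covers of Lemma~\ref{s3:lem:cover_split_on_S} satisfy $e(Y)=Ne(U)$ and $v(Y)=v(U)$ only on the curve whose reduction graph was used to build them. (So $t(Y)=Ne(U)-v(U)+1$; your formula $N(e-v)+v+1$ is off.) On another curve $X_i$, the number of preimages of a vertex or edge of $\Gamma(U_i)$ is the index in $\Z/N\Z$ of the image of the corresponding verticial or nodal subgroup. You do not control this index, and it need not vary with $N$ in any regular way.

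Hence for pairs $(i,j)$ not involving the base curve, $t(Y_i)-t(Y_j)$ is not an ``eventually nonvanishing expression in $N$''. The bounds $e(Y_i)\le Ne(U_i)$ and $v(Y_i)\ge v(U_i)$ give only one-sided information, and only for pairs containing the base curve. Making the ``leading coefficients'' differ for all pairs at once is itself a simultaneous separation problem of the same kind as the theorem, so the argument is circular. The compactness fallback is not specified.

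A smaller point: for $\#P=1$ you need Proposition~\ref{s3:prop:graphicity_numeric} applied to a nongraphic $\alpha_{ij}$, not Corollary~\ref{s3:cor:toric_separation_for_two}, which assumes nonisomorphic reduction graphs.

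The missing idea is the stability statement Lemma~\ref{s4:lem:preserving_distinct_values}. Given the current curves, there is a constant $N$ such that for every prime $\ell>N$ and every normal open subgroup of $\ell$-power index, distinct toric ranks stay distinct. The reason is that fibres over vertices and edges have $\ell$-power cardinality, so $e-v$ of the cover is a base-$\ell$ expansion with digits bounded by $N<\ell$. This lets one accumulate separations pair by pair, provided every new separating subgroup can be chosen of $\ell$-power index with $\ell$ arbitrarily large.

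That is why the paper first proves Proposition~\ref{s4:prop:graphicity_criterion_ell}, assuming each $\alpha_{ij}^{\ell}$ is nongraphic for infinitely many $\ell$. Lemma~\ref{s4:lem:V_U_subgroups} supplies an $\ell$-power subgroup below which, for every prime $\ell'$, some subgroup of index dividing $\ell'$ separates vertex counts; one chooses $\ell'>N$. Lemma~\ref{s3:lem:toric_to_ev} then converts a vertex-count difference with equal toric ranks into a toric-rank difference on an $\ell$-power subgroup, again with $\ell>N$. Each step preserves the earlier separations. The theorem then follows from the finite-intersection argument on the nonempty finite sets $S_U$, together with Proposition~\ref{s3:prop:graphicity_numeric}. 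Your proposal has no substitute for this stability lemma or for the passage through large primes.
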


As a corollary of Theorem~\ref{s4:thm:graphicity_criterion} we obtain a generalization of Corollary~\ref{s3:cor:toric_separation_for_two} from the previous section.

\begin{corollary}\label{s4:cor:toric_separation_for_n}
Assume that for every $(i,j)\in P$ the reduction graphs of $X_i$ and $X_j$ are not isomorphic.
Then there exists an open subgroup $U\subset \Delta$ such for every $(i,j)\in P$ we have $t(U_i) \ne t(U_j)
$.
\end{corollary}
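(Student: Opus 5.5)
The corollary is the contrapositive of Theorem~\ref{s4:thm:graphicity_criterion}, combined with the observation that a graphic isomorphism forces the reduction graphs to be isomorphic. We argue by contradiction. Suppose that for every open subgroup $U\subset \Delta$ there is some pair $(i,j)\in P$ with $t(U_i) = t(U_j)$. This is exactly the hypothesis of Theorem~\ref{s4:thm:graphicity_criterion}, applied to the same collection $X_0,\ldots,X_n$, the same cuspidal isomorphisms $\alpha_i$ and the same subset $P\subset P_n$. The theorem then yields a pair $(i,j)\in P$ such that $\alpha_{ij}\colon \Delta_i\xrightarrow{\sim}\Delta_j$ is graphic.

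It remains to check that graphicity of $\alpha_{ij}$ produces an isomorphism of labelled reduction graphs $\Gamma(X_i)\xrightarrow{\sim}\Gamma(X_j)$, contradicting the assumption. By Definition~\ref{s3:def:veg_isom}, $\alpha_{ij}$ arises from an isomorphism of graphs of groups $\mathcal{G}^{\Sigma}(X_i)\xrightarrow{\sim}\mathcal{G}^{\Sigma}(X_j)$ (with $\Sigma=\mathbb{P}\setminus\{p\}$). Such an isomorphism includes an isomorphism of the underlying graphs. Concretely, it gives bijections between the conjugacy classes of verticial, nodal and cuspidal subgroups, and these correspond to $V$, $E$ and $C$. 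The bijections are compatible with the branch maps $\iota_b,\iota_c$, so incidences are preserved.

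The labels are also preserved. For a vertex $v$, the verticial subgroup $D_v$ is the pro-$\Sigma$ fundamental group of the normalization of the corresponding component with the nodal and cuspidal points removed. The number of these points is the number of branches and cusps abutting $v$, which the graph isomorphism already preserves. Since $D_v$ is a surface group with a known number $r_v$ of punctures, the rank of its abelianization, $2g_v+r_v-1$, recovers the genus $g_v$, exactly as in the discussion of $S_{g,r}$ in Section~\ref{s:families}. Hence $\Gamma(X_i)\cong\Gamma(X_j)$ as labelled graphs, which is the desired contradiction. Therefore some open $U$ satisfies $t(U_i)\ne t(U_j)$ for all $(i,j)\in P$.

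There is no real obstacle here, since the substantive work lies in Theorem~\ref{s4:thm:graphicity_criterion}. The only point needing care is this last step, that a graphic isomorphism respects genus labels. If one prefers to avoid the abelianization computation, it also follows from the fact that $\alpha_{ij}$ restricted to $D_v$ is a cuspidal isomorphism of the fundamental groups of the vertex curves (cuspidal for the marked points). By the remark after Definition~\ref{s2:def:cuspidal_isom}, a cuspidal isomorphism preserves both $r$ and $g$.
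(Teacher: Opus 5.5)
Your proposal is correct and is the paper's argument: the corollary is the contrapositive of Theorem~\ref{s4:thm:graphicity_criterion}, because a graphic $\alpha_{ij}$ would yield an isomorphism of the reduction graphs of $X_i$ and $X_j$. The paper treats that last implication as evident, so your check that the genus labels are preserved via the vertex groups $D_v$ is a correct but optional supplement (the abelianization rank $2g_v+r_v-1$ should read $2g_v$ when $r_v=0$, though the genus is recovered either way).
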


\begin{proof}
Indeed, since for every $(i,j)\in P$ the reduction graphs $X_i$ and $X_j$ are not isomorphic it follows that for every pair $(i,j)\in P$ the isomorphism $\alpha_{ij}$ is not graphic.
Thus we conclude using Theorem~\ref{s4:thm:graphicity_criterion}.
\end{proof}

To prove Theorem~\ref{s4:thm:graphicity_criterion} it will be enough to prove the following seemingly weaker statement.

\begin{proposition}\label{s4:prop:graphicity_criterion_ell}
With the same assumptions an in Theorem~\ref{s4:thm:graphicity_criterion}, there exists $(i,j)\in P$ such that the isomorphism 
\[
\alpha^{\ell}_{ij}\colon \Delta^{\ell}_i\xrightarrow{\sim} \Delta^{\ell}_{j}
\]
is graphic for all sufficiently large prime numbers $\ell$.
\end{proposition}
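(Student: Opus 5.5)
We argue by contradiction. Suppose that for every $(i,j)\in P$ the set $L_{ij}$ of primes $\ell\ne p$ for which $\alpha^{\ell}_{ij}$ is \emph{not} graphic is infinite. We will then construct one open subgroup $U\subset\Delta$ with $t(U_i)\ne t(U_j)$ for all $(i,j)\in P$, contradicting the hypothesis of Theorem~\ref{s4:thm:graphicity_criterion}.

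First we make reductions. By Lemma~\ref{s3:lem:graphicity_restriction}, graphicity of $\alpha^{\ell}_{ij}$ is unchanged when we pass to a characteristic open subgroup whose index is prime to $\ell$. Since only finitely many primes divide a fixed index, such a passage does not affect the property ``for all sufficiently large $\ell$''. So, after replacing $\Delta$ by a suitable open subgroup, we may assume that every $X_i$ has sturdy and untangled reduction. Non-graphicity is also stable under restriction to any open subgroup, by the contrapositive of Lemma~\ref{s3:lem:graphicity_restriction}(i). Because each $L_{ij}$ is infinite, we can enumerate $P=\{\pi_1,\dots,\pi_m\}$ and choose pairwise distinct primes $\ell_1,\dots,\ell_m$, all large, with $\ell_k\in L_{\pi_k}$.

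Next we handle the pairs one at a time, building a decreasing chain $\Delta\supset U^{(1)}\supset\dots\supset U^{(m)}$ in which $U^{(k)}\subset U^{(k-1)}$ is a normal subgroup of $\ell_k$-power index. Write $\pi_k=(i,j)$. The isomorphism $\alpha^{\ell_k}_{ij}$, restricted to $U^{(k-1)}$, is non-graphic and cuspidal. By Proposition~\ref{s3:prop:graphicity_numeric}, specifically the proof of $(iii)\Rightarrow(i)$ via Lemma~\ref{s3:lem:toric_to_ev}, there is then a pro-$\ell_k$ open subgroup on which the toric ranks differ. We want this difference to be \emph{large}. The route is to mimic the proof of Lemma~\ref{s3:lem:toric_to_ev}: once $e$ or $v$ disagree for some pro-$\ell_k$ subgroup, apply Lemma~\ref{s3:lem:cover_split_on_S} with $S=\emptyset$ and degree $\ell_k^n$ on the side where the reduction graph is ``larger''. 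This multiplies $e$ exactly by $\ell_k^n$ on one side, while on the other side $e$ gets at most the same multiplier and $v$ does not decrease. The result is a difference $|t(U^{(k)}_i)-t(U^{(k)}_j)|\ge c\,\ell_k^n$ with $c>0$, where $n$ can be taken as large as we like.

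The main obstacle is making these inequalities \emph{persist} through the later steps $k'>k$. The point is that $t$ is not multiplicative under covers. For a Galois cover of degree $d$, the graph map $\Gamma(Y)\to\Gamma(X)$ only gives
\[
e(Y)\le d\,e(X), \qquad v(X)\le v(Y)\le d\,v(X).
\]
The first thing I would try is to make every later cover combinatorially rigid on both sides. Concretely, I would choose the $\ell_{k'}$-covers, for $k'>k$, to be totally split over all vertices and edges, i.e. combinatorial covers pulled back from $\Delta^{cmb}$. Then on every curve where the cover is combinatorial, $t$ transforms exactly as $t\mapsto d(t-1)+1$, and this preserves inequalities. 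Here is how I would arrange that. By Lemma~\ref{s3:lem:functorial_M}, on the pairs already ``equalized'' the sets of maximal covers correspond. On the remaining pairs, the bound $v(Y)\le d\,v(X)$, with equality exactly for combinatorial covers, lets one count covers and pigeonhole. Among the $\ell_{k'}$-power quotients of the combinatorial quotient of $U^{(k'-1)}$ (a free group of rank $t$), a positive proportion are combinatorial for all $X_i$ simultaneously, once $\ell_{k'}$ is large compared with the bounded number of vertices. Choosing the degree $\ell_k^n$ at step $k$ with $n$ large relative to the bounded distortion introduced by such covers, I expect every inequality to survive to $U=U^{(m)}$. This gives the desired contradiction and hence Proposition~\ref{s4:prop:graphicity_criterion_ell}.
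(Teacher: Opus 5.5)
Your overall frame is sound and close to the paper's: the contrapositive, the reduction to sturdy and untangled reduction, the fact that non-graphicity of $\alpha^{\ell}_{ij}$ passes to open subgroups by Lemma~\ref{s3:lem:graphicity_restriction}(i), and separating one pair at a time. The gap is the step you yourself call the main obstacle: making inequalities $t(U_i)\ne t(U_j)$ already obtained survive later passages to smaller subgroups. You leave this as an expectation, and the mechanism you sketch cannot deliver it.

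\emph{Combinatorial covers cannot do the job.} Covers that are combinatorial on every $X_a$ change all toric ranks by the same injective map $t\mapsto d(t-1)+1$. They preserve equalities as well as inequalities, so they can never create the new inequality for $\pi_{k'}$. You would need covers combinatorial on all curves of earlier pairs yet non-combinatorial on a curve of $\pi_{k'}$. When pairs share a curve (say $P=\{(0,1),(0,2)\}$), nothing in the proposal produces such a cover. Your counting claim does not supply one either. The maps $\Delta\to\Z/\ell\Z$ that are combinatorial for $X_0$ form a subspace of dimension $t(X_0)$, and the non-graphic $\alpha_a$ need not respect combinatorial quotients. Nothing prevents the intersection of the relevant subspaces from being zero.

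\emph{A large difference does not help.} The cover from Lemma~\ref{s3:lem:cover_split_on_S} gives a difference of order $\ell_k^n$, but $e$ of the new graphs is of the same order. The only general constraints on a later Galois cover of degree $d$ are $e(X)\le e(Y)\le d\,e(X)$ and $v(X)\le v(Y)\le d\,v(X)$. These let $t(Y)-1$ range over an interval of length $(d-1)(e(X)+v(X))$. Difference and graph size grow together, so for $d\ge 2$ these bounds never force the inequality to survive, however large $n$ is.

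\emph{The primes cannot be fixed in advance.} Choosing $\ell_1,\dots,\ell_m$ up front as ``all large'' has no content, since the threshold that matters depends on the graphs produced by the earlier steps.

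The missing idea is Lemma~\ref{s4:lem:preserving_distinct_values}, a base-$\ell$ digit argument. Let $U\subset\Delta$ be \emph{normal} of $\ell$-power index. Then every vertex or edge of $\Gamma(X_a)$ has exactly $\ell^m$ preimages in $\Gamma(U_a)$ for some $m$. Let $e_{a,m}$ (resp.\ $v_{a,m}$) count the edges (resp.\ vertices) with $\ell^m$ preimages. Then $t(U_a)-1=\sum_m (e_{a,m}-v_{a,m})\ell^m$ and $\sum_m(e_{a,m}-v_{a,m})=t(X_a)-1$. If $\ell> e(X_a)+e(X_b)+v(X_a)+v(X_b)$, the coefficients of $t(U_a)-t(U_b)$ have absolute value less than $\ell$. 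So $t(U_a)=t(U_b)$ forces every coefficient to vanish, hence $t(X_a)=t(X_b)$.

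Thus inequalities persist automatically under any normal subgroup of prime-power index, for a prime above the current threshold $N$. No combinatorial condition and no size estimate are needed. To use this you must do two things. First, choose each new prime \emph{after} the previous step, above the recomputed $N$; this is possible precisely because each $L_{ij}$ is infinite. Second, create each new difference only through normal subgroups of prime-power index for such primes. Proposition~\ref{s3:prop:graphicity_numeric} alone does not give that, since its separating subgroup need not be normal in $\Delta$. The paper instead uses Lemma~\ref{s4:lem:V_U_subgroups}: it gives a normal $U$ of $\ell$-power index such that for \emph{every} prime $\ell'$ some normal $V\subset U$ of index dividing $\ell'$ has $v(V)\ne v(V_1)$. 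If the toric ranks still agree, it then applies Lemma~\ref{s3:lem:toric_to_ev}. It recomputes $N$ before each of these three passages.

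A minor point: Lemma~\ref{s3:lem:graphicity_restriction}(ii) concerns $\ell$-subgroups, not subgroups of index prime to $\ell$. This is harmless, since you only need part~(i).
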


First we show how to derive Theorem~\ref{s4:thm:graphicity_criterion} from Proposition~\ref{s4:prop:graphicity_criterion_ell}.

\begin{proof}[Proof of Theorem~\ref{s4:thm:graphicity_criterion}]
For an open subgroup $U\subset \Delta$ we define a subset $S_U\subset P$ as the set of all pairs $(i,j)\in P$ such that, writing $\alpha = \alpha_{ij}$, for all sufficiently large prime numbers $\ell$ the isomorphism 
\[
\alpha^{\ell}_U\colon U^{\ell}_i\xrightarrow{\sim} U^{\ell}_j
\]
is graphic.
With this definition Proposition~\ref{s4:prop:graphicity_criterion_ell} is equivalent to the statement that the set $S_{\Delta}$ is nonempty.
Applying Proposition~\ref{s4:prop:graphicity_criterion_ell} to all open subgroups $U\subset \Delta$ we deduce that for every $U$ the set $S_U$ is nonempty.
Moreover, thanks to Lemma~\ref{s3:lem:graphicity_restriction} we see that for every pair of open subgroups 
\[
U\subset V\subset \Delta
\]
we have the inclusion $S_U\subset S_V$.
Thus the intersection
\[
S = \bigcap  S_U
\]
is nonempty, let $(i,j)\in S$ and write $\alpha = \alpha_{ij}$.
Then for every open subgroup $U\subset \Delta$ there exists a natural number $n_U$ such that for all prime numbers $\ell > n_U$ the isomorphism $\alpha^{\ell}_U$ is graphic, in particular this implies that
\[
v(U_i) = v(U_j).
\]
Hence $\alpha$ is numerically verticial and by Proposition~\ref{s3:prop:graphicity_numeric} we conclude that $\alpha$ is graphic. 
\end{proof}

The proof of Proposition~\ref{s4:prop:graphicity_criterion_ell} will use an induction argument on $n$.
Note that in the case $n=1$ the equivalence $(i)\Leftrightarrow (iii)$ in Proposition~\ref{s3:prop:graphicity_numeric} clearly implies Theorem~\ref{s4:thm:graphicity_criterion}. 
However, for the induction to work properly, we will first need to look more closely at the case of two hyperbolic curves.  
Thus, for the next lemma we assume that $n=1$ and write $\alpha = \alpha_1$.

\begin{lemma}\label{s4:lem:V_U_subgroups}
Assume that $X$ and $X_1$ have sturdy and untangled reduction.
Let $\ell\ne p$ be a prime number such that the induced isomorphism 
\[
\alpha^{\ell} \colon \Delta^{\ell} \xrightarrow{\sim} \Delta_1^{\ell}
\]
is not graphic.
Then there exists a normal open subgroup $U\subset \Delta$ of $\ell$-power index such that for every prime number $\ell'$ there is a normal open subgroup $V\subset U$ of index dividing $\ell'$ such that
$v(V) \ne v(V_1)$.
\end{lemma}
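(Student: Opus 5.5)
The proof reduces to one claim: there is a normal open subgroup $U\subset\Delta$ of $\ell$-power index with $v(U)\ne v(U_1)$. Given such a $U$, the lemma follows at once, because for every prime $\ell'$ we may take $V=U$. Its index in $U$ is $1$, which divides $\ell'$, and $v(V)\ne v(V_1)$ holds by the choice of $U$. So the whole work lies in producing $U$. I will argue by contradiction: assume that $v(U)=v(U_1)$ for every normal open subgroup $U\subset\Delta$ of $\ell$-power index, and deduce that $\alpha^{\ell}$ is graphic.

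First, $\alpha^{\ell}$ is cuspidal by Lemma~\ref{s2:lem:cuspidal_isoms}(iii), since $\alpha$ is cuspidal. The open subgroups of $\Delta^{\ell}$ are exactly the images of the open subgroups of $\Delta$ of $\ell$-power index. Moreover, the normal ones among them form a cofinal system in $\Delta^{\ell}$. The idea is to rerun the proof of Proposition~\ref{s3:prop:graphicity_numeric} for $\Sigma=\{\ell\}$, taking the inverse limit $\varprojlim C_{\ell}(U)\xrightarrow{\sim}\varprojlim C_{\ell}(U_1)$ only over this cofinal system of normal subgroups $U$. Since $X$ and $X_1$ have sturdy and untangled reduction, so do all their covers, and each $C_{\ell}(U)$ is in canonical bijection with $V(U)$. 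Once we have a bijection of pro-vertex sets compatible with the actions of $\Delta^{\ell}$ and $\Delta^{\ell}_1$, comparing stabilizers shows that $\alpha^{\ell}$ is verticial. Then Proposition~\ref{s3:prop:graphicity=cuspidal_and_verticial} makes it graphic, which is the desired contradiction. Here the maps $C_\ell(U')\to C_\ell(U)$ for $U'\subset U$ are the functorial maps constructed after \eqref{s3:eq:C_to_V}. These are defined purely through combinatorial and maximal covers, so they too are transported by $\alpha$ once the relevant vertex counts agree.

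The main obstacle is the input to Lemma~\ref{s3:lem:functorial_M} at a level $U$. That lemma needs $v(W)=v(W_1)$ for all index-$\ell$ subgroups $W\subset U$ that are normal in $U$, but such a $W$ need not be normal in $\Delta$. I will handle this by passing to the normal core $N\subset W$ in $\Delta$. This $N$ is normal of $\ell$-power index in $\Delta$, so $v(N)=v(N_1)$ by assumption, and the same holds for every normal subgroup of $\Delta$ of $\ell$-power index contained in $N$. I then plan to use the characterization that $W$ is maximal (respectively, $W$ combinatorial) in terms of which vertices of $U$ are inert or totally split. I would try to show this characterization can be read off from vertex counts of $\Delta$-normal subgroups, using covers supplied by Lemma~\ref{s3:lem:cover_split_on_S} over suitable sets of vertices.

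If this reconstruction proves too delicate, the fallback is to allow $U$ itself to be non-normal in $\Delta$, with the normality in the statement then taken relative to $U$. In that version one picks $U$ normal of $\ell$-power index at which the induced bijection $M_\ell(U)\to M_\ell(U_1)$ first fails. Failure of that bijection then produces an index-$\ell$ subgroup $V\subset U$, normal in $U$, with $v(V)\ne v(V_1)$. For primes $\ell'\ne\ell$ one would instead use $V=U$ when $v(U)\ne v(U_1)$, or amplify the discrepancy with $\Z/\ell'\Z$-covers from Lemma~\ref{s3:lem:cover_split_on_S}.
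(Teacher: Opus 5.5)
Your opening reduction is valid, since index $1$ divides $\ell'$. But it trades the lemma for a claim you never prove: that non-graphicity of $\alpha^{\ell}$ is already detected by the vertex count of some $\Delta$-normal subgroup of $\ell$-power index.

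Your main line breaks exactly at the obstacle you name. Rerunning the proof of Proposition~\ref{s3:prop:graphicity_numeric} over $\Delta$-normal levels $U$ needs the equalities $v(W)=v(W_1)$ for index-$\ell$ subgroups $W$ that are normal in $U$ but not in $\Delta$. These are needed both in Lemma~\ref{s3:lem:functorial_M} with the prime $\ell$ and in the transition maps $C_{\ell}(U')\to C_{\ell}(U)$. Your hypothesis does not supply them.

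Passing to the $\Delta$-core $N$ of $W$ only expresses $v(W)$ as the number of $W/N$-orbits on $V(N)$. Orbit counts of normal subgroups of $\Delta/N$ do not in general determine orbit counts of non-normal ones. Lemma~\ref{s3:lem:cover_split_on_S} only builds covers of $X$ with prescribed splitting; it says nothing about how the corresponding covers of $X_1$ split. So the step you describe as \emph{I would try to show} is the whole content, and it is missing.

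The fallback does not help either. Making $U$ non-normal in $\Delta$ changes the statement, and normality and $\ell$-power index of $U$ are exactly what Lemma~\ref{s4:lem:preserving_distinct_values} needs in the proof of Proposition~\ref{s4:prop:graphicity_criterion_ell}. Failure of $M_{\ell}(U)\to M_{\ell}(U_1)$ produces $V$ only for $\ell'=\ell$. Nothing converts a discrepancy at an index-$\ell$ subgroup of $U$ into one at an index-$\ell'$ subgroup of $U$.

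The missing idea is to let a non-normal subgroup carry the discrepancy, use $\ell'$ only at a normal level, and argue by contradiction separately for each $\ell'$.
\begin{enumerate}[(1)]
\item Proposition~\ref{s3:prop:graphicity_numeric}, applied to the cuspidal non-graphic $\alpha^{\ell}$, gives an open $H\subset\Delta$ with $v(H)\neq v(H_1)$, whose normal core $U$ has $\ell$-power index.
\item If $v(U)\neq v(U_1)$, take $V=U$.
\item Otherwise fix a prime $\ell'\neq p$, and suppose every index-$\ell'$ subgroup $V$ normal in $U$ satisfies $v(V)=v(V_1)$. This is precisely the hypothesis of Lemma~\ref{s3:lem:functorial_M} for $U$, $U_1$ and the prime $\ell'$. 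That lemma then gives a bijection $V(U)\xrightarrow{\sim}V(U_1)$ compatible with the actions of $\Delta/U\cong\Delta_1/U_1$.
\item Passing to $H/U$-orbits gives $v(H)=v(H_1)$, a contradiction.
\end{enumerate}
This $U$ does not depend on $\ell'$, and $V$ only has to be normal in $U$. That is exactly why no statement about $\Delta$-normal subgroups is ever needed.

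One caveat on the statement itself: Lemma~\ref{s3:lem:functorial_M} requires $\ell'\in\Sigma$, so the lemma should be read for $\ell'\neq p$, which is all that is used later. For $\ell'=p$ the only candidate would be $V=U$, so the literal statement would reduce to your unproved claim.
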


\begin{proof}
Since $\alpha^{\ell}$ preserves cusps and is not graphic it follows from Proposition~\ref{s3:prop:graphicity_numeric} that there exists a normal open subgroup $U\subset \Delta$ of $\ell$-power index and an intermediate subgroup $U\subset H\subset \Delta$ such that 
\[
v(H) \ne v(H_1).
\]
If $v(U) \ne v(U_1)$ then we may take $V = U$ so assume that $v(U) = v(U_1)$.

Suppose on the contrary that there exists a prime number $\ell'$ such that for every normal open subgroup $V\subset U$ of index $\ell'$ we have $v(V) = v(V_1)$.
Then it follows from Lemma~\ref{s3:lem:functorial_M} applied to $\ell'$ and the open subgroup $U$ that $\alpha$ induces a bijection
\[
V(U)\xrightarrow{\sim} V(U_1),
\] 
which is compatible with the action of $\Delta/U$ on both sides.
However, after taking $H/U$-invariants we obtain
$v(H) = v(H_1)$ which is a contradiction.
\end{proof}

We now come back to the general case $n\ge 1$.
To establish the induction step in the proof of Proposition~\ref{s4:prop:graphicity_criterion_ell} we will use the following key lemma.

\begin{lemma}\label{s4:lem:preserving_distinct_values}
There exists a natural number $N$ with the following property: for every prime number $\ell > N$, every normal open subgroup $U\subset \Delta$ of $\ell$-power index and every pair $(i,j)\in P_n$, whenever $t(X_i)\ne t(X_j)$ then
\[
t(U_i) \ne t(U_j)
\]
as well.
\end{lemma}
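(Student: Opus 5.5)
The plan is to prove the congruence
\[
t(U_i)\equiv t(X_i) \pmod{\ell-1}
\]
for every $0\le i\le n$, every prime $\ell\ne p$, and every normal open subgroup $U\subset\Delta$ of $\ell$-power index. The lemma then follows by taking
\[
N=\max\Bigl(p,\ \max_{(i,j)\in P_n}|t(X_i)-t(X_j)|+1\Bigr).
\]
Indeed, if $t(X_i)\ne t(X_j)$ and $\ell>N$, then $t(X_i)$ and $t(X_j)$ are distinct integers whose difference has absolute value less than $\ell-1$, so they are not congruent modulo $\ell-1$. Hence $t(U_i)\ne t(U_j)$. This argument does not even use the isomorphisms $\alpha_i$ beyond the facts that $U_i=\alpha_i(U)$ is again normal of $\ell$-power index in $\Delta_i$ and that $\ell\neq p$.

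To prove the congruence, fix $i$ and write $Y\to X_i$ for the Galois cover corresponding to $U_i$, with Galois group $G=\Delta_i/U_i$, an $\ell$-group. The cover extends to a finite morphism of stable models, and $G$ acts on $\mathcal{Y}$ with quotient $\mathcal{X}_i$; this holds because the degree is prime to $p$, by the result of Tamagawa quoted in Section~\ref{s:graphic}. Consequently $G$ acts on the graph $\Gamma(Y)$ with quotient graph $\Gamma(X_i)$, and this action is compatible with the labels. After passing to barycentric subdivisions, which changes neither $H_1$ nor the quotient, we may assume that $G$ acts without edge inversions. The standard transfer argument for finite group actions on graphs (equivalently, rational cellular homology of the quotient CW complex) then gives
\[
H_1(\Gamma(Y),\Q)^G\cong H_1(\Gamma(X_i),\Q).
\]
Let $M$ be a $G$-stable complement of the invariants, which exists by Maschke's theorem. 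Then $M$ is a $\Q[G]$-module with $M^G=0$, and $t(Y)-t(X_i)=\dim_{\Q}M$.

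It remains to show that for a finite $\ell$-group $G$ every $\Q[G]$-module $M$ with $M^G=0$ has dimension divisible by $\ell-1$. We argue by induction on $|G|$; the case $G=1$ is trivial, since then $M=0$. For $G\ne 1$, choose a central element $z$ of order $\ell$. Since $z^\ell=1$, we can decompose $M=M^{z}\oplus M'$ with $M'=(1-z)M$. On $M'$ the element $z$ acts through $\Q[z]/(\Phi_\ell)\cong\Q(\zeta_\ell)$. Hence $M'$ is a $\Q(\zeta_\ell)$-vector space, and its $\Q$-dimension is divisible by $[\Q(\zeta_\ell):\Q]=\ell-1$. Because $z$ is central, $M^{z}$ is $G$-stable. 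It is therefore a $\Q[G/\langle z\rangle]$-module, and it satisfies $(M^z)^{G/\langle z\rangle}=M^G=0$. By induction $\dim M^z$ is divisible by $\ell-1$, and so is $\dim M$.

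The main point requiring care is the identification of $\Gamma(X_i)$ with the quotient graph $\Gamma(Y)/G$. One must check that the $G$-action on the special fibre of $\mathcal{Y}$ induces exactly the reduction graph of $X_i$, including nodes and cusps. For this I would cite the compatibility of stable models with tame Galois covers (\cite[Proposition 2.2]{tamagawa_2004_resolution}) and the description of $\Delta^{\Sigma}_{X_i}$ acting on $\tilde{\Gamma}(X_i)$ given in Section~\ref{s:graphic}. Open edges (cusps) do not contribute to $H_1$ and can be ignored. Once this identification is in place, the remaining steps are formal.
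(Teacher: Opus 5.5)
Your proof is correct, but the key step differs from the paper's. The paper reduces to two curves and takes $N=e(X)+e(X_1)+v(X)+v(X_1)$. It then uses only that the fibre of every vertex and edge of $\Gamma(X_i)$ in $\Gamma(U_i)$ has cardinality $\ell^j$ for some $j$. Writing $e(U_i)=\sum_j e_{ij}\ell^j$ and $v(U_i)=\sum_j v_{ij}\ell^j$, the equality $t(U)=t(U_1)$ becomes $\sum_j a_j\ell^j=0$ with $|a_j|<\ell$. Uniqueness of base-$\ell$ expansions forces every $a_j=0$, and summing over $j$ gives $t(X)=t(X_1)$.

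You instead prove $t(U_i)\equiv t(X_i)\pmod{\ell-1}$. You do this via the transfer isomorphism $H_1(\Gamma(Y),\Q)^G\cong H_1(\Gamma(X_i),\Q)$, Maschke's theorem, and the fact that rational representations of an $\ell$-group without invariants have dimension divisible by $\ell-1$.

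Both arguments rest on the same geometric input: $\Gamma(X_i)$ is the quotient of $\Gamma(U_i)$ by the $\ell$-group $\Delta_i/U_i$. In the paper's setup this is just $\tilde{\Gamma}(X_i)/\Delta_i$ versus $\tilde{\Gamma}(X_i)/U_i$, and no edge inversions occur, so your barycentric subdivision is harmless but unnecessary.

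Your congruence actually follows in one line from the paper's orbit count. Since $\ell^j\equiv 1\pmod{\ell-1}$, you get $e(U_i)\equiv e(X_i)$ and $v(U_i)\equiv v(X_i)$ modulo $\ell-1$, so the representation-theoretic machinery (transfer, Maschke, cyclotomic fields) is not needed. What your formulation buys is a cleaner invariant statement. It also gives a constant $N$ that depends only on $p$ and the differences of the toric ranks, not on the edge and vertex counts. The paper's digit-by-digit argument yields the finer equalities $e_{0j}-v_{0j}=e_{1j}-v_{1j}$ for each $j$, though only their sum is used.
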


\begin{proof}
Observe that it is enough to prove the lemma in the case of only two curves, thus we assume that $n=1$ and $t(X) \ne t(X_1)$.
We will show that the statement holds with the following constant 
\[
N = e(X) + e(X_1) + v(X) + v(X_1).
\]
Let $\ell > N$ be a prime number and $m\ge 1$ be a natural number.
Consider a normal open subgroup $U\subset \Delta$ of index $\ell^m$.
Note that for every vertex $v\in V(X)$ the preimage of $v$ under the morphism of graphs $\Gamma(U)\to \Gamma(X)$ is a set of cardinality $\ell^j$ for some $0\le j \le m$.
For $i =0,1$ and $0\le j\le m$ denote by $v_{ij}$ (resp. $e_{ij}$) the cardinality of the set of vertices (resp. edges) of $\Gamma(X_i)$ whose preimage through the map $\Gamma(U_i)\to \Gamma(X_i)$ has size $\ell^{j}$.
Then for $i=0,1$ we have 
\[
e(X_i) = \sum^{m}_{j=0} e_{ij},\quad v(X_i) = \sum^{m}_{j=0} v_{ij},
\]
as well as
\[
e(U_i) = \sum^{m}_{j=0}e_{ij}\ell^{j},\quad v(U_i) = \sum^{m}_{j=0}v_{ij}\ell^{j}.
\]  
Suppose that $t(U) = t(U_1)$ thus also $e(U) - v(U) = e(U_1) - v(U_1)$ hence
\[
\sum^{m}_{j=0}(e_{0j} - v_{0j})\ell^{j} = \sum^{m}_{j=0}(e_{1j} - v_{1j})\ell^{j}.
\] 
Putting $a_j = (e_{0j} - v_{0j}) - (e_{1j} - v_{1j})$ for $0\le j\le m$ we obtain
\begin{equation}\label{s4:loc:ell_power_sum}
a_0 + a_1\ell+ \ldots + a_m\ell^m = 0.
\end{equation}
However $|a_j|\le N \le \ell - 1$ and this inequality together with equation~\eqref{s4:loc:ell_power_sum} implies that $a_j=0$ for all $0\le j\le m$.
Hence for every $0\le j\le m$ we have 
\[
e_{0j} - v_{0j} = e_{1j} - v_{1j}.
\]
Taking the sum over all $0\le j\le m$ we deduce that $e(X) - v(X) = e(X_1) - v(X_1)$ hence $t(X) = t(X_1)$, contrary to our assumption.
Hence we must have $t(U)\ne t(U_1)$ which shows that the natural number $N$ has the desired property.
\end{proof}

Finally, we can put together all the pieces to prove our main result of this section.

\begin{proof}[Proof of Proposition~\ref{s4:prop:graphicity_criterion_ell}]
We will prove the contrapositive; namely we assume that for all $(i,j)\in P$ the isomorphism $\alpha^{\ell}_{ij}$ is not graphic for infinitely many primes $\ell$ and we construct an open subgroup $U\subset \Delta$ such that for every $(i,j)\in P$ we have $t(U_i)\ne t(U_j)$.
In the proof we will repeatedly replace $\Delta$ by an open subgroup; note that by Lemma~\ref{s3:lem:graphicity_restriction} our assumption is stable with respect to this operation.
In particular we may replace $\Delta$ by a characteristic open subgroup and assume that all curves $X_i$ for $0\le i\le n$ have sturdy and untangled reduction.
We proceed by induction on $n$, the base case $n = 1$ follows from Proposition~\ref{s3:prop:graphicity_numeric}.
Thus let $n\ge 2$ and assume that the statement holds for all sets of hyperbolic curves of cardinality at most $n$

Let $P'_n\subset P_n$ be the subset of all unordered pairs $(i,j)$ with $1\le i\ne j\le n$ and put 
\[
P' = P\cap P'_n.
\]
Write $Q$ for the set of all integers $0\le j\le n$ such that the pair $(0,j)$ belongs to $P$.
Then we have a disjoint sum decomposition
\[
P = P'\sqcup Q',
\]
where $Q'\subset P_n$ is the set of pairs $(0,j)$ with $j\in Q$.
We apply the induction hypothesis to the collection of hyperbolic curves 
\[
X_1,\ldots ,X_n
\]
and the set $P'$.
Thus, after replacing $\Delta$ by an open subgroup we may assume that for every pair $(i,j)\in P'$ we have 
\[
t(X_i)\ne t(X_j).
\]
Write $R\subset Q$ for the set of all $j\in Q$ such that $t(X)\ne t(X_j)$ and let $R'\subset Q'$ be the set of all pairs $(0,j)\in Q'$ with $j\in R$.
From these definitions we see that the following property holds:
\begin{quote}
$(*)$ for every pair $(i,j)$ belonging to $P'\sqcup R'$ we have $t(X_i)\ne t(X_j)$.
\end{quote}
If $R = Q$ then $R' = Q'$ hence we are already done thus we may assume that $R\ne  Q$.
Changing labels we may also assume that $1\in Q\setminus R$, write $\alpha = \alpha_{1}$.
We are going to show that after passing to a suitable open subgroup of $\Delta$ the property $(*)$ will still hold and additionally we will get $t(X)\ne t(X_1)$; in other words the cardinality of the set $R$ will increase.
This will finish the proof, as in a finite number of steps we will reach the situation when $R = Q$.

We apply Lemma~\ref{s4:lem:preserving_distinct_values} to the collection of hyperbolic curves
\[
X_0, X_1,\ldots , X_n
\]
to find the corresponding constant $N$.
By our assumption there exists a prime number $\ell > N$ such that the isomorphism $\alpha^{\ell}$ is not graphic.
Hence it follows from Lemma~\ref{s4:lem:V_U_subgroups} that we can find a normal open subgroup $U\subset \Delta$ of $\ell$-power index satisfying the following property:
\begin{quote}
$(**)$ for every prime number $\ell'$ there exists a normal open subgroup $V\subset U$ of index dividing $\ell'$ such that $v(V)\ne v(V_1)$.
\end{quote}
Replacing $\Delta$ by the open subgroup $U$ we may assume that $\Delta$ itself satisfies the property $(**)$.
Moreover by the conclusion of Lemma~\ref{s4:lem:preserving_distinct_values} we see that after this replacement the property $(*)$ still holds.

We again apply Lemma~\ref{s4:lem:preserving_distinct_values} as above and choose a new constant $N$.
Choose a prime number $\ell'> N$ and let $V\subset \Delta$ be a normal open subgroup of index dividing $\ell'$ obtained from the property $(**)$.
Replacing $\Delta$ by $V$ as previously we may therefore assume that
\[
v(X) \ne v(X_1),
\]
moreover as before the property $(*)$ still holds.
If $t(X) \ne t(X_1)$ then we have already enlarged the set $R$, thus we may assume that $t(X) = t(X_1)$.

We apply Lemma~\ref{s4:lem:preserving_distinct_values} for the third time to find yet another constant $N$.
Choose a prime number $\ell > N$. 
Then it follows from Lemma~\ref{s3:lem:toric_to_ev} that there exists a normal open subgroup $U\subset \Delta$ of $\ell$-power index such that $t(U) \ne t(U_1)$.
Replacing $\Delta$ by the open subgroup $U$ we see as before that the property $(*)$ still holds, moreover by the choice of $U$ we have enlarged the set $R$.
As we have already stated this is enough to conclude. 
\end{proof} 

\begin{remark}
In fact, our arguments in this section used only covers of hyperbolic curves obtained as a composition of cyclic covers of prime power degree. 
Therefore, it follows that Theorem~\ref{s4:thm:graphicity_criterion} and Corollary~\ref{s4:cor:toric_separation_for_n} are valid also in the situation when the groups $\Delta_i$ are replaced by their maximal pro-solvable quotients.
\end{remark}

In the final part of this section we are going to apply Theorem~\ref{s4:thm:graphicity_criterion} together with a compactness argument to obtain Corollary~\ref{s4:cor:compactness} which is an extension of Corollary~\ref{s4:cor:toric_separation_for_n}; this result will be essential for the proofs of Theorems~\ref{s1:thm:separation_in_char_zero} and~\ref{s1:thm:separation_by_toric_rank}.

Consider again the collection~\eqref{s4:eq:collection_of_hyp_curves} of hyperbolic curves over $k$, for some natural number $n\ge 1$.
Define $P\subset P_n$ to be the subset consisting of all pairs $(i,j)\in P$ such that the reduction graphs of hyperbolic curves $X_i$ and $X_j$ are not isomorphic. 
Write $C$ for the set of all $n$-tuples 
\[
\alpha =(\alpha_1,\ldots \alpha_n)
\]
of cuspidal isomorphisms
\[
\alpha_i\colon \Delta\xrightarrow{\sim} \Delta_i.
\]
Given an $n$-tuple $\alpha$ and an open subgroup $U\subset \Delta$ we write $U^{\alpha}_i$ for $\alpha_i(U)$; additionally we denote by $\alpha_0$ the identity map on $\Delta$.

Since $\Delta$ is topologically finitely generated, there exists a descending filtration of characteristic open subgroups
\[
\Delta\supset U^{(1)}\supset U^{(2)}\supset \ldots,
\]
such that the intersection $\cap_{j\ge 1} U^{(j)}$ is trivial.
Note that for every $1\le i\le n$ we also get an analogous filtration
\[
\Delta_i \supset U^{(1)}_i \supset U^{(2)}_i \supset \ldots
\]
such that any isomorphism $\Delta\xrightarrow{\sim}\Delta_i$ is strictly compatible with these filtrations.
For an integer $j\ge 1$ we write $A_j$ for the set of all $n$-tuples
\[
\bar{\alpha} = (\bar{\alpha}_1,\ldots \bar{\alpha}_n)
\]
of isomorphisms
\[
\bar{\alpha}_i \colon \Delta/U^{(j)} \xrightarrow{\sim} \Delta_i/U^{(j)}_i,
\]
note that all sets $A_j$ are finite.
Given an $n$-tuple $\bar{\alpha} \in A_j$ and an open subgroup $\Delta\supset U\supset U^{(j)}$ we write $U^{\bar{\alpha}}_i$ for the open subgroup of $\Delta_i$ determined by $\bar{\alpha}_i(U)$; additionally we put $U^{\bar{\alpha}}_0 = U$. 
For an integer $j\ge 1$ we define a subset $C_j\subset A_j$ consisting of all $n$-tuples $\bar{\alpha}\in A_j$ such that for every open subgroup 
\[
\Delta\supset U\supset U^{(j)}
\]
we have $r(U) = r(U^{\bar{\alpha}}_i)$.
Observe that there are natural restriction maps $C\to C_j$ for $j\ge 1$ and $C_{j'}\to C_{j}$ for $j'\ge j$.
These restriction maps are clearly compatible with each other thus we have an induced map 
\[
C\to \varprojlim C_j
\]
which is a bijection by Lemma~\ref{s2:lem:cuspidal_isoms},~$(ii)$. 
Next, for every $j\ge 1$ we define a subset $S_j\subset C_j$ consisting of all $n$-tuples $\bar{\alpha}\in C_j$ such that for every open subgroup 
\[
\Delta\supset U\supset U^{(j)}
\]
there exist indices $(i,i')\in P$ such that $t(U^{\bar{\alpha}}_i) = t(U^{\bar{\alpha}}_{i'})$.
Clearly for integers $j'\ge j$ the natural map $C_{j'}\to C_j$ restricts to a map $S_{j'}\to S_j$.
 
We claim that the inverse limit
\begin{equation}\label{s4:loc:inverse_limit}
\varprojlim S_j
\end{equation}
is empty. Indeed, suppose this is not the case and let
\[
\alpha \in \varprojlim S_j \subset \varprojlim C_j = C
\]
be an $n$-tuple of cuspidal isomorphisms.
For every open subgroup $U\subset \Delta$ there exists an integer $j\ge 1$ such that
\[
\Delta\supset U \supset U^{(j)}
\]
Write $\bar{\alpha}$ for the image of $\alpha$ in $S_j$.
By the definition of the set $S_j$ we see that there exist indices $(i,i')\in P$ such that
\[
t(U^{\alpha}_i) = t(U^{\bar{\alpha}}_i) = t(U^{\bar{\alpha}}_{i'}) = t(U^{\alpha}_{i'})
\]
As the open subgroup $U$ was arbitrary it follows from Theorem~\ref{s4:thm:graphicity_criterion} that there exist indices $(i,i')\in P$ such that the isomorphism
\[
\alpha_{ii'} = \alpha_{i'} \circ \alpha^{-1}_i \colon \Delta_i\xrightarrow{\sim} \Delta_{i'}
\]
is graphic.
But this is a contradiction since by the definition of $P$ the reduction graphs of $X_{i}$ and $X_{i'}$ are not isomorphic.
Thus the inverse limit~\eqref{s4:loc:inverse_limit} is empty, since all sets $S_j$ are finite this implies that there exists an integer $N\ge 1$ such that the sets $S_j$ are empty for all $j\ge N$.
We summarize the above discussion in the next corollary.

\begin{corollary}\label{s4:cor:compactness}
For a finite collection~\eqref{s4:eq:collection_of_hyp_curves} of hyperbolic curves over $k$ there exists a natural number $N\ge 1$ with the following property: for any $n$-tuple $\alpha \in C$ of cuspidal isomorphisms there exists an open subgroup $U\subset \Delta$ of index at most $N$ such that whenever the reduction graphs of $X_i$ and $X_j$ are not isomorphic then the toric ranks
\[
t(U^{\alpha}_i)\ne t(U^{\alpha}_j)
\]
are distinct.
\end{corollary}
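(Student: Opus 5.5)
The plan is to read the corollary directly off the compactness argument just carried out, making precise what ``index at most $N$'' means. First I use the preceding discussion to fix an integer $j_0\ge 1$ with $S_{j_0}=\emptyset$. This integer exists because $\varprojlim S_j$ is empty and each $S_j$ is finite. The relevant fact is that an inverse system of nonempty finite sets has nonempty limit; hence some $S_{j_0}$ is empty, and then every $S_j$ with $j\ge j_0$ is empty, since $S_j$ maps to $S_{j_0}$. I then set $N=[\Delta:U^{(j_0)}]$. This is finite because $U^{(j_0)}$ is an open subgroup of $\Delta$.

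Next, let $\alpha\in C$ be an $n$-tuple of cuspidal isomorphisms, and let $\bar\alpha$ be its image under the restriction map $C\to C_{j_0}$. This image exists because cuspidal isomorphisms preserve $r$ on corresponding covers, by Lemma~\ref{s2:lem:cuspidal_isoms}$(ii)$. Since $\bar\alpha\in C_{j_0}$ but $S_{j_0}=\emptyset$, we have $\bar\alpha\notin S_{j_0}$. Unwinding the definition of $S_{j_0}$, there is an open subgroup $U$ with $\Delta\supset U\supset U^{(j_0)}$ such that $t(U^{\bar\alpha}_i)\ne t(U^{\bar\alpha}_{i'})$ for every $(i,i')\in P$. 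Because $U$ contains $U^{(j_0)}$, its index is at most $N$. Finally, since $U\supset U^{(j_0)}$ and each $\alpha_i$ carries $U^{(j_0)}$ onto $U^{(j_0)}_i$ (the filtration is characteristic), we get $U^{\alpha}_i=U^{\bar\alpha}_i$. So the toric ranks $t(U^\alpha_i)$ and $t(U^\alpha_{i'})$ are distinct for all pairs whose reduction graphs are non-isomorphic, which is the set $P$.

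The only real content is the emptiness of $\varprojlim S_j$, which the text has already proved. That proof combines Theorem~\ref{s4:thm:graphicity_criterion} with the fact that graphic isomorphisms induce isomorphisms of reduction graphs. One point to double-check there is that the identification $C\cong\varprojlim C_j$ is truly a bijection: a compatible family of finite-level isomorphisms, each cuspidal at its level, must assemble to a cuspidal isomorphism. This holds because the $U^{(j)}$ are cofinal and numerical cuspidality is tested on open subgroups containing some $U^{(j)}$. The remaining steps are bookkeeping.
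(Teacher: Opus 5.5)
Your proof is correct and follows the paper's compactness argument: $\varprojlim S_j=\emptyset$ by Theorem~\ref{s4:thm:graphicity_criterion}, finiteness of the $S_j$ forces some $S_{j_0}=\emptyset$, and you then unwind the definition of $S_{j_0}$ at the image of $\alpha$ in $C_{j_0}$. Your explicit choice $N=[\Delta:U^{(j_0)}]$ also cleanly separates the filtration level from the bound on the index of $U$, which the paper's write-up conflates by calling both $N$.
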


\section{Separating classes by the toric rank}\label{s:separating_classes}

In this section we are going to apply the theory developed in previous sections to construct families of curves separating classes of points, our main result is Proposition~\ref{s5:prop:separating_compact_set}.
The situation we consider will be slightly more general than in Section~\ref{s:introduction}, as a finite set of points will be replaced by a finite partition of a set of points.  
We keep the same assumptions and notation as in the previous two sections.

First we introduce some notation and terminology.
For a set $S$ and a collection of subsets $S_i\subset S$ for $i\in I$ we say that 
\[
\mathcal{S} = \{S_i\}_{i\in I}
\]
is a \emph{partition} of $S$ if the subsets $S_i$ are pairwise disjoint, nonempty and $S = \cup_i S_i$.
Clearly a partition of $S$ is the same as an equivalence relation on $S$ and we will refer to the subsets $S_i$ as \emph{classes} of the partition $\mathcal{S}$.
A subset $S_0\subset S$ is called a \emph{set of representatives} of $\mathcal{S}$ if $S_0$ contains exactly one element from each class of $\mathcal{S}$.
When $I$ is finite we say that $\mathcal{S}$ is a \emph{finite} partition, when every class of $\mathcal{S}$ has exactly one element we say that $\mathcal{S}$ is a \emph{trivial} partition.
Given another partition 
\[
\mathcal{T} = \{T_j \}_{j\in J}
\]
of $S$ we say that $\mathcal{T}$ is \emph{finer} than $\mathcal{S}$ if for every $j\in J$ there exists $i\in I$ such that $T_j\subset S_i$.
For a set $S'$ and a map $\varphi\colon S'\to S$ the nonempty subsets $\varphi^{-1}(S_i)\subset S'$ define a partition of $S'$ called the \emph{pullback} of the partition $\mathcal{S}$.
For a hyperbolic curve $X$ over $k$, a subset $S\subset X(k)$ and a connected finite \'etale cover $X'\to X$ we write $S|_{X'}$ for the preimage of $S$ under the map $X'(k)\to X(k)$, when $S$ is equipped with a partition $\mathcal{S}$ then the partition of $S|_{X'}$ obtained as the pullback of $\mathcal{S}$ is denoted by $\mathcal{S}|_{X'}$.

Let $X$ be a hyperbolic curve over $k$.
Recall that in Definition~\ref{s1:def:separation_by_toric_rank} we introduced the notion of a family of proper curves separating points of a finite set by the toric rank.
Here we generalize it to the case of a finite set of families of hyperbolic curves and a finite partition of a not necessarily finite set.

\begin{definition}
Let $S\subset X(k)$ be a subset with a finite partition $\mathcal{S}$ and $\{Y_j\}_{j\in J}$ be a finite set of families of hyperbolic curves over $X$.
We say that families $\{Y_j\}_{j\in J}$ \emph{separate classes} of $\mathcal{S}$ by the \emph{toric rank} if for every set of representatives $S_0\subset S$ of $\mathcal{S}$
there exists an index $j\in J$ such that the compactification of the family $Y_j$ separates points in $S_0$ by the toric rank.
\end{definition}

Clearly this recovers Definition~\ref{s1:def:separation_by_toric_rank} in the case of a finite set and the trivial partition.
For convenience we state the following lemma whose proof is immediate from the definition.

\begin{lemma}\label{s5:lem:finer_partition}
Let $\{Y_j\}_{j\in J}$ be a finite set of families of hyperbolic curves over $X$ separating classes of $\mathcal{S}$ by the toric rank.
Let $\mathcal{T}$ be another partition of $S$ and suppose that $\mathcal{S}$ is finer than $\mathcal{T}$. 
Then families $\{Y_j\}_{j\in J}$ separate classes of $\mathcal{T}$ by the toric rank.
\end{lemma}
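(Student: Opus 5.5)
The statement to prove is Lemma~\ref{s5:lem:finer_partition}. The proof is a direct unwinding of the definition of separating classes, and the key observation is that sets of representatives of the coarser partition $\mathcal{T}$ can be enlarged to sets of representatives of the finer partition $\mathcal{S}$. Write $\mathcal{S} = \{S_i\}_{i\in I}$ and $\mathcal{T} = \{T_k\}_{k\in K}$. Since $\mathcal{S}$ is finer than $\mathcal{T}$, each $S_i$ lies in a unique class $T_{k(i)}$. Because the classes of $\mathcal{S}$ cover $S$ and are nonempty, the map $i\mapsto k(i)$ is surjective, and each class $T_k$ is the disjoint union of the $S_i$ with $k(i)=k$. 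In particular $\mathcal{T}$ is again a finite partition, so the definition of separating classes of $\mathcal{T}$ makes sense.

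Now fix a set of representatives $T_0\subset S$ of $\mathcal{T}$. Each $t\in T_0$ lies in a unique $S_i$, and distinct elements of $T_0$ lie in distinct classes $T_k$, hence in distinct classes $S_i$. Thus $T_0$ contains at most one element from each class of $\mathcal{S}$. We extend $T_0$ to a set of representatives $S_0\supset T_0$ of $\mathcal{S}$ by choosing one point in each class $S_i$ not already met by $T_0$; this is possible because each class is nonempty and there are finitely many of them.

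By hypothesis there is an index $j\in J$ such that the compactification of $Y_j$ separates the points of $S_0$ by the toric rank. That is, the Jacobians of the fibres over the points of $S_0$ have pairwise distinct toric ranks. Restricting this to the subset $T_0\subset S_0$, the toric ranks of the fibres over points of $T_0$ are still pairwise distinct. Hence the same family $Y_j$ separates the points of $T_0$. Since $T_0$ was an arbitrary set of representatives of $\mathcal{T}$, the families $\{Y_j\}_{j\in J}$ separate classes of $\mathcal{T}$ by the toric rank.

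There is no real obstacle here. The only point needing care is the extension step, namely checking that $T_0$ meets each class of $\mathcal{S}$ at most once; this follows from the fact that each class of $\mathcal{S}$ is contained in a single class of $\mathcal{T}$.
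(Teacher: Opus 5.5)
Your proposal is correct and is exactly the unwinding of the definition that the paper has in mind; the paper gives no separate argument and just says the proof is immediate from the definition. Extending a set of representatives of $\mathcal{T}$ to one of $\mathcal{S}$ and then restricting the pairwise distinctness of toric ranks to the subset is precisely what that remark leaves to the reader.
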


From now on we assume that $X$ is a proper hyperbolic curve. Write 
\[
D\subset X\times_k X
\]
for the diagonal and
\[
Y = X\times_k X\setminus D
\]
for the second configuration space of $X$.
We consider $Y$ as a scheme over $X$ via the first projection thus $Y\to X$ is a family of hyperbolic curves over $X$ with a fibre over a point $s\in X(k)$ equal to
\[
Y_s = X\setminus\{s\},
\]
we also write $\Delta_s = \Delta_{Y_s}$ for the $p$-prime fundamental group of the fibre over $s$.
We are going to define an equivalence relation on the set $X(k)$ of $k$-rational points of $X$.

\begin{definition} 
Let $s,t\in X(k)$ be two $k$-rational points of $X$.
We say that $s$ and $t$ are \emph{weakly graphically} equivalent if the reduction graphs of hyperbolic curves $Y_s$ and $Y_t$ are isomorphic.
Classes of the resulting partition of $X(k)$ are called \emph{weakly graphic}.
\end{definition}

Note that the partition of $X(k)$ into weakly graphic classes is a finite partition.
Using our results on graphic isomorphisms from Section~\ref{s:criterion} and the notion of a comparison isomorphism from Section~\ref{s:families} we prove our first result concerning separating classes by the toric rank.

\begin{proposition}\label{s5:prop:separating_finite_set}
Let $S\subset X(k)$ be a finite subset and $\mathcal{S}$ be a partition of $S$ into weakly graphic classes.
Then there exists a connected finite \'etale cover $X'\to X$ and a finite set of families of hyperbolic curves $\{Y_j\}_{j\in J}$ over $X'$ which separate classes of $\mathcal{S}|_{X'}$ by the toric rank.

Moreover, for every index $j\in J$ there exists a commutative diagram
\[
\begin{tikzcd}[cramped]
Y_j\arrow[r]\arrow[rd] & Y'\arrow[r]\arrow[d] & Y\arrow[d] \\
 & X'\arrow[r] & X,
\end{tikzcd}
\]
with horizontal morphisms being finite \'etale and the square being cartesian.
\end{proposition}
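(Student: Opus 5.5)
We fix a base point $s_0\in S$ and, for every $s\in S$, regard $\Delta_s$ as the $p$-prime fundamental group of the fibre $Y_s=X\setminus\{s\}$. The configuration space $Y\to X$ is a family of hyperbolic curves, so Definition~\ref{s2:def:comparison_smooth} gives for each $s\in S$ a nonempty set $C(Y/X,\bar s_0,\bar s)$ of comparison isomorphisms $\Delta_{s_0}\xrightarrow{\sim}\Delta_s$, and by the cuspidality lemma at the end of Section~\ref{s:families} these are all cuspidal. Enumerate $S=\{s_0,s_1,\dots,s_n\}$ and apply Corollary~\ref{s4:cor:compactness} to the collection $Y_{s_0},\dots,Y_{s_n}$. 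This yields an integer $N$ such that for every $n$-tuple $\alpha$ of cuspidal isomorphisms there is an open subgroup $U\subset\Delta_{s_0}$ of index at most $N$ with $t(U^{\alpha}_i)\ne t(U^{\alpha}_j)$ whenever $s_i,s_j$ lie in different weakly graphic classes. Since $\Delta_{s_0}$ is topologically finitely generated, there are only finitely many open subgroups of index $\le N$. Call this finite set $\mathcal{U}$; it is the future index set $J$.

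Next we make every $U\in\mathcal{U}$ monodromy trivial. Let $X'\to X$ be a connected finite \'etale cover that is Galois and large enough that every $U\in\mathcal{U}$ is monodromy trivial for the pulled-back family $Y'=Y\times_XX'\to X'$; such a cover exists by the remark at the end of Section~\ref{s:families}. Taking the intersection of finitely many open subgroups of $\pi_1(X)$ handles all $U$ simultaneously. For each $U$ we then use the construction of diagram~\eqref{s2:diag:construction_of_a_family}, with no further base change because $U$ is monodromy trivial. This gives a family $Y_U\to Y'\to X'$ of hyperbolic curves whose fibre over a point $t'\in X'(k)$ is the cover of $Y_{t}$ corresponding to $\alpha(U)$ for any comparison isomorphism $\alpha$ from $\bar s'_0$ to $\bar t'$. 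Here we fix a lift $s'_0$ of $s_0$ and use that the conjugacy class is independent of $\alpha$. This already produces the required commutative diagram with a cartesian square.

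Finally we check separation. Let $S_0'\subset S|_{X'}$ be any set of representatives of $\mathcal{S}|_{X'}$; it consists of lifts $t'_0,\dots,t'_m$ of points of $S$ lying in pairwise distinct weakly graphic classes. We reduce to the case $t'_0$ lying over $s_0$ by including all points of $S$ in the enumeration; equivalently, we run the compactness argument with every possible base point, which keeps $\mathcal{U}$ finite. Choose comparison isomorphisms $\alpha_i\in C(Y'/X',\bar s_0',\bar t'_i)$. Via the injection~\eqref{s2:loc:F'_to_F} these are cuspidal comparison isomorphisms for $Y/X$. Extend the tuple arbitrarily by cuspidal comparison isomorphisms to the remaining points of $S$, which is possible since the comparison sets are nonempty. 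Corollary~\ref{s4:cor:compactness} then gives $U\in\mathcal{U}$ with pairwise distinct toric ranks $t(\alpha_i(U))$. These are exactly the toric ranks of the fibres of $Y_U$ over $t'_i$. Since the proper curve has the same reduction graph as the compactification, the toric rank is the rank of the dual graph homology, and it does not change when cusps are filled in (open edges do not contribute). Hence the compactification of $Y_U$ separates $S'_0$.

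The main obstacle is the bookkeeping of base points. The representative over the class of $s_0$ need not be a lift of $s_0$, and lifts to $X'$ only allow comparison isomorphisms from the subsets $C(\mathcal{F}',\cdot,\cdot)$. The key point is that the corollary quantifies over all tuples of cuspidal isomorphisms, so any choice works. We handle the base point by applying the corollary once for each possible base point in $S$ and taking $\mathcal{U}$ to be the union of the resulting finite sets.
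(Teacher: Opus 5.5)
Your proposal is correct and is essentially the paper's proof: apply Corollary~\ref{s4:cor:compactness} to the fibres $Y_{s_0},\dots,Y_{s_n}$, pass to a cover $X'$ that trivializes the monodromy of all open subgroups of index at most $N$, and take one family from diagram~\eqref{s2:diag:construction_of_a_family} for each such subgroup. The base-point issue is simpler than you make it: extend $S'_0$ to a full lift $S'=\{t'_0,\dots,t'_n\}$ of $S$, as the paper does; if $t'_0$ differs from your fixed lift $s'_0$, apply the corollary to the tuple $\gamma_i\gamma_0^{-1}$, where $\gamma_i\in C(Y'/X',s'_0,t'_i)$, and replace the resulting subgroup by its preimage under $\gamma_0$, which still has index at most $N$ --- so one application of the corollary suffices and you need not range over all base points.
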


\begin{proof}
When $\#S = 1$ then the statement is trivial thus assume that $\#S = n + 1$ with $n\ge 1$.
Denote the elements of $S$ as
\[
S = \{s_0, s_1,\ldots , s_n\}.
\]
Denote by $Y_i = Y_{s_i}$ the fibre of $Y\to X$ over $s_i$ and let $\Delta_i$ be the $p$-prime fundamental group of $Y_i$; we use the same convention of omitting the label `$0$' as in Section~\ref{s:criterion}. 
Write $P\subset P_n$ for the set of all unordered pairs $(i,j)$ of integers $0\le i\ne j  \le n$ such that $(i,j)\in P$ if and only if $s_i$ and $s_j$ are not weakly graphically equivalent.
We apply Corollary~\ref{s4:cor:compactness} to the collection of hyperbolic curves
\[
Y_0, Y_1, \ldots Y_n.
\]
Thus, there exists a natural number $N\ge 1$ such that for any $n$-tuple 
\[
\alpha = (\alpha_1,\ldots \alpha_n)
\]
of cuspidal isomorphisms
\[
\alpha_i \colon \Delta \xrightarrow{\sim} \Delta_i
\]
there exists an open subgroup $U\subset \Delta$ of index at most $N$ such that whenever $(i,j)\in P$ then the toric ranks
\[
t(U^{\alpha}_i) \neq t(U^{\alpha}_j)
\] 
are distinct; here we write $U^{\alpha}_i$ for $\alpha_i(U)$.
Let $X'\to X$ be a connected finite \'etale cover which trivializes the monodromy of all open subgroups of $\Delta$ of index at most $N$.
We are going to show that the classes of the partition $\mathcal{S}|_{X'}$ can be separated by the toric rank, which will finish the proof.

Consider a cartesian diagram
\[
\begin{tikzcd}
Y'\arrow[r] \arrow[d]& Y \arrow[d] \\
X'\arrow[r] & X
\end{tikzcd}
\]
and choose a lift $S'\subset X'(k)$ of $S$ to $X'$, also for every $0\le i\le n$ we denote by $s'_i\in S'$ the point mapping to $s_i\in S$.
For every $1\le i\le n$ choose a comparison isomorphism
\[
\alpha_i\in C(Y'/X', s'_0, s'_i),
\]
then from our discussion in Section~\ref{s:families} we see that 
\[
\alpha = (\alpha_1,\ldots, \alpha_n)
\]
may be considered as an $n$-tuple of cuspidal isomorphisms
\[
\alpha_i \colon \Delta \xrightarrow{\sim} \Delta_{i}.
\]
Therefore it follows that there exists an open subgroup $U\subset \Delta$ of index at most $N$ such that for every $(i,j)\in P$ the toric ranks
\begin{equation}\label{s5:loc:toric_ranks_diff}
t(U^{\alpha}_i)\neq t(U^{\alpha}_j)
\end{equation}
are distinct.
Since $X'$ was chosen to trivialize the monodromy of all open subgroups of $\Delta$ of index at most $N$ we see from our discussion in Section~\ref{s:families} that a finite \'etale cover determined by the open subgroup $U\subset \Delta $ extends to a finite \'etale cover
\[
Y''\to Y'
\]
of degree $\le N$ such that the composition $Y''\to X'$ is a family of hyperbolic curves.
Moreover for every $1\le i\le n$ the fibre of $Y''\to X'$ over $s'_i$ is a connected finite \'etale cover of $Y_i$ determined by the open subgroup $U^{\alpha}_i\subset \Delta_i$.
Therefore the inequality~\eqref{s5:loc:toric_ranks_diff} for every $(i,j)\in P$ implies that the family $Y''\to X'$ separates points in $S'$ by the toric rank.
This finishes the proof as there are only finitely many open subgroups of $\Delta$ of bounded index.
\end{proof}

To improve Proposition~\ref{s5:prop:separating_finite_set} further it will be necessary to introduce another equivalence relation on the set $X(k)$, we consider again the family $Y\to X$ of hyperbolic curves.
Recall that we have defined in Section~\ref{s:families} a subset
\[
C(Y/X,s,t) \subset \Isom^{\Out}(\Delta_s,\Delta_t)
\]
of comparison isomorphisms between fibres over $s$ and $t$.

\begin{definition} 
Let $s,t\in X(k)$ be two $k$-rational points. We say that points $s$ and $t$ are \emph{graphically} equivalent if there exists a comparison isomorphism
\[
\alpha\in C(Y/X,s,t)
\]
which is graphic.
Classes of the resulting partition are called \emph{graphic}.
\end{definition}

Note that points which are graphically equivalent are also weakly graphically equivalent thus the partition of $X(k)$ into graphic classes is finer than the partition into weakly graphic classes.
The next proposition gives a criterion for checking whether two $k$-rational points of $X$ are graphically equivalent.

\begin{proposition}\label{s5:prop:graphical_equivalence_criterion}
Let $s,t\in X(k)$ be two $k$-rational points.
Suppose that there exists a semistable model $\mathcal{X}$ of $X$ such that the reductions of $s$ and $t$ in the special fibre of $\mathcal{X}$ lie in the smooth locus of the same irreducible component.
Then points $s$ and $t$ are graphically equivalent.
\end{proposition}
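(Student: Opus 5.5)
The plan is to realise both fibres $Y_s = X\setminus\{s\}$ and $Y_t = X\setminus\{t\}$ as fibres of a family of stable curves over a base on which the two points are connected through the locus of one fixed irreducible component. Then a specialization/comparison isomorphism, which automatically preserves the graph of groups, will link them. Concretely, I would take the semistable model $\mathcal{X}$ over the ring of integers $\Oc$ of a sufficiently large finite subextension; after a finite extension I can assume $s,t$ are defined over it. Let $C$ be the irreducible component of $\mathcal{X}_{\bar{s}}$ containing the reductions of $s$ and $t$ in its smooth locus, and let $C^{sm}$ be the intersection of $C$ with the smooth locus of $\mathcal{X}_{\bar{s}}$. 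Let $W\subset \mathcal{X}$ be the open subscheme obtained by removing the other components and the nodes; it is a smooth $\Oc$-scheme with connected special fibre $C^{sm}$ and generic fibre $X$.

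The key construction is as follows. Over $W$, form the pointed curve $\mathcal{X}\times_{\Oc} W \to W$ together with the diagonal section. Near the special fibre the diagonal section meets the smooth locus, so blowing up does not change the structure at the special fibre. Its stabilization, i.e. contracting any unstable components of $\mathcal{X}$, gives a stable marked curve over $W$. With the log structure pulled back from the closed point (or from $\overline{\mathcal{M}}_{g,1}$), this is a stable log curve $\overline{\mathcal{Z}}{}^{log}\to W^{log}$ whose generic part is exactly $Y\to X$. The crucial point is that its fibre over every geometric point of the special fibre $C^{sm}$ has the same dual graph $\Gamma(Y_s)$ with the marked point on $C$, and that all these closed fibres are locally constant in a combinatorial sense along the connected scheme $C^{sm}$.

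Next I will use the properties from Section~\ref{s:families}. Specializing from $s$ to a point $\bar{s}_0$ on $C^{sm}$ gives a comparison isomorphism $\Delta_s\xrightarrow{\sim}\pi^{adm}_1(\mathcal{Z}_{\bar{s}_0})$ by Lemma~\ref{s2:lem:strictly_henselian}, and this map is compatible with the graph of groups; this is the standard description in \cite[Appendix]{mochizuki_2004}. The same holds for $t$ and $\bar{t}_0$. Finally, I take a comparison isomorphism between $\bar{s}_0$ and $\bar{t}_0$ along the connected base $C^{sm}$ with its constant log structure. Over such a base the family is log-equisingular, so comparison isomorphisms preserve verticial, nodal and cuspidal subgroups, and this can be checked étale locally where the family is a product of a constant graph structure. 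Composing the three maps, and using the compatibility of the map~\eqref{s2:eq:relation_to_pullback} with the inclusion $W^{log}\to$ (the log regular extension of $X$), yields an element of $C(Y/X,s,t)$. This element is cuspidal and verticial, hence graphic by Proposition~\ref{s3:prop:graphicity=cuspidal_and_verticial}.

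I expect the main obstacle to be justifying that the composite lies in $C(Y/X,s,t)$, and not merely in the comparison set of the larger family over $W^{log}$. The difficulty is that $\pi_1(X)\to\pi_1(W^{log})$ needs to be surjective. I would argue this via the easier half of log purity, since $W$ is log regular with trivial log locus equal to the generic fibre. I would also handle the unramified base extension by noting that the comparison sets are unions over conjugates, as discussed in Section~\ref{s:families}. A secondary check is that the fibres along $C^{sm}$ really do have a constant dual graph, which I would verify by using the fact that the stabilization is a base change of the stabilization over $\overline{\mathcal{M}}_{g,1}$ restricted to a single stratum.
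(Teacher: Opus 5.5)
Up to the last step your argument is the paper's own. Your $W$ is the paper's $\mathcal{U}$; also deleting the nodes is harmless, and even makes $W$ smooth over $\Oc_K$. Your stabilized diagonal-pointed curve is the paper's stable log curve $\overline{\mathcal{Y}}{}^{log}\to\mathcal{U}^{log}$, and the identification of comparison sets via the easy half of log purity is the same. Joining the two reductions along the equisingular locus $C^{sm}$ corresponds to the paper's two specializations from the generic point of the special fibre of $\mathcal{U}$. For the purity step you must use the divisorial log structure of the special fibre: the one pulled back from $\overline{\mathcal{M}}_{g,1}$ is in general not log regular on $W$.

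The gap is the passage from $K$ to $k$, which you flag but do not resolve correctly. The construction produces a graphic element of $C(Y_K/X_K,s,t)$, the comparison set for the family over the generic fibre $X_K$ of $W$. It does not directly produce one in $C(Y/X,s,t)$, where $X=X_K\times_K k$ is the curve of the statement.

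Two of your claims fail here:
\begin{enumerate}[(i)]
\item For this $X$, the map $\pi_1(X)\to\pi_1(W^{log})$ is not surjective. Its image lies in the kernel of the surjection onto $\pi_1((\Spec\Oc_K)^{log})$, which is the nontrivial tame inertia.
\item The extension $k/K$ is not unramified. Since $\Oc_K$ is strictly henselian, all of $G_K$ is inertia.
\end{enumerate}

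Writing $C(Y_K/X_K,s,t)$ as the union of the sets $C(Y/X,gs,t)$ over Galois twists $gs$ of $s$ only gives a graphic comparison isomorphism from some twist of $s$ to $t$. Let $\sigma_s\colon G_K\to\pi_1(X_K,s)$ be the section defined by the $K$-rational point $s$. Then $C(Y/X,gs,t)$ differs from $C(Y/X,s,t)$ by composition with the outer action of $\sigma_s(g)$ on $\Delta_s$. So you need the fact you never state: $G_K$ acts on $\Delta_s$ through \emph{graphic} outer automorphisms.

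This fact is true. The pair $(X_K,s)$ has a stable model over $\Oc_K$, namely the pullback of your $\overline{\mathcal{Z}}$ along the section through $s$. Hence the action factors through the tame quotient and is given by profinite Dehn twists, as recalled in Section~\ref{s:graphic}. Consequently the whole $G_K$-orbit of a graphic $\alpha\in C(Y_K/X_K,s,t)$ is graphic. Since $\pi_1(X_K,s)=\pi_1(X,s)\cdot\sigma_s(G_K)$, this orbit meets the image of $C(Y/X,s,t)$. This is exactly the reduction with which the paper opens its proof.
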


\begin{proof}
Let $K\subset k$ be the fraction field of a strictly henselian discrete valuation ring dominated by $k$ and $X_K$ be a hyperbolic curve over $K$ which is isomorphic to $X$ after the base change to $k$.
By extending $K$ we may assume that $X_K$ has stable reduction over $K$ and that both points $s$ and $t$ arise from $K$-rational points of $X_K$.
Write $Y_K$ for the second configuration space of $X_K$ and let $Y_K\to X_K$ be the morphism determined by the first projection.
Then we have a cartesian diagram
\[
\begin{tikzcd}
Y\arrow[r]\arrow[d] & Y_K\arrow[d] \\
X\arrow[r] & X_K,
\end{tikzcd}
\] 
which determines an injection between the sets of comparison isomorphisms
\begin{equation}\label{s5:loc:comparison_1}
C(Y/X,s,t)\hookrightarrow C(Y_K/X_K,s,t).
\end{equation}
Our goal is to show that the left hand side of~\eqref{s5:loc:comparison_1} contains a graphic isomorphism.
We claim that it is enough to find a graphic isomorphism inside the right hand side of~\eqref{s5:loc:comparison_1}.
Indeed, suppose that 
\[
\alpha \in C(Y_K/X_K,s,t)
\] 
is graphic, since the group $G_K$ acts on the group $\Delta_s$ through graphic outer automorphisms we see that the $G_K$-orbit of $\alpha$ consists entirely of graphic automorphisms.
Thus the claim follows from the fact that the intersection of the $G_K$-orbit of $\alpha$ with the left hand side of~\eqref{s5:loc:comparison_1} is nonempty.

Choose a semistable model $\mathcal{X}_{\Oc_K}$ of $X_K$ over $\Oc_K$ such that the reductions of both points $s$ and $t$ lie on the smooth locus of the same irreducible component $v$ of the special fibre of $\mathcal{X}_{\Oc_K}$; this is possible by our assumption.
Write $\mathcal{U}$ for an open subscheme of $\mathcal{X}_{\Oc_K}$ obtained by removing all irreducible components from the special fibre apart from $v$.
Then $\mathcal{U}$ is a regular scheme and there exists a unique family of stable curves $\mathcal{Y}\to \mathcal{U}$ with a cartesian diagram
\[
\begin{tikzcd}
Y_K\arrow[r]\arrow[d] & \mathcal{Y} \arrow[d] \\
X_K \arrow[r] & \mathcal{U}.
\end{tikzcd}
\]
Furthermore the special fibre of $\mathcal{U}$ is a regular divisor hence determines a log regular log structure $\mathcal{U}^{log}$ on $\mathcal{U}$ making the compactification $\overline{\mathcal{Y}}\to\mathcal{U}$ into a stable log curve.
Thus it follows from our discussion in Section~\ref{s:families} that we have an equality
\begin{equation}\label{s5:loc:comparison_2}
C(Y_K/X_K,s,t) = C(\overline{\mathcal{Y}}^{log}/\mathcal{U}^{log},s,t)
\end{equation}
hence we are reduced to finding a graphic isomorphism inside the right hand side of~\eqref{s5:loc:comparison_2}.

Write $\bar{u}$ and $\bar{v}$ for geometric points lying over the reductions of $s$ and $t$ on the special fibre of $\mathcal{U}$ and $\bar{\eta}$ for a geometric point over the generic point of the special fibre of $\mathcal{U}$.
Then we have a diagram of specialization isomorphisms
\[
\begin{tikzcd}[column sep = tiny]
 & \pi^{adm, \Sigma}_1(\mathcal{Y}_{\bar{u}}) & & \pi^{adm,\Sigma}_1(\mathcal{Y}_{\bar{v}}) \\
\Delta_s\arrow[ru, "\sim" sloped] & & \pi^{adm,\Sigma}_1(\mathcal{Y}_{\bar{\eta}})\arrow[ru, "\sim" sloped, "\alpha_2"']\arrow[lu, "\sim" sloped, "\alpha_1"] & & \Delta_{t}\arrow[lu, "\sim" sloped].
\end{tikzcd}
\]
It follows from the construction that the isomorphisms $\alpha_1$ and $\alpha_2$ arise from isomorphisms of graphs of groups of corresponding fibres, therefore the isomorphism $\Delta_s\cong \Delta_t$ obtained from the above diagram is graphic.
\end{proof}

Choose a discrete valuation field $K\subset k$ dominated by $k$ and a hyperbolic curve $X_K$ over $K$ such that $X$ is the base change of $X_K$ along $K\subset k$.
For a subset
\[
S\subset X(k) = X_K(k)
\]
we say that $S$ is \emph{relatively compact} if there exists a finite field extension $L/K$ such that $S\subset X_K(L)$; clearly this notion is independent of the choice of the field $K$.
From Proposition~\ref{s5:prop:graphical_equivalence_criterion} we immediately obtain the next corollary. 

\begin{corollary}\label{s5:cor:finiteness_of_graphicity_classes}
For a relatively compact subset $S\subset X(k)$ the partition of $S$ into graphic classes is finite.
\end{corollary}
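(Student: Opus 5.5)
We have to show that a relatively compact subset $S\subset X(k)$ meets only finitely many graphic classes. The idea is to find a single semistable model in which points of $S$ reduce into finitely many "good" regions, and then apply Proposition~\ref{s5:prop:graphical_equivalence_criterion} to each region.

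First, fix $K$ and a finite extension $L/K$ with $S\subset X_K(L)$. Replacing $K$ by a finite extension of $L$ over which $X_K$ acquires stable reduction, we may assume $S\subset X_K(K)$ and that $X_K$ has a stable model $\mathcal{X}_{\Oc_K}$. Note that $\Oc_K$ is still a strictly henselian discrete valuation ring, since it is a finite extension of one.

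Next, choose a regular semistable model $\mathcal{X}'$ of $X_K$ over $\Oc_K$. We can take the minimal desingularization of $\mathcal{X}_{\Oc_K}$, obtained by replacing each node of thickness $m$ with a chain of $m-1$ projective lines. Since $\mathcal{X}'$ is regular and proper, every $K$-rational point of $X_K$ extends to an $\Oc_K$-section of $\mathcal{X}'$. A section through a regular scheme must meet the special fibre at a point where the fibre is reduced and smooth: if it passed through a node, where the local equation is $xy=\pi$, the intersection multiplicity would force $\pi$ to have valuation at least $2$. Hence the reduction of every point of $S$ lies in the smooth locus of some irreducible component of $\mathcal{X}'_{\bar{s}}$.

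The special fibre has finitely many irreducible components, so $S$ splits into finitely many subsets $S_v$, indexed by components $v$. For any two points of the same $S_v$, the model $\mathcal{X}'$, base changed to $\Oc_k$ (or used directly through the descent over $K$ that appears in the proof of Proposition~\ref{s5:prop:graphical_equivalence_criterion}), is a semistable model of $X$ in which both reductions lie in the smooth locus of the same component. By Proposition~\ref{s5:prop:graphical_equivalence_criterion} such points are graphically equivalent. Therefore $S$ meets at most as many graphic classes as $\mathcal{X}'_{\bar{s}}$ has irreducible components, so the partition is finite.

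The main obstacle is the claim that sections of a regular model avoid the nodes. If that step were doubtful, I would instead blow up $\mathcal{X}_{\Oc_K}$ at the reductions of the points of $S$ lying in nodes, repeating until every reduction is smooth. This process terminates because a node of thickness $m$ is resolved after finitely many steps, and the number of nodes and their thicknesses are bounded independently of the points, given that they are all $K$-rational.
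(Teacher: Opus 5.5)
Your argument is correct and is essentially the paper's proof. The paper also enlarges $K$ so that $S\subset X_K(K)$ and $X_K$ has stable reduction, then takes the minimal regular model (your minimal desingularization of the stable model), notes that $K$-rational points reduce into the smooth locus, and applies Proposition~\ref{s5:prop:graphical_equivalence_criterion} component by component. Your $xy=\pi$ argument for the smooth-locus claim is sound, so the fallback blow-up procedure is unnecessary. As stated it would also be problematic: blowing up a thickness-one node of a regular model produces a non-reduced special fibre.
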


\begin{proof}
Choose a discrete valuation field $K\subset k$ dominated by $k$ and a hyperbolic curve $X_K$ over $K$ such that $S\subset X_K(K)$.
By enlarging $K$ we may assume that $X_K$ has stable model over $K$.
Let $\mathcal{X}^{reg}$ be the minimal regular model of $X_K$ over $\Oc_K$ and $\mathcal{X}$ be its base change to $\Oc$; it is a semistable model of $X$.
By construction the reduction of every point from $S$ to the special fibre of $\mathcal{X}$ lies in the smooth locus of the special fibre.
As the set of irreducible components of the special fibre of $\mathcal{X}$ is finite, the result follows from Proposition~\ref{s5:prop:graphical_equivalence_criterion}.
\end{proof}

Using Corollary~\ref{s5:cor:finiteness_of_graphicity_classes} we can prove a stronger version of Proposition~\ref{s5:prop:separating_finite_set} by weakening the assumption on the set $S$ from being finite to being relatively compact.

\begin{proposition}\label{s5:prop:separating_compact_set}
Let $S\subset X(k)$ be relatively compact subset and let $\mathcal{S}$ be a finite partition of $S$ into weakly graphic classes.
Then there exists a connected finite \'etale cover $X'\to X$ and a finite set of families of hyperbolic curves $\{Y_j\}_{j\in J}$ over $X'$ which separate classes in $\mathcal{S}|_{X'}$ by the toric rank.
\end{proposition}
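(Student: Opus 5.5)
The idea is to reduce to the finite case, Proposition~\ref{s5:prop:separating_finite_set}, using Corollary~\ref{s5:cor:finiteness_of_graphicity_classes}. Graphically equivalent points cannot be distinguished by comparison isomorphisms, so they should behave identically in every family built by the construction of Proposition~\ref{s5:prop:separating_finite_set}.

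\textbf{Step 1: choose a finite set of representatives.} By Corollary~\ref{s5:cor:finiteness_of_graphicity_classes}, the partition $\mathcal{G}$ of $S$ into graphic classes is finite. Since graphic equivalence refines weak graphic equivalence, $\mathcal{G}$ is finer than $\mathcal{S}$. Choose a finite set $S_0\subset S$ containing exactly one point from each graphic class, and give $S_0$ the partition $\mathcal{S}_0$ into weakly graphic classes. Apply Proposition~\ref{s5:prop:separating_finite_set} to $(S_0,\mathcal{S}_0)$. This gives a cover $X'\to X$ and families $Y_j\to X'$, each obtained from $Y'=Y\times_X X'$ by a finite \'etale cover $Y_j\to Y'$ that is monodromy trivial over $X'$.

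\textbf{Step 2: transfer toric ranks from representatives to all of $S$.} Let $s\in S$ and let $s_0\in S_0$ be the representative of its graphic class. Fix lifts $s',s'_0\in X'(k)$. Choose a graphic comparison isomorphism $\beta\in C(Y/X,s_0,s)$. The set $C(Y/X,s_0,s)$ is the union of the translates of $C(Y'/X',s'_0,\pi s')$ over the fibre of $X'\to X$ above $s$. Since monodromy acts through comparison isomorphisms, I plan to argue that the translates can be arranged so that some graphic comparison isomorphism lies in $C(Y'/X',s'_0,s'')$ for some lift $s''$ of $s$.

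Then, because $Y_j\to Y'$ is monodromy trivial, the fibre of $Y_j$ over $s''$ corresponds, via any element of $C(Y'/X',s'_0,s'')$, to the fibre over $s'_0$. In particular it corresponds via a graphic isomorphism. Since graphicity passes to open subgroups (Lemma~\ref{s3:lem:graphicity_restriction}), the toric ranks of the two fibres agree. The transfer needs to hold for every lift $s'$ of $s$, not just one. I would handle this by replacing $X'$ with its Galois closure over $X$, which should make the statement independent of the chosen lift.

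\textbf{Step 3: conclude.} Take any set of representatives $T$ of $\mathcal{S}|_{X'}$. Replace each point of $T$ by the corresponding lift of a point of $S_0$; by Step 2 every toric rank is unchanged. Points of $T$ lie in distinct weakly graphic classes, so their representatives in $S_0$ are distinct and pairwise weakly-graphically inequivalent. These representatives therefore form part of a set of representatives of $\mathcal{S}_0|_{X'}$. Some $Y_j$ separates any such set by the toric rank, so the same $Y_j$ separates $T$.

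\textbf{Main obstacle.} The hard step is Step 2: showing that a graphic comparison isomorphism downstairs on $X$ can be realized by a comparison isomorphism on $X'$ between the chosen lifts. The tool is the decomposition of $C(Y/X,s_0,s)$ as a union of the sets $C(Y'/X',\cdot,\cdot)$ indexed by the $\pi_1(X)$-action on fibres. The difficulty is that this action changes the lift, so care is needed to make the conclusion hold for all lifts at once. I would first try the Galois closure trick together with the observation that monodromy-trivial covers have conjugacy classes independent of the chosen comparison isomorphism.
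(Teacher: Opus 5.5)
Your strategy is the paper's: take one point in each graphic class (finitely many by Corollary~\ref{s5:cor:finiteness_of_graphicity_classes}), apply Proposition~\ref{s5:prop:separating_finite_set} to these points with the weakly graphic partition, and transfer toric ranks along graphic comparison isomorphisms. The transfer uses that each $Y_j\to Y'$ is monodromy trivial over $X'$, together with Lemma~\ref{s3:lem:graphicity_restriction}. The one step you leave open, Step 2, is set up the wrong way round. The remedy you propose for it is unnecessary, and as stated it is not correct.

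In Step 3 the lift $s'$ of $s$ is the given datum: it is an arbitrary member of an arbitrary set of representatives of $\mathcal{S}|_{X'}$. The lift of the representative $s_0$, by contrast, is free, because Proposition~\ref{s5:prop:separating_finite_set} separates every set of representatives of $\mathcal{S}_0|_{X'}$, i.e.\ arbitrary lifts of the points of $S_0$. You already use this freedom in Step 3. So do not fix $s'_0$. Fix $s'$ and use the decomposition from Section~\ref{s:families} with the other base point varying: $C(Y/X,s_0,s)=\bigcup_{s'_0}C(Y'/X',s'_0,s')$, where $s'_0$ runs over the fibre above $s_0$. This lets you choose a lift $\varphi(s')$ of $s_0$, depending on $s'$, such that $C(Y'/X',\varphi(s'),s')$ contains a graphic isomorphism. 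The choice does not depend on $j$, since every $Y_j$ comes from a cover of $Y'$ that is monodromy trivial over $X'$. Distinct points of your $T$ give pairwise weakly-graphically inequivalent representatives. Hence $\{\varphi(s')\}$ extends to a set of representatives of $\mathcal{S}_0|_{X'}$, and your Step 3 goes through verbatim. This is exactly the paper's map $\varphi$.

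Passing to the Galois closure does not make anything independent of the lift. The family $Y_j$ is monodromy trivial only over $X'$, and the point-pushing monodromy of $\pi_1(X)$ on $\Delta_s$ is not graphic in general. Consequently, with $s'_0$ fixed, a graphic element of $C(Y'/X',s'_0,s')$ need not exist for every lift $s'$ of $s$. Likewise, the fibres of $Y_j$ over different lifts of $s$ need not have equal toric ranks. The Galois closure does supply deck transformations, which leave these sets of comparison isomorphisms unchanged. But what they give you is again a lift of $s_0$ depending on $s'$, which the union formula already provides directly, so the Galois closure is superfluous.
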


\begin{proof}
Consider the partition of $S$ into graphic classes and let $T\subset S$ be a set of representatives of this partition; by Corollary~\ref{s5:cor:finiteness_of_graphicity_classes} we see that $T$ is a finite set.
Write $\mathcal{T}$ for the partition of $T$ into weakly graphic classes.
We apply Proposition~\ref{s5:prop:separating_finite_set} to the set $T$ and the partition $\mathcal{T}$ thus there exists a connected finite \'etale cover $X'\to X$ and a finite set of families of hyperbolic curves $\{Y_j\}_{j\in J}$ over $X'$ separating classes of $\mathcal{T}|_{X'}$ by the toric rank.
Additionally, for every $j\in J$ there exists a commutative diagram
\[
\begin{tikzcd}[cramped]
Y_j\arrow[r]\arrow[rd] & Y'\arrow[r]\arrow[d] & Y\arrow[d] \\
 & X'\arrow[r] & X,
\end{tikzcd}
\]
with horizontal morphisms being finite \'etale and the square being cartesian.
We will show that the set of families $\{Y_j\}_{j\in J}$ separates classes of $\mathcal{S}|_{X'}$, this will finish the proof.

Let $S_0\subset S$ be a set of representatives of $\mathcal{S}$ and let $S'_0\subset X'(k)$ be a lift of $S$, it is a set of representatives of $\mathcal{S}|_{X'}$.
We need to find an index $j\in J$ such that the family $Y_j$ separates points in $S'_0$.
Let $s'\in S'_0$ be a point with the image $s\in S_0$, there exists a unique point $t\in T$ such that $s$ and $t$ are graphically equivalent.
Recall from Section~\ref{s:families} that we have

\[
C(Y/X,s,t) =  \bigcup_{t'} C(Y'/X', s', t').
\]
where $t'$ runs through all points in the fibre of $X'\to X$ over $t$.
Therefore we may choose a point 
\[
\varphi(s')\in T|_{X'}
\]
lying over $t$ such that there exists a graphic comparison isomorphism
\[
\alpha_{s'}\in C(Y'/X', s', \varphi(s'))
\]
between fibres over $s'$ and $\varphi(s')$ for the family $Y'\to X'$. Write 
\[
T' = \{\varphi(s')\colon s'\in S'_0\}
\]
for a subset of $T|_{X'}$ containing the chosen elements $\varphi(s')$ for all $s'\in S'_0$, note that the constructed map 
\[
\varphi\colon S'_0\to T'
\]
is a bijection.
Observe that elements of $T'$ lie in pairwise different classes of the partition $\mathcal{T}|_{X'}$ thus we can find an index $j\in J$ such that the family $Y_j$ separates points in $T'$ by the toric rank.
On the other hand, as the family $Y_j\to X'$ arises from a finite \'etale cover of $Y'$ and $\alpha_{s'}$ is graphic it follows from our discussion in Section~\ref{s:families} that fibres of $Y_j\to X'$ over $s'$ and $\varphi(s')$ have isomorphic reduction graphs.
In particular for every $s'\in S'_0$ the toric ranks of fibres of $Y_j\to X'$ over $s'$ and $\varphi(s')$ are equal.
This implies that the family $Y_j$ separates points in $S'_0$ by the toric rank. 
\end{proof}

\section{Separating points}\label{s:separating_points}

In this section we will apply our results from Section~\ref{s:separating_classes} together with a resolution of nonsingularities to prove Theorems~\ref{s1:thm:separation_in_char_zero} and~\ref{s1:thm:separation_by_toric_rank}.
We keep the same assumptions as in the previous section, additionally in this section we assume that $k$ has characteristic zero and the residue field $\kappa$ of $k$ is an algebraic closure of the finite field $\mathbb{F}_p$.

Let $X$ be a proper hyperbolic curve over $k$.
Recall from Section~\ref{s:graphic} that any connected finite \'etale cover $X'\to X$ extends to a finite morphism $\mathcal{X}'\to \mathcal{X}$ between the stable models and induces a morphism of the dual graphs $\Gamma(X')\to \Gamma(X)$.
When the degree of $X'\to X$ is divisible by $p$ then the morphism $\mathcal{X}'\to \mathcal{X}$ in general is not finite, equivalently it is not quasi-finite so it must contract an irreducible component of the special fibre to a point.
Given a connected finite \'etale cover $X' \to X$ and an irreducible component $v'\in \Gamma(X')$ we say that $v'$ is \emph{vertical} if its image in $\mathcal{X}$ is a closed point, otherwise we say that $v'$ is \emph{horizontal}.
Write 
\[
V(X'/X)\subset V(X')
\]
for the set of vertical components and $R(X'/X)\subset \mathcal{X}(\kappa)$ for the image of $V(X'/X)$ in $\mathcal{X}$.
We will need the following result, called a resolution of nonsingularities.

\begin{theorem}[Tamagawa, {\cite[Theorem 0.2 (v)]{tamagawa_2004_resolution}}]
\label{s6:thm:tamagawa_rns}
For every $r\in \mathcal{X}(\kappa)$ there exists a connected finite \'etale cover $X'\to X$ such that $r\in R(X'/X)$.
\end{theorem}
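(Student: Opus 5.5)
This is Tamagawa's resolution of nonsingularities, quoted from \cite{tamagawa_2004_resolution}, so in the paper one simply invokes it. If I had to prove it, I would construct cyclic covers of degree a power of $p$ whose wild ramification in the special fibre is concentrated at $r$. Such ramification forces the stable model of the cover to acquire a new component that contracts to $r$.

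First I would reduce to the case where $r$ is a smooth point of the special fibre $\mathcal{X}_{\bar{s}}$. If $r$ is a node, I would blow up $\mathcal{X}$ at $r$, possibly repeatedly after a finite extension of the discrete valuation field, to get a semistable model $\mathcal{X}_1$. Its exceptional chain of projective lines maps to $r$, and I would choose a smooth point $r_1$ of $\mathcal{X}_1$ on that chain. Stable models of covers dominate $\mathcal{X}$, and a component of the stable model of $X'$ lying over $r_1$ in $\mathcal{X}_1$ has image $r$ in $\mathcal{X}$. So the statement for $(\mathcal{X}_1,r_1)$ gives it for $(\mathcal{X},r)$. The argument should therefore be phrased for arbitrary semistable models. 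It is also harmless to first pass to a tame cover, e.g.\ one with sturdy reduction, since vertical components pull back appropriately.

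For a smooth point $r$, the plan is to build a $\mu_p$-torsor $X'\to X$, given generically by $y^p=f$, with the following properties:
\begin{enumerate}[(a)]
\item $\mathrm{div}(f)\in p\cdot\mathrm{Div}(X)$, so that the cover is \'etale;
\item $f\equiv 1$ to high $\pi$-adic order on the tube of every component away from a small residue disc $D_r$ around $r$, so that the cover is \'etale in reduction there and adds no vertical components elsewhere;
\item on $D_r$, after rescaling $y=1+\lambda z$ with $\lambda^p\sim p$, the reduction of $f$ yields a nontrivial Artin--Schreier equation $z^p-z=\bar{g}$ with $\bar{g}$ having poles inside a smaller disc around $r$.
\end{enumerate}
By the usual analysis of the stable reduction of $\mu_p$-torsors (Raynaud, Green--Matignon), condition (c) forces a genus $>0$ component of the stable model of $X'$ whose image in $\mathcal{X}$ is $r$, i.e.\ $r\in R(X'/X)$.

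The main obstacle is realizing (a)--(c) simultaneously by a global function on a proper curve. I would work in the Jacobian $J$ of $X$. A candidate $f$ with a prescribed local shape near $r$ corresponds to a degree-zero divisor class, and the requirement $\mathrm{div}(f)=pE$ means we need a point of $J$ that is $p$-divisible in a controlled way. Here the hypothesis that $\kappa=\overline{\mathbb{F}}_p$ enters: every point of the special fibre of the N\'eron model is torsion. Hence, after multiplying by a suitable prime-to-$p$ integer and perturbing by elements of the formal group, which is $p$-divisible after finite extension, one can adjust the divisor to be divisible by $p$ without destroying the reduction behaviour (b), (c). If a single $\mu_p$-step only produces a vertical component over a neighbourhood of $r$ rather than exactly at $r$, I would iterate inside a tower of degree a power of $p$, shrinking the disc each time. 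Since the final cover is a composite of connected finite \'etale covers, this suffices.
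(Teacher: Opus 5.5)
The paper gives no argument here: the statement is quoted from Tamagawa, so your first sentence is exactly what the paper does. The self-contained sketch you add, however, has a genuine gap, and it sits at the step you yourself flag as the main obstacle. Since $X$ is proper over the algebraically closed field $k$, Kummer theory gives $H^1(X,\mu_p)\cong J[p]$. So $X$ has only $p^{2g(X)}-1$ nontrivial $\mu_p$-torsors, and each has only finitely many vertical components in its stable model. Hence the union of the sets $R(X'/X)$ over all $\mu_p$-torsors $X'\to X$ is a \emph{finite} subset of $\mathcal{X}(\kappa)$. For all but finitely many $r$ there is therefore no $f$ on $X$ satisfying your (a)--(c), and the same holds on any fixed preliminary tame cover. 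In particular, you cannot start from an $f_0$ with prescribed local shape at $r$ and then ``adjust the divisor to be divisible by $p$''. Any correcting factor $u$ obtained from the $p$-divisibility of the formal group of $J$ just lands you on one of these finitely many torsors. Nothing controls $v(u-1)$ on the complement of $D_r$, which is exactly what (b) and (c) require. Invoking that the $\kappa$-points of the N\'eron special fibre are torsion does not address this.

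What is genuinely missing is a way to \emph{choose} a preliminary cover $X_1\to X$ (or a tower) such that some point over $r$ becomes one of the finitely many bad points of some $\mu_p$-torsor of $X_1$. You call passing to a tame cover harmless but never say which cover achieves this. The proposed iteration in a $p$-power tower also comes with no termination argument. Producing such a cover is the real content of Tamagawa's theorem, and it is there that the hypothesis that $\kappa$ is algebraic over $\mathbb{F}_p$ is actually used. Two smaller points. First, with $\lambda^p\sim p$ the substitution $y=1+\lambda z$ gives the radicial reduction $z^p=\bar g$; to get $z^p-z=\bar g$ you need $\lambda^{p-1}=-p$ and $f=1+\lambda^p g$ with $g$ integral. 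Second, ``shrinking the disc'' is unnecessary: a vertical component whose associated point of $X^{\mathrm{an}}$ lies anywhere in the tube $\mathrm{red}^{-1}(r)$ already maps to $r$ in $\mathcal{X}$.
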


Let $R\subset \mathcal{X}(\kappa)$ be a finite subset, in the next lemma we will use Theorem~\ref{s6:thm:tamagawa_rns} to construct a particular finite \'etale cover of $X$.

\begin{lemma}\label{s6:lem:special_cover_X'}
There exists a connected finite \'etale cover $\varphi \colon X'\to X$ with sturdy reduction having the following properties:
\begin{enumerate}[(i)]
\item we have $R\subset R(X'/X)$,
\item the fibre $\varphi^{-1}(x)$ over every point $x\in R(X'/X)$ is a union of verticial components,
\item any two vertical components contained in different fibres of $\varphi$ have different genera,
\item the sets of genera of vertical and horizontal components are disjoint.
\end{enumerate}
\end{lemma}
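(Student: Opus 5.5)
The construction proceeds in stages: first get (i) from Theorem~\ref{s6:thm:tamagawa_rns}, then pass to further covers to force sturdiness and to separate genera, checking at each stage that (i) survives. Throughout I use one basic observation. If $X''\to X'\to X$ are connected finite \'etale covers, the stable model map $\mathcal{X}''\to\mathcal{X}$ factors through $\mathcal{X}'$ after contracting. A component of $\mathcal{X}''$ lying over a vertical component of $\mathcal{X}'$ therefore maps to a point of $\mathcal{X}$. Hence $R(X'/X)\subset R(X''/X)$ as long as every vertical component of $X'$ is dominated by some component of $X''$, which always holds since $\mathcal{X}''\to\mathcal{X}'$ is surjective. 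Consequently (i) is preserved under passing to further covers, and it is enough to arrange (i) first.

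\textbf{Step 1: property (i).} For each $r\in R$, Theorem~\ref{s6:thm:tamagawa_rns} gives a cover $X_r\to X$ with $r\in R(X_r/X)$. I take $X_1$ to be a connected component of the fibre product of all the $X_r$ over $X$, or its Galois closure. By the observation above, $R\subset R(X_1/X)$. I then replace $X_1$ by a characteristic cover, for instance a $2$-power abelian cover of large degree, so that the reduction is sturdy; this is the standard fact recalled in Section~\ref{s:graphic}.

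\textbf{Step 2: property (ii).} The fibre $\varphi^{-1}(x)$ over $x\in R(X'/X)$ is connected and a union of components of the special fibre of $\mathcal{X}'$. This is because the map $\mathcal{X}'\to\mathcal{X}$ is proper between normal models, so Zariski's main theorem applies. The issue is that this fibre might contain the image of a horizontal component meeting it only at a point. However, a horizontal component maps onto a component of $\mathcal{X}$, so it cannot lie inside the fibre. Hence the fibre is a union of vertical components, possibly together with isolated nodes or points. I would read ``union of verticial components'' as meaning that every component inside the fibre is vertical, which follows automatically.

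\textbf{Step 3: properties (iii) and (iv), the main obstacle.} The genera have to be separated, and I would do this by taking further $p$-prime cyclic covers that are unramified over cusps and nodes, ramified nowhere, but with controlled splitting behaviour. In a Galois cover of degree $\ell$ that is inert at a vertex $v$ of genus $g$, Riemann--Hurwitz shows the component above $v$ has genus $\ell(g-1)+1$. If $v$ is totally split, the $\ell$ components above it keep genus $g$. Lemma~\ref{s3:lem:cover_split_on_S} lets me choose any proper subset $S$ of vertices to split. Let $F_1,\dots,F_m$ be the fibres over the points of $R(X'/X)$, and let $H$ be the set of horizontal components. I iterate such covers with distinct large primes $\ell_1,\ell_2,\dots$, making one group, say $F_k$, inert and splitting everything else. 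After these steps the vertical components over $F_k$ have genera of the form $\ell_k(g-1)+1$, with $g\ge 2$ by sturdiness. For generic choices of the primes, this forces the genus sets coming from different groups to be disjoint, and disjoint from the untouched horizontal genera.

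A subtlety is that the fibres of the new cover over $x\in R$ are the preimages of the old fibres, so the grouping of components is preserved. Another is that splitting a vertex keeps its genus, so each group is only modified at its own step. Choosing each $\ell_k$ larger than all genera currently present guarantees that the new genera exceed everything else, and so avoid collisions. This step is the most delicate one, because the covers from Lemma~\ref{s3:lem:cover_split_on_S} require the inert set to be nonempty and the split set to be proper, and because components within the same group may start with different genera. For (iii) only genera across different fibres need to differ, so within-group variation is harmless. Sturdiness is preserved throughout.
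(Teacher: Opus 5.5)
Your handling of (i), (iii) and (iv) essentially matches the paper. You use Tamagawa's theorem and a common cover for (i). You then use cyclic covers from Lemma~\ref{s3:lem:cover_split_on_S}, inert on one group of vertical components and split elsewhere, to push the genera into a chain $h<a_1\le b_1<\dots<a_n\le b_n$. The paper instead inflates all vertical components at once with $S=H$ and then splits $H\cup V_1\cup\dots\cup V_m$ cumulatively. Your one-group-at-a-time variant works equally well. What makes it work is choosing each degree prime to $p$ and larger than every genus currently present; ``generic'' or distinct primes play no role. These steps rely on $\mathcal{X}''\to\mathcal{X}'$ being finite for prime-to-$p$ covers, so that $R(X''/X)=R(X'/X)$ and the vertical components of $X''$ are exactly the preimages of those of $X'$. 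For the same reason your sturdiness cover should be prime to $p$; a $2$-power cover is not when $p=2$.

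The genuine gap is (ii). Zariski's connectedness theorem does not apply to $\mathcal{X}'\to\mathcal{X}$. This map is generically finite of degree $\deg\varphi>1$, so $\varphi_*\mathcal{O}_{\mathcal{X}'}\neq\mathcal{O}_{\mathcal{X}}$. Its Stein factorization passes through the normalization $\mathcal{Z}$ of $\mathcal{X}$ in the function field of $X'$. Hence $\varphi^{-1}(x)$ is a disjoint union of connected pieces, one for each point of $\mathcal{Z}$ over $x$, and some of these pieces may be single points.

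Your reading of (ii) as ``every component contained in the fibre is vertical'' is vacuous: any component lying over a closed point of $\mathcal{X}$ is vertical by definition. The statement says the fibre \emph{equals} a union of vertical components, i.e.\ it has no isolated points. That is exactly what you concede might fail. It is also what Lemma~\ref{s6:lem:stable_to_weak_graphicity} uses: there the reduction $r'_i$ of an arbitrary lift of $s_i$ must lie on a vertical component.

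The missing idea is the Galois property, which you mention but never use. If $X'\to X$ is Galois, then $\Gal(X'/X)$ acts on $\mathcal{X}'$ over $\mathcal{X}$ and permutes the points of $\mathcal{Z}$ over $x$ transitively. It therefore permutes the connected components of $\varphi^{-1}(x)$ transitively. Since $x\in R(X'/X)$, one of these components contains a vertical component, hence all of them do. You must then check that the later prime-to-$p$ covers preserve this property. This holds because $\mathcal{X}''\to\mathcal{X}'$ is finite onto a normal scheme. By going-down, every point of $\mathcal{X}''$ lying over a point of a vertical component $w'$ lies on a component mapping onto $w'$.
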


\begin{proof}
First we apply Theorem~\ref{s6:thm:tamagawa_rns} a finite number of times and pass to the Galois closure to construct a connected finite \'etale cover $X'\to X$ satisfying the property $(i)$. 
Observe that in the Galois case the property $(ii)$ holds automatically, indeed, by considering the Stein factorization of the morphism $\mathcal{X}'\to \mathcal{X}$ we see that the fibre over $x$ is a disjoint sum of connected curves with the Galois group acting transitively on the set of connected components.
Next, note that for every $p$-prime cover $X''\to X'$ the set of vertical components $V(X''/X)$ of the composition $X''\to X$ is the preimage of the set $V(X'/X)$ under the map $V(X'')\to V(X')$, moreover we have the equality
\[
R(X''/X) = R(X'/X).
\]
In particular, the composite cover $X''\to X$ still has properties $(i)$ and $(ii)$.
Hence to finish the proof we only need to modify the cover $X'\to X$ by a $p$-prime cover to obtain properties $(iii)$ and $(iv)$.

We may assume that the reduction of $X'$ is sturdy.
Write 
\[
R(X'/X) = \{x_1,\ldots , x_n\}
\]
for some $n\ge 1$.
Denote by $H\subset V(X')$ the set of horizontal components and by 
\[
V_i\subset V(X'/X)
\]
the set of all vertical components lying over $x_i$, for $1\le i \le n$.
Denote by $h$ the maximum of genera of horizontal components and by $a_i$ (respectively, $b_i$) the minimum (respectively, the maximum) of genera of components belonging to $V_i$, for $1\le i\le n$.
We are going to show that after replacing $X'$ by a suitable $p$-prime cover we will obtain the sequence of inequalities
\begin{equation}\label{s6:loc:eq:genera}
h < a_1\le b_1 <a_2 \le b_2 < \ldots <a_n\le b_n.
\end{equation}
This will clearly imply both properties $(iii)$ and $(iv)$.

To achieve this we will repeatedly apply Lemma~\ref{s3:lem:cover_split_on_S}, choosing a suitable subset $S\subset V(X)$ and replacing $X'$ by a cyclic cover of sufficiently large degree.
First, we apply Lemma~\ref{s3:lem:cover_split_on_S} with $S = H$ and $N$ sufficiently large to obtain the inequality $h < a_1$.
Next, suppose we already have the inequality
\[
h < a_1\le b_1 <a_2 \le b_2 < \ldots <a_m\le b_m.
\]
for some $m < n$.
We apply Lemma~\ref{s3:lem:cover_split_on_S} again, with 
\[
S = H\cup V_1\cup\ldots \cup V_m
\]
and choose $N$ sufficiently large to obtain $b_m < a_{m+1}$.
Thus after finitely many steps we reach the inequality~\eqref{s6:loc:eq:genera}.
\end{proof}

Using the reduction map on the stable model of $X$ we may define another partition of the set of $k$-rational points of $X$.

\begin{definition}
A partition of $X(k)$ obtained by taking the pullback of the trivial partition of $\mathcal{X}(\kappa)$ along the reduction map
\[
X(k)\to \mathcal{X}(\kappa),
\]
will be called a partition into \emph{stable} classes.
\end{definition}

Let $S\subset X(k)$ be a relatively compact subset and write $\mathcal{S}$ for the partition of $S$ into stable classes.
Since the image of $S$ under the reduction map is finite we see that $\mathcal{S}$ is a finite partition. 
The next lemma relates the partition into stable classes to the partition into weakly graphic classes considered in Section~\ref{s:separating_classes}.

\begin{lemma}\label{s6:lem:stable_to_weak_graphicity}
There exists a connected finite \'etale cover $X'\to X$ such that the partition of $S|_{X'}$ into weakly graphic classes is finer than the pullback partition $\mathcal{S}|_{X'}$.
\end{lemma}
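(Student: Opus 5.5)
We take $X'\to X$ to be the cover $\varphi\colon X'\to X$ of Lemma~\ref{s6:lem:special_cover_X'}, applied to the finite set $R\subset\mathcal{X}(\kappa)$ equal to the image of $S$ under the reduction map. We then have to show the following. Let $s',t'\in S|_{X'}$ be weakly graphically equivalent, i.e.\ the reduction graphs of $Y'_{s'}=X'\setminus\{s'\}$ and $Y'_{t'}=X'\setminus\{t'\}$ are isomorphic as labelled graphs. Then $s=\varphi(s')$ and $t=\varphi(t')$ have the same reduction $r\in\mathcal{X}(\kappa)$.

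The first step is to describe $\Gamma(X'\setminus\{s'\})$ in terms of $\Gamma(X')$. Let $\mathcal{X}'$ be the stable model of $X'$. The stable model of the pointed curve $(X',s')$ is obtained as follows.
\begin{itemize}
\item If $s'$ reduces to a smooth point of a component $w$ of $\mathcal{X}'_{\bar{s}}$, the graph is $\Gamma(X')$ with one open edge attached at $w$.
\item If $s'$ reduces to a node, a new genus-$0$ vertex is inserted on that edge, carrying the open edge.
\end{itemize}
In either case, forgetting the open edge and contracting genus-$0$ vertices of valence two recovers $\Gamma(X')$. Sturdiness of $X'$ guarantees that all original vertices have genus $\ge 2$, so genus-$0$ vertices can only arise this way. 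Hence the labelled graph $\Gamma(Y'_{s'})$ determines the following intrinsic datum: the genus $g(s')$ of the component of $\mathcal{X}'_{\bar s}$ adjacent to the cusp (or $0$ if the reduction is a node). An isomorphism of reduction graphs carries open edges to open edges. Since there is only one cusp, weak graphic equivalence of $s'$ and $t'$ forces $g(s')=g(t')$.

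The second step is to relate $g(s')$ to the reduction of $s$. Since $r\in R\subset R(X'/X)$, and by property $(ii)$ the fibre $\varphi^{-1}(r)$ of $\mathcal{X}'\to\mathcal{X}$ is a union of vertical components, the specialization of $s'$ lies on $\varphi^{-1}(r)$. We want $s'$ to reduce to a smooth point of a vertical component over $r$, so that $g(s')$ is the genus of a component of $V_r$. The main obstacle is ruling out that $s'$ reduces to a node of $\mathcal{X}'_{\bar s}$, or to a point where the vertical components meet horizontal ones. I would first try to handle this by a further $p$-prime modification. Alternatively, I would argue that a $k$-point whose reduction lies in a fibre of a contraction must specialize into the exceptional locus. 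If it hits a node there, one can compose with a cyclic cover from Lemma~\ref{s3:lem:cover_split_on_S}; such a cover keeps properties $(i)$--$(iv)$, as noted in the proof of Lemma~\ref{s6:lem:special_cover_X'}. If this cannot be fully arranged, a weaker invariant still suffices: the multiset of genera of the vertices adjacent to the genus-$0$ vertex carrying the cusp. Those adjacent vertices lie over $r$ (or are horizontal only when $r$ is not vertical, which is excluded by $(i)$), so this multiset is again intrinsic to the labelled graph.

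Given this, properties $(iii)$ and $(iv)$ finish the argument. By $(iv)$ the genus $g(s')$ (or the genus multiset) identifies a vertical component, and by $(iii)$ it determines the unique point $x\in R(X'/X)$ over which that component lies. Hence $g(s')=g(t')$ implies that $s$ and $t$ have the same reduction $x$. This is exactly the statement that the weakly graphic partition of $S|_{X'}$ is finer than $\mathcal{S}|_{X'}$.
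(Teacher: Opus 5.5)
Your overall strategy is the paper's. You take $X'\to X$ from Lemma~\ref{s6:lem:special_cover_X'} with $R$ the image of $S$. You use that an isomorphism of reduction graphs of $X'\setminus\{s'\}$ and $X'\setminus\{t'\}$ must respect the vertex carrying the unique cusp and its neighbours. You then split into the cases smooth/smooth, smooth/node (genus $\ge 2$ versus genus $0$, by sturdiness) and node/node. However, two points in your write-up are wrong and need fixing.

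\textbf{Node reductions cannot be removed.} A further $p$-prime cover cannot move the reduction off a node. For such a cover the morphism of stable models is finite and an admissible covering on special fibres, so the preimage of a node consists of nodes, and every lift of $s'$ still reduces to a node. Moreover, the proof of Lemma~\ref{s6:lem:special_cover_X'} only notes that $p$-prime covers preserve $(i)$ and $(ii)$. A cyclic cover as in Lemma~\ref{s3:lem:cover_split_on_S} changes the genera of split and inert vertices differently, so it need not preserve $(iii)$ and $(iv)$. The genus-multiset invariant is therefore not a fallback; it is the required treatment of the node case.

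\textbf{A neighbour of the cusp vertex can be horizontal.} Your parenthetical claim that both neighbours of the genus-$0$ cusp vertex lie over $r$ (with horizontal ones ``excluded by $(i)$'') is false. Let $h$ be a horizontal component whose image passes through $r$. Then $h$ meets $\varphi^{-1}(r)$, which by $(ii)$ is a union of vertical components, so $h$ meets a vertical component over $r$ in a node, and $s'$ may reduce to exactly such a node. This is precisely where $(iv)$ is needed, as in the paper:
\begin{enumerate}[(a)]
\item by $(ii)$, at least one neighbour is vertical;
\item every vertical neighbour contains the node, hence lies over $r$;
\item by $(iv)$, a vertical genus in the multiset for $s'$ can only match a vertical genus in the multiset for $t'$;
\item by $(iii)$, this forces the reductions of $s$ and $t$ to coincide.
\end{enumerate}

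In the smooth case there is no obstacle at all: by $(ii)$, the unique component through the reduction of $s'$ is already vertical over $r$. With these corrections, your argument coincides with the paper's proof.
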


\begin{proof}
Let $R$ be the image of $S$ under the reduction map, it is a finite set since $S$ is relatively compact.
Choose a connected finite \'etale cover $X'\to X$ with sturdy reduction satisfying properties $(i)-(iv)$ from Lemma~\ref{s6:lem:special_cover_X'}, we will show that this cover satisfies the statement of the lemma.

Let $s_1,s_2 \in S$ be two points lying in different stable classes, choose two points 
\[
s'_1,s'_2\in X'(k)
\]
lying above $s_1$ and $s_2$, respectively.
For $i=1,2$ write $r'_i\in \mathcal{X}'(\kappa)$ for the reduction of $s'_i$ and denote
\[
X'_i = X'\setminus \{s'_i\}.
\]
We need to show that the reduction graphs
$\Gamma(X'_1)$ and $\Gamma(X'_2)$ are not isomorphic.
In the proof we will refer to properties $(i) - (iv)$ from Lemma~\ref{s6:lem:special_cover_X'} which hold for the cover $X'\to X$.

Note that hyperbolic curves $X'_i$ have only one cusp hence any isomorphism between their reduction graphs must preserve the vertex containing the cusp.
By properties $(i)$ and $(ii)$ there are vertical components $v'_i\in V(X'/X)$ containing $r'_i$, for $i = 1,2$.
As $s_1$ and $s_2$ lie in different stable classes the components $v'_1$ and $v'_2$ lie in different fibres of $X'\to X$, thus they have different genera by property $(iii)$.
Therefore if both points $r'_i$ lie in the smooth locus of the special fibre we see that the reduction graphs $\Gamma(X'_1)$ and $\Gamma(X'_2)$ are not isomorphic.
The same holds if only one point $r'_i$ lies in the smooth locus.
Indeed, if $r'_1$ is a node and $r'_2$ lies in the smooth locus then the cusp of $X'_1$ determines a component of genus zero, whereas on $X'_2$ the cusp determines a component having positive genus.

Thus we may suppose that both points $r'_i$ are nodes.
For $i=1,2$ denote by $w'_i\in V(X')$ the irreducible component different from $v'_i$ containing the node $r'_i$.
Then in the reduction of graph of $X'_i$ the cusp belongs to a vertex $u'_i$ of genus zero and the vertex $u'_i$ is adjacent to only two vertices $w'_i$ and $v'_i$.
However properties $(iii)$ and $(iv)$ imply that 
\[
g(v'_1)\notin\{g(w'_2), g(v'_2)\},
\]
thus again the reduction graphs are not isomorphic.
\end{proof}

As a corollary of the theory developed so far we can extend our results concerning families of curves separating classes of points.

\begin{proposition}\label{s6:prop:separating_compact_set_stable_classes}
Let $S\subset X(k)$ be a relatively compact subset and $\mathcal{S}$ be a finite partition of $S$ into stable classes.
Then there exists a connected finite \'etale cover $X'\to X$ and a finite set of families of hyperbolic curves $\{Y_j\}_{j\in J}$ over $X'$ which separate classes of $\mathcal{S}|_{X'}$ by the toric rank.
\end{proposition}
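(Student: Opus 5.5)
We combine Lemma~\ref{s6:lem:stable_to_weak_graphicity} with Proposition~\ref{s5:prop:separating_compact_set} and then use Lemma~\ref{s5:lem:finer_partition} to pass from the finer partition to the coarser one.

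\emph{Step 1.} Apply Lemma~\ref{s6:lem:stable_to_weak_graphicity} to the relatively compact set $S$. This gives a connected finite \'etale cover $X_1\to X$ such that the partition of $S|_{X_1}$ into weakly graphic classes, call it $\mathcal{W}$, is finer than $\mathcal{S}|_{X_1}$.

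\emph{Step 2.} Check that $S|_{X_1}$ is again relatively compact in $X_1(k)$. Choose a discrete valuation field $K$ over which both $X_1\to X$ and the points of $S$ are defined (the latter over a finite extension $L/K$). The preimages of $L$-points under the finite map $X_1\to X$ are defined over a bounded finite extension of $L$, since their degrees are bounded by $\deg(X_1/X)$ and $K$ has only finitely many extensions of bounded degree inside $k$ in the relevant henselian setting. Moreover $\mathcal{W}$ is finite, because the partition of $X_1(k)$ into weakly graphic classes is finite.

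\emph{Step 3.} Apply Proposition~\ref{s5:prop:separating_compact_set} to $X_1$, the set $S|_{X_1}$ and the partition $\mathcal{W}$. This yields a connected finite \'etale cover $X'\to X_1$ and families $\{Y_j\}_{j\in J}$ over $X'$ separating the classes of $\mathcal{W}|_{X'}$ by the toric rank. Pullbacks of partitions commute with composition, so $(S|_{X_1})|_{X'}=S|_{X'}$ for the composite cover $X'\to X$. Pullback also preserves fineness, so $\mathcal{W}|_{X'}$ is finer than $\mathcal{S}|_{X'}$. Lemma~\ref{s5:lem:finer_partition} then shows that $\{Y_j\}$ separates the classes of $\mathcal{S}|_{X'}$.

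The main point needing care is Step 2, the relative compactness of the preimage. I expect it to be routine: enlarge $K$ so that $X_1$ is defined over it, and note that $X_1\to X$ is finite of degree $d$, so each fibre over an $L$-point consists of points defined over extensions of $L$ of degree at most $d$. If $K$ has finitely many extensions of each degree, as for $p$-adic fields or after choosing $K$ appropriately (e.g.\ finite over $\mathbb{Q}_p^{ur}$ completed), their compositum is a single finite extension containing all of them. If that finiteness is delicate, one can instead note directly that $S|_{X_1}$ meets only finitely many weakly graphic and graphic classes, which is all the proof of Proposition~\ref{s5:prop:separating_compact_set} actually uses. This can be seen by applying Proposition~\ref{s5:prop:graphical_equivalence_criterion} to a semistable model of $X_1$ on which these reductions lie in the smooth locus.
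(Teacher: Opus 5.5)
Your skeleton is the paper's: Lemma~\ref{s6:lem:stable_to_weak_graphicity}, replace $X$ by the cover, apply Proposition~\ref{s5:prop:separating_compact_set}, and use Lemma~\ref{s5:lem:finer_partition} to get back to $\mathcal{S}|_{X'}$, noting that pullback preserves fineness. The paper replaces $X$ by $X'$ without checking that $S|_{X'}$ is still relatively compact, so your Step~2 addresses a point the paper passes over. However, the justification you give there is false in the generality of Section~\ref{s:separating_points}.

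\textbf{The failing claim.} You assert that $K$ has only finitely many extensions of bounded degree. This fails for the fields in play. $X$ and $S$ may only be defined over fields with residue field $\overline{\mathbb{F}}_p$, for instance finite extensions of $\widehat{\mathbb{Q}_p^{\mathrm{ur}}}$, which is your own parenthetical example. Such fields have infinitely many extensions of degree $p$. Indeed, if $\zeta_p\in K$, then $U_1\cap K^{\times p}=U_1^p\subset U_2$. Hence $K^\times/K^{\times p}$ contains $U_1/U_1^p$, which surjects onto $U_1/U_2\cong\overline{\mathbb{F}}_p$.

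\textbf{Why it matters.} The cover from Lemma~\ref{s6:lem:special_cover_X'} has vertical components, so it is not $p$-prime. Its fibres over points of $S$ are therefore defined over possibly wild extensions of $L$ of degree $\le d$, and nothing forces a single finite extension to contain them all. Your Step~2 is fine when $K$ can be taken to be a $p$-adic local field, which covers finite $S$ and the sets $S\subset X(K_v)$ of Section~\ref{s:galois_sections}.

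\textbf{Repairing the fallback.} In general you need your fallback, but there the existence of the semistable model is the whole content, so it has to be proved. One way is as follows.
\begin{enumerate}[(1)]
\item Descend $X_1\to X$ to $K$, and fix a finite $M/L$ over which $X_1$ has stable reduction.
\item Every point of $S|_{X_1}$ is rational over some $L'M$ with $[L':L]\le d=\deg(X_1/X)$. Hence $e(L'M/K)\le B:=d\,e(M/K)$.
\item Such a point reduces into the smooth locus of the minimal regular model over $\Oc_{L'M}$. Normalizing $v(K^\times)=\Z$, the base change of that model to the ring of integers of $k$ is obtained from the stable model of $X_1$ by subdividing each edge into pieces of length $1/e(L'M/K)$.
\item All these models are dominated by the single semistable model that subdivides into pieces of length $1/B!$. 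Each dominating map is an isomorphism away from the nodes, so the reductions stay in the smooth locus.
\item Proposition~\ref{s5:prop:graphical_equivalence_criterion} then gives finitely many graphic classes in $S|_{X_1}$.
\end{enumerate}
As you observe, this finiteness together with the finiteness of weakly graphic classes is all that the proof of Proposition~\ref{s5:prop:separating_compact_set} uses. It replaces the appeal to Corollary~\ref{s5:cor:finiteness_of_graphicity_classes}. With this repair your argument is complete.
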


\begin{proof}
Using Lemma~\ref{s6:lem:stable_to_weak_graphicity} we find a connected finite \'etale cover $X'\to X$ such that the partition of $S|_{X'}$ into weakly graphic classes is finer than the restricted partition $\mathcal{S}|_{X'}$.
Thus by Lemma~\ref{s5:lem:finer_partition} we may replace $X$ by $X'$ and assume that $\mathcal{S}$ is a finite partition into weakly graphic classes.
Then the statement follows from Proposition~\ref{s5:prop:separating_compact_set}.
\end{proof}

We may improve Proposition~\ref{s6:prop:separating_compact_set_stable_classes} further by introducing finer partitions than the partition into stable classes.
Let $\mathcal{X}^{s}$ be a semistable model of $X$, similarly to the case of the stable model we make the following definition.

\begin{definition}
A partition of $X(k)$ obtained by taking the pullback of the trivial partition of $\mathcal{X}^{s}(\kappa)$ along the reduction map
\[
X(k)\to \mathcal{X}^{s}(\kappa)
\]
will be called a partition into \emph{semistable} classes.
\end{definition}

Clearly such a partition depends on the chosen semistable model $\mathcal{X}^s$, moreover any partition of $X(k)$ into semistable classes is finer that the partition into stable classes.
To relate these two partitions we will need the following strengthening of Theorem~\ref{s6:thm:tamagawa_rns}.

\begin{theorem}[Mochizuki-Tsujimura, {\cite[Theorem A]{mochizuki_tsujimura_2023}}]\label{s6:thm:rns}
There exists a connected finite \'etale cover $X'\to X$ which extends to a morphism $\mathcal{X}'\to \mathcal{X}^s$. 
\end{theorem}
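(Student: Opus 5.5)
The statement is \cite[Theorem A]{mochizuki_tsujimura_2023} and we take it as given; what follows is the route we would take to prove it. The work is to move from Theorem~\ref{s6:thm:tamagawa_rns}, which puts \emph{some} vertical component over a prescribed closed point of $\mathcal{X}$, to a cover that realizes \emph{every} component of a prescribed semistable model $\mathcal{X}^s$.

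\textbf{Berkovich reformulation.} Work on the analytification $X^{an}$. Every component of the special fibre of $\mathcal{X}^s$ gives a type-2 point of $X^{an}$; let $\Xi$ be the finite set of these points. Let $X'\to X$ be a connected finite \'etale cover and $\mathcal{X}'$ the stable model of $X'$. By the standard dictionary between semistable models and skeleta, the rational map $\mathcal{X}'\dashrightarrow\mathcal{X}^s$ extends to a morphism as soon as the preimage of $\Xi$ in $X'^{an}$ lies in the vertex set of the skeleton of $\mathcal{X}'$. The reason is that the vertex set of $\mathcal{X}^s$ then pulls back into the vertex set of $\mathcal{X}'$, and the reduction maps are compatible. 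So it suffices to find, for each $\xi\in\Xi$, a cover whose stable model has every preimage of $\xi$ as a vertex.

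\textbf{Stability under further covers.} If $\xi$ is realized by $X'$, it stays realized after passing to any further connected finite \'etale cover $X''\to X'$. The argument is the same as in the proof of Lemma~\ref{s6:lem:special_cover_X'}, using Galois closures and Stein factorization. This lets us treat the points of $\Xi$ one at a time and take a common Galois cover at the end.

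\textbf{Single-point step and where it is hard.} Fix $\xi\in\Xi$ that is not already a vertex of $\mathcal{X}$. Then $\xi$ lies in a residue disc or annulus of $\mathcal{X}$ over a closed point $r\in\mathcal{X}(\kappa)$. Theorem~\ref{s6:thm:tamagawa_rns} gives a cover with a vertical component over $r$, i.e.\ some type-2 point in that disc becomes a vertex, but not necessarily $\xi$. Getting exactly $\xi$ is the main obstacle. We would first try an approximation-and-induction argument.
\begin{enumerate}[(a)]
\item Choose a finite extension over which $\xi$ is a component of a model, together with finitely many $k$-points that separate $\xi$ from nearby type-2 points.
\item Replace $X$ by a cover produced by Theorem~\ref{s6:thm:tamagawa_rns} so that a vertex now lies strictly closer to $\xi$ in the tree metric of the disc.
\item Observe that, relative to the new stable model, $\xi$ sits in a strictly smaller residue disc or annulus.
\item Iterate (b) and (c).
\end{enumerate}
Two hypotheses are essential for termination. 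The value group of $k$ is $\Q$, so the radius of $\xi$ is rational. The residue field $\kappa$ is algebraic over $\mathbb{F}_p$, which is the setting where Tamagawa's construction via Raynaud-type covers with ordinary or new-ordinary behaviour is available inside every residue disc and at every rational radius. Using these, we expect to show that the set of type-2 points realized as vertices of covers contains all type-2 points with rational radius over $\bar{\mathbb{F}}_p$-rational centres. In particular it contains $\xi$.

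\textbf{Assembly.} Having handled each $\xi\in\Xi$, take a connected finite \'etale cover dominating the covers realizing all points of $\Xi$, for instance their compositum. By the stability step, every preimage of every point of $\Xi$ is a vertex of its stable model $\mathcal{X}'$. By the Berkovich reformulation, $\mathcal{X}'\to\mathcal{X}^s$ is then a morphism, which is the required extension.
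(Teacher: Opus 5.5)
\textbf{Verdict: there is a genuine gap.} Your reduction steps are fine, but the single-point step is not proved, and it carries the whole theorem.

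\textbf{What is sound.} The criterion that $X'\to X$ extends to $\mathcal{X}'\to\mathcal{X}^s$ once every preimage of every point of $\Xi$ is a vertex of the stable model of $X'$ is the standard dictionary between normal models and finite sets of type-2 points. Stability under further covers also holds. It follows most directly from the existence of the morphism of stable models $\mathcal{X}''\to\mathcal{X}'$ for finite \'etale covers, not from the $p$-prime argument in Lemma~\ref{s6:lem:special_cover_X'}.

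\textbf{Where the gap is.} These steps only reformulate the statement. After your reformulation, the claim that a prescribed type-2 point $\xi$ becomes a vertex of the stable model of some finite \'etale cover is essentially equivalent to Theorem~\ref{s6:thm:rns} itself. That claim is exactly what your proposal leaves unproved. The paper does not prove it either; it imports it from Mochizuki--Tsujimura.

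\textbf{Why the iteration (a)--(d) fails.} Theorem~\ref{s6:thm:tamagawa_rns} only guarantees that some vertical component lies over a given closed point $r$ of the stable model. It gives no control over where, inside the residue class of $r$, the image of that component lands.
\begin{itemize}
\item When $r$ is a smooth point, the new vertex image $\zeta$ lies in the residue disc of $r$. So $\xi\wedge\zeta$ is strictly inside that disc, and the distance from $\xi$ to the image of the skeleton does drop. But it drops by an uncontrolled amount, so these distances need not reach $0$, or even tend to $0$.
\item Rationality of the radius of $\xi$ does not help: $\Q$ is dense, so nothing forces termination.
\item When the nearest point of the new skeleton is interior to an edge, $\xi$ lies in the residue annulus of a node. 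The vertical component supplied over that node may sit on the annulus skeleton, or in a disc hanging off it at another point. In that case the retraction of $\xi$ does not move and there is no progress at all.
\item Your closing claims are the missing content, asserted rather than proved: that Tamagawa's construction is available ``at every rational radius'', and that you ``expect to show'' all type-2 points with rational radius are realized.
\end{itemize}

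\textbf{What would actually be needed.} You need a Tamagawa-type statement for residue classes of an \emph{arbitrary} semistable model, not just of the stable model of a cover. For instance, it would suffice to have vertex images in two distinct residue directions at $\xi$, other than the direction toward the skeleton of $X$. After passing to a Galois closure, every preimage of $\xi$ would then have valence $\ge 3$ in the minimal skeleton, hence be a vertex. Theorem~\ref{s6:thm:tamagawa_rns}, which concerns only closed points of stable models, does not supply this. This is precisely where the new input of Mochizuki--Tsujimura is required.
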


Using Theorem~\ref{s6:thm:rns} we can now state our main result on separating finite partitions.

\begin{theorem}\label{s6:thm:separating_compact_set_semistable_classes} 
Let $S\subset X(k)$ be a relatively compact subset and $\mathcal{S}$ be a finite partition of $S$ into semistable classes.
Then there exists a connected finite \'etale cover $X'\to X$ and a finite set of families of hyperbolic curves $\{Y_j\}_{j\in J}$ over $X'$ separating classes of $\mathcal{S}|_{X'}$ by the toric rank.
\end{theorem}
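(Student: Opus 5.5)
The plan is to reduce Theorem~\ref{s6:thm:separating_compact_set_semistable_classes} to Proposition~\ref{s6:prop:separating_compact_set_stable_classes} by passing to a finite \'etale cover on which the semistable partition becomes coarser than the stable partition. By Theorem~\ref{s6:thm:rns} we choose a connected finite \'etale cover $X'\to X$ such that the stable model $\mathcal{X}'$ of $X'$ admits a morphism $\mathcal{X}'\to \mathcal{X}^s$ extending $X'\to X$. Then the reduction maps fit into a commutative square
\[
\begin{tikzcd}
X'(k)\arrow[r]\arrow[d] & \mathcal{X}'(\kappa)\arrow[d] \\
X(k)\arrow[r] & \mathcal{X}^s(\kappa).
\end{tikzcd}
\]
This holds by the valuative criterion. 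A $k$-point of $X'$ extends uniquely to an $\Oc$-point of $\mathcal{X}'$, and its composition with $\mathcal{X}'\to\mathcal{X}^s$ is the unique extension of the image point in $X(k)$.

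Next we compare partitions on $S|_{X'}$. If two points $s'_1,s'_2\in S|_{X'}$ lie in the same stable class of $X'$, i.e. have the same reduction in $\mathcal{X}'(\kappa)$, then by the square their images in $X(k)$ have the same reduction in $\mathcal{X}^s(\kappa)$. So the partition of $S|_{X'}$ into stable classes (for $X'$) is finer than the pullback partition $\mathcal{S}|_{X'}$. We also check that $S|_{X'}$ is relatively compact in $X'(k)$. Indeed, if $S\subset X_K(L)$ and $X'$ descends to some finite extension, then the fibres of $X'\to X$ over $L$-points are defined over a bounded finite extension, since the degree of $X'\to X$ is fixed. Hence there is a single finite extension $L'/K$ containing all of them.

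Now we apply Proposition~\ref{s6:prop:separating_compact_set_stable_classes} to $X'$, the relatively compact set $S|_{X'}$, and its partition into stable classes, which is finite. This gives a connected finite \'etale cover $X''\to X'$ and families $\{Y_j\}_{j\in J}$ over $X''$ separating the classes of (stable partition)$|_{X''}$ by the toric rank. The pullback to $X''$ of the stable partition of $S|_{X'}$ is finer than the pullback of $\mathcal{S}|_{X'}$, which equals $\mathcal{S}|_{X''}$. So Lemma~\ref{s5:lem:finer_partition} shows that the same families separate the classes of $\mathcal{S}|_{X''}$. Taking $X''\to X$ as the required cover finishes the argument.

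The main obstacle is the first step: making precise that the morphism $\mathcal{X}'\to\mathcal{X}^s$ from Theorem~\ref{s6:thm:rns} is compatible with the reduction maps, and that the models involved are defined over a common discrete valuation ring so that relative compactness is preserved. I expect this to be routine once everything is descended to a sufficiently large finite extension of $K$ and we use properness of $\mathcal{X}^s$.
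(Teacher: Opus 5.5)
Your proposal is correct and follows the paper's proof: Theorem~\ref{s6:thm:rns} makes the stable partition of $S|_{X'}$ finer than $\mathcal{S}|_{X'}$, and Proposition~\ref{s6:prop:separating_compact_set_stable_classes} together with Lemma~\ref{s5:lem:finer_partition} finishes the argument. Your extra checks are points the paper leaves implicit: compatibility of the reduction maps, and relative compactness of $S|_{X'}$. The second of these tacitly needs the base field to be a local field with finite residue field (as is also needed for stable partitions to be finite), so that there are only finitely many extensions of bounded degree.
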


\begin{proof}
Let $\mathcal{X}^s$ be a semistable model of $X$ giving rise to the partition $\mathcal{S}$.
By Theorem~\ref{s6:thm:rns} there exists a connected finite \'etale cover $X'\to X$ which extends to a morphism $\mathcal{X}'\to \mathcal{X}^s$.
Then the partition of $S|_{X'}$ into stable classes is finer than the restricted partition $\mathcal{S}|_{X'}$.
Thus by Lemma~\ref{s5:lem:finer_partition} we may replace $X$ by $X'$ and assume that $\mathcal{S}$ is the partition of $S$ into stable classes.
Therefore the result follows from Proposition~\ref{s6:prop:separating_compact_set_stable_classes}.\end{proof}

Using Theorem~\ref{s6:thm:separating_compact_set_semistable_classes} the proof of Theorem~\ref{s1:thm:separation_by_toric_rank} in now immediate.

\begin{proof}[Proof of Theorem~\ref{s1:thm:separation_by_toric_rank}]
Passing to a finite \'etale cover and taking the compactification we may assume that $X$ is a proper hyperbolic curve.
Given a finite subset $S\subset X(k)$ we can find a semistable model of $X$ such that the corresponding partition of $S$ into semistable classes is the trivial partition.
Thus we conclude by Theorem~\ref{s6:thm:separating_compact_set_semistable_classes}.
\end{proof}

From now on we proceed to proving Theorem~\ref{s1:thm:separation_in_char_zero} and we change our notation.
We write $k$ for an algebraically closed field of characteristic zero and $X$ for a hyperbolic curve over $k$; the following proof uses a well known argument.

\begin{proof}[Proof of Theorem~\ref{s1:thm:separation_in_char_zero}]
Passing to a finite \'etale cover and taking the compactification we may assume that $X$ is a proper hyperbolic curve.
There exists a normal finitely generated $\Z$-algebra $A\subset k$ and a family of hyperbolic curves $X_A$ over $\Spec A$ which is isomorphic to $X$ after a base change along $A\hookrightarrow k$.
We may also assume that $k$ is an algebraic closure of the fraction field of $A$.

By~\cite{cassels_1976} there exists a prime number $p$ and a $p$-adic local field $K$ together with an embedding $A\hookrightarrow \Oc_K$.
For an algebraic closure $\bar{K}$ of $K$ we can find an arrow $\iota\colon k\hookrightarrow \bar{K}$ making the following diagram commutative
\[
\begin{tikzcd}
\Oc_K \arrow[r, hook] & \bar{K}  \\
A\arrow[r, hook]\arrow[u, hook] &  k\arrow[u, hook, "\iota"'].
\end{tikzcd}
\]
Write $X_{\iota}$ for the base change of $X$ along $\iota$ and 
\[
S_{\iota}\subset X_{\iota}(\bar{K})
\]
for the set corresponding to $S$.
By construction $X_{\iota}$ has good reduction and the partition of $S_{\iota}$ into stable classes is the trivial partition, moreover we have an equivalence of categories
\[
\Fet (X_{\iota})\cong \Fet (X)
\]
of finite \'etale covers of $X_{\iota}$ and $X$.
Thus we can apply Lemma~\ref{s6:lem:stable_to_weak_graphicity} together with Proposition~\ref{s5:prop:separating_finite_set} to conclude.
\end{proof}

\section{Application to Galois sections}\label{s:galois_sections}

In the final section we will give an application of the theory developed in this paper to a problem concerning Galois sections over number fields, our main result is Theorem~\ref{s7:thm:main_semistable}.
We start by slightly generalizing our discussion from Section~\ref{s:introduction} and formulating Theorem~\ref{s1:thm:reduction_on_strongly_pos_density} also in the affine case.

Let $K$ be a field of characteristic zero with an algebraic closure $\bar{K}$, write $G_K = \Gal(\bar{K}/K)$ for the absolute Galois group of $K$.
For a hyperbolic curve $X$ over $K$ we have the homotopy short exact sequence 
\begin{equation}\label{s6:eq:etale}
1\to \pi_1(X\times_K \bar{K})\to \pi_1(X) \to G_K\to 1,
\end{equation}
denote by $\Sec(X)$ the set of conjugacy classes of sections of the above sequence.
There is a natural map
\[
X(K)\to \Sec(X) ,
\]
called the profinite Kummer map, which is injective when $K$ is a number field or a $p$-adic local field.
Additionally the set of $K$-rational cusps of $X$ determines a subset of sections
\[
\Cusp(X)\subset \Sec(X),
\]
called cuspidal sections, clearly if $X$ is proper then there are no cuspidal sections.

Assume now that $K$ is a number field.
Recall that a decomposition group $G_v\subset G_K$ of a valuation $v\in \V(K)$ may be identified with the absolute Galois group of $K_v$. 
Given a section $s$ of the sequence~\eqref{s6:eq:etale} and a valuation $v\in \V(K)$ we denote by $s_v$ the restriction of $s$ to a decomposition group $G_v\subset G_K$.
For a subset $\Omega\subset \V(K)$ we define
\[
res_{\Omega}\colon \Sec(X) \to \prod_{v\in \Omega} \Sec(X\times_K K_v)
\]
as the product of the restriction maps $s\mapsto s_v$ for all $v\in \Omega$.
The map $res_\Omega$ is known to be injective when $\Omega$ has density one (see~\cite{porowski2024}).
For a section $s\in \Sec(X)$ we say that $s$ is a \emph{Selmer} section if for every $v\in \V(K)$ the local section $s_v$ is either cuspidal or geometric, write 
\begin{equation}\label{s7:loc:sel_sec}
\Sel(X)\subset \Sec(X)
\end{equation}
for the subset of Selmer sections.
Clearly every cuspidal section is a Selmer section, moreover the profinite Kummer map induces an injection
\begin{equation}\label{s7:loc:X(K)_sel-cusp}
X(K)\hookrightarrow \Sel(X)\setminus \Cusp(X).
\end{equation}
The general form of the Section Conjecture predicts that both inclusions~\eqref{s7:loc:sel_sec} and~\eqref{s7:loc:X(K)_sel-cusp} are actually equalities.

Next we discuss reductions of Selmer sections, for a valuation $v\in \V(K)$ let $k_v$ be an algebraic closure of $K_v$ and $X_v$ be the base change of $X$ to $k_v$.
Write $\mathcal{X}_v$ for the stable model of $X_v$ over the ring of integers of $k_v$ and $\overline{\mathcal{X}}_v$ for its compactification.
Then we have the reduction map
\begin{equation}\label{s7:eq:reduction_map}
\overline{X}(K_v)\subset \overline{X}(k_v) \to \overline{\mathcal{X}}_v(\bar{\kappa}(v)),
\end{equation}
where $\bar{\kappa}(v)$ is the residue field of $k_v$; it is an algebraic closure of $\kappa(v)$.
For a Selmer section $s\in \Sel(X)$ and a valuation $v\in \V(X)$ the local section $s_v$ determines a unique point in $\overline{X}(K_v)$ which by abuse of notation we will denote by $s_v$, we also write $\bar{s}_v$ for the image of $s_v$ through the reduction map~\eqref{s7:eq:reduction_map}.
For a subset $\Omega\subset \V(K)$ we define the reduction map on $\Omega$
\[
red_{\Omega}\colon \Sel(X) \to \prod_{v\in \Omega} \overline{\mathcal{X}}_v(\bar{\kappa}(v))
\]
as the product of reductions $s\mapsto \bar{s}_v$ for all $v\in \Omega$. 
When $\Omega$ is infinite then the restriction of the map $red_{\Omega}$ to the subset $X(K)$ is injective.
On the other hand the restriction of $red_{\Omega}$ to the subset of noncuspidal Selmer sections is only known to be injective when $\Omega$ has density one (see \cite[Theorem~1.1]{porowski2025}).
Using our result from Sections~\ref{s:separating_classes} and~\ref{s:separating_points} we will be able to improve this result, as follows.

\begin{theorem}\label{s7:thm:strongly_pos_density}
Suppose that $\Omega$ has strongly positive density, then the restriction of the map $red_{\Omega}$ to the subset of noncuspidal Selmer sections is injective.
\end{theorem}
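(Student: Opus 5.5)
Suppose $s\ne t$ are noncuspidal Selmer sections with $red_{\Omega}(s)=red_{\Omega}(t)$; I will derive a contradiction. First, after replacing $K$ by a finite extension $L$ and $X$ by a connected finite \'etale cover, I reduce to the case where $X$ is proper with stable reduction. Strongly positive density is stable under finite extensions, so $\Omega_L$ still has positive upper density. Sections restrict to sections of covers, and distinct sections can be separated by some open subgroup of $\pi_1(X)$ containing the image of one but not the other; I expect this to follow from the standard neighbourhood argument. Also, equality of reductions is preserved in a suitable sense: if $s_v,t_v\in X(K_v)$ reduce to the same point of the stable model, then their lifts to a cover reduce to the same point of the cover's model. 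More precisely, the lifted points lie in the same stable class of the cover, possibly after passing to a further finite extension.

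The main idea is to use families of curves to turn a Selmer section $s$ of $X$ into a Selmer section of a family. Given $Y\to X'$ as constructed in Section~\ref{s:separating_classes}, the section $s$ (lifted to $X'$) pulls back the homotopy sequence of $Y\to X'$ to obtain a ``fibre'' outer Galois representation $\rho_s\colon G_K\to\Out(\pi_1(Y_s))$. At each $v$ this representation agrees with that of the geometric fibre $Y_{s_v}$, because $s_v$ is geometric. In particular the toric rank of $\mathrm{Jac}(\overline{Y}_{s_v})$ can be read off from the $\ell$-adic representation $H^1(\pi_1(\overline{Y}_s),\Q_\ell)$ restricted to $G_v$: it equals the dimension of the part on which inertia acts unipotently but nontrivially (Grothendieck's monodromy), i.e.\ the rank of $N$. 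The representation $V_s=H^1(\overline{Y}_s)$ is a global $G_K$-representation.

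Now apply Theorem~\ref{s6:thm:separating_compact_set_semistable_classes} locally. The hypothesis $s\neq t$ together with the density one injectivity result of \cite{porowski2025} gives a set $\Omega'$ of positive density outside $\Omega$ at some finite level. Alternatively, I use the fact that $s_v\neq t_v$ for some place $v_0$, which follows from the injectivity of $res$ on a density one set. At such a place $v_0$ the points $s_{v_0},t_{v_0}$ are distinct, so Theorem~\ref{s1:thm:separation_by_toric_rank} provides a cover $X'$ and a family $Y\to X'$ with $t(Y_{s_{v_0}})\ne t(Y_{t_{v_0}})$. This family can be taken to be defined over a number field, since it comes from a finite \'etale cover of the configuration space $Y$, which is defined over $K$. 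Consequently the global representations $V_s$ and $V_t$ are nonisomorphic, since their restrictions to $G_{v_0}$ have different monodromy ranks. By Chebotarev and Faltings/Serre-type arguments, $V_s\not\cong V_t$ implies that the set of places where the Frobenius traces differ has positive density. Positive density alone does not force this set to meet $\Omega$, which is why strongly positive density is needed: I will arrange the statement at all finite levels $L$, replacing $L$ by a field where the relevant residual images trivialize, so that the traces differ on a set of places of density one in a Chebotarev class.

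On the other hand, for $v\in\Omega$ with $\bar s_v=\bar t_v$, I want $V_s|_{G_v}\cong V_t|_{G_v}$ (or at least equal Frobenius traces). This is where the graphic comparison enters. Points with equal reduction in the stable model lie in the same stable class, and after the cover of Lemma~\ref{s6:lem:stable_to_weak_graphicity} and Proposition~\ref{s5:prop:graphical_equivalence_criterion} they are graphically equivalent via a Galois-equivariant comparison isomorphism. Then $V_s|_{G_v}$ and $V_t|_{G_v}$ agree, and in particular their Frobenius traces agree for good $v$. Combined with the previous paragraph, this contradicts the assumption that $\Omega$ has positive upper density at the appropriate level $L$.

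The main obstacle is this last comparison. Equality of reductions must be shown to give equal local Galois representations up to semisimplification. Over a residue field of characteristic $p$ the family may have bad reduction at $\bar s_v$, so I would restrict to places where $Y\to X'$ has good reduction, which discards only finitely many places. At such a place both fibres specialize to the same special fibre, so their Frobenius traces coincide by smooth and proper base change.
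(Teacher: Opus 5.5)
\textbf{There is a genuine gap: the trace comparison at places $v\in\Omega$ fails.} You fix global lifts $s',t'$ of $s,t$ to the cover $X'$ of Theorem~\ref{s1:thm:separation_by_toric_rank} and a family $Y\to X'$. You then assert that $\bar s_v=\bar t_v$ forces the lifts to have the same reduction on the model of $X'$. This is false.

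At a place where $\mathcal{X}'\to\mathcal{X}$ is finite \'etale, $\bar s'_v$ and $\bar t'_v$ are only two points of the fibre over the common point $\bar s_v$. Which lift of $t$ reduces to the same point as $s'$ depends on $v$, and a finite extension of the base field does not change this. Since $Y$ does not descend to $X$, the fibres of $\overline{Y}$ over two different points of such a fibre are unrelated. So smooth and proper base change gives no equality of Frobenius traces of $V_{s'}$ and $V_{t'}$ at $v$, and the contradiction you aim for is not reached.

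The detour through graphic equivalence does not repair this. Lemma~\ref{s6:lem:stable_to_weak_graphicity} only says that weakly graphic classes refine stable classes. Proposition~\ref{s5:prop:graphical_equivalence_criterion} produces comparison isomorphisms at residue characteristic $p$, not isomorphisms of local Galois representations at $v\in\Omega$.

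\textbf{What is missing is a way to let the lift depend on the place.} In your one-pair setting this can be arranged:
\begin{enumerate}[(i)]
\item Replace $X'$ by its Galois closure. The cover in Theorem~\ref{s1:thm:separation_by_toric_rank} works for \emph{every} lift, so for each of the finitely many lifts $gt'$ of $t$ choose a family $Y_g$ whose fibres over $s'_{v_0}$ and $(gt')_{v_0}$ have different toric rank.
\item At the level $L$ where everything is defined, the preimage of $\Omega$ has positive upper density; this is where strong positivity is used. It is covered by the finitely many sets $\{w : \bar s'_w=\overline{(gt')}_w\}$, so one of them has positive upper density.
\item On that set the traces of $V^{(g)}_{s'}$ and $V^{(g)}_{gt'}$ agree at good places. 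Rajan's theorem \cite{rajan_1998} then makes their semisimplifications potentially isomorphic, contradicting the different toric ranks at $v_0$.
\end{enumerate}
Two details in your sketch also need correcting. You need Rajan's statement (agreement on a set of positive upper density implies potential isomorphism), not merely Serre-type positivity of the set of places where traces differ; and the relevant finite extension is the one making the algebraic monodromy connected, not one trivializing ``residual images''. Also, the toric rank must be read off from the semisimplification, for example as the number of weight-zero Frobenius eigenvalues on an open subgroup of $G_{v_0}$. The rank of $N$ is invisible to traces.

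\textbf{How the paper avoids the lift problem.} It fixes $v$ and uses Faltings' finiteness (Lemma~\ref{s7:lem:finiteness}) to build one set $\Omega(R)$ that works for all pairs of Selmer sections of the cover at once. This is what allows lifts to be chosen place by place in \eqref{s7:loc:containment}. It then deduces $\bar s_v=\bar t_v$ for every $v$ and concludes with \cite[Theorem 1.1]{porowski2025}.

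\textbf{The reduction to the proper case is also incomplete.} No finite \'etale cover of an affine curve is proper. After compactifying, you need \cite[Lemma 3.6]{porowski2025} to know that noncuspidal Selmer sections with the same image in $\Sec(\overline{X})$ are conjugate.
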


We recall that two Galois sections are conjugate if they are conjugate over an open subgroup of $G_K$ (see \cite[\S 3]{porowski2024}),
in particular to prove Theorem~\ref{s7:thm:strongly_pos_density} we may always replace $K$ by a finite field extension $L$ and $\Omega$ by its preimage in $\V(L)$; note that by definition the property of having strongly positive density is preserved under this operation.
Before we continue we show how to reduce the proof of Theorem~\ref{s7:thm:strongly_pos_density} to the case of a proper hyperbolic curve.

\begin{lemma}
Theorem~\ref{s1:thm:reduction_on_strongly_pos_density} implies Theorem~\ref{s7:thm:strongly_pos_density}.
\end{lemma}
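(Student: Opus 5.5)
We reduce the affine case to the proper case by passing to a suitable finite étale cover whose compactification is proper and hyperbolic, and then transport Selmer sections and reductions along the cover. Let $X$ be a hyperbolic curve over $K$ and let $s,s'\in\Sel(X)$ be noncuspidal Selmer sections with $red_{\Omega}(s)=red_{\Omega}(s')$. After replacing $K$ by a finite extension (allowed, as recalled above, since strongly positive density is preserved and conjugacy of sections may be tested over an open subgroup), we may assume the following. There is a connected finite étale cover $X'\to X$, defined over $K$ and geometrically connected, such that the smooth compactification $\overline{X}'$ is a proper hyperbolic curve with stable reduction over $K$. Moreover, $s$ lifts to a section $s'_1$ of $\pi_1(X')\to G_K$, and $s'$ lifts to a section $s'_2$ with compatible choices. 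Such an $X'$ exists because any open subgroup of $\pi_1(X\times_K\bar K)$ stable under the image of $s$ gives a cover through which $s$ factors after enlarging $K$. We choose the open subgroup characteristic, so that both $s$ and $s'$ lift, and of genus at least two, so that the compactification is hyperbolic.

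Next we check that the lifts are noncuspidal Selmer sections of $X'$. For each $v$, the local section $s_v$ is geometric, coming from a point $x_v\in X(K_v)$. The lift $s'_{1,v}$ is then the section of some $K_v$-point of $X'$ over $x_v$, or a cuspidal section over a cusp above $x_v$. The latter is excluded because $x_v$ is not a cusp and étale covers send cusps to cusps. Hence $s'_1$ is a noncuspidal Selmer section of $X'$; composing with $\pi_1(X')\to\pi_1(\overline{X}')$ gives a Selmer section of the proper curve $\overline{X}'$, in the sense of the introduction. The same holds for $s'_2$.

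The main obstacle is comparing reductions, since the lifts over $x_v$ are not unique. For $v\in\Omega$, equality of the reductions of $s_v$ and $s'_v$ in $\overline{\mathcal{X}}_v$ does not immediately imply equality of the reductions of the chosen lifts in $\overline{\mathcal{X}}'_v$. I would try to handle this by first showing that the lifts of $s$ and $s'$ can be chosen \emph{simultaneously}, so that $s'_1$ and $s'_2$ induce the same reduction at almost all $v\in\Omega$. At places of good reduction of the cover, the cover of stable models is étale near the reduction point, so the fibre over a common reduction point is identified with a $G_v$-set read off from the section. Equal reductions together with lifts determined by the same open subgroup then give equal reductions upstairs. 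The finitely many bad places can be discarded from $\Omega$, since this does not affect strongly positive density. If this simultaneous-choice argument fails, I would fall back on running over all of the finitely many pairs of lifts and using a density pigeonhole argument. Since $\Omega$ is partitioned into finitely many pieces according to which pair of lifts has matching reductions, one piece has strongly positive density after passing to a finite extension $L/K$. This is because finitely many sets of upper density zero cannot cover a set of positive upper density.

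Applying Theorem~\ref{s1:thm:reduction_on_strongly_pos_density} to $\overline{X}'$ then gives that $s'_1$ and $s'_2$ are conjugate as sections for $\overline{X}'$. The remaining step is to lift this conjugacy from $\pi_1(\overline{X}')$ back to $\pi_1(X')$ and push it down to $\pi_1(X)$. I would do this by repeating the argument for all finite étale covers of $X'$ and using the profinite limit: Selmer sections are determined by their images in all proper compactifications of covers, as in \cite{porowski2025}. This yields $s=s'$.
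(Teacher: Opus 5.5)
Your reduction follows the paper's outline. After a finite extension of $K$, you pass to a cover $X'\to X$ of genus at least two with stable reduction and lift $s,t$ to Selmer sections of $X'$. You then find a subset of $\Omega$ of strongly positive density on which the reductions of one fixed pair of lifts agree, and apply Theorem~\ref{s1:thm:reduction_on_strongly_pos_density} to $\overline{X}'$.

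In the middle step only your fallback works. Two lifts into the same open subgroup can differ by conjugation by an element of $\pi_1(X\times_K\bar{K})$ outside $\pi_1(X'\times_K\bar{K})$. They then correspond to different points of $X'$ over the same point of $X$, whose reductions differ at almost all places, so no ``simultaneous choice'' is automatic. The pigeonhole over the finitely many pairs of lifts is what the paper uses implicitly. It needs the remark that pulling back a set of upper density zero along $\V(L)\to\V(K)$ keeps it of upper density zero, so that a single pair works for every $L$. Also, a noncuspidal Selmer section may be locally cuspidal at some $v$, so $s_v$ need not come from $X(K_v)$; its lift is then locally cuspidal and still Selmer.

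The genuine gap is the final step, which is the real content of the lemma. Theorem~\ref{s1:thm:reduction_on_strongly_pos_density} only tells you that $s'_1$ and $s'_2$ have the same image in $\Sec(\overline{X}')$, and you must get back to $\Sec(X')$. Your profinite-limit argument does not do this. Each repetition over a further cover $X''\to X'$ needs its own finite extension $L''/K$. It gives equality only for \emph{some} pair of lifts chosen at that level, and conjugacy only over an open subgroup of $G_{L''}$ depending on $X''$. Passing to the limit would require lifts compatible along the tower and one open subgroup of $G_K$ over which all these conjugacies hold. Here ``conjugate'' means conjugate over some open subgroup, and modulo any finite quotient any two sections already agree over a suitable open subgroup. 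So this uniformity is exactly what must be proved, and nothing in your argument provides it.

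The tower is also unnecessary. The paper concludes directly from the equality in $\Sec(\overline{X}')$ for the single compactification, using \cite[Lemma 3.6]{porowski2025}: noncuspidal Selmer sections with the same image in $\Sec(\overline{X})$ are conjugate. Roughly, this equality pins down the local points in $\overline{X}'(K_v)$, so the local sections agree wherever they are geometric, and one concludes with injectivity of restriction maps as in \cite{porowski2024}. That single-compactification injectivity is the missing input in your proposal.
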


\begin{proof}
Let $X$ be an affine hyperbolic curve and $s,t\in \Sel(X)$ be two noncuspidal sections such that 
\[
red_{\Omega}(s) = red_{\Omega}(t)
\]
for some set of valuation $\Omega\subset \V(K)$ of strictly positive density.
Fix a connected finite \'etale cover $X'\to X$ of genus at least two.
After passing to a finite field extension of $K$ we may assume that there exist lifts $s',t'\in \Sel(X')$ of $s$ and $t$ and a set of strongly positive density $\Omega'\subset \Omega$ such that 
\[
red_{\Omega'}(s') = red_{\Omega'}(t').
\]
If we show that $s'$ and $t'$ are conjugate then the same will hold for $s$ and $t$, hence replacing $X$ by $X'$ we may assume that $X$ has genus at least two; extending the base field $K$ we may also assume that $X$ has stable reduction over $K$.
Applying Theorem~\ref{s1:thm:reduction_on_strongly_pos_density} we deduce that the images of sections $s$ and $t$ along the natural map
\[
\Sec(X)\to \Sec(\overline{X})
\]
are equal.
Thus it follows from \cite[Lemma 3.6]{porowski2025} that $s$ and $t$ are conjugate.
\end{proof}

From now on we will assume that $X$ is a proper hyperbolic curve over $K$, in particular the set of cuspidal sections is empty.
For a subset $\Omega\subset \V(K)$ we say that $\Omega$ has \emph{potentially} density one if there exists a finite field extension $L/K$ such that the preimage of $\Omega$ to $\V(L)$ has density one.
For a valuation $v\in \V(K)$ we define a subset
\[
\Omega(v)\subset \V(K)
\]
consisting of all valuations $v'$ satisfying the following property: for any two Selmer sections $s,t\in \Sel(X)$ whenever we have the equality $\bar{s}_{v'} = \bar{t}_{v'}$ then also $\bar{s}_{v} = \bar{t}_{v}$.
With this definition Theorem~\ref{s1:thm:reduction_on_strongly_pos_density} will be obtained as a corollary of the following result. 

\begin{theorem}\label{s7:thm:main_stable}
For every $v\in \V(K)$ the set $\Omega(v)$ has potentially density one.
\end{theorem}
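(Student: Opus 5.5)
Fix $v\in\V(K)$. After passing to a finite extension of $K$ (allowed, since conjugacy of sections and density statements are insensitive to this once we work with preimages of valuations), we may assume that $X$ has stable reduction at $v$ and that all relevant covers below are defined over $K$. The Selmer sections $s,t$ give points $s_v,t_v\in X(K_v)\subset X(k_v)$. These points form a relatively compact subset of $X_v(k_v)$, since all of them lie in $X(K_v)$. We partition $X(K_v)$ into stable classes with respect to the stable model at $v$. By Proposition~\ref{s6:prop:separating_compact_set_stable_classes}, there is a connected finite \'etale cover $X'_v\to X_v$ and finitely many families $\{Y_j\}_{j\in J}$ over $X'_v$ separating the stable classes of $X(K_v)|_{X'_v}$ by toric rank.

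The crucial feature of that construction is that each $Y_j$ arises, as in Proposition~\ref{s5:prop:separating_finite_set}, from a finite \'etale cover of $Y'=Y\times_X X'$, where $Y$ is the second configuration space. Since the geometric fundamental groups over $k_v$ and over $\bar K$ agree, we can therefore descend $X'$ and all $Y_j\to X'$ to a finite extension $L/K$. The \'etale cover $X'_v\to X_v$ comes from an open subgroup of the geometric fundamental group, so it is defined over $\bar K$; the same holds for the covers $Y_j\to Y'$. After replacing $K$ by $L$, we get global families $Y_j\to X'$ over $K$, and the Selmer sections $s,t$ lift (after a further extension) to Selmer sections $s',t'$ of $X'$.

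Now let $s,t$ be sections with $\bar s_v\ne\bar t_v$. The key step is to transfer the separation at $v$ to other places $v'$ via Galois representations. For a Selmer section $s'$ of $X'$, the fibre of $Y_j$ over $s'$ can be recovered group-theoretically: pulling back $\pi_1(Y_j)\to\pi_1(X')$ along $s'$ gives a $G_K$-action on $\pi_1^{\Sigma}$ of the fibre. The abelianization of that action is the $\ell$-adic Tate module $T_\ell$ of a Jacobian-like $G_K$-module $M_j(s')$. At each place $w$ where $s'_w$ is geometric, $M_j(s')|_{G_w}$ is the Tate module of the Jacobian of the actual fibre over the point $s'_w$. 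Hence the toric rank at $w$ is read off from the $G_w$-representation, namely from the dimension of the unipotent part of inertia. Since $\bar s_v\ne\bar t_v$, there is some $j$ with $\mathrm{toric}_v(M_j(s'))\ne\mathrm{toric}_v(M_j(t'))$. In particular, $M_j(s')\otimes\Q_\ell$ and $M_j(t')\otimes\Q_\ell$ are non-isomorphic, and even their restrictions to $G_v$ differ.

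Conversely, if $\bar s_{v'}=\bar t_{v'}$ for a good-reduction place $v'$, then the two fibres have reductions at the same point. I would then like to conclude that the Frobenius traces at $v'$ coincide, by comparing the reductions of both fibres via smooth proper base change. The aim is to show that the set of $v'$ with $\bar s_{v'}=\bar t_{v'}$ has density zero, by a Chebotarev argument for the pair of non-isomorphic semisimple representations; this gives potential density one for $\Omega(v)$. I expect this to be the main obstacle. Coincident reductions only give equal characteristic polynomials at $v'$, and only up to the finitely many choices of $j$ and of lifts. The density of $\{v':\tr\rho_s(\mathrm{Frob})=\tr\rho_t(\mathrm{Frob})\}$ is positive in general and need not be zero. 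I would first try to use the families more cleverly: for each of the finitely many $j$, a Chebotarev/Zariski-closure argument on the image of $\rho_s\times\rho_t$ shows that the agreement locus is a proper closed subset of the image, hence has density $<1$. To push this to density zero, I would use the freedom to pass to finite extensions $L/K$. These restrict to subgroups of the image, so I can shrink to the identity component, where a proper closed subset has measure zero; this is exactly the reason the statement asks only for \emph{potential} density one.
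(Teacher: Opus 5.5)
Your per-pair analysis is essentially right. The Zariski-closure/identity-component argument you sketch is the content of Rajan's theorem, which the paper cites in Lemma~\ref{s7:lem:difference_set}. But the argument does not prove the theorem, because it never becomes uniform in the sections. By definition
\[
\Omega(v)=\bigcap_{\substack{s,t\in\Sel(X)\\ \bar s_v\neq \bar t_v}}\{v'\in\V(K) : \bar s_{v'}\neq\bar t_{v'}\},
\]
and $\Sel(X)$ is not known to be finite; finiteness is essentially what the Section Conjecture would give. For a fixed pair $(s,t)$ your argument yields a set of potential density one. However, it does so over an extension $L_{s,t}$ that depends on the pair, and an intersection of infinitely many such sets may have density zero or even be empty. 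So the step ``the set of $v'$ with $\bar s_{v'}=\bar t_{v'}$ has density zero, hence $\Omega(v)$ has potential density one'' does not follow.

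The missing ingredient is Faltings' finiteness theorem (Lemma~\ref{s7:lem:finiteness}). Because the sections are Selmer and $X$ is proper, there is a fixed finite set $B$ of places (bad places of $X$ and of the $Y_j$, and places above $\ell$) outside which every local section $s_{v'}$ comes from an $\Oc_{v'}$-point. Consequently, uniformly in $s$, the representation $\rho^{ss}_s$ attached to $Y_j$ is unramified outside $B$, pure of weight one, and of bounded dimension. Hence $s\mapsto \phi_{Y_j}(s)$ takes values in the \emph{finite} set $R(d,B,\ell)$. You then only need to intersect the sets $\Omega(\rho,\rho')$ over the finitely many pairs of non-almost-isomorphic classes in $R'$. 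This is the only place your Chebotarev/Zariski-closure argument is needed, and the resulting finite intersection $\Omega(R)$ has potential density one (Corollary~\ref{s7:cor:properties_of_Omega(R)}).

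Your two remaining observations then finish exactly as in the paper. If $v'\in\Omega(R)$ and $\bar s_{v'}=\bar t_{v'}$, the $G_{v'}$-representations agree, so $\phi_j(s)$ and $\phi_j(t)$ are almost isomorphic for every $j$. On the other hand, different stable classes at $v$ give some $j$ with different toric rank at $v$. This rules out almost isomorphism, not merely isomorphism, because toric rank is unchanged on open subgroups of $G_v$.

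Working with the universally quantified set on the cover, as in the containment of $\Omega(v,S|_{X'},\mathcal{S}|_{X'})$ in $\Omega(v,S,\mathcal{S})$, also removes your worry about the lifts $s',t'$. Otherwise their choice depends on $v'$.
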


Let us first see how to finish the proof of Theorem~\ref{s1:thm:reduction_on_strongly_pos_density} using Theorem~\ref{s7:thm:main_stable}.

\begin{proof}[Proof of Theorem~\ref{s1:thm:reduction_on_strongly_pos_density}]
Let $\Omega\subset \V(K)$ be a set of strongly positive density and $s,t\in \Sel(X)$ be two Selmer sections such that 
\[
red_{\Omega}(s) = red_{\Omega}(t),
\]
we want to show that $s$ and $t$ are conjugate.
By Theorem~\ref{s7:thm:main_stable} for every valuation $v\in \V(K)$ the set $\Omega(v)$ has potentially density one, in particular the intersection 
\[
\Omega(v)\cap \Omega
\]
is nonempty.
Thus it follows from the definition of $\Omega(v)$ that $\bar{s}_{v} = \bar{t}_{v}$.
As this holds for all valuations $v\in \V(K)$ we deduce from \cite[Theorem 1.1]{porowski2025} that sections $s$ and $t$ are conjugate.
\end{proof}

We now proceed towards the proof of Theorem~\ref{s7:thm:main_stable}, it will be necessary to consider a slightly more general situation.
Let $S\subset \Sel(X)$ be a subset and $v\in \V(K)$ be a valuation.
For a finite partition $\mathcal{S}$ of $S$ we define a subset
\[
\Omega(v,S, \mathcal{S})\subset \V(K)
\]
consisting of all valuations $v'$ satisfying the following property: for any two sections $s,t \in S$ whenever we have the equality $\bar{s}_{v'} = \bar{t}_{v'}$ then $s$ and $t$ lie in the same class of the partition $\mathcal{S}$.
Recall from Section~\ref{s:separating_points} that we have defined various partitions of the set $X(k_v)$, hence using the natural map
\begin{equation}\label{s7:loc:reduction}
\Sel(X)\to X(K_v)\subset X(k_v) 
\end{equation}
we may pull back such a partition from $X(k_v)$ to $\Sel(X)$ to obtain a partition of $S$.

\begin{definition}
For $v\in \V(K)$ the classes of the partition of $\Sel(X)$ obtained as the pullback of a partition of $X(k_v)$ into stable (resp. weakly graphic, semistable) classes along the map~\eqref{s7:loc:reduction}
will be called $v$\emph{-stable} (resp. $v$\emph{-weakly graphic}, $v$\emph{-semistable}).
\end{definition}

Note that when $S = \Sel(X)$ and $\mathcal{S}$ is the partition of $S$ into $v$-stable classes then we have the equality $\Omega(v, S,\mathcal{S}) = \Omega(v)$, recovering our previous definition.
In particular, Theorem~\ref{s7:thm:main_stable} is a special case of the next theorem, which is our main result in this section.

\begin{theorem}\label{s7:thm:main_semistable}
Let $X$ be a proper hyperbolic curve over $K$ with and $v\in \V(K)$ be a valuation.
Let $S\subset \Sel(X)$ be a subset of Selmer sections and $\mathcal{S}$ be a finite partition of $S$ into $v$-semistable classes. 
Then the set $\Omega(v, S, \mathcal{S})$ has potentially density one.
\end{theorem}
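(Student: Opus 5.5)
We will combine the local separation result of Section~\ref{s:separating_points} (Theorem~\ref{s6:thm:separating_compact_set_semistable_classes}) with a global Chebotarev-type argument. The first step is to descend the local construction over $k_v$ to families over $K$. The image of $S$ under $\Sel(X)\to X(K_v)\subset X(k_v)$ lies in $X(K_v)$, so it is relatively compact. Theorem~\ref{s6:thm:separating_compact_set_semistable_classes} then produces a connected finite \'etale cover $X'_v\to X_v$ and finitely many families of hyperbolic curves $\{Y_j\to X'_v\}_{j\in J}$ separating the classes of $\mathcal{S}|_{X'_v}$ by the toric rank. Each $Y_j$ is obtained from the configuration space by finite \'etale covers as in Proposition~\ref{s5:prop:separating_finite_set}, and finite \'etale covers of $X\times_K\bar{K}$ are defined over number fields. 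So after replacing $K$ by a finite extension $L$ (and $v$ by a place above it, which is harmless since the statement only asks for potential density one) we may assume that $X'$ and all $Y_j\to X'$ are defined over $K$. We may also assume that the Selmer sections in $S$ lift to Selmer sections of $X'$, passing to a further finite extension and using the finiteness of bounded-degree covers.

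The second step is to make toric ranks visible at other places. For a Selmer section $s$ with lift $s'$ and each $j$, the fibre $Y_{j,s'}$ is a curve over $K_w$ for every place $w$, and its toric rank at $v$ is what separates classes. The key point is that the toric rank at $v$ of the fibre should be determined by Galois-theoretic data attached to the section, not by the point itself. Concretely, pulling back $\pi_1(Y_j)\to\pi_1(X')$ along the section $s'$ gives an extension of $G_K$ by $\pi_1(Y_{j,\bar{s}})$, and hence a $G_K$-representation $H_1$, namely the $\ell$-adic Tate module of the Jacobian of the compactified fibre. Its restriction to $G_v$ encodes the toric rank via the rank of the unipotent part of inertia (Grothendieck's monodromy criterion). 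We then compare two sections $s,t$ globally. If $\bar{s}_{w}=\bar{t}_{w}$ at a good place $w$, then the fibres over $s_w$ and $t_w$ reduce to the same fibre of the good-reduction model of $Y_j$ over the reduction point. Hence the Frobenius characteristic polynomials of $V_\ell(J_{s})$ and $V_\ell(J_t)$ at $w$ coincide.

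The third step is a density argument. Let $T$ be the set of $w$ for which some pair $s,t$ in different classes of $\mathcal{S}$ has $\bar{s}_w=\bar{t}_w$. For such a pair, choose $j$ with a separating index, so that the Galois representations $V_\ell(J_{j,s})$ and $V_\ell(J_{j,t})$ are nonisomorphic, since their toric ranks at $v$ differ. By Faltings' isogeny theorem together with Chebotarev, after a finite extension making the relevant image connected, the set of $w$ with equal Frobenius traces has density zero. The difficulty is that there may be infinitely many pairs $(s,t)$, and the families depend on $j$, so we need uniformity. To handle this, I would bound the possible representations: there are only finitely many $j$, and for fixed $j$ the representations $V_\ell(J_{j,s})$ arise from sections of a fixed family. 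Their images in $\GL$ of finite quotients, say mod $\ell^2$ as in Serre's trick, range over finitely many possibilities, since the section data factor through a finite quotient $\pi_1(X')\to G$. Thus the representations restricted to an open subgroup fall into finitely many Galois classes. Equality of reductions at $w$ forces equality of Frobenius classes in this finite quotient, which fails on a set of positive Chebotarev density in each case. Taking the finite union of these exceptional sets still leaves a complement of density one after a finite base extension.

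The main obstacle is this uniformity step: reducing infinitely many pairs of Selmer sections to finitely many Galois-theoretic comparisons while still retaining the distinguishing toric-rank information at $v$. I would first try to work purely with finite quotients of $\pi_1(Y_j)$ through which the $v$-inertia action detects toric rank. One option is the combinatorial quotient $\Delta^{cmb}$ of the fibre, whose rank is the toric rank. I would then apply Chebotarev directly to the finitely many resulting finite extensions of $K$.
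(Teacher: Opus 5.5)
Your first two steps agree with the paper's proof of Proposition~\ref{s7:prop:weak_graphicity_partition}. Applying Theorem~\ref{s6:thm:separating_compact_set_semistable_classes} directly to the image of $S$ in $X(k_v)$, instead of first reducing to weakly graphic classes, is a harmless shortcut. The third step, however, has a genuine gap, and the fix you sketch cannot work.

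\textbf{Finite quotients cannot give density-zero sets.} Suppose agreement is tested in a finite Galois quotient $\Gal(M/K)$. By Chebotarev, the agreement set of a pair has density $|C|/|\Gal(M/K)|$, where $C$ is the conjugation-stable set of elements on which the two reductions agree. Since $1\in C$, this density is always positive. After base change to $M$, Frobenius is trivial, so the agreement set has density one. Hence mod-$\ell^2$ data, $\Delta^{cmb}$-quotients, or any other finite quotients can never produce the density-zero exceptional sets you need.

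\textbf{The finiteness claim is unjustified.} Finiteness of the mod-$\ell^2$ data (Hermite--Minkowski) does not imply that the $\ell$-adic representations attached to infinitely many Selmer sections fall into finitely many classes. The assertion that ``the section data factor through a finite quotient'' is false for the full $\ell$-adic representation.

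\textbf{Faltings' isogeny theorem does not apply.} Even for a single pair, it is unavailable: for a non-geometric Selmer section there is no abelian variety over $K$ behind $V_\ell(J_{j,s})$. What you need is an argument with the $\ell$-adic image and its Zariski closure.

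\textbf{What the paper does instead.} It supplies exactly the missing uniformity. For every $s\in\Sel(X)$, the representation $\rho_s$ is unramified outside a fixed finite set $B$. It is also pure of weight one, since at good places it is locally $H^1$ of a curve over $K_w$ with good reduction. So the semisimplifications $\rho^{ss}_s$ all lie in $R(d,B,\ell)$, which is finite by Faltings' finiteness theorem (Lemma~\ref{s7:lem:finiteness}). Thus infinitely many pairs $(s,t)$ produce only finitely many pairs of classes. For each pair of non-almost-isomorphic classes, Rajan's theorem (Lemma~\ref{s7:lem:difference_set}) shows that the places with different Frobenius traces have potentially density one. The resulting finite intersection $\Omega(R)$ is then shown to lie in $\Omega(v,S,\mathcal{S})$.

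\textbf{Detecting the toric rank after semisimplification.} Because one must semisimplify, the toric rank at $v$ cannot be read from the unipotent monodromy via Grothendieck's criterion, since semisimplification destroys it. Instead it is read from Frobenius weights, that is, from traces of the restriction to an open subgroup of $G_v$. This is what shows that sections separated at $v$ give representations that are not almost isomorphic.
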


In order to prove Theorem~\ref{s7:thm:main_semistable} we first need to analyse the behaviour of sets $\Omega(v,S,\mathcal{S})$ with respect to two operations: finite field extensions and passing to finite \'etale covers.
First, let $L/K$ be a finite field extension and let $w$ be a valuation of $L$ lying over $v$.
Then, considering $S$ as a subset
\[
S\subset \Sel(X\times_K L) ,
\]
it is easy to see that the set $\Omega(w,S,\mathcal{S})$ is equal to the preimage of $\Omega(v,S,\mathcal{S})$ under the restriction map $\V(L)\twoheadrightarrow \V(K)$.
In particular, to prove Theorem~\ref{s7:thm:main_semistable} we may replace $K$ by a finite field extension.

Next, let $X'\to X$ be a finite \'etale cover of hyperbolic curves over $K$.
After replacing $K$ by a finite field extension we may assume that the natural map
\[
\varphi\colon \Sel(X')\to \Sel(X)
\]
is surjective.
Write $S|_{X'}$ for the preimage $\varphi^{-1}(S)$ and $\mathcal{S}|_{X'}$ for the partition of $S|_{X}$ obtained by the pullback of the partition $\mathcal{S}$ via $\varphi$.
We claim that, up to finitely many exceptions, we have a containment
\begin{equation}\label{s7:loc:containment}
\Omega(v, S|_{X'},\mathcal{S}|_{X'})\subset \Omega(v,S,\mathcal{S}).
\end{equation}
Indeed, let $v'\in \V(K)$ be a valuation belonging to the left hand side of~\eqref{s7:loc:containment} and suppose that the residue characteristic of $v'$ is relatively prime to the degree of the Galois closure of $X'\to X$.
Let $s,t\in S$ be two Selmer sections such that 
\[
\bar{s}_{v'} = \bar{t}_{v'}.
\]
Then there exist sections $s',t'\in S|_{X'}$ lifting $s$ and $t$ such that 
\[
\bar{s}'_{v'} = \bar{t}'_{v'}.
\]
Since $v'$ belongs to the left hand side of~\eqref{s7:loc:containment} it follows that $s'$ and $t'$ lie in the same class of $\mathcal{S}|_{X'}$.
This implies that $s$ and $t$ lie in the same class of $\mathcal{S}$ hence $v'$ belongs to the right hand side of~\eqref{s7:loc:containment}, proving our claim.
In particular, to prove Theorem~\ref{s7:thm:main_semistable} we may replace $X$ by a finite \'etale cover and $\mathcal{S}$ by the pullback partition.

Finally, it is obvious that when $\mathcal{T}$ is another finite partition of $S$ which is finer than $\mathcal{S}$ then we have an inclusion
\[
\Omega(v,S,\mathcal{T})\subset \Omega(v, S,\mathcal{S}).
\]
Observe that to prove Theorem~\ref{s7:thm:main_semistable} we may combine the above properties together with Theorem~\ref{s6:thm:rns} and Lemma~\ref{s6:lem:stable_to_weak_graphicity}; thus we may replace $X$ by a finite \'etale cover, pass to a refinement of the pullback partition of $\mathcal{S}$ and assume that $\mathcal{S}$ is the partition of $S$ into $v$-weakly graphic classes.
Therefore we reduce the proof of Theorem~\ref{s7:thm:main_semistable} to proving the next proposition.

\begin{proposition}\label{s7:prop:weak_graphicity_partition}
Let $X$ be a proper hyperbolic curve over $K$ and $v\in \V(K)$ be a valuation.
Let $S\subset \Sel(X)$ be a subset of Selmer sections and $\mathcal{S}$ be a finite partition of $S$ into $v$-weakly graphic classes. 
Then the set $\Omega(v, S, \mathcal{S})$ has potentially density one.
\end{proposition}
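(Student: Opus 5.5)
The plan is to transport the local separation statement of Section~\ref{s:separating_classes} at the fixed valuation $v$ to a global statement about Galois representations attached to Selmer sections. Then Chebotarev-type density arguments at the auxiliary valuations $v'$ should give potential density one.

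\emph{Step 1 (geometry at $v$).} The image of $S$ under $\Sel(X)\to X(K_v)\subset X(k_v)$ is contained in $X(K_v)$, hence relatively compact. Apply Proposition~\ref{s5:prop:separating_compact_set} over $k_v$ to this subset and the weakly graphic partition. This gives a connected finite \'etale cover $X'_v\to X_v$ and finitely many families $\{Y_j\}_{j\in J}$ over $X'_v$ that separate the classes of the pulled back partition by the toric rank. By the construction in Proposition~\ref{s5:prop:separating_finite_set}, each $Y_j$ is a finite \'etale cover of the pullback $Y'$ of the second configuration space. Since finite \'etale covers of $X\times_K\bar{K}$ and of $X_v$, and of $Y\times_K\bar K$ and $Y$, correspond, all of these data descend to a finite extension $L/K$. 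Replacing $K$ by $L$ (allowed, as explained above), I assume $X'\to X$ and $Y_j\to X'$ are defined over $K$ and that $\Sel(X')\to\Sel(X)$ is surjective. Replacing $X$ by $X'$ (allowed by~\eqref{s7:loc:containment}), I may assume the families live over $X$ itself.

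\emph{Step 2 (representations from sections).} For $s\in S$ and $j\in J$, pull back the homotopy exact sequence~\eqref{s2:ses:homotopy_ses} of $Y_j\to X$ along $s$. Passing to compactifications and abelianizing gives a Galois representation $\rho_{j,s}\colon G_K\to\GL(V_{j,s})$ on the $\ell$-adic $H_1$ of the compactified fibre, for a fixed prime $\ell$. Since $s_v$ is geometric, $\rho_{j,s}|_{G_v}$ is the $\ell$-adic Tate module of the Jacobian of the actual fibre over $s_v$. By Grothendieck's description of semiabelian reduction, its toric rank can be read off from $\rho_{j,s}|_{G_v}$, for instance from the Frobenius weights of the semisimplification. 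So if $s,t$ lie in different classes of $\mathcal{S}$, there is a $j$ with $\rho_{j,s}\not\simeq\rho_{j,t}$, even after semisimplification. Conversely, at any $v'$ of good reduction for $X$ and the $Y_j$, with residue characteristic different from $\ell$ and prime to the covering degrees, suppose $\bar{s}_{v'}=\bar{t}_{v'}$. Then the fibres over $s_{v'}$ and $t_{v'}$ have the same special fibre, so $\rho_{j,s}$ and $\rho_{j,t}$ have equal Frobenius characteristic polynomials at $v'$ for every $j$. Hence the complement of $\Omega(v,S,\mathcal{S})$ is contained, up to finitely many valuations, in the set of $v'$ where $\tr\rho_{j,s}(\mathrm{Frob}_{v'})=\tr\rho_{j,t}(\mathrm{Frob}_{v'})$ for all $j$, for some pair $s,t$ in distinct classes.

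\emph{Step 3 (density).} For a single pair $(s,t)$, consider the Zariski closure $G$ of the image of $\rho_{j,s}\oplus\rho_{j,t}$. Pass to the finite extension of $K$ cutting out $G/G^{\circ}$. The locus of equal characteristic polynomials is then a proper Zariski closed subset of $G^{\circ}$, since otherwise the semisimplifications would agree by Brauer--Nesbitt. Therefore, by Chebotarev for $\ell$-adic Lie groups, it has Haar measure zero, and the coincidence set has density zero. This yields potential density one for each fixed pair.

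\emph{Main obstacle.} I expect the hard part to be uniformity over the possibly infinite set $S$. There are only finitely many families and classes, but infinitely many pairs, and a countable union of density-zero sets need not have density zero, while the extension $L$ must be chosen independently of the pair. My first attempt would be to exploit that $\rho_{j,s}$ is determined by the restriction of $s$ to the kernel of $\pi_1(Y_j)\to\pi_1(X)$. Since $G_K$ acts on a fixed $\pi_1$ of the fibre by outer automorphisms varying continuously with $s$, I would try to show that the image groups $G$ and their component groups are uniform in $s$ after one fixed extension. Then I would replace ``characteristic polynomials coincide'' by a single Zariski closed condition of bounded complexity on a common group, so that one density-zero estimate covers all pairs simultaneously. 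If that fails, I would instead try to work modulo a fixed power of $\ell$, so that only finitely many representations occur, and run the argument there.
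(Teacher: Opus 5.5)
There is a genuine gap, and it is the one you flag yourself. Your Steps~1 and~2 follow the paper's proof: separating families from Proposition~\ref{s5:prop:separating_compact_set}, the representations $\rho_{j,s}$ pulled back along sections, and the toric rank at $v$ set against Frobenius traces at $v'$. Step~3 is essentially Rajan's theorem for one pair. But a density-zero coincidence set for each fixed pair $(s,t)$, over an extension depending on the pair, says nothing about $\Omega(v,S,\mathcal{S})$ when $S$ is infinite.

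The missing ingredient is Faltings' finiteness theorem (Lemma~\ref{s7:lem:finiteness}). Semisimple $\ell$-adic representations of $G_K$ of dimension $\le d$, unramified outside a fixed finite set $B$ and pure of weight one, form only finitely many isomorphism classes. You already have the inputs, provided you make $B$ independent of $s$. Let $B$ contain the places above $\ell$ and the places of bad reduction of $X$ and of the compactified $Y_j$. For $v'\notin B$, every Selmer section gives $s_{v'}\in X(K_{v'})$, which extends to an integral point because $X$ is proper. So the fibre of $Y_j$ has good reduction, and $\rho_{j,s}$ is unramified at $v'$ and pure of weight one by Weil. Hence $s\mapsto\rho^{ss}_{j,s}$ has finite image on all of $\Sel(X)$.

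You then need your Step~3 only for the finitely many pairs of classes that are not almost isomorphic. Work over one compositum of the finitely many fields and intersect finitely many sets of potential density one. You should also state explicitly that $\rho_{j,s}$ and $\rho_{j,t}$ are \emph{not almost isomorphic}, not merely non-isomorphic, since Step~3 restricts to the preimage of $G^\circ$. This does hold: your reading of the toric rank via Frobenius weights of the semisimplified local representation is insensitive to finite extensions of $K_v$.

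Neither of your proposed fixes would work. Working modulo a fixed power of $\ell$ fails structurally. Two finite-image representations of the same dimension become trivial, hence isomorphic, on the intersection of their kernels. Over that extension all Frobenius traces at split places agree, so no set of potential density one can come from them. The non-almost-isomorphism supplied by the toric rank has no mod-$\ell^n$ analogue. The uniform-image-group idea has no mechanism making the Zariski closures uniform in $s$. Even granting uniformity, packaging infinitely many trace-coincidence conditions into one density-zero set is exactly what needs a finiteness statement, and Faltings' theorem supplies it directly.
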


Before we prove Proposition~\ref{s7:prop:weak_graphicity_partition} we need to recall a few facts from the theory of Galois representations.
Let $d\ge 1$ be a natural number, $\ell$ be a prime number and $B\subset \V(K)$ be a finite set of valuations containing all places above $\ell$.
Write $R = R(d, B, \ell)$ for the set of isomorphism classes of semisimple representations 
\[
\rho: G_K \to \GL(V)
\]
unramified outside $B$ and of half weight, where $V$ is a vector space over $\Q_{\ell}$ of dimension $\le d$.
Also, for a representation $\rho$ and a valuation $v\in \V(K)$ we denote by $\rho_v$ the restriction of $\rho$ to a decomposition subgroup $G_v\subset G_K$.

\begin{lemma}\label{s7:lem:finiteness}
The set $R$ is finite.
\end{lemma}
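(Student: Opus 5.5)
This is a Faltings-type finiteness statement (Hermite–Minkowski combined with Chebotarev), and I would prove it by the standard argument. The plan is to bound the set of possible Frobenius characteristic polynomials, and then invoke the fact that a semisimple $\ell$-adic representation is determined by its traces on a fixed finite set of Frobenius elements.

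First I would fix the field that all the relevant residual representations factor through. Each $\rho\in R$ admits a $G_K$-stable $\Z_\ell$-lattice, since $G_K$ is compact. The reduction modulo $\ell^2$ (for $\ell=2$; modulo $\ell$ suffices otherwise) then gives a homomorphism
\[
G_K\to \GL_{\le d}(\Z/\ell^2\Z).
\]
The image of this map has bounded order and the map is unramified outside $B$. By Hermite–Minkowski there are only finitely many extensions of $K$ of bounded degree unramified outside $B$. Their compositum $M$ is a finite Galois extension of $K$, and it depends only on $(d,B,\ell)$.

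Next I would use the Faltings–Serre argument. Consider the $\Z_\ell$-algebra generated by the image of $\rho$, or more precisely by $\rho\oplus\rho'$ for two candidate representations $\rho,\rho'$. Its reduction lands in a finite ring, so the elements of $\Gal(M/K)$ surject onto that quotient. By Chebotarev, we can choose a finite set $T\subset\V(K)\setminus B$ such that the Frobenius elements at $T$ represent every conjugacy class of $\Gal(M/K)$. The trace argument (Nakayama's lemma over $\Z_\ell$ applied to the image algebra) then shows the following: if two semisimple $\rho,\rho'$ of dimension $\le d$ have equal $\tr\rho(\mathrm{Frob}_w)$ for all $w\in T$, then $\tr\rho=\tr\rho'$ on all of $G_K$. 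By Brauer–Nesbitt, and since we are in characteristic zero with semisimple representations, this gives $\rho\cong\rho'$.

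It then remains to show that for each $w\in T$ the trace $\tr\rho(\mathrm{Frob}_w)$ takes only finitely many values as $\rho$ ranges over $R$. Here I use the half-weight hypothesis, which I read as: the Frobenius eigenvalues are algebraic integers, all of whose complex absolute values equal $q_w^{1/2}$. Under this reading, the characteristic polynomial of $\rho(\mathrm{Frob}_w)$ has integer coefficients bounded in terms of $d$ and $q_w$, so only finitely many polynomials occur. Consequently the trace vector $(\tr\rho(\mathrm{Frob}_w))_{w\in T}$ lies in a finite set, and so $R$ is finite.

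I expect the main obstacle to be the second step. One must make the trace-determination argument uniform: the finite set $T$ has to work for every pair $\rho,\rho'$ simultaneously. This is why $M$ is taken to be the compositum over all residual representations of bounded size, instead of a field that depends on $\rho$. One must also check that the integrality of eigenvalues is part of (or follows from) the meaning of half weight as used here. If the paper's notion of ``half weight'' only gives the absolute-value condition, I would add the integrality by invoking that these representations come from $H^1$ of abelian varieties, where it holds by Weil.
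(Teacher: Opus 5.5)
Your argument is the standard proof of Faltings' finiteness theorem for semisimple $\ell$-adic representations: Hermite--Minkowski, then Chebotarev, then Nakayama applied to the $\Z_\ell$-span $A$ of $(\rho\oplus\rho')(G_K)$, then finiteness of the possible Frobenius characteristic polynomials. That theorem is exactly what the paper invokes, since its proof is just a reference to Faltings--W\"ustholz, Ch.~V, \S2. So the route is the same, but one step fails as you have written it.

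\textbf{The gap (last step).} You read ``half weight'' as: the eigenvalues of $\rho(\mathrm{Frob}_w)$ are algebraic integers all of whose complex absolute values equal $q_w^{1/2}$. From this you conclude that the characteristic polynomial has coefficients in $\Z$, and that inference is wrong.
\begin{itemize}
\item Since $\rho$ is a $\Q_\ell$-representation, the coefficients are only algebraic integers lying in $\Q_\ell$. For instance, $X^2-\sqrt{2}X+q_w$ satisfies all your conditions whenever $\sqrt{2}\in\Q_\ell$.
\item Nothing bounds the degree of these coefficients over $\Q$. Without such a bound, bounded archimedean absolute values no longer leave only finitely many possibilities.
\end{itemize}
The purity notion in the cited finiteness theorem includes, as a hypothesis, that for $w\notin B$ the characteristic polynomial of $\rho(\mathrm{Frob}_w)$ lies in $\Z[X]$, with roots of absolute value $q_w^{1/2}$. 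You should take that as the meaning of ``half weight''; with it, your last step goes through.

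Your fallback via $H^1$ of abelian varieties does not prove the lemma as stated, because $R$ consists of arbitrary representations. It only covers the representations $\rho^{ss}_s$ produced later in the paper, where integrality comes from the local geometricity of the Selmer section at $w$ together with Weil. That would suffice for the application, but it is a different statement.

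\textbf{A minor point (first two steps).} What you need is that $M$ contains the field cut out by $G_K\to (A/\ell A)^\times$. Since $A$ need not be saturated in $M_d(\Z_\ell)\times M_{d'}(\Z_\ell)$, this field is in general not cut out by any residual representation, mod $\ell$ or mod $\ell^2$. So take $M$ to be the compositum of all extensions of $K$ unramified outside $B$ of degree at most $\ell^{2d^2}\ge \#(A/\ell A)$. The reduction modulo $\ell^2$ plays no role.

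With these adjustments the argument is complete. Taking one Frobenius per conjugacy class of $\Gal(M/K)$ is indeed enough, because the trace is conjugation-invariant.
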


\begin{proof}
See for example~\cite[V, \S2]{faltings_wustholz_rational_points}.
\end{proof}

Note that whenever $d'\ge d$ and $B'\supset B$ then we naturally have an inclusion 
\[
R(d,B,\ell)\subset R(d',B',\ell).
\]
For two semisimple $G_K$-representations $\rho,\rho'$ we say that $\rho$ and $\rho'$ are \emph{almost isomorphic} if there exists an open subgroup $U\subset G_K$ such that their restrictions to $U$ are isomorphic.
Clearly this induces an equivalence relation on the set $R$, write 
\[
R' = R'(d, B,\ell)
\]
for the quotient of $R$ with respect to this equivalence relation.
Since $R$ is finite there exists a Galois finite field extension $L = L(R)$ of $K$ with the following property: 
two representations whose classes belong to $R$ are almost isomorphic if and only if their restrictions to $G_L\subset G_K$ are isomorphic.
Write 
\[
Split(L/K)\subset \V(K)
\]
for the set of all valuations $v\in \V(K)$ which are totally split in $L$.
For any semisimple representations $\rho,\rho'$ whose isomorphism classes lie in $R$ we define a subset 
\[
\Omega(\rho,\rho')\subset Split(L/K)
\]
consisting of all valuations $v\in Split(L/K)$ at which both representations $\rho,\rho'$ are unramified and the traces of Frobenius elements at $v$
\[
\tr(\rho(Frob_v)) \neq \tr(\rho'(Frob_v))
\]
are distinct.
Note that the definition of the set $\Omega(\rho,\rho')$ depends only on the classes of $\rho$ and $\rho'$ in $R'$.

\begin{lemma}\label{s7:lem:difference_set}
Assume that the images of $\rho$ and $\rho'$ in $R'$ are distinct. Then the set $\Omega(\rho,\rho')$ has potentially density one.
\end{lemma}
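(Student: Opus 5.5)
We want: if $\rho,\rho'$ (semisimple, classes in $R$) are not almost isomorphic, then $\Omega(\rho,\rho')$ has potentially density one. The plan is to pass to the field $L=L(R)$ and use Chebotarev together with the Zariski-closure structure of the image of $\rho\oplus\rho'$. By the choice of $L$, the restrictions $\rho|_{G_L}$ and $\rho'|_{G_L}$ are non-isomorphic, since otherwise $\rho,\rho'$ would be almost isomorphic. Moreover, the preimage of $Split(L/K)$ in $\V(L)$ consists of all valuations of $L$ lying over totally split places of $K$, which is a density one subset of $\V(L)$. For such $w$ over $v$ we have $Frob_w=Frob_v$ in $G_L$. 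Therefore it suffices to show that there is a finite extension $L'/L$ such that the set of $w'\in\V(L')$ at which the traces of Frobenius of $\rho|_{G_{L'}}$ and $\rho'|_{G_{L'}}$ differ has density one.

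Consider $\sigma=\rho\oplus\rho'\colon G_K\to \GL(V)\times\GL(V')$, and let $G$ be the Zariski closure of $\sigma(G_K)$ in $\GL(V)\times \GL(V')$, a linear algebraic group over $\Q_\ell$. Replace $L$ by the finite extension $L'$ cut out by $\sigma^{-1}(G^\circ(\Q_\ell))$, so that $\sigma(G_{L'})$ is Zariski dense in the connected group $G^\circ$. The function
\[
f(g,g')=\tr(g)-\tr(g')
\]
is regular on $G^\circ$.

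The key claim is that $f$ is not identically zero on $G^\circ$. Suppose instead that $f$ vanishes on $G^\circ$. Then $\tr\rho=\tr\rho'$ on $G_{L'}$, by Zariski density of $\sigma(G_{L'})$. Restrictions of semisimple representations to open subgroups stay semisimple, and in characteristic zero semisimple representations are determined by their traces. This would give $\rho|_{G_{L'}}\cong\rho'|_{G_{L'}}$, contradicting that $\rho,\rho'$ are not almost isomorphic. Hence the zero locus $Z=\{f=0\}\subset G^\circ$ is a proper Zariski closed subset of the irreducible group $G^\circ$, so $\dim Z<\dim G^\circ$.

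It remains to show that the set of Frobenius elements landing in $Z$ has density zero. This is the main obstacle. I would apply Serre's $\ell$-adic Chebotarev density theorem to the compact $\ell$-adic Lie group $H=\sigma(G_{L'})$, which is open in $G^\circ(\Q_\ell)$ in the relevant sense; more precisely, its Lie algebra is that of $G^\circ$, by the algebraicity results of Bogomolov/Serre in the half-weight setting. Then $Z\cap H$ is a closed analytic subset of $H$ of strictly smaller dimension, stable under conjugation. Serre's theorem (Chebotarev for $\ell$-adic analytic images) shows that Frobenius elements land in such a set with density zero, because its Haar measure is zero.

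The potential subtlety is justifying that $\dim(Z\cap H)<\dim H$, i.e. that $H$ is Zariski dense and open in $G^\circ(\Q_\ell)$. If the needed algebraicity result were unavailable, I would try first the weaker route: since $H$ is Zariski dense in the irreducible $G^\circ$, it is not contained in $Z$. Then I would use the analyticity of $f$ on the $\ell$-adic manifold $H$, together with the fact that $H$ is a compact analytic group on which $f$ does not vanish identically on any open subgroup. Zariski density is preserved under passing to open subgroups of the connected $G^\circ$, so the zero set of $f$ has measure zero in $H$. Chebotarev then yields density zero of exceptions. Removing also the finitely many ramified places of $B$, we get that $\Omega(\rho,\rho')$ has density one after base change to $L'$, as required.
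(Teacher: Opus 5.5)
Your argument is correct, but it takes a different route from the paper.

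\textbf{The paper's route.} The paper argues by contradiction in two lines. If $\Omega(\rho,\rho')$ were not potentially of density one, then the set $Split(L/K)\setminus\Omega(\rho,\rho')$ of split places where the Frobenius traces agree would have strongly positive density. Rajan's refinement of strong multiplicity one \cite[Theorem 2]{rajan_1998} then forces $\rho$ and $\rho'$ to be almost isomorphic.

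\textbf{Your route.} You reprove the needed statement directly. You pass to an extension where the algebraic monodromy group of $\rho\oplus\rho'$ is connected, and show that the trace difference $f$ cannot vanish on $G^\circ$, using semisimplicity and non-almost-isomorphism. You then apply Serre's $\ell$-adic Chebotarev argument to the proper conjugation-stable Zariski-closed set $Z$. This is essentially the mechanism inside Rajan's theorem.

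\textbf{What each buys.} Your version is self-contained and shows why passing to an extension is unavoidable. For example, $\psi_1\otimes(1\oplus\epsilon)$ and $\psi_2\otimes(1\oplus\epsilon)$ have equal traces on a whole non-identity component, hence on a set of density $1/2$ over $K$. Your argument also never uses the finiteness of $R$; the field $L(R)$ only enters so that you land inside $Split(L/K)$. For that, take $L'$ to contain $L$ by passing to the compositum; the Zariski closure of $\sigma(G_{L'L})$ is still $G^\circ$, since it is a finite-index closed subgroup of a connected group. The paper's route buys brevity at the cost of a black box.

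\textbf{One caution.} Your primary justification of $\dim(Z\cap H)<\dim H$ uses Bogomolov-type openness of $H$ in $G^\circ(\Q_\ell)$. That is not available here: elements of $R$ need not come from abelian varieties, and in the application they are pulled back along possibly non-geometric sections. Your fallback is the right argument and suffices on its own, with one adjustment.
\begin{enumerate}[(i)]
\item It must exclude vanishing on open subsets, not just open subgroups. Any open subset of $H$ contains a coset $hU$ of an open subgroup $U$. If $f$ vanished on $hU$, then $f(h\,\cdot)$ would vanish on $U$, which is Zariski dense in $G^\circ$, hence $f$ would vanish on $hG^\circ=G^\circ$.
\item Therefore the zero set of $f$ in $H$ is Haar-null.
\item Approximate $Z\cap H$ by the conjugation-stable unions of cosets $(Z\cap H)U$, with $U$ open normal; their Haar measures tend to zero. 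Finite Chebotarev then gives upper density zero for the exceptional places.
\end{enumerate}
This is Serre's standard lemma that the Frobenius density in a conjugation-stable Zariski-closed set equals the proportion of components of $G$ it contains.
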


\begin{proof}
Suppose that $\Omega(\rho,\rho')$ does not have potentially density one, since $Split(L/K)$ has potentially density one it follows that the complement 
\[
Split(L/K)\setminus \Omega(\rho,\rho')
\]
has strongly positive density.
Then it follows from~\cite[Theorem 2]{rajan_1998} that $\rho$ and $\rho'$ are almost isomorphic.
\end{proof}

Finally we define
\[
\Omega(R) = \bigcap_{\rho\ncong\rho'} \Omega (\rho,\rho')
\]
with $\rho, \rho'$ running through all pairs of distinct isomorphism classes of representations from $R'$.
This is a finite intersection and from Lemma~\ref{s7:lem:difference_set} we deduce the next corollary.

\begin{corollary}\label{s7:cor:properties_of_Omega(R)}
The set $\Omega(R)$ has potentially density one and has the following property: if $\rho$ and $\rho'$ are two semisimple representations whose isomorphisms classes lie in $R$ and $v\in \Omega(R)$ is such that $\rho_v\cong \rho'_v$ then $\rho$ and $\rho'$ are almost isomorphic.
\end{corollary}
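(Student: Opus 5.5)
The corollary has two parts. First, $\Omega(R)$ has potentially density one. Second, if $v\in\Omega(R)$ and $\rho_v\cong\rho'_v$, then $\rho$ and $\rho'$ are almost isomorphic.

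\textbf{Density.} By Lemma~\ref{s7:lem:finiteness} the set $R$ is finite, so $R'$ is finite. Hence $\Omega(R)$ is a finite intersection of the sets $\Omega(\rho,\rho')$, taken over pairs of distinct classes in $R'$. By Lemma~\ref{s7:lem:difference_set}, each $\Omega(\rho,\rho')$ has potentially density one.

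So we only need that a finite intersection of sets of potentially density one again has potentially density one. Choose finite extensions $L_1,\ldots,L_m$ of $K$ witnessing this for each set, and let $L'$ be their compositum. The preimage in $\V(L')$ of a density one subset of $\V(L_i)$ has density one in $\V(L')$. Indeed, its complement is the preimage of a density zero set. Each valuation of $L_i$ has at most $[L':L_i]$ extensions to $L'$, so this complement has upper density at most a constant times zero. A finite intersection of density one sets in $\V(L')$ has density one, since its complement is a finite union of density zero sets. If $\Omega(R)$ is empty because $R'$ has fewer than two elements, the intersection is conventionally $Split(L/K)$, which has potentially density one.

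\textbf{Separation property.} Let $\rho,\rho'$ have classes in $R$, let $v\in\Omega(R)$, and suppose $\rho_v\cong\rho'_v$. Assume for contradiction that $\rho$ and $\rho'$ are not almost isomorphic. Then their images in $R'$ are distinct, so $v\in\Omega(\rho,\rho')$. By definition, both representations are then unramified at $v$ and
\[
\tr(\rho(Frob_v))\ne\tr(\rho'(Frob_v)).
\]
On the other hand, $\rho_v\cong\rho'_v$ as representations of the decomposition group $G_v$. Since both are unramified at $v$, the Frobenius elements are well defined up to conjugacy and inertia. Isomorphic representations have equal traces on every element, so the two Frobenius traces coincide. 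This is a contradiction.

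The only point needing care is the density argument for the finite intersection under the change of base field. It is routine and reduces to the fact that a density zero set of places has a density zero preimage.
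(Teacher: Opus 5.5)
Your proof is correct and follows the paper's argument. $\Omega(R)$ is a finite intersection, by Lemma~\ref{s7:lem:finiteness}, of sets $\Omega(\rho,\rho')$ of potentially density one (Lemma~\ref{s7:lem:difference_set}), and the separation property follows because isomorphic $G_v$-representations have equal Frobenius traces; you simply spell out the routine points the paper leaves implicit, namely stability of potentially density one under finite intersections via a common compositum, and the trace comparison at $v$.
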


Next we recall a construction of Galois representations arising from a Galois section and a family of varieties. 
Let $Y\to X$ be a smooth proper family of curves of genus $g\ge 2$ over $X$, by considering the monodromy action of the fundamental group of $X$ on the first \'etale cohomology group 
\[
V = H^1_{et}(Y_{\bar{x}},\Q_{\ell})
\]
of a fibre over a geometric point $\bar{x}$ of $X$ we obtain a representation 
\[
\rho \colon \pi^{et}_1(X)\to \GL(V).
\]
Given a section $s\in \Sec(X)$ we may pull back $\rho$ along a homomorphism $s\colon G_K\to\pi^{et}_1(X)$ to obtain a representation
\[
\rho_s\colon G_K\to \GL(V).
\]
Moreover, there exists a finite set $B\subset \V(K)$ of valuations with the property that for every Selmer section $s\in \Sel(X)$ the representation $\rho_s$ is unramified outside $B$.
Write $\rho^{ss}_s$ for the semisimplification of the representation $\rho_s$ thus for $d=2g$ and $R = R(d,B,\ell)$ we obtain a map
\[
\phi_{Y} \colon \Sel(X)\to R
\]
sending a section $s$ to the isomorphism class of the representation $\rho^{ss}_s$.
After these preparations we finally come the the proof of Proposition~\ref{s7:prop:weak_graphicity_partition}.

\begin{proof}[Proof of Proposition~\ref{s7:prop:weak_graphicity_partition}]
Applying Proposition~\ref{s5:prop:separating_compact_set} we may replace $X$ by a finite \'etale cover and pass to a finite field extension of $K$ to assume that there exists a finite set of families $\{Y_j\}_{j\in J}$ of smooth proper curves over $X$ which satisfy the following condition:
\begin{quote}
$(*)$ for any two sections $s,t\in S$ lying in different classes of the partition $\mathcal{S}$ there exists an index $j\in J$ such that the Jacobians of the geometric fibres of $Y_j\to X$ over $s_v$ and $t_v$ have different toric rank.
\end{quote}
Pick a prime number $\ell$, then for sufficiently large integer $d$ and a finite set $B$ we obtain maps
\[
\phi_j = \phi_{Y_j}\colon \Sel(X) \to R,
\]
for every $j\in J$ and $R = R(\ell,d,B)$. We claim that we have an inclusion
\[
\Omega(R) \subset \Omega(v,S,\mathcal{S}),
\]
this will finish the proof as the set $\Omega(R)$ has potentially density one by Corollary~\ref{s7:cor:properties_of_Omega(R)}.

Indeed, let $v'\in \Omega(R)$ be a valuation and choose two Selmer sections $s,t\in S$ such that $\bar{s}_{v'} = \bar{t}_{v'}$.
This implies that for every $j\in J$ the local $G_{v'}$-representations 
\[
\phi_j(s)_{v'} \cong \phi_j(t)_{v'}
\]
are isomorphic.
By the definition of the set $\Omega(R)$ we deduce that for every $j\in J$ the global $G_K$-representations
$\phi_j(s)$ and $\phi_j(t)$ are almost isomorphic.

On the other hand, suppose that $s$ and $t$ lie in different classes of the partition $\mathcal{S}$.
Using property $(*)$ we may choose an index $j\in J$ such that the geometric fibres of $Y_j\to X$ over $s_v$ and $t_v$ have different toric ranks.
As the toric rank of a fibre can be reconstructed from the trace of a restriction of the corresponding local $G_v$-representation to an open subgroup we deduce that the representations $\phi_j(s)$ and $\phi_j(t)$ are not almost isomorphic which contradicts our previous conclusion.
Therefore $s$ and $t$ must lie in the same class of $\mathcal{S}$ which means that $v'\in \Omega(v,S,\mathcal{S})$.
\end{proof}

In the final part of this section we discuss some corollaries of Theorem~\ref{s7:thm:main_semistable}.
We keep the assumption that $X$ is a proper hyperbolic curve over a number field $K$.
For a valuation $v\in \V(K)$ we denote by $r(v)$ the cardinality of the image of the map
\[
\Sel(X)\to \mathcal{X}_v(\bar{\kappa}(v)). 
\]
Here is an immediate corollary of Theorem~\ref{s7:thm:main_stable}.

\begin{corollary}\label{s7:cor:bound_for_Omega(v)}
For all $v'\in \Omega(v)$ we have an inequality $r(v)\le r(v')$.
\end{corollary}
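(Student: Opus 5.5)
Fix $v'\in\Omega(v)$. By definition of $\Omega(v)$, for any two Selmer sections $s,t\in\Sel(X)$ the equality $\bar{s}_{v'}=\bar{t}_{v'}$ forces $\bar{s}_{v}=\bar{t}_{v}$. Reading this as a statement about maps, the reduction map at $v$ factors through the reduction map at $v'$. I will set $\rho_v\colon \Sel(X)\to\mathcal{X}_v(\bar{\kappa}(v))$ and $\rho_{v'}\colon\Sel(X)\to\mathcal{X}_{v'}(\bar{\kappa}(v'))$ for these two maps, so that $r(v)=\#\rho_v(\Sel(X))$ and $r(v')=\#\rho_{v'}(\Sel(X))$.

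The plan is to construct a surjection from the image of $\rho_{v'}$ onto the image of $\rho_v$. For each point $y$ in the image of $\rho_{v'}$, choose a section $s\in\Sel(X)$ with $\rho_{v'}(s)=y$, and set $f(y)=\rho_v(s)$. The defining property of $\Omega(v)$ shows that $f(y)$ does not depend on the choice of $s$: if $\rho_{v'}(s)=\rho_{v'}(t)$, then $\rho_v(s)=\rho_v(t)$. By construction $\rho_v=f\circ\rho_{v'}$ on $\Sel(X)$, so $f$ maps the image of $\rho_{v'}$ onto the image of $\rho_v$. Comparing cardinalities then gives $r(v)\le r(v')$. The same argument works if $r(v')$ is infinite, where the inequality is trivial, so no finiteness assumption is needed.

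No step here is a genuine obstacle; the only care needed is that $\Omega(v)$ is defined using exactly the reduction maps to $\mathcal{X}_v(\bar{\kappa}(v))$ appearing in the definition of $r(v)$. The nontrivial content, that such $v'$ exist in abundance, is in Theorem~\ref{s7:thm:main_stable}; the corollary itself needs only the definition of $\Omega(v)$.
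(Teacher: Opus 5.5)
Your argument is correct and is essentially the paper's proof. The paper picks sections $s_1,\ldots,s_n$ with pairwise distinct reductions at $v'$ (where $n=r(v')$) and uses the defining property of $\Omega(v)$ to see that every $\bar{s}_v$ equals some $\bar{s}_{i,v}$; this is your surjection $f$ expressed through representatives, and as you observe only the definition of $\Omega(v)$ is actually needed (the paper's appeal to Theorem~\ref{s7:thm:main_stable} at that step is not essential).
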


\begin{proof}
Put $n= r(v')$ and choose Selmer sections $s_i$ for $1\le i \le n$ whose reductions in $\mathcal{X}(\bar{\kappa}(v'))$ are pairwise different.
Then for every section $s\in \Sel(X)$ there is a unique index $i$ such that $\bar{s}_{v'} = \bar{s}_{i,v'}$.
From Theorem~\ref{s7:thm:main_stable} we deduce $\bar{s}_v = \bar{s}_{i,v}$ hence $r(v)\le n$
\end{proof}

Next we can also slightly strengthen~\cite[Lemma 6.2]{porowski2025}.

\begin{corollary}
Suppose that there exists an integer $d\ge 1$ and a set of valuations $\Omega\subset \V(K)$ of strictly positive density such that for every $v\in \Omega$ we have the inequality $r(v)\le d$.
Then the set $\Sel(X)$ is finite, of cardinality $\le d$.
\end{corollary}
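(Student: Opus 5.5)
The plan is to argue by contradiction with Corollary~\ref{s7:cor:bound_for_Omega(v)}, using Theorem~\ref{s7:thm:main_stable}. Suppose $\Sel(X)$ contains $d+1$ pairwise distinct (non-conjugate) Selmer sections $s_0,\ldots,s_d$. First I will show that for all but finitely many valuations $v\in\V(K)$ these sections have pairwise distinct reductions, so that $r(v)\ge d+1$. Then I will derive a contradiction with the bound $r(v')\le d$ on $\Omega$.

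For the first step, fix $i\ne j$. Since $s_i$ and $s_j$ are not conjugate, \cite[Theorem 1.1]{porowski2025} (the injectivity of the full reduction map $red_{\V(K)}$ on $\Sel(X)$) provides a valuation $v_{ij}$ with $\bar{s}_{i,v_{ij}}\ne\bar{s}_{j,v_{ij}}$. Taking one such valuation for each of the finitely many pairs, and noting that replacing $K$ by a finite extension is harmless, I obtain a finite set $\{v_{ij}\}$ of places.

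The key step is now Theorem~\ref{s7:thm:main_stable}. Each set $\Omega(v_{ij})$ has potentially density one, so after passing to a single finite extension $L/K$ (and pulling back $\Omega$, whose positive density I would need to persist; this is why I expect to use that the preimage still meets the relevant set) the finite intersection $\bigcap_{i,j}\Omega(v_{ij})$ has density one. By definition of $\Omega(v_{ij})$, any $v'$ in this intersection satisfies $\bar{s}_{i,v'}\ne\bar{s}_{j,v'}$ for all pairs, since equality at $v'$ would force equality at $v_{ij}$. Hence $r(v')\ge d+1$ on a density-one set. A density-one set meets any set of strictly positive density, so some $v'\in\Omega$ has $r(v')\ge d+1>d$, a contradiction. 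Therefore $\#\Sel(X)\le d$.

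The main obstacle is the bookkeeping under the field extension $L/K$. The hypothesis only gives positive density of $\Omega\subset\V(K)$, not strongly positive density, so I must make sure that a potentially-density-one set still intersects $\Omega$. For this I would use that the set $Split(L/K)$ has positive density $1/[L:K]$. I would then argue directly that the complement in $\V(K)$ of the image of the density-one subset of $\V(L)$ consists, up to density zero, of non-split places. Consequently, its intersection with $\Omega$ is avoided once we restrict to a positive-density portion of $\Omega$ inside $Split(L/K)$. If this fails, I would instead note that $r(v)$ is unchanged for totally split places and rework the count over $L$. Another subtlety is that $r$ must be computed in $\mathcal{X}_v(\bar\kappa(v))$, which is insensitive to base extension.
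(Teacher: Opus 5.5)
Your skeleton is sound and differs from the paper's route. The paper fixes an arbitrary $v$, picks $v'\in\Omega\cap\Omega(v)$, and gets $r(v)\le r(v')\le d$ for \emph{every} $v$ via Corollary~\ref{s7:cor:bound_for_Omega(v)}. It then needs \cite[Theorem 3.3]{porowski2025} and the proof of \cite[Lemma 6.2]{porowski2025} to pass from ``$r(v)\le d$ everywhere'' to $\#\Sel(X)\le d$.

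You instead fix $d+1$ non-conjugate sections, separate each pair at some $v_{ij}$ by \cite[Theorem 1.1]{porowski2025}, and apply Theorem~\ref{s7:thm:main_stable} to the finitely many $v_{ij}$. This produces a single place separating all pairs at once. It replaces the external input \cite[Theorem 3.3]{porowski2025} by the injectivity result the paper already uses elsewhere. The finite intersection $\bigcap_{i,j}\Omega(v_{ij})$ is indeed potentially of density one: pull back to the Galois closure of the compositum of the relevant fields, since density one is preserved under pullback. Your worry about $r$ under base change is moot, because $\Omega$ and the $\Omega(v_{ij})$ already live in $\V(K)$, and $L$ serves only to compare densities.

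The gap is your last paragraph. A set of potential density one need not meet a set of positive density. For example, $Split(L/K)$ is itself potentially of density one, yet for $L/K$ quadratic it is disjoint from the set of places inert in $L$, which has density $1/2$.

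So your plan to ``restrict to a positive-density portion of $\Omega$ inside $Split(L/K)$'' fails whenever $\Omega\cap Split(L/K)$ has density zero, and positive density of $\Omega$ does not exclude this. Reworking the count over $L$ does not help either. In that case the preimage of $\Omega$ in $\V(L)$ has density zero and is invisible to any density-one statement. Theorem~\ref{s7:thm:main_stable} gives you nothing beyond potential density one, so this step cannot be closed under a mere positive-density hypothesis.

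What is needed is that $\Omega$ has \emph{strongly} positive density. This is what the paper's proof uses when it invokes Theorem~\ref{s7:thm:strongly_pos_density} to produce $v'\in\Omega\cap\Omega(v)$; ``strictly'' in the statement is evidently a slip for ``strongly''. With that hypothesis your step is one line. Let $L$ be a field over which $\bigcap_{i,j}\Omega(v_{ij})$ pulls back to a density-one set. The preimage of $\Omega$ in $\V(L)$ has positive upper density, so the two preimages meet. The image in $\V(K)$ of a common place is then some $v'\in\Omega$ with $r(v')\ge d+1$, giving the contradiction.
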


\begin{proof}
Let $v\in \V(K)$ be a valuation, from Theorem~\ref{s7:thm:strongly_pos_density} there exists $v'\in \Omega\cap \Omega(v)$ thus from Corollary~\ref{s7:cor:bound_for_Omega(v)} we deduce that
\[
r(v)\le r(v') \le d.
\] 
Hence for all $v\in \V(K)$ we have $r(v)\le d$, thus it follows from \cite[Theorem 3.3]{porowski2025} and the proof of \cite[Lemma 6.2]{porowski2025} that the set $\Sel(X)$ is finite, of cardinality $\le d$.
\end{proof}

We end this section by considering some variants of the sets $\Omega(v)$.
For a valuation $v\in \V(K)$ write $\Oc_v\subset k_v$ for the valuation ring and fix an element $\pi_v\in \Oc_v$ of positive valuation.
Then for a natural number $n\ge 0$ we define a map $red_{v,n}$ as the composition
\[
red_{v,n}\colon \Sel(X)\to X(K_v) \to \mathcal{X}_v(\Oc_v/\pi_v^{n+1}),
\]
note that the image of $red_{v,n}$ is a finite set.
Let $S\subset \Sel(X)$ be a subset and define a set 
\[
\Omega(v, S, n)\subset \V(K)
\]
consisting of all valuations $v'\in \V(K)$ satisfying the following property: for any two Selmer sections $s,t\in S$ satisfying $\bar{s}_{v'} = \bar{t}_{v'}$ we also have $red_{v,n}(s) = red_{v,n}(t)$.
Note that there is a sequence of inclusions
\[
\Omega(v, S, 0)\supset \Omega(v, S, 1)\supset \ldots,
\]
moreover when $S = \Sel(X)$ then we have 
\[
\Omega(v)\supset \Omega(v, S, 0).
\]
Then our next corollary improves on Theorem~\ref{s7:thm:main_stable}.
\begin{corollary}\label{s7:cor:Omega(v,n)}
For a natural number $n\ge 0$ the set $\Omega(v, S, n)$ has potentially density one
\end{corollary}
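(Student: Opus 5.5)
The plan is to reduce Corollary~\ref{s7:cor:Omega(v,n)} to Theorem~\ref{s7:thm:main_semistable} by exhibiting a semistable model whose reduction partition is finer than the partition given by $red_{v,n}$.

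First, since the image of $red_{v,n}$ is finite, the fibres of $red_{v,n}$ restricted to $S$ form a finite partition $\mathcal{S}_n$ of $S$. Unwinding the definitions, $\Omega(v,S,n) = \Omega(v,S,\mathcal{S}_n)$. Since inclusions of the form $\Omega(v,S,\mathcal{T})\subset\Omega(v,S,\mathcal{S})$ hold whenever $\mathcal{T}$ is finer than $\mathcal{S}$, it suffices to find a finite partition $\mathcal{T}$ of $S$ into $v$-semistable classes that is finer than $\mathcal{S}_n$. Theorem~\ref{s7:thm:main_semistable} then shows that $\Omega(v,S,\mathcal{T})$ has potentially density one, and the same follows for $\Omega(v,S,n)$. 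We may also enlarge $K$ freely, as the paper already noted.

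The key step is to construct a semistable model $\mathcal{X}^s$ of $X_v$ such that two points of $X(K_v)$ with the same reduction in $\mathcal{X}^s(\bar\kappa(v))$ also agree in $\mathcal{X}_v(\Oc_v/\pi_v^{n+1})$. The local picture is this. The points of $X(K_v)$ reducing to a fixed smooth point $r$ of the special fibre of the stable model form an open disc. Points reducing to a node $r$ form an open annulus. Congruence modulo $\pi_v^{n+1}$ partitions the $K_v$-rational points of each such disc or annulus into finitely many subdiscs of fixed radius; here finiteness uses that $K_v$ is a local field, which is exactly why the image of $red_{v,n}$ is finite. Since $S$ maps to $X(K_v)$, it is enough to separate these finitely many residue classes.

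To separate them, we blow up the stable model (over the ring of integers of a finite extension, after base change) at the finitely many relevant points, repeatedly, and then take the semistable reduction. Blowing up a smooth $\kappa$-point and normalizing with a suitable ramified extension inserts a $\mathbb{P}^1$ component whose smooth points correspond to subdiscs of smaller radius. Iterating this $n+1$ times, with finer radius steps if $\pi_v$ is not a uniformizer, produces a semistable model in which distinct residue classes modulo $\pi_v^{n+1}$ that contain $K_v$-points reduce to distinct points. A similar treatment of the annuli around nodes, by subdividing the chain of $\mathbb{P}^1$'s, handles points specializing to nodes. This is the standard correspondence between semistable models and finite sets of discs, or equivalently vertex sets in the Berkovich skeleton.

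The main obstacle is making this model construction precise, in particular showing that the result is semistable and that its reduction map refines the congruence classes modulo $\pi_v^{n+1}$ on the relevant $K_v$-points. The cleanest approach is the Berkovich description: choose the finite set of type II points given by the boundaries of the finitely many discs of radius $|\pi_v|^{n+1}$ around the relevant $K_v$-points, add them to the skeleton of the stable model, and invoke the bijection between semistable vertex sets and semistable models. Under this bijection, two points have the same reduction exactly when they lie in the same residue disc or annulus of the enlarged vertex set.
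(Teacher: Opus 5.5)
Your proposal is correct and matches the paper's proof: the paper likewise writes $\Omega(v,S,n)=\Omega(v,S,\mathcal{S})$ for the partition $\mathcal{S}$ cut out by $red_{v,n}$, asserts (as ``easy to see'') that blowing up the stable model at points of the special fibre gives a $v$-semistable partition $\mathcal{T}$ finer than $\mathcal{S}$, and concludes from $\Omega(v,S,\mathcal{T})\subset\Omega(v,S,\mathcal{S})$ together with Theorem~\ref{s7:thm:main_semistable}. One detail needs tightening: over a node with local equation $zw=c$, the congruence classes modulo $\pi_v^{n+1}$ are not subdiscs of one fixed radius, because both $|z(x)-z(y)|$ and $|w(x)-w(y)|$ must be $\le|\pi_v|^{n+1}$; this gives discs whose radius varies with the coordinate, or even the closed annulus $\{|z|,|w|\le|\pi_v|^{n+1}\}$, so the vertices you add should be the boundary points of the congruence classes themselves, after which refinement is immediate because each residue disc or annulus of the new model avoids these points and hence lies entirely inside or entirely outside each class.
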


\begin{proof}
Passing to a finite field extension we may assume that $X$ has stable reduction at $v$.
The map $red_{v,n}$ defines a finite partition $\mathcal{S}$ of the set $S$, as the pullback of the trivial partition of its codomain.
By considering semistable models of $X\times_K K_v$ arising from blow-ups at points in the special fibre of the stable model it is easy to see that there exists a partition $\mathcal{T}$ of $\Sel(X)$ into $v$-semistable classes which is finer than the partition $\mathcal{S}$.
Then we have
\[
\Omega(v, S, \mathcal{T})\subset \Omega(v, S,\mathcal{S}) = \Omega(v, S, n),
\]
thus the statement follows from Theorem~\ref{s7:thm:main_semistable}.
\end{proof}

For a valuation $v\in \V(K)$ we denote be $l(v)$ the cardinality of the image of the map
\[
\Sel(X) \to X(K_v).
\]
For a natural number $m\ge 0$ we define a subset
\[
\Omega(m)\subset \V(K)
\]
consisting all valuations $v\in \V(K)$ such that $l(v)\ge m$.
Thus we have a sequence of inclusions
\[
\V(K) = \Omega(0)\supset \Omega(1)\supset \ldots.
\]
Note that if $m\ge 1$ is such that $\Omega(m)$ has density zero then using \cite[Lemma 6.2]{porowski2025} we deduce that the set $\Sel(X)$ is finite, of cardinality $\le m-1$; in particular $\Omega(m)$ must be empty.
Moreover it follows from~\cite{lawrence_venkatesh_2020} (see also \cite{betts_stix_2025}) that for sufficiently large integers $m$ we have $\V(K)\neq \Omega(m)$,
according to the Section Conjecture this should imply that $\Omega(m)$ is empty.
Here we can prove a very weak variant of this conclusion.

\begin{corollary}
For a natural number $m\ge 0$ the set $\Omega(m)$ is either empty or has potentially density one.
\end{corollary}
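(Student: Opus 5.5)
Assume $\Omega(m)$ is nonempty and fix a valuation $v\in\Omega(m)$. Then $l(v)\ge m$, so there are Selmer sections $s_1,\ldots,s_m\in\Sel(X)$ whose images $s_{1,v},\ldots,s_{m,v}\in X(K_v)$ are pairwise distinct. The plan is to show that every valuation in a suitable set of potentially density one also separates these $m$ sections, and hence lies in $\Omega(m)$. Since $\Omega(m)$ only grows when we enlarge it to contain such a set, this will prove the claim.

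First we choose a level $n$. Since the points $s_{i,v}$ are distinct points of $X(K_v)$, after passing to a finite extension of $K$ so that $X$ has stable reduction at $v$, there is an $n\ge 0$ such that the reductions $red_{v,n}(s_i)$ in $\mathcal{X}_v(\Oc_v/\pi_v^{n+1})$ are pairwise distinct. This holds because distinct $K_v$-points of the proper model are separated modulo a sufficiently high power of $\pi_v$.

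Next we apply the corollary. Set $S=\{s_1,\ldots,s_m\}$ and apply Corollary~\ref{s7:cor:Omega(v,n)}: the set $\Omega(v,S,n)$ has potentially density one. Take $v'\in\Omega(v,S,n)$. If $\bar{s}_{i,v'}=\bar{s}_{j,v'}$ for some $i\ne j$, then by the definition of $\Omega(v,S,n)$ we would have $red_{v,n}(s_i)=red_{v,n}(s_j)$, which contradicts the choice of $n$. So the reductions $\bar{s}_{i,v'}$ are pairwise distinct. In particular the points $s_{i,v'}\in X(K_{v'})$ are pairwise distinct, so $l(v')\ge m$ and $v'\in\Omega(m)$. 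This gives $\Omega(v,S,n)\subset\Omega(m)$, and therefore $\Omega(m)$ has potentially density one.

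The main point to check is bookkeeping under base change. Replacing $K$ by a finite extension $L$ changes the meaning of $l(\cdot)$, since $X(L_w)$ may contain more points than $X(K_v)$. To handle this, we work directly with the preimage of $\Omega(v,S,n)$ in $\V(K)$: Corollary~\ref{s7:cor:Omega(v,n)} is applied over the extension where stable reduction holds, and by the compatibility noted earlier its pullback from $\V(L)$ to $\V(K)$ is the corresponding set over $K$. Distinctness of the reductions at $w'$ above $v'$ implies distinctness of the points $s_{i,v'}\in X(K_{v'})$, because the sections are defined over $K$. Hence the conclusion descends to $K$. The remaining points, the existence of $n$ and this pullback compatibility, are routine.
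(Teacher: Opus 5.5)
Your proposal is correct and follows the paper's proof. The paper also fixes $v\in\Omega(m)$, chooses $n$ so that $red_{v,n}$ takes at least $m$ distinct values on $\Sel(X)$, shows $\Omega(v,S,n)\subset\Omega(m)$, and concludes by Corollary~\ref{s7:cor:Omega(v,n)}. The differences are cosmetic: you take $S=\{s_1,\ldots,s_m\}$ instead of $S=\Sel(X)$, which only enlarges $\Omega(v,S,n)$; you prove the inclusion directly rather than by citing the argument of Corollary~\ref{s7:cor:bound_for_Omega(v)}; and your base-change bookkeeping is harmless but not needed, since the corollary is already stated over $K$.
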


\begin{proof}
Suppose that $\Omega(m)$ is nonempty and let $v\in \Omega(m)$.
Since $l(v)\ge m$ we see that there exists an integer $n\ge 0$ such that the image of the reduction map $red_{v,n}$ has cardinality $\ge m$.
Then using the same argument as in the proof Corollary~\ref{s7:cor:bound_for_Omega(v)} we deduce that for $S = \Sel(X)$ we have an inclusion
\[
\Omega(v, S, n)\subset\Omega(m),
\]
hence the result follows from Corollary~\ref{s7:cor:Omega(v,n)}.
\end{proof}

\bibliographystyle{abbrv}
\bibliography{bibliography}

\end{document}